\newtheorem{theorem}{Theorem}[section]
\newtheorem{lemma}[theorem]{Lemma}
\newtheorem{definition}[theorem]{Definition}
\newtheorem{proposition}[theorem]{Proposition}
\newtheorem{remark}[theorem]{Remark}
\newtheorem{corollary}[theorem]{Corollary}
\newtheorem{example}[theorem]{Example}
\newtheorem{question}{Question}
\newcommand{\cali}[1]{\mathscr{#1}}
\numberwithin{equation}{section}
\newcommand{\dist}{{\rm dist}}
\newcommand{\ddc}{{dd^c}}
\newcommand{\Jac}{{\rm Jac}}
\newcommand{\supp}{{\rm supp}}
\newcommand{\id}{{\rm Id}}
\newcommand{\aut}{{\rm Aut}}
\newcommand{\Ac}{\cali{A}}
\newcommand{\Cc}{\cali{C}}
\newcommand{\Dc}{\cali{D}}
\newcommand{\Fc}{\cali{F}}
\newcommand{\Hc}{\cali{H}}
\newcommand{\Ic}{\cali{I}}
\newcommand{\Jc}{\cali{J}}
\newcommand{\Nc}{\cali{N}}
\newcommand{\Pc}{\cali{P}}
\newcommand{\Vc}{\cali{V}}
\newcommand{\Bb}{\mathbb{B}}
\newcommand{\Db}{\mathbb{D}}
\newcommand{\Pb}{\mathbb{P}}
\newcommand{\Cb}{\mathbb{C}}
\newcommand{\Nb}{\mathbb{N}}
\newcommand{\Rb}{\mathbb{R}}
\newcommand{\card}{\text{card}}
\author{Johan Taflin}
\title{Attracting Currents and Equilibrium Measures for Quasi-attractors of $\Pb^k$}
\begin{document}
\maketitle

\begin{abstract}
Let $f$ be a holomorphic endomorphism of $\Pb^k$ of degree $d.$  For each quasi-attractor of $f$ we construct a finite set of currents with attractive behaviors. To every such an attracting current is associated an equilibrium measure which allows for a systematic ergodic theoretical approach in the study of quasi-attractors of $\Pb^k.$  As a consequence, we deduce that there exist at most countably many quasi-attractors, each one with topological entropy equal to a multiple of $\log d.$ We also show that the study of these analytic objects can initiate a bifurcation theory for attracting sets.
\end{abstract}
\section{Introduction}
If $f$ is a continuous self-map of a topological space $X$ there are several notions of attractors for the dynamical system defined by the iterates $f^n:=f\circ\cdots\circ f$ of $f.$ In this paper, we will only consider the following topological ones. A compact subset $A$ of $X$ is called an \textit{attracting set} if it has a non-empty open neighborhood $U,$ called a \textit{trapping region}, such that $f(U)\Subset U$ and $A=\cap_{n=0}^\infty f^n(U).$ An \textit{attractor} is an attracting set which possesses a dense orbit. Hurley \cite{hurley} also introduced the notion of \textit{quasi-attractors} which are intersections of attracting sets. They are related to Conley's \textit{chain recurrence classes} \cite{conley}. The minimal chain recurrence classes are exactly the minimal quasi-attractors which therefore correspond to the chain recurrent quasi-attractors. Unlike attractors, there always exists at least one chain recurrent quasi-attractor when $X$ is compact.

In this paper, we consider the case where $X$ is the complex projective space $\Pb^k$ and $f$ is a holomorphic endomorphism of algebraic degree $d\geq2.$ This situation has been studied by many authors, see e.g. \cite{fs-example, fw-attractor, jonsson-weickert-attractor, d-attractor, rong, daurat}. In \cite{d-attractor}, Dinh constructed, under geometric assumptions, an \textit{attracting current} and an \textit{equilibrium measure} associated to an attracting set. Our first aim is to generalize this construction to all quasi-attractors without any assumption. These analytic objects will give us interesting information about the dynamics on a quasi-attractor and also about its structure. Another motivation was to study attracting sets for holomorphic families of endomorphisms.
In one complex variable, the bifurcation locus of a holomorphic family of rational maps on $\Pb^1$ is the closure of the set of parameters where an attracting cycle appears or disappears. When this happens, there still exists at the limit a periodic cycle, which is neural. In higher dimension, the situation is more complicated. It seems more difficult to follow an attracting set and to study directly the limit set when it ``disappears''. It could therefore be easier to consider the analytic objects associated to them and their numerical invariants. For example, in Section \ref{sec-ac-bif} we give criteria to determine if an attracting set moves continuously with the parameter.

To introduce our results, we have to fix some notations and recall classical statements. Denote by $\omega$ the standard Fubini-Study form on $\Pb^k.$ If $E$ is a subset of $\Pb^k$ then we define $\Cc_p(E)$ to be the set of positive closed currents $S$ of bidegree $(p,p),$ supported in $E$ and of mass $1,$ i.e. $\langle S,\omega^{k-p}\rangle=1.$ Observe that $\Cc_k(E)$ corresponds to the set of probability measures supported in $E.$

From now on, $f$ will be an endomorphism of $\Pb^k$ of algebraic degree $d\geq2.$ The \textit{Green current of order $p$} of $f$ is defined by $T^p:=\lim_{n\to\infty}d^{-np}(f^n)^*\omega^p.$ Its support $\Jc_p$ is called the \textit{Julia set of order $p$}. These Julia sets give a filtration
$$\Pb^k=:\Jc_0\supset \Jc_1\supset \Jc_2\supset\cdots\supset\Jc_k\supset\Jc_{k+1}:=\varnothing,$$
which will be of particular importance in the sequel. It is a result of Forn{\ae}ss-Sibony \cite{fs-cdhd2} and Ueda \cite{ueda-fatou} that $\Jc_1$ is equal to the Julia set $\Jc$ of $f$ i.e. the complement of the largest open set where the family of iterates $(f^n)_{n\geq1}$ is normal. The set $\Jc_k$ is the support of the \textit{equilibrium measure} of $f,$ $\mu:=T^k,$ which has many interesting dynamical properties. However, a quasi-attractor which intersects $\Jc_k$ has to be equal to the whole space $\Pb^k.$ Therefore, this measure $\mu$ gives no information about proper quasi-attractors.

As we will see later, our results and techniques have similarities with those used in the study of horizontal-like maps \cite{ds-geo} (see also \cite{dujardin-henon}, \cite{dns-hori}). But our geometric setting is a priori very different and we need a better understanding of it. In \cite{daurat}, Daurat introduced a notion of \textit{dimension} for an attracting set which stays pertinent for quasi-attractors. A quasi-attractor $A$ has dimension $s$ if $\Cc_{k-s}(A)\neq\varnothing$ and $\Cc_{k-s-1}(A)=\varnothing.$ Our first result is to give an equivalent definition.
\begin{proposition}\label{prop-intro}
A quasi-attractor $A$ has dimension $s$ if and only if $A\cap\Jc_s\neq\varnothing$ and $A\cap\Jc_{s+1}=\varnothing.$
\end{proposition}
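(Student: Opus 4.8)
The plan is to prove both implications by exploiting two facts: first, that a quasi-attractor $A$ is an intersection of attracting sets, each with a trapping region $U$ satisfying $f(U)\Subset U$; and second, that the Julia sets $\Jc_s$ are the supports of the Green currents $T^s$, which behave well under pullback ($d^{-s}(f)^*T^s = T^s$). The key quantitative link is that $\Cc_{k-s}(A)\neq\varnothing$ — the existence of a positive closed $(k-s,k-s)$-current of mass $1$ supported in $A$ — should force $A$ to meet $\Jc_s$, because intersecting such a current with the Green current $T^s$ (of complementary ``vertical'' dimension) produces a nonzero positive measure carried by $A\cap\Jc_s$.

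First I would prove the implication ``dimension $s$ $\Rightarrow$ $A\cap\Jc_s\neq\varnothing$ and $A\cap\Jc_{s+1}=\varnothing$''. For the first conclusion, take $S\in\Cc_{k-s}(A)$. The wedge product $S\wedge T^s$ is well-defined (the Green current $T^s$ has continuous potentials away from $\Jc_s$, or one argues by a regularization/pullback limit) and is a positive measure of mass $\langle S, T^s\wedge\omega^0\rangle=\langle S,\omega^{k-s}\rangle\cdot(\text{mass of }T^s)\neq 0$ by the cohomological computation, since $\{T^s\}=\{\omega^s\}$ in cohomology. This measure is supported in $\supp S\cap\supp T^s\subset A\cap\Jc_s$, so the intersection is nonempty. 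For the second conclusion, suppose for contradiction that $A\cap\Jc_{s+1}\neq\varnothing$; I would use a known pullback/averaging argument (push forward a suitable current under iterates of $f$ restricted to a trapping region, or use that near a point of $\Jc_{s+1}$ one can construct a current in $\Cc_{k-s-1}$) to produce an element of $\Cc_{k-s-1}(A)$, contradicting that $A$ has dimension exactly $s$. Concretely, if $p\in A\cap\Jc_{s+1}$ and $U$ is a trapping region for an attracting set containing $A$, one averages $d^{-n(k-s-1)}(f^n)_*(\omega^{k-s-1}|_U)$, or rather pulls back and uses $f(U)\Subset U$ to get masses concentrating on $A$; the limit current lies in $\Cc_{k-s-1}(A)$, a contradiction.

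For the converse, assume $A\cap\Jc_s\neq\varnothing$ and $A\cap\Jc_{s+1}=\varnothing$. From $A\cap\Jc_{s+1}=\varnothing$ I want $\Cc_{k-s-1}(A)=\varnothing$: if $S'\in\Cc_{k-s-1}(A)$ existed, then $S'\wedge T^{s+1}$ would be a nonzero positive measure (again by the cohomological mass computation, since $\{T^{s+1}\}=\{\omega^{s+1}\}$) supported in $A\cap\Jc_{s+1}=\varnothing$, impossible. It remains to show $\Cc_{k-s}(A)\neq\varnothing$, i.e. to construct a positive closed $(k-s,k-s)$-current of mass $1$ supported in $A$. Here I would use the trapping region structure: pick an attracting set $A'\supset A$ with trapping region $U$, and consider the normalized pullbacks $d^{-n(k-s)}(f^n)^*\omega^{k-s}$ restricted appropriately, or the normalized push-forwards of $\omega^{k-s}|_U$; by compactness of the space of positive closed currents of bounded mass, a subsequence converges to some positive closed current $S$ supported in $\cap_n f^n(\overline U)=A'$. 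The point $p\in A\cap\Jc_s$ is used to guarantee that the limiting current has the right dimension and does not degenerate to lower bidimension — roughly, the presence of $\Jc_s$ near $A$ forces mass to survive in bidegree $(k-s,k-s)$ after normalization. One then descends from $A'$ to $A$ by intersecting over all attracting sets whose intersection is $A$, using a diagonal/compactness argument to extract a current supported in $A$ itself.

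The main obstacle I anticipate is the construction in the converse direction — producing a current in $\Cc_{k-s}(A)$ rather than merely in $\Cc_{k-s}(A')$ for some larger attracting set $A'$, and ensuring the limiting current is genuinely of bidimension $s$ (mass does not escape to lower-dimensional pieces). This requires a careful quantitative control of masses under the normalized pullbacks near $\Jc_s$, presumably leaning on the fact that $T^s$ has strictly positive ``self-intersection'' behavior along $\Jc_s$ and on the attracting dynamics $f(U)\Subset U$ to trap the mass. The wedge-product arguments in the other direction are comparatively routine once the intersection $S\wedge T^p$ is justified (continuity of potentials of $T^p$ off $\Jc_p$, or a standard regularization).
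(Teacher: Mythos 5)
Your overall skeleton (reduce to the equivalence between $A\cap\Jc_l=\varnothing$ and $\Cc_{k-l}(A)=\varnothing$, treat attracting sets first, then pass to the quasi-attractor by a compactness/diagonal extraction over the decreasing family) is the same as the paper's (Proposition \ref{prop def equiva} and Corollary \ref{coro p-pseudo}), and your final descent step is fine. But two of your steps have genuine gaps. The first is the wedge-product argument you use three times: you assert that $S\wedge T^s$ (resp. $S'\wedge T^{s+1}$) is a nonzero measure supported in $\supp S\cap\supp T^s\subset A\cap\Jc_s$ (resp. in $A\cap\Jc_{s+1}$). Continuity of the local potentials of $T$ gives the definition of the product, its mass, and the easy bound $\supp(S\wedge T^l)\subset\supp S\cap\supp T=A\cap\Jc_1$; it does \emph{not} give containment in $\supp(T^l)=\Jc_l$ when $l\geq2$. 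That stronger locality statement (the Bedford--Taylor product $T^l\wedge S$ vanishes on every open set where the current $T^l$ vanishes, for every positive closed $S$) is a nontrivial claim which directly contains the implication $A\cap\Jc_l=\varnothing\Rightarrow\Cc_{k-l}(A)=\varnothing$ that you are trying to establish, so it cannot be waved through as routine. The paper avoids it entirely: if $A\cap\Jc_l=\varnothing$ then $T^l\in\Cc_l(\Pb^k\setminus A)$, and one invokes the classical fact that the supports of two positive closed currents of complementary bidegrees on $\Pb^k$ must intersect, which kills $\Cc_{k-l}(A)$ without ever forming a wedge product.

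The second gap is the construction of elements of $\Cc_{k-l}(A)$ from $A\cap\Jc_l\neq\varnothing$ (needed both in your forward direction, to contradict $\Cc_{k-s-1}(A)=\varnothing$, and in your converse, which you yourself flag as the main obstacle). As written the sketch does not close: the currents $d^{-ln}(f^n)_*(\chi\,\omega^{k-l})$ are not closed (the cutoff $\chi$ destroys closedness), so compactness of positive closed currents of bounded mass does not apply to them, and your normalization $d^{-n(k-s-1)}(f^n)_*$ is wrong (the push-forward of a $(k-l,k-l)$-current scales by $d^{ln}$, so the correct factor is $d^{-(s+1)n}$ in the codimension $s+1$ case). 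The missing ingredient is exactly the paper's Lemma \ref{le existence courant} (Dinh): every limit value of $d^{-ln}(f^n)_*(\chi\,\omega^{k-l})$ is positive \emph{and closed}, of mass $\langle\omega^{k-l}\wedge T^l,\chi\rangle$; choosing $\chi$ with compact support in a trapping region $U$ and positive near a point of $A\cap\Jc_l$ makes this mass nonzero, and $f(U)\Subset U$ forces any limit to be supported in the attracting set, whence an element of $\Cc_{k-l}(A)$ after normalization. Your intuition that ``the presence of $\Jc_s$ forces mass to survive'' is exactly what this lemma encodes, but without it (or a substitute) neither your forward contradiction nor your converse construction goes through.
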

Although simple, this fact is essential since it provides us with geometric information about $A.$ If $A$ is a quasi-attractor of dimension $s$ then $T^{s+1}$ belongs to $\Cc_{s+1}(\Pb^k\setminus A)$ which implies that $A$ is weakly $(k-s)$-pseudoconvex (see Section \ref{sec-pluri}). This will allow us to use the $\ddc$-method developed by Dinh and Sibony to study the dynamics induced by $f$ on $\Cc_{k-s}(U)$ for a trapping region $U$ of $A.$ In particular, we obtain that there exists on each attracting set at least one current which exhibits equidistribution properties.
\begin{theorem}\label{th equi as}
Let $A$ be an attracting set of dimension $s$ with a trapping region $U.$ There exist a trapping region $D_\tau\subset U$ and a current $\tau$ in $\Cc_{k-s}(D_\tau)$ such that
$$\lim_{N\to\infty}\frac{1}{N}\sum_{n=0}^{N-1}\frac{1}{d^{ns}}(f^n)_*R=\tau$$
for all continuous currents $R$ in $\Cc_{k-s}(D_\tau).$
\end{theorem}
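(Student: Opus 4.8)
\emph{Proof proposal.} The plan is to analyze the normalized push‑forward $\Lambda:=d^{-s}f_*$ acting on currents of bidegree $(k-s,k-s)$. First I record two elementary facts: $\Lambda$ preserves $\Cc_{k-s}(\Pb^k)$, since for closed $S\in\Cc_{k-s}(\Pb^k)$ one has $\langle f_*S,\omega^s\rangle=\langle S,(f^*\omega)^s\rangle=d^s$ because $[f^*\omega]=d[\omega]$, and $f_*$ preserves positivity and closedness; and $\Lambda$ maps $\Cc_{k-s}(\overline V)$ into $\Cc_{k-s}(\overline{f(V)})$ for every open $V$ (using that $f$ is open and proper, so $f(\overline V)=\overline{f(V)}$). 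Since $A$ has dimension $s$, Proposition~\ref{prop-intro} gives $A\cap\Jc_{s+1}=\varnothing$, and since the sets $f^n(U)$ form a neighbourhood basis of $A$ I may fix a trapping region $D_\tau$, with $A\subset D_\tau\Subset U$, such that $\overline{D_\tau}\cap\Jc_{s+1}=\varnothing$; equivalently $T^{s+1}\in\Cc_{s+1}(\Pb^k\setminus\overline{D_\tau})$, i.e. $A$ is weakly $(k-s)$‑pseudoconvex near $\overline{D_\tau}$. Then $\Cc_{k-s}(\overline{D_\tau})$ is convex, weakly compact and $\Lambda$‑invariant (as $f(\overline{D_\tau})\subset D_\tau$), so the Cesàro means $\tau_N(R):=\tfrac1N\sum_{n=0}^{N-1}\Lambda^nR$ stay inside it, and from $\tfrac1N(R-\Lambda^NR)\to0$ (bounded mass) every cluster value of $(\tau_N(R))_N$ is a fixed current of $\Lambda$. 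It remains to prove that all these cluster values coincide, for every continuous $R\in\Cc_{k-s}(D_\tau)$.

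For this I would use the $\ddc$‑method of Dinh–Sibony, which is available precisely because of the weak pseudoconvexity above. Given continuous $R,R'\in\Cc_{k-s}(D_\tau)$, write $R-R'=\ddc S$ with $S$ of bidegree $(k-s-1,k-s-1)$, supported near $\overline{D_\tau}$ and with bounded super‑potential (possible since $R,R'$ are continuous forms). Then $\Lambda^n(R-R')=\ddc\!\big(d^{-ns}(f^n)_*S\big)$, so for a smooth test form $\Phi$ of bidegree $(s,s)$,
$$\big\langle\Lambda^n(R-R'),\Phi\big\rangle=\big\langle S,\ d^{-ns}(f^n)^*(\ddc\Phi)\big\rangle=d^{\,n}\,\big\langle S,\ d^{-n(s+1)}(f^n)^*(\ddc\Phi)\big\rangle .$$
Now $\ddc\Phi$ is a smooth closed $(s+1,s+1)$‑form cohomologous to $0$; dominating it in modulus by $C\omega^{s+1}$ and using $d^{-n(s+1)}(f^n)^*\omega^{s+1}\to T^{s+1}$, whose support misses a neighbourhood of $\overline{D_\tau}$, one sees that $d^{-n(s+1)}(f^n)^*(\ddc\Phi)$ tends to $0$ there. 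The task is thus reduced to showing that this convergence is fast enough to absorb the factor $d^{\,n}$, at least after averaging in $n$.

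I expect this quantitative step to be the main obstacle. One must control the mass of $d^{-n(s+1)}(f^n)^*\omega^{s+1}$ on a fixed neighbourhood of $\overline{D_\tau}$ — showing it decays like $d^{-n}$ — and show that the remaining borderline contribution, seen through $S$, oscillates; concretely, one has to carry out the spectral analysis of the operator $\Lambda$ on $\Cc_{k-s}(\overline{D_\tau})$ by adapting the super‑potential formalism of Dinh–Sibony to the weakly pseudoconvex neighbourhoods of $\overline{D_\tau}$ produced by $T^{s+1}$. This is also what forces Cesàro means rather than plain limits: the peripheral spectrum will in general consist of roots of unity, reflecting a possible cyclic permutation by $\Lambda$ of finitely many fixed currents carried by $A$, so $\Lambda^nR$ need not converge and $\tau$ will be a barycenter. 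Granting the estimate, $\tau_N(R)$ and $\tau_N(R')$ have the same cluster values for all continuous $R,R'$; taking $R'=\Lambda^{n_0}R$ and letting $n_0\to\infty$ then pins the common value down to a single current $\tau$, whence $\tau_N(R)\to\tau$ for every continuous $R\in\Cc_{k-s}(D_\tau)$.

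Finally, $\supp\Lambda^nR\subset f^n(\overline{D_\tau})$ and $\bigcap_{n\ge0}f^n(\overline{D_\tau})\subset A\subset D_\tau$, so $\supp\tau\subset A$, while $\langle\tau,\omega^s\rangle=\lim_N\langle\tau_N(R),\omega^s\rangle=1$ and $\tau$ is positive and closed as a weak limit of such currents; hence $\tau\in\Cc_{k-s}(D_\tau)$, which completes the proof.
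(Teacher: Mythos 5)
There is a genuine gap, and in fact the proposal proves (or rather, sets out to prove) a statement that is false. You fix $D_\tau$ to be a trapping region with $A\subset D_\tau\Subset U$ and then claim that all Ces\`aro cluster values coincide for \emph{every} continuous $R\in\Cc_{k-s}(D_\tau)$. But in general several distinct attracting currents are supported in $A$: in the paper's own example ($A=\{p_0,p_1,p_2\}$, an attracting fixed point together with an attracting $2$-cycle, $s=0$), smooth probabilities concentrated near $p_0$ and near $p_1$ have different Ces\`aro limits ($\delta_{p_0}$ versus $2^{-1}(\delta_{p_1}+\delta_{p_2})$), and these are genuinely distinct fixed points of $\Lambda$, not a cyclic orbit that averaging could merge. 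So no trapping region containing all of $A$ can work, and the real content of the theorem is the construction of a possibly much smaller $D_\tau$. The paper does this by a maximization procedure: it takes a dense sequence $(\phi_j)$ in $\Pc(U)$, sets $c_j=\max_{S\in\Ic_p(D_{j-1})}\langle S,\phi_j\rangle$, lets $D_j$ be the union of supports of currents whose Ces\`aro means maximize $\phi_j$, proves each $D_j$ is a trapping region with a uniform contraction rate $\eta_U$ (via structural disks obtained by regularization under $\aut(\Pb^k)$ and the subharmonicity of $\theta\mapsto\langle\mathcal R(\theta),\phi_j\rangle$, Lemma \ref{le domi-cons1}), and shows the decreasing sequence $(D_j)$ stabilizes; the limit is $D_\tau$ and Lemma \ref{le sepa} pins down $\tau$. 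Nothing in your proposal plays the role of this selection of $D_\tau$.

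Independently, the analytic step you acknowledge as ``the main obstacle'' is indeed missing and is unlikely to be repairable as stated. Writing $R-R'=\ddc S$ with $S$ supported near $\overline{D_\tau}$ and with bounded super-potential is not justified on a merely weakly $p$-pseudoconvex region (the paper only solves such $\ddc$-equations with estimates under extra hypotheses, e.g. strict $(p-1)$-convexity via Andreotti--Grauert theory, in Theorem \ref{th vitesse cas particulier}); and the decay you need, $\|(f^n)^*\omega^{s+1}\|=o(d^{ns})$ on a neighbourhood of $\overline{D_\tau}$, is in the paper a \emph{consequence} of the attracting property of $\tau$ (Proposition \ref{prop-estim-degre}), so invoking it here would be circular, while the soft fact $d^{-n(s+1)}(f^n)^*\omega^{s+1}\to T^{s+1}$ gives no rate and Ces\`aro averaging does not absorb the extra factor $d^{n}$. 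So both the geometric choice of $D_\tau$ and the key quantitative estimate are gaps; the paper's route through structural disks, the maximization over $\Pc(U)$, and the stationarity of the trapping regions avoids any such estimate.
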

We say that $\tau$ is an \textit{attracting current} of bidimension $(s,s)$ associated to $A$ (see Definition \ref{def-attracting-current} for a precise definition). They can be considered as attractive periodic points in the set $\Cc_{k-s}(\Pb^k).$ In general, there exist several such currents supported in $A$ and it is not possible to remove the Ces\`{a}ro mean in the theorem. For example, if $A=\{p_0,p_1,p_2\}$ is the union of an attracting fixed point $p_0$ and an attracting cycle $\{p_1,p_2\}$ of period $2$ then $A$ is an attracting set of dimension $0$ with two attracting measures, the Dirac mass $\delta_{p_0}$ and $2^{-1}(\delta_{p_1}+\delta_{p_2}).$ For the latter, if $R$ is a smooth probability supported in a small neighborhood of $p_1$ then the sequence $(f^n)_*R$ has two different limit values, $\delta_{p_1}$ and $\delta_{p_2}.$ However, if we exchange $f$ by $f^2$ in this example then the sequence $(f^n)_*R$ converges and we obtain three attracting measures which cannot be decomposed anymore. This phenomenon still holds in the general case.

\begin{theorem}\label{th finitude}
Let $A$ be an attracting set of dimension $s.$ The set of attracting currents of bidimension $(s,s)$ supported in $A$ is finite. Moreover, there exists an integer $n_0\geq1$ such that if $\tau$ is an attracting current of bidimension $(s,s)$ supported in $A$ for $f^{n_0}$ then 
$$\lim_{n\to\infty}\frac{1}{d^{nn_0s}}(f^{nn_0})_*R=\tau$$
for all continuous currents $R$ in $\Cc_{k-s}(D_\tau).$ Here, $D_\tau$ is the trapping region associated to $\tau$ and $f^{n_0}$ by Theorem \ref{th equi as}.
\end{theorem}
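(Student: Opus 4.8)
The plan is to analyse the normalised push-forward $\Lambda:=d^{-s}f_*$ acting on $\Cc_{k-s}(\overline U)$ for $U$ a trapping region of $A$. By Proposition \ref{prop-intro}, $A\cap\Jc_{s+1}=\varnothing$, so $A$ is weakly $(k-s)$-pseudoconvex and $\Lambda$ is exactly the operator handled by the $\ddc$-method in the proof of Theorem \ref{th equi as}. From that method I will borrow two facts. First, every attracting current $\tau$ supported in $A$ is $\Lambda$-fixed: applying the continuous operator $\Lambda$ to the Ces\`aro limit in Theorem \ref{th equi as} changes it only by correction terms of mass $N^{-1}$, so $\Lambda\tau=\tau$. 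Second, there is a finite-dimensional $\Lambda$-invariant space $V$ on which $\Lambda$ is semisimple with root-of-unity eigenvalues, together with a complementary $\Lambda$-invariant direction on which $\Lambda$ contracts; letting $\pi$ denote the associated projection onto $V$ (which commutes with $\Lambda$), one has $\Lambda^n(R-R')-\Lambda^n\pi(R-R')\to 0$ for all continuous $R,R'$ and, by regularisation, for all $R,R'\in\Cc_{k-s}(\overline U)$. Consequently the compact convex set
$$\Kc:=\big\{\,S\in\Cc_{k-s}(\overline U):\ \Lambda S=S,\ \supp S\subseteq A\,\big\}$$
satisfies $\Kc-\Kc\subseteq V$, so it spans a finite-dimensional affine space.

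The first real step is to show that every attracting current $\tau$ supported in $A$ is an extreme point of $\Kc$. If $\tau=\tfrac12(S_1+S_2)$ with $S_i\in\Kc$, then positivity forces $\supp S_i\subseteq\supp\tau$, which is compactly contained in the trapping region $D_\tau$ from Theorem \ref{th equi as}; approximating $S_i$ by continuous $R_i\in\Cc_{k-s}(D_\tau)$ and comparing $\tfrac1N\sum_{n<N}\Lambda^nR_i\to\tau$ (Theorem \ref{th equi as}) with $\tfrac1N\sum_{n<N}\Lambda^nR_i=S_i+O(\|R_i-S_i\|)$ (from the second fact, since the Ces\`aro means of $\Lambda^n$ are bounded) gives $\tau=S_1=S_2$. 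So the attracting currents supported in $A$ lie among the extreme points of $\Kc$, and the finiteness assertion reduces to showing that $\Kc$ is a polytope. This is the heart of the matter: finite-dimensionality of $\Kc$ alone does not suffice, and one must prove that the cone of positive currents inside the affine span of $\Kc$ is polyhedral. My approach would be to refine the $\ddc$-method beyond the contraction estimate used for Theorem \ref{th equi as}, showing that modulo the contracting direction $f$ decomposes $A$ into finitely many ``elementary'' pieces that are cyclically permuted by $f$ and that span $V$ by positive, mutually singular currents; the positive cone is then simplicial. The geometric phenomenon behind this, which also guides the general argument, is that distinct attracting currents have essentially disjoint trapping regions: if $D_{\tau_1}\cap D_{\tau_2}\neq\varnothing$ and the attracting set of this intersection still has dimension $s$, then applying Theorem \ref{th equi as} to it identifies the resulting current with both $\tau_1$ and $\tau_2$, forcing $\tau_1=\tau_2$. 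Granting that $\Kc$ is a polytope, the first assertion follows.

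For the second assertion, let $n_0$ be the least common multiple of the orders of the finitely many root-of-unity eigenvalues of $\Lambda|_V$, so that $\Lambda^{n_0}=(f^{n_0})_*/d^{n_0s}$ restricts to the identity on $V$. The set $A$ is again an attracting set of dimension $s$ for $f^{n_0}$, with trapping region $U$, so the whole discussion applies to $f^{n_0}$. Let $\tau$ be an attracting current of bidimension $(s,s)$ supported in $A$ for $f^{n_0}$, with $D_\tau$ its trapping region from Theorem \ref{th equi as} applied to $f^{n_0}$, and let $R\in\Cc_{k-s}(D_\tau)$ be continuous. Put $w:=R-\tau$; since $\pi$ commutes with $\Lambda$ and $\Lambda^{n_0}$ is the identity on $V$, the second fact gives $\Lambda^{n_0n}w-\pi(w)=\Lambda^{n_0n}w-\Lambda^{n_0n}\pi(w)\to0$, so, using $\Lambda\tau=\tau$,
$$\frac{1}{d^{nn_0s}}(f^{nn_0})_*R=\tau+\Lambda^{n_0n}w\longrightarrow\tau+\pi(w).$$
But Theorem \ref{th equi as} applied to $f^{n_0}$ tells us that the Ces\`aro means of the left-hand sequence converge to $\tau$, and a convergent sequence shares the limit of its Ces\`aro means; therefore $\pi(w)=0$ and $\frac{1}{d^{nn_0s}}(f^{nn_0})_*R\to\tau$, the convergence claimed in the theorem.

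The step I expect to be the genuine obstacle is establishing that $\Kc$ is a polytope, equivalently that there are only finitely many attracting currents of bidimension $(s,s)$ supported in $A$. The straightforward part of the $\ddc$-method only delivers finite-dimensionality of the affine span of $\Kc$, which by itself does not bound the number of extreme points; one must extract the finer decomposition of $A$ into cyclically permuted pieces and, in particular, control the situations where the trapping regions of two distinct attracting currents overlap while the attracting set of their intersection has dimension strictly below $s$.
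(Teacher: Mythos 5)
There is a genuine gap, and it sits exactly where your argument needs to do work. The ``second fact'' you borrow --- a finite-dimensional $\Lambda$-invariant subspace $V$ on which $\Lambda$ is semisimple with root-of-unity eigenvalues, plus a complementary contracting direction with projection $\pi$ commuting with $\Lambda$ --- is not produced by the $\ddc$-method behind Theorem \ref{th equi as}. That method only yields convergence of Ces\`aro means through subharmonicity of $\theta\mapsto\langle\Delta_n\mathcal R(\theta),\phi\rangle$ and the maximum principle; it gives no spectral decomposition of $\Lambda$ on $\Cc_{k-s}(\overline U)$. What you are assuming is a quasi-compactness/spectral-gap statement, which would in particular give exponential speed of convergence towards $\tau$ --- precisely what the paper states it cannot prove in general (Section \ref{subsec-speed}) and proves only under the special hypotheses of Theorem \ref{th vitesse cas particulier}. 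Since your reduction of $\Kc$ to a finite-dimensional affine set, your choice of $n_0$ as the lcm of eigenvalue orders, and your limit computation $\Lambda^{n_0n}(R-\tau)\to\pi(R-\tau)$ all rest on this unproved fact, the second assertion of the theorem is not established; and you yourself concede that the first assertion (finiteness, i.e.\ that $\Kc$ is a polytope) is left as ``the genuine obstacle'' with only a hoped-for refinement sketched.

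For comparison, the paper's route to finiteness does not pass through finite-dimensionality at all. The technical core is Lemma \ref{le courant avec prop de cv}: every attracting current $S\in\Cc_p(U)$ is attractive on the explicit trapping region $\Nc_S(r_U)$, which enjoys the uniform contraction $f(\Nc_S(r_U)_{\eta_U})\subset\Nc_S(r_U)$ with $\eta_U$ independent of $S$. Combined with the facts that any such region meets $\Jc_s$ (Corollary \ref{coro p-pseudo}) and that the regions of two distinct attracting currents intersect only outside $\Jc_s$ (Lemma \ref{le preli a finitude des tau} --- essentially your disjointness remark), a volume count of pairwise disjoint balls of radius $\eta_U/3$ in $\Pb^k$ bounds the number of attracting currents (Lemma \ref{le finitude cas ca}, Theorem \ref{th finitude des tau}). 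Your disjointness observation alone cannot yield finiteness: without a uniform lower bound on the size of the trapping regions one could a priori have infinitely many small disjoint ones, and obtaining that uniform bound is exactly the hard content of Lemma \ref{le courant avec prop de cv}. For removing the Ces\`aro mean, the paper then picks $n_0$ so that the set of attracting currents of $f^{n_0}$ has maximal cardinality (possible because the bound depends only on $\eta_U$, hence is uniform over iterates), shows these currents remain attracting for every $f^{nn_0}$ (Proposition \ref{prop tau toujours extremal}), and finally runs a combinatorial argument on $\Dc_p(V)$ --- the sets $\Vc_{S_n}$, the arithmetic-progression statement of Proposition \ref{prop arit} via Lemma \ref{le union}, and Lemma \ref{le arit ok} --- to prove Theorem \ref{th convergence dans V}. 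None of these ingredients has a counterpart in your proposal, so both halves of the theorem remain unproved as written.
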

A key ingredient in the proofs of these two theorems is to consider positive closed currents as geometric objects and to use them in order to build new trapping regions.

In the example above, at the end each connected component of $A$ is the support of an attracting measure. In general, it is not enough to consider the connected components but it is possible to associate to each current $\tau$ in Theorem \ref{th finitude} an attracting set $A_\tau$ which can be seen as the ``irreducible component'' of $A$ containing $\tau,$ cf. Definition \ref{def-irreducible-compo}. When $A$ is algebraic, this definition coincides with the classical one and $A$ is the union of its irreducible components. It is likely that such a decomposition holds in general.

We now come back to quasi-attractors. If $A$ is a quasi-attractor of dimension $s$ then, by definition, $A$ is equal to the intersection of attracting sets $\{A_i\}_{i\geq0}.$ As the intersection of two attracting sets is again an attracting set, we can assume that the sequence $\{A_i\}_{i\geq0}$ is decreasing and that each of them is of dimension $s.$ A fundamental question is to know if all these sequences have to be stationary i.e. if all quasi-attractors are indeed attracting sets. In the real setting, it is easy to create an example with a quasi-attractor which is not an attracting set. In real dimension larger or equal to three, Bonatti-Li-Yang \cite{bonatti-li-yang} prove that it is even possible to find a residual subset $\mathcal U$ of an open set of $\mathcal C^r$ diffeomorphisms, $r\geq1,$ such that if $f\in\mathcal U$ then none of the minimal quasi-attractors of $f$ is an attracting set. It means that this phenomenon can have some kind of robustness. In the holomorphic setting, we are only able to exclude it in some special cases. For example, if $A=\cap_{i\geq0}A_i$ is such a quasi-attractor in $\Pb^2$ then the Hausdorff dimension of each $A_i$ has to be larger or equal to $3$ (see Remark \ref{rk-qa-as}). However, a consequence of the finiteness in Theorem \ref{th finitude} is that, from the point of view of currents, attracting sets and quasi-attractors are the same.
\begin{corollary}\label{cor-qa-ca}
If $A$ is equal to the intersection of a decreasing sequence of attracting sets $\{A_i\}_{i\geq0}$ of dimension $s$ then there exists an integer $i_0\geq0$ such that the attracting currents of $A_i$ are equal to those of $A_{i_0}$ for all $i\geq i_0.$ Moreover, the minimal elements in the set of dimension $s$ quasi-attractors are in one-to-one correspondence with the set of attracting currents in $\Cc_{k-s}(\Pb^k).$ In particular, a holomorphic endomorphism of $\Pb^k$ admits at most countably many minimal quasi-attractors.
\end{corollary}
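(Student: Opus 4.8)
The plan is to deduce the three assertions from the finiteness of attracting currents (Theorem~\ref{th finitude}), their existence (Theorem~\ref{th equi as}) and the irreducible component construction $A_\tau$ (Definition~\ref{def-irreducible-compo}). For a quasi-attractor $B$ of dimension $s$ write $\cali T(B)$ for the set of attracting currents of bidimension $(s,s)$ supported in $B$; it is non-empty by Theorem~\ref{th equi as} and finite by Theorem~\ref{th finitude}. Since the $A_i$ decrease, $\cali T(A_{i+1})\subset\cali T(A_i)$, so the finite sets $\cali T(A_i)$ eventually equal some $\cali T(A_{i_0})$. Each $\tau\in\cali T(A_{i_0})$ is supported in every $A_i$ with $i\ge i_0$, hence in $A=\bigcap_iA_i$, and conversely; so $\cali T(A)=\cali T(A_{i_0})$, which is the first assertion. (Incidentally $A$ is a quasi-attractor of dimension $s$: $A\cap\Jc_{s+1}=\varnothing$ and $A\cap\Jc_s=\bigcap_i(A_i\cap\Jc_s)\neq\varnothing$ as a decreasing intersection of non-empty compacts, so Proposition~\ref{prop-intro} applies.)

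Next I would establish a separation property: if $\tau,\tau'$ are attracting currents of bidimension $(s,s)$ with trapping regions $D_\tau,D_{\tau'}$ given by Theorem~\ref{th equi as}, and $D_\tau\cap D_{\tau'}$ contains a smooth current $R\in\Cc_{k-s}$, then $\tau=\tau'$, because $R$ is continuous and belongs to both $\Cc_{k-s}(D_\tau)$ and $\Cc_{k-s}(D_{\tau'})$, so the averages $\frac1N\sum_{n=0}^{N-1}d^{-ns}(f^n)_*R$ converge at once to $\tau$ and to $\tau'$. From Definition~\ref{def-irreducible-compo}, $A_\tau$ is an attracting set of dimension $s$ with $\supp\tau\subset A_\tau$ and no proper attracting subset of dimension $s$; consequently $A_\tau$ lies inside every attracting set $B''$ containing $\supp\tau$, since $A_\tau\cap B''$ is then an attracting set of dimension $s$ inside $A_\tau$, hence equal to $A_\tau$. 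The map $\tau\mapsto A_\tau$ is injective: if $A_\tau=A_{\tau'}$, regularising a current of $\Cc_{k-s}(A_\tau)$ inside a small neighbourhood of $A_\tau\subset D_\tau\cap D_{\tau'}$ produces a smooth $R\in\Cc_{k-s}(D_\tau\cap D_{\tau'})$, and the separation property gives $\tau=\tau'$. It takes values in the minimal dimension-$s$ quasi-attractors: if $B\subsetneq A_\tau$ were a dimension-$s$ quasi-attractor, intersecting with $A_\tau$ the dimension-$s$ attracting sets defining $B$ keeps them of dimension $s$ (each still contains $B$) and makes them subsets of $A_\tau$, hence all equal to $A_\tau$, so $B=A_\tau$, a contradiction. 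Finally it is onto: for a minimal dimension-$s$ quasi-attractor $B=\bigcap_iB_i$ the first paragraph gives $\tau\in\cali T(B)$, and since $\supp\tau\subset B\subset B_i$ we get $A_\tau\subset B_i$ for all $i$, so $A_\tau\subset B$ and $A_\tau=B$ by minimality; thus $B$ is the attracting set $A_\tau$ and $\cali T(B)=\{\tau\}$ by injectivity, so $B\mapsto\tau$ inverts $\tau\mapsto A_\tau$ and the correspondence is a bijection.

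For countability, the key point is that trapping regions can be taken from a countable family. Given a trapping region $U$ of an attracting set, the open sets $f^n(U)$ are again trapping regions decreasing to the same attracting set (because $f(U)\Subset U$), and interposing a finite union $V$ of balls with rational data, in the finitely many standard affine charts of $\Pb^k$, with $\overline{f^{n+1}(U)}\subset V\Subset f^n(U)$, gives a trapping region $V\subset U$ for the same attracting set; let $\cali G$ be the resulting countable family. For a minimal dimension-$s$ quasi-attractor $B$, pick $\tau\in\cali T(B)$ with $A_\tau=B$; shrinking its trapping region $D_\tau$ preserves the conclusion of Theorem~\ref{th equi as} (continuous test currents in the smaller region remain continuous test currents in $D_\tau$), so by the previous remark we may take $D_B\in\cali G$ with $B\subset D_B\subset D_\tau$. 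By the separation property, $D_B$ contains no minimal dimension-$s$ quasi-attractor but $B$: another one, $B'=A_{\tau'}\subset D_B\subset D_\tau$, would satisfy $B'\subset D_\tau\cap D_{\tau'}$, which thus carries a smooth current of $\Cc_{k-s}$ and forces $\tau=\tau'$, i.e. $B'=B$. Hence $B\mapsto D_B$ injects the minimal dimension-$s$ quasi-attractors into $\cali G$, so there are at most countably many of them; summing over $s=0,\dots,k$ and noting that every minimal quasi-attractor is, for its own dimension $s$, a minimal dimension-$s$ quasi-attractor, we conclude that there are at most countably many minimal quasi-attractors.

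The regularisation of positive closed currents with prescribed small support and the stability of Theorem~\ref{th equi as} under shrinking the trapping region are the routine inputs. I expect the main obstacle to be the last step: one has to guarantee that the separating neighbourhoods $D_B$ can all be drawn from a single countable family, which is why the argument goes through the openness and properness of $f$ to build $\cali G$.
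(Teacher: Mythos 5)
Your first paragraph (stabilisation of the finite sets of attracting currents, via Theorems \ref{th equi as} and \ref{th finitude}) is exactly the paper's argument, and your countability argument, though phrased through a countable family of rational trapping regions rather than through the countability of attracting sets themselves, is sound in spirit. The gap is in the middle step, the one-to-one correspondence. You assert that ``from Definition \ref{def-irreducible-compo}, $A_\tau$ is an attracting set of dimension $s$ with $\supp\tau\subset A_\tau$ and no proper attracting subset of dimension $s$,'' and you then deduce that $A_\tau$ is contained in every attracting set containing $\supp\tau$. Nothing in that definition (which only says $A_\tau$ is the attracting set of the trapping region $\Nc_\tau(r_U)$) gives this: one can show that any dimension-$s$ attracting subset of $A_\tau$ must contain $\supp(\tau)$, but there is no reason it cannot be a proper subset of $A_\tau$ (for instance an attracting set associated to $\Nc_\tau(r')$ for a smaller $r'$ could a priori be strictly smaller). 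Your claim is in fact equivalent to saying that the intersection of all attracting sets containing $\supp\tau$ is itself the attracting set $A_\tau$, and your surjectivity step concludes that every minimal quasi-attractor ``is the attracting set $A_\tau$'' --- but whether minimal quasi-attractors are attracting sets is precisely the open Question \ref{q-qa-as} of the paper (with robustly negative answers in the smooth real setting), so this cannot be a routine consequence of the definitions.

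The paper avoids this by never identifying the minimal quasi-attractor with an attracting set: to each attracting current $\tau$ it associates $K_\tau$, the intersection of \emph{all} dimension-$s$ quasi-attractors containing $\supp(\tau)$, which is again a quasi-attractor of dimension $s$ (possibly strictly smaller than $A_\tau$). The role of the irreducible components is only to prove the ``minimal $\Rightarrow$ unique attracting current'' direction: if a quasi-attractor $A$ carries two distinct attracting currents $\tau_1\neq\tau_2$, then $K_j:=A\cap A_{i_0,\tau_j}$ are dimension-$s$ quasi-attractors with $\Cc_p(K_1)\cap\Cc_p(K_2)=\varnothing$, so $A$ is not minimal; combined with the existence part this yields the bijection $\tau\mapsto K_\tau$. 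Your injectivity and separation arguments (via common smooth currents in overlapping trapping regions, as in Lemma \ref{le preli a finitude des tau}) are fine and can be reused, but the map must be built with $K_\tau$ rather than $A_\tau$ for the correspondence to be established without settling the open question.
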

We define the attracting currents of the quasi-attractor $A$ to be those of the attracting set $A_{i_0}.$ Regarding the cardinality of the set of quasi-attractors, by a result of Gavosto \cite{gavosto} (see also \cite{buzzard}) if $k\geq2$ then a holomorphic endomorphism of $\Pb^k$ can have infinitely many sinks, thus in particular infinitely many attractors. So the bound in the corollary is sharp and we do not know a direct proof of it when $k\geq3.$ By cohomological arguments, there exists at most one minimal quasi-attractor of dimension larger or equal to $k/2.$ Hence, the case $k=2$ follows easily from the definition of the dimension (cf. \cite{fw-attractor} for a different proof). Again, this result is not true in the real setting. Bonatti and Moreira remark that the unfolding of a homoclinic tangency can create, in a residual subset of an open set of diffeomorphisms, uncountably many minimal quasi-attractors.

From an invariant current, it is classical to build an invariant measure (see e.g. \cite{fs-example}). Let $\tau$ be an attracting current of bidimension $(s,s).$ We define the \textit{equilibrium measure} $\nu_\tau$ associated to $\tau$ as the intersection of $\tau$ with the Green current of order $s$:
$$\nu_\tau:=\tau\wedge T^s.$$
Using previous results from \cite{bs-3}, \cite{dethelin-lyap}, \cite{d-attractor}, it is easy to deduce from Theorem \ref{th equi as} and Theorem \ref{th finitude} the following properties for $\nu_\tau.$
\begin{corollary}\label{cor-qa-nu}
Let $A$ be a quasi-attractor of dimension $s$ for $f.$ If $\tau$ is an attracting current in $\Cc_{k-s}(A)$ then its equilibrium measure $\nu_\tau$ is an ergodic measure of maximal entropy $s\log d$ on $A$ and it has at least $s$ positive Lyapunov exponents. Moreover, if $n_0$ is the integer defined in Theorem \ref{th finitude} then $\nu_\tau$ has at most $n_0$ ergodic components with respect to $f^{n_0},$ each of which is mixing.
\end{corollary}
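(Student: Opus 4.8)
The plan is to treat the four assertions separately, invoking in each case a known theorem from the literature on the intersection of an invariant current with the Green current, and using Theorems \ref{th equi as} and \ref{th finitude} only to control the recurrence structure of $\tau$ under $f$. First I would fix, by Theorem \ref{th finitude}, the integer $n_0\geq1$ and the trapping region $D_\tau$ such that $d^{-nn_0 s}(f^{nn_0})_*R\to\tau$ for every continuous $R\in\Cc_{k-s}(D_\tau)$; in particular $(f^{n_0})_*\tau=d^{s}\tau$, so $\tau$ is a fixed point of the normalized push-forward $g:=f^{n_0}$, and $\nu_\tau=\tau\wedge T^s$ is then $g$-invariant (recall $(f^{n_0})^*T^s=d^{n_0 s}T^s$, so the wedge product is preserved after normalization). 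The well-definedness of $\nu_\tau=\tau\wedge T^s$ as a probability measure, together with the fact that it does not charge pluripolar sets and gives no mass to $A\cap\Jc_{s+1}=\varnothing$ (which is empty by Proposition \ref{prop-intro}, since $A$ has dimension $s$), is exactly the setting of Dinh's construction in \cite{d-attractor}; I would cite that for the basic properties of $\nu_\tau$ and the fact that $\supp\nu_\tau\subset A\cap\Jc_s$.

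Next, for the entropy and Lyapunov statements I would argue as follows. Since $T^s$ is the Green current of order $s$ and $\tau$ is a positive closed current of bidimension $(s,s)$ with support of ``dimension $s$'' in the sense of Daurat, the slice measure $\nu_\tau$ is a measure of entropy at least $s\log d$ by the now-standard lower bound for entropy of such geometric intersection measures (de Thélin \cite{dethelin-lyap}, building on \cite{bs-3}); conversely $h_{\nu_\tau}(f)\leq s\log d$ because $A$ avoids $\Jc_{s+1}$, so the topological entropy of $f|_A$ is at most $s\log d$ (this is where Proposition \ref{prop-intro} does the real work, via the variational principle or directly via Dinh's estimate in \cite{d-attractor}), giving $h_{\nu_\tau}(f)=s\log d$. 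The existence of at least $s$ positive Lyapunov exponents then follows from de Thélin's inequality \cite{dethelin-lyap} relating metric entropy, Lyapunov exponents and the dimension of the current, applied to $\nu_\tau$.

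For ergodicity and mixing, I would first treat $g=f^{n_0}$: the equidistribution in Theorem \ref{th finitude} is the key input, since convergence of $d^{-nn_0 s}(f^{nn_0})_*R\to\tau$ for a dense class of test currents gives, after intersecting with $T^s$ and using the mixing of $T^s$ (i.e. $d^{-ns}(f^n)^*$ of a smooth form converges to $T^s\langle\cdot,\omega^{k-s}\rangle$), the mixing of $\nu_\tau$ under $g$: for smooth $\varphi,\psi$ one computes $\langle \nu_\tau, \varphi\cdot(\psi\circ g^n)\rangle = \langle \tau\wedge T^s, \varphi\cdot(\psi\circ g^n)\rangle$ and pushes the $g^n$ onto $\tau$, where the normalized push-forwards of $\varphi\,\tau$ converge to $(\int\varphi\,d\nu_\tau)\,\tau$-type limits; this is the standard argument from \cite{fs-example} adapted to the attracting setting and I would spell it out only schematically. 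In particular $\nu_\tau$ is $g$-ergodic, hence $g$-mixing. Returning to $f$ itself, $\nu_\tau$ is $f$-invariant and $g$-ergodic with $g=f^{n_0}$, so by the standard ergodic-decomposition lemma for iterates, $\nu_\tau$ has at most $n_0$ ergodic components under $f^{n_0}$, each of the form $(f^j)_*\nu_0$ for a single $g$-ergodic $\nu_0$, and each is mixing by the computation just made; ergodicity of $\nu_\tau$ under $f$ itself follows since an $f$-invariant measure all of whose $f^{n_0}$-ergodic components are cyclically permuted by $f$ is $f$-ergodic.

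The main obstacle, as I see it, is not any single cited theorem but the bookkeeping needed to pass cleanly between $f$ and $f^{n_0}$ while keeping track of which trapping region and which normalization is in force: one must check that $\nu_\tau$ is genuinely $f$-invariant (not merely $f^{n_0}$-invariant) — which requires $\tau$ to be, up to the Cesàro averaging of Theorem \ref{th equi as}, the $f$-invariant current, not just a $g$-fixed one — and that the entropy bound $s\log d$ is the same for $f$ and $f^{n_0}$ up to the factor $n_0$. I would handle this by noting that the attracting current produced in Theorem \ref{th equi as} for $f$ is automatically $f$-invariant, that the $f^{n_0}$-attracting currents of Theorem \ref{th finitude} decompose it, and that de Thélin's and Dinh's estimates are insensitive to replacing $f$ by an iterate (both sides scale by $n_0$).
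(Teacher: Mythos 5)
There is a genuine gap, and it sits exactly in the part of the statement that is specific to this corollary. You apply Theorem \ref{th finitude} to the current $\tau$ of the corollary and conclude that $d^{-nn_0s}(f^{nn_0})_*R\to\tau$ on $D_\tau$, hence that $\nu_\tau$ is mixing, hence ergodic, for $g=f^{n_0}$. But Theorem \ref{th finitude} asserts this convergence only for currents that are attracting \emph{for $f^{n_0}$}, whereas the $\tau$ of the corollary is an attracting current \emph{for $f$} (it is one of the attracting currents of the attracting set $A_{i_0}$, attractive in the Ces\`aro sense of Theorem \ref{th equi as}); such a $\tau$ is in general an average of several $f^{n_0}$-attracting currents and is \emph{not} attracting for $f^{n_0}$. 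The introductory example of an attracting $2$-cycle already refutes your claim: there $\tau=2^{-1}(\delta_{p_1}+\delta_{p_2})$, $\nu_\tau=\tau$ is not ergodic for $f^2$, and $(f^{2n})_*R$ converges to $\delta_{p_1}$ or $\delta_{p_2}$, not to $\tau$. This is precisely why the corollary claims only ``at most $n_0$ ergodic components with respect to $f^{n_0}$, each mixing'' rather than $f^{n_0}$-mixing of $\nu_\tau$. Your subsequent derivation is then internally inconsistent: from ``$\nu_\tau$ is $g$-ergodic'' you cannot get a nontrivial decomposition into $n_0$ components (a $g$-ergodic measure has one $g$-ergodic component), and the ``standard lemma for iterates'' you invoke needs $f$-ergodicity of $\nu_\tau$ as an input, which you only try to extract afterwards from the decomposition — the argument is circular. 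The correct route (the paper's) is: ergodicity of $\nu_\tau$ for $f$ is proved directly by running the computation of Theorem \ref{th-nu-mixing} with Ces\`aro means, using the Ces\`aro equidistribution of Theorem \ref{th equi as} for $f$ itself; for the component statement one uses Proposition \ref{prop tau toujours extremal} to write $\tau=\frac{1}{n_0}\sum_{j=0}^{n_0-1}\Lambda^{j}\sigma$ where $\sigma$ is an attracting current for $f^{n_0}$, attractive for every iterate of $f^{n_0}$, so that each measure $(\Lambda^{j}\sigma)\wedge T^s$ is mixing for $f^{n_0}$ by Theorem \ref{th-nu-mixing}, and $\nu_\tau$ is the average of these at most $n_0$ measures.

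A secondary imprecision: the entropy lower bound is not a black-box consequence of $\nu_\tau$ being ``the intersection of an invariant current with $T^s$.'' The paper's proof of Theorem \ref{th-nu-entropy} needs the attractivity of $\tau$, via Lemma \ref{le conv tronca}, to show that for a generic $s$-plane $H$ one has $\Lambda^{n_i}(\chi[H])\to c\,\tau$, and only then can the Bedford--Smillie/Yomdin argument of \cite{bs-3} (and \cite{d-attractor}, \cite{dethelin-lyap} for the exponents) be run; citing ``the standard lower bound for geometric intersection measures'' without this equidistribution input does not constitute a proof, although the ingredient is available to you from Theorem \ref{th equi as}. The upper bound $h_{\nu_\tau}\le s\log d$ via $A\cap\Jc_{s+1}=\varnothing$ and Theorem \ref{th entropy julia}, and the deduction of at least $s$ positive exponents from $h_{\nu_\tau}=s\log d$ via de Th\'elin's inequality, do follow the paper. (Also a small normalization slip: $\Lambda^{n_0}\tau=\tau$ means $(f^{n_0})_*\tau=d^{n_0 s}\tau$, not $d^{s}\tau$.)
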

In particular, the topological entropy of a quasi-attractor is always a multiple of $\log d$ and this multiple is exactly the dimension of the quasi-attractor. Observe that this dimension is a geometric invariant which is independent of $f.$ It is easy to see that quasi-attractors of dimension $0$ are finite unions of sinks. Therefore, a quasi-attractor has zero topological entropy if and only if it is a union of sinks. Another consequence is that if $f$ is not chain recurrent on $\Pb^k$ and possesses no attractive periodic orbit then such an equilibrium measure must be supported in $\Jc\setminus\Jc_k.$

Unfortunately, we were not able to prove that $\nu_\tau$ is always hyperbolic. It is the case in all known examples cf. \cite{t-attrac-speed}, \cite{da-ta}. If we knew that the convergence in Theorem \ref{th finitude} has exponential speed then the hyperbolicity of $\nu_\tau$ would follow from an easy adaptation of arguments in \cite{dethelin-lyap}. The property of $\nu_\tau$ to be hyperbolic would have several consequences. For example, in \cite{daurat-2} Daurat proved in this situation and when $k=2$ that $\nu_\tau$ represents an equidistribution of saddle periodic points in $D_\tau$ and that the Green current $T$ is laminar in $D_\tau.$ Moreover, if $\tau$ were the unique invariant current in $\Cc_{k-s}(D_\tau)$ then techniques developed by Bedford-Lyubich-Smillie \cite{bls-4} would imply that $\nu_\tau$ is the unique measure of entropy $s\log d$ on $D_\tau$ (see \cite{daurat-2}). These two questions, the speed of convergence in Theorem \ref{th finitude} and the uniqueness of $\tau,$ are related and we postpone them to a later work. See Section \ref{subsec-speed} for results in that direction.

In order to better illustrate the above results we consider now the case of minimal quasi-attractors, i.e. quasi-attractors which are chain recurrent. We emphasize that these dynamically meaningful objects have the advantage to always exist and that attractors are special cases of them.
\begin{corollary}\label{cor-qa-final}
Let $K$ be a minimal quasi-attractor of dimension $s.$ There exists an integer $n_0\geq1$ such that if we exchange $f$ by $f^{n_0}$ then $K$ splits into $n_0$ minimal quasi-attractors $K=K_1\cup\cdots\cup K_{n_0}$ such that each $K_i$ is contained in a trapping region $U_{K_i}$ which supports a unique attracting current $\tau_i\in\Cc_{k-s}(K_i)$ with
$$\lim_{n\to\infty}\frac{1}{d^{ns}}(f^n)_*R=\tau_i$$
for all continuous currents $R$ in $\Cc_{k-s}(U_{K_i}).$ Moreover, the equilibrium measure $\nu_{\tau_i}$ is mixing, of maximal entropy $s\log d$ on $K_i$ and has at least $s$ positive Lyapunov exponents.
\end{corollary}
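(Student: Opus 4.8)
The plan is to isolate an attracting set around $K$, pass to a well‑chosen iterate $f^{n_0}$, and then read the conclusions off Corollary~\ref{cor-qa-ca}, Theorem~\ref{th finitude} and Corollary~\ref{cor-qa-nu}. Since $K$ is a minimal quasi-attractor of dimension $s$, Corollary~\ref{cor-qa-ca} attaches to it a unique attracting current $\tau\in\Cc_{k-s}(K)$. Writing $K=\bigcap_{i\ge0}A_i$ as a decreasing intersection of attracting sets of dimension $s$ and choosing $i_0$ as in that corollary, the set $A:=A_{i_0}$ is an attracting set of dimension $s$ containing $K$ whose only attracting current for $f$ is $\tau$; in particular $K$ is the unique minimal quasi-attractor contained in $A$.

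\emph{Passing to an iterate.} Apply Theorem~\ref{th finitude} to $A$ to get $n_1\ge1$ such that every attracting current of $A$ for $f^{n_1}$ satisfies the non-Ces\`aro equidistribution of that theorem on its trapping region. The normalised pushforward $d^{-s}f_*$ conjugates the action of $f^{n_1}$ on $(k-s,k-s)$-currents to itself and hence permutes the finitely many attracting currents of $A$ for $f^{n_1}$; these currents are supported in $K$ and, $\tau$ being indecomposable, form a single $d^{-s}f_*$-orbit $\tau_1,\dots,\tau_m$ with $\tau=\tfrac1m\sum_i\tau_i$. As $n_1$ is also a period of $\tau_1$ under $d^{-s}f_*$ we get $m\mid n_1$, and since $(d^{-s}f_*)^m\tau_i=\tau_i$ we also get $(d^{-s}f_*)^{rm}\tau_i=\tau_i$ for all $r\ge0$; writing $n_1=ml$ and splitting iterates of $f^m$ through multiples of $f^{n_1}$, the non-Ces\`aro convergence for $f^{n_1}$ upgrades to non-Ces\`aro convergence for $f^m$ and all its powers, on the same trapping regions $D_{\tau_i}$. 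Put $n_0:=m$. Then $\tau_1,\dots,\tau_{n_0}$ are exactly the attracting currents of $A$ for $f^{n_0}$, each is the unique attracting current in its trapping region $U_{K_i}:=D_{\tau_i}$ produced by Theorem~\ref{th equi as}, and Corollary~\ref{cor-qa-ca} applied to $f^{n_0}$ attaches to $\tau_i$ a minimal quasi-attractor $K_i\subset K$ with $\tau_i\in\Cc_{k-s}(K_i)$ enjoying the equidistribution stated in the corollary on $U_{K_i}$.

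\emph{The splitting $K=K_1\cup\cdots\cup K_{n_0}$.} For any $i$, writing $K_i=\bigcap_l B_l$ with $B_l$ attracting sets of $f^{n_0}$, each $\bigcup_{j=0}^{n_0-1}f^j(B_l)$ is an attracting set of $f$ (apply $V\mapsto\bigcup_{j=0}^{n_0-1}f^j(V)$ to trapping regions), hence $\bigcup_j f^j(K_i)$ is an $f$-invariant quasi-attractor of $f$ contained in $K$; minimality of $K$ forces $\bigcup_j f^j(K_i)=K$. Since $d^{-s}f_*$ cyclically permutes the pairwise distinct currents $\tau_i$, the map $f$ cyclically permutes the pairwise distinct sets $K_i$, so that $K=K_1\cup\cdots\cup K_{n_0}$ with $f(K_i)=K_{i+1\bmod n_0}$, each $K_i$ a minimal quasi-attractor of $f^{n_0}$.

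\emph{Ergodic properties and the main difficulty.} The Green current $T^s$ is unchanged on replacing $f$ by $f^{n_0}$, so $\nu_{\tau_i}=\tau_i\wedge T^s$ is the equilibrium measure of $\tau_i$ for $f^{n_0}$, and Corollary~\ref{cor-qa-nu} applied to $f^{n_0}$, $K_i$ and $\tau_i$ gives that $\nu_{\tau_i}$ is ergodic, realises the maximal entropy on $K_i$ (which, renormalising the entropy of $f^{n_0}$ by $n_0$, is $s\log d$) and carries at least $s$ positive Lyapunov exponents. Since the non-Ces\`aro convergence holds for $f^{n_0}$ and all its powers, the integer furnished by Theorem~\ref{th finitude} for $f^{n_0}$ may be taken equal to $1$, and then the mixing clause of Corollary~\ref{cor-qa-nu} says exactly that each $\nu_{\tau_i}$ is mixing for $f^{n_0}$. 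The one genuinely delicate point is the second step: choosing an integer $n_0$ that is simultaneously large enough for the non-Ces\`aro convergence of Theorem~\ref{th finitude} to survive the cyclic decomposition of $\tau$ and equal to the exact number of pieces, and checking that the trapping regions $U_{K_i}$ can be taken to support a single attracting current. Everything after that follows directly from the earlier results.
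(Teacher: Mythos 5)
Your argument hinges on the second step, and that is where the genuine gaps lie. First, the assertion that $d^{-s}f_*$ permutes the attracting currents of $A$ for $f^{n_1}$ is not a formal consequence of commutation: being an attracting current is not just $\Lambda^{n_1}$-invariance, it requires attractivity on some trapping region, and showing that $\Lambda\sigma$ is again attractive is exactly the delicate point of Proposition \ref{prop tau toujours extremal} (pulling back continuous test forms and using Lemma \ref{le conv tronca}); you assert it without proof. Second, the claims that these currents are supported in $K$ and form a single $\Lambda$-orbit with $\tau=\frac{1}{m}\sum_i\tau_i$ do not follow from ``$\tau$ being indecomposable'': the $\tau_i$ are only $\Lambda^{n_1}$-invariant, so extremality of $\tau$ in $\Ic_p(U)$ (the $f$-invariant currents) gives no such decomposition statement. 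The honest route to the single-orbit picture is the set-theoretic analysis that the paper carries out first: distinct minimal quasi-attractors are disjoint, $f$ permutes the minimal quasi-attractors of $f^{n}$ inside $K$, and minimality of $K$ for $f$ forces a single cycle of pairwise disjoint pieces; without this, the existence and the value of your $m$ are not established, nor is the claim that $\tau_1,\dots,\tau_{n_0}$ are \emph{exactly} the attracting currents for $f^{n_0}$ (note an attracting current for $f^{m}$ need not be one for $f^{ml}$, so ``exactly'' is not automatic).

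The second gap concerns trapping regions versus iterates. Your $D_{\tau_i}$ is a trapping region for $f^{n_1}$, not for $f^{n_0}=f^{m}$ when $m<n_1$, so setting $U_{K_i}:=D_{\tau_i}$ does not produce the object the statement requires (after exchanging $f$ by $f^{n_0}$, one needs $f^{n_0}(U_{K_i})\Subset U_{K_i}$). Your upgrade of the non-Ces\`aro convergence from $f^{n_1}$ to $f^{m}$ via weak continuity of $f_*$ is correct as far as it goes, but it only gives convergence for continuous forms in $\Cc_p(D_{\tau_i})$; it does not show that $\tau_i$ is an attracting current, in the sense of Definition \ref{def-attracting-current}, for $f^{n_0}$ and for every power of $f^{n_0}$, which is precisely the standing hypothesis needed to invoke Theorem \ref{th convergence dans V}, Theorem \ref{th-nu-mixing} and Corollary \ref{cor-qa-nu} (your ``the integer of Theorem \ref{th finitude} may be taken equal to $1$'' presupposes this). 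The paper resolves both issues by a different organization: the splitting analysis above plus the uniform bound of Theorem \ref{th finitude des tau} allow one to choose $n_0$ maximizing the number of pieces, which forces each $K_i$ to be minimal for \emph{all} iterates of $f^{n_0}$, hence each $\tau_i$ attractive for all iterates; Theorem \ref{th convergence dans V} then yields the non-Ces\`aro convergence on a genuine $f^{n_0}$-trapping region $U_{K_i}$, and $K_i\subset U_{K_i}$ by minimality. Your scheme could probably be repaired, but essentially by importing these arguments.
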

In particular, this result implies that each set $K_i$ as above is still a minimal quasi-attractor for all iterates of $f.$ This prevents phenomena such as adding machines to appear.
Adding machines are simples examples of minimal quasi-attractors without periodic orbit and which split into arbitrarily large number of pieces when we exchange $f$ by an iterates.

Finally, we study attracting sets for holomorphic families of endomorphisms. We show that the number of attracting currents of bidimension $(s,s)$ is constant in a family and that we can glue them together in order to form structural varieties. In particular, we obtain that the sum of the Lyapunov exponents of the equilibrium measure $\nu_\tau$ is a plurisubharmonic function of the parameter, see Corollary \ref{cor-sum-lyap}. This suggests that we can undertake a bifurcation theory of attracting sets in $\Pb^k$ using positive closed currents defined on the parameter space by these plurisubharmonic (p.s.h) functions.

The paper is organized as follows. In Section \ref{sec-pluri}, we introduce the notions of weakly $p$-pseudoconvex sets and of structural varieties which provide us with the tools to study the set $\Cc_{k-s}(U).$ Then, Section \ref{sec courant} is devoted to the dynamics in this space of currents. We establish there the existence and the finiteness of attracting currents as well as several equidistribution results toward them. Theorem \ref{th equi as} is a consequence of Theorem \ref{th conv1} and Theorem \ref{th finitude} comes from a combination of Theorem \ref{th finitude des tau} and Theorem \ref{th convergence dans V}. This allows us in Section \ref{sec-measure} to apply classical methods in order to study the equilibrium measures. In Section \ref{sec-ac-bif}, we define the irreducible components of an attracting set and we investigate their behavior in families. In the last section, we consider more specifically the case of quasi-attractors and we prove the three corollaries state above. We also suggest a list of open questions.

The author would like to thank Fabrizio Bianchi, Christian Bonatti and Tien-Cuong Dinh for helpful discussions or constructive comments.

\section{Basic facts about pluripotential theory and geometry of $\Pb^k$}\label{sec-pluri}
All the results in this section are well-known and we refer to \cite{de-book} for an introduction to pluripotential theory. First, we recall the notion of weakly $p$-pseudoconvex sets. As we will see in the next section, a trapping region in $\Pb^k$ is always weakly $p$-pseudoconvex for some $p$ which will give us a geometric setting for the sequel. In the second part of this section we give simple results about structural varieties in $\Pb^k.$ From now on, and for the rest of the paper, $k,$ $p$ and $s$ are three non-negative integers such that $s=k-p.$

\subsection{Weakly $p$-pseudoconvex domains}\label{subsec p-pseudo}
Dinh and Sibony introduced the following definition (cf. \cite{ds-superpot}). Following their conventions, in the sequel positivity for forms and currents corresponds to strong positivity in \cite{de-book}.
\begin{definition}
A compact subset $K$ of a complex manifold $X$ of dimension $k$ is weakly $p$-pseudoconvex if there exists a positive smooth $(s,s)$-form $\phi$ such that $\ddc\phi$ is strictly positive on $K.$
\end{definition}
It is easy to see that a compact subset $K$ of $\Pb^k$ such that $\Cc_{s+1}(\Pb^k\setminus K)$ is not empty is weakly $p$-pseudoconvex in $\Pb^k$ (see \cite{ds-superpot}). Indeed, Forn\ae ss and Sibony show that such a set is $(p-1)$-pseudoconvex (cf. \cite{fs-oka} for definitions and mind the difference in indices).

An important point about weakly $p$-pseudoconvex compact sets in $\Pb^k$ is that a current of bidegree $(p,p)$ supported on such a set is totally determined by its values on smooth forms $\phi$ with $\ddc\phi\geq0.$ To be more precise, let $U$ be an open set of $\Pb^k$ such that there exists a positive smooth $(s,s)$-form $\phi_0$ with $\ddc\phi_0\geq\omega^{s+1}$ on $\overline U.$ We denote by $\Pc(U)$ the set of $\mathcal C^2$ $(s,s)$-forms $\phi$ on $U$ such that $0\leq\phi\leq\omega^s$ and $\ddc\phi\geq0.$

\begin{lemma}\label{le plongement de C2}
There exists a constant $c>0$ such that if $\phi$ is a $(s,s)$-form of class $\mathcal C^2$ on $\Pb^k$ then the restriction $\widetilde \phi$ to $U$ of
$$\frac{1}{c+2}\left(\frac{\phi}{c\|\phi\|_{\mathcal C^2(\Pb^k)}}+\phi_0+\omega^s\right)$$
is in $\cali P(U).$
\end{lemma}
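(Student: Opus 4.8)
The plan is to verify directly that the form
$$\widetilde\phi := \frac{1}{c+2}\left(\frac{\phi}{c\|\phi\|_{\mathcal C^2(\Pb^k)}} + \phi_0 + \omega^s\right)\Big|_U$$
satisfies the three defining conditions of $\cali P(U)$: it is of class $\mathcal C^2$ (clear, since $\phi$, $\phi_0$, $\omega^s$ all are), it satisfies $0 \le \widetilde\phi \le \omega^s$, and $\ddc\widetilde\phi \ge 0$ on $U$. The constant $c$ will be chosen at the end to make both inequalities work simultaneously. The underlying principle is simply that a large multiple of the "reservoir" form $\omega^s$ dominates any fixed $\mathcal C^2$ form in the positivity order, and a large multiple of $\phi_0$ (whose $\ddc$ is $\ge \omega^{s+1}$) dominates any fixed $\mathcal C^2$ form in the $\ddc$-positivity order, after compactness of $\Pb^k$ (hence of $\overline U$) turns pointwise strict inequalities into uniform ones.

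First I would fix the constant. By compactness of $\Pb^k$ and continuity, there is a constant $c_1 > 0$ such that for every smooth $(s,s)$-form $\psi$ one has $-c_1\|\psi\|_{\mathcal C^0}\,\omega^s \le \psi \le c_1\|\psi\|_{\mathcal C^0}\,\omega^s$ pointwise (this is the statement that $\omega^s$ is a strictly positive $(s,s)$-form, so it dominates a neighborhood of $0$ in the finite-dimensional fibre of $(s,s)$-covectors, uniformly over the compact base). Likewise, since $\ddc\phi_0 \ge \omega^{s+1}$ on $\overline U$ and $\|\ddc\psi\|_{\mathcal C^0} \le \|\psi\|_{\mathcal C^2}$ up to a fixed constant, there is $c_2 > 0$ with $\ddc\psi \ge -\|\psi\|_{\mathcal C^2}\,\omega^{s+1} \ge -\|\psi\|_{\mathcal C^2}\,\ddc\phi_0$ on $\overline U$ for every smooth $(s+1,s+1)$-form of the form $\ddc\psi$. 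Set $c := \max(c_1, c_2)$ (or any larger integer).

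Next I would check $\ddc\widetilde\phi \ge 0$. On $U$,
$$\ddc\widetilde\phi = \frac{1}{c+2}\left(\frac{\ddc\phi}{c\|\phi\|_{\mathcal C^2(\Pb^k)}} + \ddc\phi_0\right),$$
since $\ddc\omega^s = 0$ (as $\omega$ is closed). By the choice of $c \ge c_2$ we have $\frac{\ddc\phi}{c\|\phi\|_{\mathcal C^2}} \ge -\frac{1}{c}\ddc\phi_0 \ge -\ddc\phi_0$ on $\overline U$, so the bracket is $\ge 0$, hence $\ddc\widetilde\phi \ge 0$. For the two-sided bound $0 \le \widetilde\phi \le \omega^s$: by the choice of $c \ge c_1$, the term $\frac{\phi}{c\|\phi\|_{\mathcal C^2}}$ satisfies $-\omega^s \le \frac{\phi}{c\|\phi\|_{\mathcal C^2}} \le \omega^s$ (using $\|\phi\|_{\mathcal C^0} \le \|\phi\|_{\mathcal C^2}$), and $0 \le \phi_0 \le C\omega^s$ for some fixed constant (by compactness again); absorbing that constant into the choice of $c$ if necessary, or more cleanly rescaling $\phi_0$ at the outset so that $0 \le \phi_0 \le \omega^s$ on $\overline U$ while still $\ddc\phi_0 \ge \omega^{s+1}$ there — which costs nothing — we get that the bracket lies between $0$ and $3\omega^s$ pointwise. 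Wait: the lower bound needs $\frac{\phi}{c\|\phi\|_{\mathcal C^2}} + \phi_0 + \omega^s \ge -\omega^s + 0 + \omega^s = 0$, and the upper bound gives $\le \omega^s + \omega^s + \omega^s = 3\omega^s$; but I need $\le (c+2)\omega^s$, which holds since $c \ge 1$. Dividing by $c+2$ then yields $0 \le \widetilde\phi \le \frac{3}{c+2}\omega^s \le \omega^s$. In all these steps one must note that positivity of $(s,s)$-forms is preserved under restriction to the submanifold $U$, so the inequalities valid on $\overline U \subset \Pb^k$ descend to $U$.

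The only genuinely delicate point — and the one I would state carefully rather than wave at — is the \emph{uniformity} of the pointwise domination $\psi \le c\|\psi\|_{\mathcal C^0}\omega^s$ and $\ddc\psi \ge -c\|\psi\|_{\mathcal C^2}\ddc\phi_0$: these rely on $\omega^s$ being strictly positive as an $(s,s)$-form (not merely nonnegative) and on $\ddc\phi_0 \ge \omega^{s+1} > 0$ on the compact set $\overline U$, together with the compactness of $\Pb^k$ to make the implied constants independent of the point. Strong positivity of $\omega^s$ is classical ($\omega$ being a Kähler form), and this is exactly why the lemma is phrased in terms of the $\mathcal C^2$-norm on all of $\Pb^k$ rather than on $U$ alone. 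Everything else is bookkeeping with the single constant $c$. I do not expect any essential obstacle; the content of the lemma is that $\cali P(U)$ is a "large enough" convex set to probe currents supported on weakly $p$-pseudoconvex sets, and the proof is precisely the elementary scaling argument above.
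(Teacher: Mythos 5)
Your argument is correct and essentially the same as the paper's: choose, by compactness and strict positivity of $\omega^s$ (resp.\ of $\ddc\phi_0\geq\omega^{s+1}$ on $\overline U$), a constant $c$ with $-c\|\phi\|_{\mathcal C^2}\omega^s\leq\phi\leq c\|\phi\|_{\mathcal C^2}\omega^s$ and $-c\|\phi\|_{\mathcal C^2}\omega^{s+1}\leq\ddc\phi\leq c\|\phi\|_{\mathcal C^2}\omega^{s+1}$, then check $\ddc\widetilde\phi\geq0$ and $0\leq\widetilde\phi\leq\omega^s$ exactly as you do. The only caveat is your aside about rescaling $\phi_0$ so that $\phi_0\leq\omega^s$ ``while still $\ddc\phi_0\geq\omega^{s+1}$'': scaling $\phi_0$ down also scales $\ddc\phi_0$, so this is not literally possible (and $\phi_0$ is fixed in the statement anyway); but this is harmless, since your primary route of absorbing the bound $\phi_0\leq c\,\omega^s$ into the constant $c$ is precisely what the paper does.
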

\begin{proof}
There exists a constant $c>1$ such that $-c\|\phi\|_{\mathcal C^2(\Pb^k)}\omega^s\leq\phi\leq c\|\phi\|_{\mathcal C^2(\Pb^k)}\omega^s$ and $-c\|\phi\|_{\mathcal C^2(\Pb^k)}\omega^{s+1}\leq\ddc\phi\leq c\|\phi\|_{\mathcal C^2(\Pb^k)}\omega^{s+1}$ on $\Pb^k,$ for all $(s,s)$-forms of class $\mathcal C^2.$ Therefore
$$\ddc\left(\frac{\phi}{c\|\phi\|_{\mathcal C^2(\Pb^k)}}+\phi_0\right)\geq0,$$
on $U.$ Moreover, if $c>1$ is large enough, $\phi_0\leq c\omega^s$ and thus $\widetilde\phi$ defined above satisfies both conditions in the definition of $\Pc(U).$
\end{proof}
As a direct consequence, we have the following lemma.
\begin{lemma}\label{le sepa}
If $R$ and $S$ are two $(p,p)$-currents supported on $U$ such that
$$\langle R,\psi\rangle=\langle S,\psi\rangle$$
for all $\psi\in\Pc(U)$ then $R=S.$
\end{lemma}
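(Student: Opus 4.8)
The plan is to deduce Lemma \ref{le sepa} from Lemma \ref{le plongement de C2} by a density argument. Since $R$ and $S$ are currents on $\Pb^k$, they are by definition continuous linear functionals on the space of smooth forms, and two currents of bidegree $(p,p)$ supported on $U$ agree if and only if they agree on all smooth $(s,s)$-forms $\phi$ (with coefficients tested against a partition of unity, one can even reduce to forms supported near $U$). So the real content is: the equality $\langle R,\psi\rangle=\langle S,\psi\rangle$ for all $\psi\in\Pc(U)$ forces $\langle R,\phi\rangle=\langle S,\phi\rangle$ for every smooth $(s,s)$-form $\phi$ on $\Pb^k$.

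First I would fix an arbitrary $\mathcal C^2$ (or smooth) $(s,s)$-form $\phi$ on $\Pb^k$ and apply Lemma \ref{le plongement de C2}: its output $\widetilde\phi$, the restriction to $U$ of
$$\frac{1}{c+2}\left(\frac{\phi}{c\|\phi\|_{\mathcal C^2(\Pb^k)}}+\phi_0+\omega^s\right),$$
lies in $\Pc(U)$. Hence $\langle R,\widetilde\phi\rangle=\langle S,\widetilde\phi\rangle$ by hypothesis. Now $\widetilde\phi$ is an affine combination of $\phi$ and of the two fixed forms $\phi_0$ and $\omega^s$; applying the hypothesis to $\widetilde{\phi_0}$ and to $\widetilde{\omega^s}$ (or directly observing that suitable positive multiples of $\phi_0+\omega^s$ and $\omega^s$ — or simply $\phi_0$ and $\omega^s$ themselves, after rescaling into $[0,\omega^s]$ — belong to $\Pc(U)$) shows that $\langle R,\phi_0\rangle=\langle S,\phi_0\rangle$ and $\langle R,\omega^s\rangle=\langle S,\omega^s\rangle$. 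Subtracting these from the relation $\langle R,\widetilde\phi\rangle=\langle S,\widetilde\phi\rangle$ and clearing the nonzero scalar factor $\frac{1}{(c+2)c\|\phi\|_{\mathcal C^2}}$ yields $\langle R,\phi\rangle=\langle S,\phi\rangle$.

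Since $\phi$ was an arbitrary smooth $(s,s)$-form, $R$ and $S$ agree on all test forms of the appropriate bidegree; as both are supported in $U$ (so they vanish on forms of other bidegrees by type considerations), we conclude $R=S$.

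**The only subtlety** — and it is genuinely minor — is bookkeeping with the two reference forms $\phi_0,\omega^s$: one must make sure they (or fixed positive multiples landing in $[0,\omega^s]$ with $\ddc(\cdot)\geq0$) actually belong to $\Pc(U)$, so that the hypothesis can be invoked on them and subtracted off. This is exactly what the construction in Lemma \ref{le plongement de C2} guarantees, since $\widetilde{\phi}$ for $\phi=0$ already shows $\frac{1}{c+2}(\phi_0+\omega^s)\in\Pc(U)$, and a further application to a suitable multiple of $\omega^s$ separates the two. No serious obstacle arises; the statement is a routine corollary of the embedding lemma.
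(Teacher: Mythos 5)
Your argument is correct and follows essentially the same route as the paper: apply Lemma \ref{le plongement de C2} to an arbitrary $\mathcal C^2$ form $\phi$, note that $\omega^s$ and a suitable rescaling of $\phi_0$ themselves lie in $\Pc(U)$ so the hypothesis applies to them, and subtract to recover $\langle R,\phi\rangle=\langle S,\phi\rangle$. The bookkeeping you flag is exactly the point the paper handles by observing $\omega^s,\,c^{-1}\phi_0\in\Pc(U)$, so there is nothing further to add.
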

\begin{proof}
If $\phi$ is a $\mathcal C^2$ $(s,s)$-form on $\Pb^k$ then by Lemma \ref{le plongement de C2}
$$\left\langle R,\frac{\phi}{c\|\phi\|_{\mathcal C^2(\Pb^k)}}+\phi_0+\omega^s\right\rangle=\left\langle S,\frac{\phi}{c\|\phi\|_{\mathcal C^2(\Pb^k)}}+\phi_0+\omega^s\right\rangle.$$
Moreover, $\omega^s$ and $c^{-1}\phi_0$ are also in $\Pc(U),$ hence $\langle R,\phi_0\rangle=\langle S,\phi_0\rangle$ and $\langle R,\omega^s\rangle=\langle S,\omega^s\rangle.$ Therefore, $\langle R,\phi\rangle=\langle S,\phi\rangle.$
\end{proof}
In the dynamical part of this paper, we are interested to open subsets $U$ of $\Pb^k$ such that $\Cc_{s+1}(\Pb^k\setminus U)$ and $\Cc_p(U)$ are both non-empty. But to study $\Cc_p(U)$ it is enough to consider the smallest open subset $\widetilde U\subset U$ such that $\Cc_p(\widetilde U)=\Cc_p(U).$
\begin{definition}\label{def pseudoconcave core}
Let $U$ be an open subset of $\Pb^k$ such that $\Cc_p(U)\neq\varnothing.$ We define the $s$-pseudoconcave core $\widetilde U$ of $U$ by
$$\widetilde U=\bigcup_{S\in\Cc_p(U)}\supp(S).$$
\end{definition}

\begin{remark}\label{rk hull}
If  $U$ is an open subset of $\Pb^k$ such that $\Cc_{s+1}(\Pb^k\setminus U)\neq\varnothing$ then we can define the $(p-1)$-pseudoconvex hull $\widehat U$ of $U$ to be the complement of $\cup_{S\in\Cc_{s+1}(\Pb^k\setminus U)}\supp(S).$ Both this set and $\widetilde U$ have interesting geometric properties. When $s=0,$ $\widehat U$ is the rationally convex hull of $U,$ cf. \cite{guedj-appro} and \cite{duval-sibony-conv}. To our knowledge, these sets have not been studied in general.
\end{remark}

\subsection{Structural varieties}
We are now interested in the geometry of $\Cc_p(U).$ We will study this set by the means of special families parametrized by complex manifolds called structural varieties. The theory of structural varieties was introduced by Dinh and Sibony, cf. \cite{ds-geo}, \cite{d-attractor} and \cite{ds-superpot}. We refer to these references for more details on this concept. Here we just recall the facts that we will need in what follows.

Let $M$ and $X$ be two complex manifolds of dimension $m$ and $k$ respectively. Let $\pi_M\colon M\times X\to M$ and $\pi_X\colon M\times X\to X$ be the canonical projections. To a positive closed $(p,p)$-current $\mathcal R$ in $M\times X$ such that $\pi_X(\supp(\mathcal R)\cap\pi_M^{-1}(M'))\Subset X$ for all $M'\Subset M,$ the slicing theory of Federer \cite{federer} associates for almost all $\theta\in M$ a positive closed $(p+m,p+m)$-current $\langle\mathcal R,\pi_M,\theta\rangle.$ It is supported on $\{\theta\}\times X$ and we will identify it to a $(p,p)$-current of $X.$ The family $\mathcal R(\theta):=\langle\mathcal R,\pi_M,\theta\rangle$ defines a \textit{structural variety} parametrized by $M.$ We denote it by $\{\mathcal R(\theta)\}_{\theta\in M}$ or simply $\mathcal R.$ When $M$ is biholomorphic to a disk we call it \textit{a structural disk}. When $X$ is weakly $p$-pseudoconvex, a consequence of Lemma \ref{le sepa} is that $\mathcal R(\theta)$ is defined for all $\theta\in M.$ Another important point about structural varieties is that the function $\theta\mapsto\langle\mathcal R(\theta),\phi\rangle$ inherits properties from the test form $\phi.$

\begin{theorem}\label{th sh}\cite[Theorem 2.1]{ds-geo}\cite[Proposition A.1]{d-attractor}
Let $U$ be an open subset of $\Pb^k$ such that $\overline U$ is weakly $p$-pseudoconvex. Every structural variety in $\Cc_p(U)$ is defined everywhere. Moreover, if $\mathcal R$ is such a structural variety and if $\phi$ is a real continuous $(s,s)$-form on $U$ such that $\ddc\phi\geq0$ (resp. $\ddc\phi=0,$ resp. $d\phi=0$) then the function $h(\theta):=\langle\mathcal R(\theta),\phi\rangle$ is plurisubharmonic (resp. pluriharmonic, resp. constant). In particular, the mass of $\mathcal R(\theta)$ is independent of $\theta.$
\end{theorem}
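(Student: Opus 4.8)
The plan is to realise the function $h=h_\phi$ as a representative of the push-forward of $\mathcal R$ against $\phi$ along the first projection, then to transfer to $h$ the cohomological and positivity properties of $\phi$, and finally to use the weak $p$-pseudoconvexity of $\overline U$ together with Lemmas \ref{le plongement de C2} and \ref{le sepa} to dispose of the exceptional parameters. I would first assume $\phi$ smooth. Fix $M'\Subset M$. By the support hypothesis on $\mathcal R$, the map $\pi_M$ is proper over $M'$ on $\supp(\mathcal R)$, so $\mathcal S:=\pi_{M*}\big(\mathcal R\wedge\pi_X^*\phi\big)$ is a well-defined $(0,0)$-current, i.e. a distribution, on $M'$. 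Federer's slicing formula together with Fubini gives $\langle\mathcal S,\chi\rangle=\int_M\chi(\theta)\,\langle\mathcal R(\theta),\phi\rangle\,d\theta$ for all test functions $\chi$, so $\mathcal S$ is represented almost everywhere by $h_\phi$; in particular $h_\phi\in L^1_{\rm loc}(M')$.

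Since $\pi_M$ is a submersion and is proper over $M'$ on $\supp(\mathcal R)$, push-forward commutes with $d$ and $\ddc$; combined with $d\mathcal R=0$ this yields
$$\ddc h_\phi=\pi_{M*}\big(\mathcal R\wedge\pi_X^*(\ddc\phi)\big)\quad\text{and}\quad dh_\phi=\pi_{M*}\big(\mathcal R\wedge\pi_X^*(d\phi)\big)$$
as currents on $M'$. If $\ddc\phi\geq0$, then $\mathcal R\wedge\pi_X^*(\ddc\phi)$ is positive (a positive closed current wedged with a positive smooth form) and so is its push-forward, hence $\ddc h_\phi\geq0$; together with $h_\phi\in L^1_{\rm loc}$ this means $h_\phi$ coincides almost everywhere with a psh function. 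The same computation gives pluriharmonicity when $\ddc\phi=0$ and local constancy (hence constancy on each component of $M$) when $d\phi=0$; taking $\phi=\omega^s$, which is $d$-closed, shows that the mass $\langle\mathcal R(\theta),\omega^s\rangle$ does not depend on $\theta$. To pass to a merely continuous $\phi$, note that the masses of the $\mathcal R(\theta)$ are bounded on $M'$, so $h_{\phi_j}\to h_\phi$ uniformly on $M'$ whenever $\phi_j\to\phi$ uniformly near $\bigcup_{\theta\in M'}\supp\mathcal R(\theta)$, and a uniform limit of psh (resp. pluriharmonic, resp. constant) functions is of the same type. Suitable smooth $\phi_j$ are obtained by averaging over automorphisms, $\phi_\epsilon:=\int_{\aut(\Pb^k)}g^*\phi\,d\rho_\epsilon(g)$ with $\rho_\epsilon$ a smooth probability measure shrinking to $\id$: each $g$ is biholomorphic, so $\ddc(g^*\phi)=g^*(\ddc\phi)$ and $d(g^*\phi)=g^*(d\phi)$, and the averaging preserves $\ddc\phi\geq0$, $\ddc\phi=0$ and $d\phi=0$.

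It remains to show that $\mathcal R(\theta)$ is defined for every $\theta$. The slices exist on a set $G$ of full measure, where they are positive, closed, supported in $\overline U$, and of uniformly bounded mass. Fix $\theta_0\notin G$ and a countable uniformly dense family $\{\psi_i\}\subset\Pc(U)$. By the above each $h_{\psi_i}$ is psh, so its average over $B(\theta_0,r)$ decreases to $h_{\psi_i}^*(\theta_0)$ as $r\to0$; since moreover $0\leq h_{\psi_i}$ and $h_{\psi_i}\leq h_{\psi_i}^*(\theta_0)+o(1)$ near $\theta_0$, a Markov-type inequality shows that $\{\theta:|h_{\psi_i}(\theta)-h_{\psi_i}^*(\theta_0)|<\epsilon\}$ has density $1$ at $\theta_0$ for each $i$. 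Intersecting finitely many of these sets with $G$ and diagonalising, one extracts $\theta_j\in G$ with $\theta_j\to\theta_0$ and $\langle\mathcal R(\theta_j),\psi_i\rangle\to h_{\psi_i}^*(\theta_0)$ for every $i$. By weak-$*$ compactness a subsequence of $\mathcal R(\theta_j)$ converges to a positive closed $(p,p)$-current $S$ supported in $\overline U$ with $\langle S,\psi_i\rangle=h_{\psi_i}^*(\theta_0)$ for all $i$; by density and Lemmas \ref{le plongement de C2} and \ref{le sepa} such an $S$ is unique and is fully determined as a current by these values, so one sets $\mathcal R(\theta_0):=S$.

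The step I expect to be the main obstacle is making the slicing/push-forward formalism legitimate for a form $\phi$ that is only continuous: one must ensure that $\mathcal R\wedge\pi_X^*\phi$, its $\ddc$, the Fubini identity, and the commutation of $d$ and $\ddc$ with $\pi_{M*}$ all hold — which is precisely why the proof routes through smooth $\phi$ first and why one needs the automorphism averages to be defined on a neighbourhood of the (compact in $M'$) union of supports. The extension to all $\theta$ is the second delicate point, but it is a routine consequence of psh regularity together with Lemmas \ref{le plongement de C2} and \ref{le sepa}.
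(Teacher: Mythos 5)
The paper itself does not prove this theorem: it is quoted from \cite[Theorem 2.1]{ds-geo} and \cite[Proposition A.1]{d-attractor}, so your proposal has to be measured against those references. Your outline follows the same circle of ideas: the identity $h_\phi=\pi_{M*}(\mathcal R\wedge\pi_X^*\phi)$ and the positivity of $\pi_{M*}(\mathcal R\wedge\pi_X^*\ddc\phi)$ for smooth $\phi$, regularization of a continuous $\phi$ by averaging over automorphisms, and then plurisubharmonic regularity together with Lemma \ref{le sepa} to handle exceptional parameters. The smooth-$\phi$ computation and the density argument you use to select good parameters $\theta_j\to\theta_0$ are correct.

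The genuine gap is in the last step, which is the actual content of the theorem. ``Defined everywhere'' means that the Federer slice $\langle\mathcal R,\pi_M,\theta_0\rangle$ exists at \emph{every} $\theta_0$, i.e.\ the slicing limit $\lim_{\epsilon\to0}\mathcal R\wedge\pi_M^*\rho_{\theta_0,\epsilon}$ exists and is independent of the kernel; you instead manufacture a candidate $S$ as a weak limit of slices at well-chosen nearby good parameters and decree $\mathcal R(\theta_0):=S$. This produces an extension of the slice map but does not prove existence of the slice at $\theta_0$. Relatedly, your argument only yields that $h_\phi$ agrees \emph{almost everywhere} with a psh function, whereas the theorem asserts that $\theta\mapsto\langle\mathcal R(\theta),\phi\rangle$, evaluated on the actual slices, is psh; for that one must show that at every parameter where the slice exists its pairing with each $\psi\in\Pc(U)$ equals the value of the psh representative, since a.e.\ equality does not give plurisubharmonicity of $h$ itself. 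This is exactly where the weak $p$-pseudoconvexity of $\overline U$ is exploited in the cited proofs: pairing the regularized slices with forms of $\Pc(U)$ produces psh functions of $\theta$ whose limits as $\epsilon\to0$ are controlled at every point, which pins down the slice everywhere; your Markov-type density argument does not substitute for this. A secondary but real issue: you test weak convergence of $\mathcal R(\theta_j)$ against the $\psi_i\in\Pc(U)$, which are only $\mathcal C^2$ forms on $U$, while the limit current is a priori supported only in $\overline U$ and the union of the supports of the slices over $M'$ need not be relatively compact in $U$; the same problem affects the automorphism-average smoothing of a $\phi$ defined only on $U$. These points must be handled through Lemma \ref{le plongement de C2} and the pseudoconvexity hypothesis, and they are not closed in your sketch.
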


Our approach deeply exploits the fact that the ambient space is $\Pb^k.$ In particular, that $\Pb^k$ has a big automorphism group which acts transitively on $\Pb^k$ but also on $T^*\Pb^k$ is important. This allows us to use convolution in order to construct structural disks which regularize currents (see e.g. \cite{ds-superpot}). To this aim, we introduce some notations.

We endow $\Pb^k$ with the distance induced by the Fubini-Study metric and if $\eta>0$ and $E\subset\Pb^k$ then we denote by $E_\eta$ the neighborhood of $E$ defined by $E_\eta:=\{x\in\Pb^k\,|\, \dist(x,E)<\eta\}.$ Let $W$ be a small neighborhood of $\id$ in $\aut(\Pb^k)$ which is biholomorphic to the unit ball $\Bb(0,1)$ of $\Cb^{k^2+2k}.$ We will often identify an automorphism $\sigma$ in $W$ with the corresponding point in $\Bb(0,1)$ and we assume that $\id$ is associated to $0.$ If $0<r\leq1,$ we denote by $B_W(r)$ the subset of $W$ corresponding to $\Bb(0,r).$ Moreover, we choose a smooth probability measure $\rho$ on $W$ with full support. We state without proof the following elementary lemma in order give notations.\begin{lemma}\label{le transi}
For all $0<r\leq1$ there exists $\eta(r)>0$ such that if $x,y\in\Pb^k$ with $\dist(x,y)\leq\eta(r)$ then there is $\sigma\in B_W(r)$ with $\sigma(x)=y.$ Moreover, if $K$ is a compact subset of an open set $V$ of $\Pb^k$ then there exists a constant $0<r\leq1$ such that for all $\sigma$ in $B_W(r),$ $\sigma(K)\subset V.$
\end{lemma}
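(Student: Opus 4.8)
The plan is to prove both assertions using the transitivity of the $\aut(\Pb^k)$-action together with a compactness/open-mapping argument. First I would address the first statement. Consider the evaluation map $\mathrm{ev}\colon W\times\Pb^k\to\Pb^k\times\Pb^k$ defined by $\mathrm{ev}(\sigma,x)=(x,\sigma(x))$. For a fixed $x$, the map $\sigma\mapsto\sigma(x)$ is holomorphic and, since $\aut(\Pb^k)=\mathrm{PGL}(k+1,\Cb)$ acts transitively on $\Pb^k$ with the isotropy group of $x$ being a proper (parabolic) subgroup, its differential at $\id$ is surjective onto $T_x\Pb^k$. Hence, after shrinking $W$ if necessary, the restriction of $\mathrm{ev}$ to $B_W(r)\times\Pb^k$ is a submersion, so its image is open in $\Pb^k\times\Pb^k$ and contains the diagonal $\Delta$ in its interior (the pair $(x,x)$ is attained by $(\id,x)$). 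Since $\Delta$ is compact, there is $\eta(r)>0$ such that the $\eta(r)$-neighborhood of $\Delta$ in $\Pb^k\times\Pb^k$ is contained in $\mathrm{ev}(B_W(r)\times\Pb^k)$; that is, whenever $\dist(x,y)\leq\eta(r)$ one can find $\sigma\in B_W(r)$ with $\sigma(x)=y$.

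For the second statement, let $K\Subset V$. The set $\{\sigma\in W\mid \sigma(K)\subset V\}$ is open in $W$: indeed, the map $W\times K\to\Pb^k$, $(\sigma,x)\mapsto\sigma(x)$, is continuous, $K$ is compact, and $\{\id\}\times K$ is mapped into the open set $V$, so by the tube lemma there is a neighborhood $W'$ of $\id$ in $W$ with $W'(K)\subset V$. Choosing $0<r\leq1$ small enough that $B_W(r)\subset W'$ gives the claim.

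I do not expect any serious obstacle here, since this is an elementary statement — the authors themselves state it ``without proof'' to fix notation. The only mild subtlety is organizing the submersion argument so that the constant $\eta(r)$ is extracted uniformly over the compact diagonal; this is handled by the tube lemma (or equivalently by a Lebesgue-number argument) applied to the open cover of $\Delta$ coming from the local surjectivity of $\mathrm{ev}$. One should also remember to shrink $W$ at the outset so that all the exponential-type charts and the identification of $W$ with $\Bb(0,1)\subset\Cb^{k^2+2k}$ remain valid, but this is harmless.
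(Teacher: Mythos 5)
Your proof is correct. The paper deliberately states this lemma without proof (it is there only to fix the notations $\eta(r)$ and $B_W(r)$), so there is no argument of the author to compare with; your submersion-plus-compactness argument for the first assertion and the tube-lemma argument for the second are exactly the standard verifications one would supply. The only superfluous precaution is the ``shrinking of $W$'': since $\mathrm{PGL}(k+1,\Cb)$ acts transitively on $\Pb^k$, the orbit maps $\sigma\mapsto\sigma(x)$ are submersions everywhere, so the evaluation map is already a submersion on all of $W\times\Pb^k$ and no shrinking is needed.
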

The following construction will be repeatedly used in the sequel.
\begin{proposition}\label{prop regu}
Let $K$ be a compact subset of an open set $V$ of $\Pb^k.$ Let $0<r\leq1$ be as in Lemma \ref{le transi}. There exist constants $l>0$ and $C>0$ with the followings properties. For each current $S\in\Cc_p(K)$ there is a structural disk $\mathcal S$ in $\Cc_p(V)$ such that
\begin{itemize}
\item $\mathcal S(0)=S$ and for $\theta\neq0,$ $\mathcal S(\theta)$ is smooth with $\|\mathcal S(\theta)\|_{\mathcal C^2}\leq C|\theta|^{-l},$
\item for each $L\Subset\Db^*,$ there exists $C_L>0$ such that $\|\mathcal S(\theta)-\mathcal S(\theta')\|_{\mathcal C^2}\leq C_L|\theta-\theta'|$ if $\theta,\theta'\in L,$
\item if $\theta\neq0$ then $\mathcal S(\theta)$ is strictly positive on $(\supp(S))_{\eta(2^{-1}r|\theta|)}.$
\end{itemize}
Moreover, if $\theta_0\neq0$ then $\mathcal S(\theta_0)$ depends continuously on $S$ with respect to the smooth topology for $\mathcal S(\theta_0)$ and the weak topology for $S.$
\end{proposition}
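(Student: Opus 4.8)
The plan is to construct $\mathcal S$ by convolving $S$ with automorphisms from the group $W$, using the parameter $\theta$ to control the size of the convolution neighborhood. Concretely, fix the probability measure $\rho$ on $W$ and define, for $\theta\in\Db$,
$$\mathcal S(\theta):=\int_{W}(\sigma_\theta)_*S\,d\rho(\sigma),$$
where $\sigma_\theta$ is the automorphism obtained by rescaling $\sigma\in W\cong\Bb(0,1)\subset\Cb^{k^2+2k}$ by the factor $\theta$ (i.e.\ the point $\theta\cdot\sigma$ of the ball, which makes sense since $|\theta|\le 1$), with $\sigma_0=\id$. More precisely one wants this to be the slice of a single positive closed current $\mathcal S$ on $\Db\times\Pb^k$: take $\Phi\colon \Db\times W\times\Pb^k\to \Db\times\Pb^k$ sending $(\theta,\sigma,x)$ to $(\theta,\sigma_\theta(x))$, which is holomorphic and submersive over $\Db^*$, and set $\mathcal S:=(\Phi)_*\big(\pi^*(\text{graph-type current built from }S)\wedge \pi_W^*\rho\big)$, arranged so that $\langle\mathcal S,\pi_\Db,\theta\rangle$ equals the integral above for $\theta\ne0$ and equals $S$ at $\theta=0$. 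Since $\Phi$ is holomorphic, $\mathcal S$ is positive closed of bidegree $(p,p)$, and by Lemma \ref{le transi} (choosing $r$ so that $\sigma(K)\subset V$ for all $\sigma\in B_W(r)$, then composing with the rescaling) its support projects into $V$; when $V$'s closure is weakly $p$-pseudoconvex the slices are defined everywhere by Theorem \ref{th sh}, and $\mathcal S(0)=S$ follows from $\sigma_0=\id$ and $\rho$ being a probability measure.

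Next I would extract the three bulleted estimates. Smoothness and the bound $\|\mathcal S(\theta)\|_{\Cc^2}\le C|\theta|^{-l}$ for $\theta\ne0$: for fixed $\theta\ne0$ the map $\sigma\mapsto\sigma_\theta$ has image containing a ball of automorphisms of radius $\sim|\theta|$ around $\id$, so $\mathcal S(\theta)$ is a genuine convolution of $S$ against a smooth measure on an $|\theta|$-scale neighborhood; differentiating under the integral sign and using that the density of the push-forward of $\rho$ under $\sigma\mapsto\sigma_\theta$ has $\Cc^2$-norm $O(|\theta|^{-l})$ for a suitable $l$ depending only on $k$ (the Jacobian factors from rescaling) gives the bound, uniformly in $S$ because $S$ has mass $1$. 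The Lipschitz estimate on $L\Subset\Db^*$ comes from the same differentiation: on the compact set $L$ the rescaling parameter stays bounded away from $0$ and $\infty$, so $\theta\mapsto(\text{density})$ is smooth and one gets $\|\mathcal S(\theta)-\mathcal S(\theta')\|_{\Cc^2}\le C_L|\theta-\theta'|$ by the mean value inequality, again with constant independent of $S$. Strict positivity of $\mathcal S(\theta)$ on $(\supp S)_{\eta(2^{-1}r|\theta|)}$: by Lemma \ref{le transi}, for any $x$ with $\dist(x,\supp S)<\eta(2^{-1}r|\theta|)$ there is $\sigma\in B_W(2^{-1}r|\theta|)$, hence $\sigma_\theta$ ranging over a ball of scale $\sim|\theta|$, moving a point of $\supp S$ to $x$; since $\rho$ has full support, the integral averages $(\sigma_\theta)_*S$ over an open set of automorphisms all of whose images still hit a neighborhood of $x$, so the resulting smooth form is strictly positive there. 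Finally, continuity of $S\mapsto\mathcal S(\theta_0)$ (weak topology on $S$, smooth topology on $\mathcal S(\theta_0)$): for fixed $\theta_0\ne0$, $\mathcal S(\theta_0)$ is obtained by pairing $S$ with a fixed smooth kernel, so this is immediate from the definition of weak convergence.

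The main obstacle is the bookkeeping that makes ``$\mathcal S$ is a single positive closed current on $\Db\times\Pb^k$ whose slices are the convolutions, with $\mathcal S(0)=S$'' precise and correct — in particular checking that the slicing $\langle\mathcal S,\pi_\Db,\theta\rangle$ really is continuous up to $\theta=0$ and equals $S$ there rather than some averaged object, which is where the rescaling $\sigma\mapsto\sigma_\theta$ degenerating to the constant map $\id$ at $\theta=0$ does the work, and where one must be careful that the total mass stays $1$ (guaranteed since all $(\sigma_\theta)_*$ preserve mass and $\rho$ is a probability measure, together with Theorem \ref{th sh}). The estimates themselves are routine once the current $\mathcal S$ is set up, since they reduce to elementary bounds on a family of smooth convolution kernels on the group $W$, all uniform in $S$ because $S$ ranges in the mass-one set $\Cc_p(K)$; the exponent $l$ and the constants $C,C_L$ then depend only on $k$, on $r$, and on $\rho$, not on $S$.
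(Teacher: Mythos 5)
Your construction is the same as the paper's: the paper defines $\mathcal S(\theta):=\int_{B_W(1)}(r\theta\sigma)_*S\,d\rho(\sigma)$ and, like you, treats the $\mathcal C^2$ bound in $|\theta|^{-l}$, the Lipschitz estimate on compact subsets of $\Db^*$, the continuity in $S$, and the fact that this is a genuine structural disk with $\mathcal S(0)=S$ as classical convolution facts (it simply cites Dinh--Sibony and Dinh for them). Your sketch of how to realize $\mathcal S$ as a positive closed current on $\Db\times\Pb^k$ is fine: for each $\sigma$ the map $(\theta,x)\mapsto(\theta,(r\theta\sigma)(x))$ is a biholomorphism of $\Db\times\Pb^k$ preserving the fibers, so averaging the push-forwards of $\pi_{\Pb^k}^*S$ against $\rho$ does the job, and the slices exist everywhere by Theorem \ref{th sh}.

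The genuine gap is in the strict positivity, which is precisely the one point the paper proves in detail because it is not routine. Your argument — the automorphisms in $B_W(2^{-1}r|\theta|)$ move $\supp(S)$ onto every point of $(\supp S)_{\eta(2^{-1}r|\theta|)}$, hence the average is strictly positive there — only shows that the \emph{support} of $\mathcal S(\theta)$ contains that neighborhood. Strict positivity of a $(p,p)$-form at a point means $\mathcal S(\theta)\wedge v>0$ for every nonzero weakly positive $(s,s)$-vector $v$, and "the images sweep a neighborhood of $x$" does not give this: averaging the integration current on $\{z_1=c\}$ over translations $c$ with a smooth density produces $\rho(z_1)\,\tfrac{i}{2}dz_1\wedge d\bar z_1$, whose support fills a neighborhood but which is degenerate (it vanishes against any weakly positive vector containing $dz_1\wedge d\bar z_1$); your heuristic applies verbatim to that example, so it cannot suffice. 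What is actually needed, and what the paper does, is to disintegrate $S$ into currents $\delta_a\wedge\Psi$ with $\Psi\in\bigwedge^{(p,p)}T_a^*\Pb^k$ strongly positive and nonzero, and to use the linear-algebra fact that for nonzero strongly positive $u$ and nonzero weakly positive $v$ one has $(\sigma_*u)\wedge v>0$ for almost every $\sigma\in Gl_k(\Cb)$; since $\sigma\mapsto(\sigma_*u)\wedge v$ is nonnegative and not identically zero, its average over \emph{any} open set of automorphisms against a smooth positive density is strictly positive. In other words, it is the convolution over an open piece of the full automorphism group — which spreads the tangent directions, not just the supports (the transitivity of $\aut(\Pb^k)$ on $T^*\Pb^k$ that the paper highlights) — that yields strict positivity on $(\supp S)_{\eta(2^{-1}r|\theta|)}$; this is the missing ingredient in your write-up.
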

\begin{proof}
Using the identification between $W$ and $\Bb(0,1)$ we can define
$$\mathcal S(\theta):=\int_{B_W(1)}(r\theta\sigma)_*Sd\rho(\sigma).$$
Almost all the properties stated above about $\mathcal S(\theta)$ can be found in \cite{ds-superpot} \cite{d-attractor} or are classical consequences of convolution. The only point which is well-known but not proved explicitly in the literature is the strict positivity of $\mathcal S(\theta).$ It simply comes from the fact that if $u$ is a (strongly) positive element of $\bigwedge^{(p,p)}\Cb^k$ and $v$ is a weakly positive element of $\bigwedge^{(s,s)}\Cb^k,$ both non-zero, then $(\sigma_*u)\wedge v>0$ for almost all elements $\sigma\in Gl_k(\Cb).$ Therefore, if $R$ is a current of the form $\delta_a\wedge\Psi$ where $\Psi\in\bigwedge^{(p,p)}T_a^*\Pb^k$ is (strongly) positive and non-zero then $\mathcal R(\theta):=\int_{B_W(1)}(r\theta\sigma)_*Rd\rho(\sigma)$ is strictly positive on $B(a,\eta(2^{-1}r|\theta|)).$ Thus the result follows since $S$ can be disintegrate into currents with support at a point.
\end{proof}

The two following results are basic but we state them explicitly since they play a central role in the sequel. They have been already used in \cite{d-attractor}.
\begin{lemma}\label{le domi-cons1}
Let $(u_n)_{n\geq0}$ be a uniformly bounded sequence of subharmonic functions defined on $\Db$ which is locally equicontinuous on $\Db^*.$ Assume that there exists $c\in\Rb$ such that
$$\limsup_{n\to\infty}u_n(\theta)\leq c,$$
for all $\theta\in\Db$ and
$$\lim_{n\to\infty}u_n(0)=c.$$
Then the sequence $(u_n)_{n\geq0}$ converges pointwise to the constant function $c.$
\end{lemma}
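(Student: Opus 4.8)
The plan is to exploit Hartogs-type convergence for subharmonic functions together with the fact that the upper semicontinuous regularization of $\limsup u_n$ is subharmonic. Since the sequence $(u_n)$ is uniformly bounded, say $|u_n|\leq M$ on $\Db$, the function $u:=\limsup_{n\to\infty}u_n$ satisfies $u\leq c$ everywhere and $u(0)=c$ by the two hypotheses. Let $u^*$ denote the upper semicontinuous regularization of $u$ on $\Db$. A classical fact (a consequence of the Brelot--Cartan theorem, see e.g. \cite{de-book}) is that $u^*$ is subharmonic on $\Db$ and that $u^*=u$ outside a pluripolar (here, polar) set. We still have $u^*\leq c$ on $\Db$, and $u^*(0)\geq u(0)=c$, so $u^*(0)=c$. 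Thus $u^*$ is a subharmonic function on $\Db$ that attains its maximum value $c$ at the interior point $0$; by the maximum principle, $u^*\equiv c$ on $\Db$.

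Next I would upgrade this to genuine pointwise convergence of $(u_n)$ to $c$. Fix $\theta\in\Db$; I want $\liminf_{n\to\infty}u_n(\theta)\geq c$, which combined with the standing hypothesis $\limsup_{n\to\infty}u_n(\theta)\leq c$ gives $\lim_n u_n(\theta)=c$. For $\theta=0$ this is immediate from the assumption $u_n(0)\to c$. For $\theta\neq0$ I would use the sub-mean-value inequality on a circle: for a small radius $\varepsilon>0$ with $\overline{\Db(\theta,\varepsilon)}\subset\Db$,
$$u_n(\theta)\leq \frac{1}{2\pi}\int_0^{2\pi} u_n(\theta+\varepsilon e^{it})\,dt.$$
Rather, the cleaner route is the averaged version: from $u_n\leq u_n^{(\varepsilon\text{-avg})}$ one gets an upper bound going the wrong way, so instead I pass to a subsequence $(u_{n_j})$ along which $u_{n_j}(\theta)\to\liminf_n u_n(\theta)=:\ell$. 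By Fatou's lemma applied to $M-u_{n_j}\geq0$ on the circle of radius $\varepsilon$ about $0$ (using that $u_n\leq M$), together with $\limsup_j u_{n_j}\leq c$ pointwise,
$$\limsup_{j\to\infty}\frac{1}{2\pi}\int_0^{2\pi} u_{n_j}(\varepsilon e^{it})\,dt \leq \frac{1}{2\pi}\int_0^{2\pi}\limsup_{j\to\infty} u_{n_j}(\varepsilon e^{it})\,dt\leq c,$$
while the sub-mean-value inequality gives $u_{n_j}(0)\leq \frac{1}{2\pi}\int_0^{2\pi}u_{n_j}(\varepsilon e^{it})\,dt$; since the left side tends to $c$, the circle averages tend to $c$ as well. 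This pins the defect of subharmonicity on the circle to zero in the limit and, combined with local equicontinuity away from $0$, forces the values on that circle — in particular at $\theta$ after choosing $\varepsilon=|\theta|$ and rotating, or more simply by running the same argument on a circle centered at $\theta$ — to converge to $c$.

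The cleanest packaging, which I would actually write up, avoids juggling subsequences: local equicontinuity of $(u_n)$ on $\Db^*$ means that from any subsequence one can extract a further subsequence converging locally uniformly on $\Db^*$ to a continuous function $v$ on $\Db^*$; by the uniform bound and Fatou, $v$ extends to a subharmonic function $\widetilde v$ on all of $\Db$ with $\widetilde v\leq c$, $\widetilde v(0)=\lim_n u_n(0)=c$ (the value at $0$ is controlled because $u_n(0)$ converges and subharmonicity passes to the limit via circle averages). Again the maximum principle gives $\widetilde v\equiv c$, hence $v\equiv c$ on $\Db^*$. Since every subsequential locally-uniform limit on $\Db^*$ equals the constant $c$, the full sequence converges to $c$ pointwise (indeed locally uniformly on $\Db^*$), and at $0$ it converges to $c$ by hypothesis. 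The main obstacle is the behaviour at the puncture $0$: equicontinuity is only assumed on $\Db^*$, so one must separately verify that the limiting subharmonic function takes the value $c$ at $0$ — this is exactly where the hypothesis $u_n(0)\to c$ is used, through the sub-mean-value inequality $u_n(0)\leq \frac{1}{2\pi}\int u_n(\varepsilon e^{it})\,dt$ and Fatou's lemma on the circle — and where one must be careful that no mass of subharmonicity escapes into the puncture.
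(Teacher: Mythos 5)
Your proof is correct, and its final ``packaging'' is essentially the argument of the paper: extract from an arbitrary subsequence a convergent one, identify the limit as a subharmonic function which is $\leq c$ on $\Db$ and equals $c$ at $0$ (this is where $u_n(0)\to c$ enters, via the sub-mean-value inequality on circles around the puncture), apply the maximum principle to conclude the limit is $\equiv c$, and finish by arbitrariness of the subsequence. The only real difference is the compactness tool: you use Arzel\`a--Ascoli on $\Db^*$ (equicontinuity plus the uniform bound) and then extend the continuous subharmonic limit across the puncture by the removable-singularity theorem for bounded subharmonic functions, whereas the paper extracts an $L^1$-convergent subsequence of subharmonic functions and uses the Hartogs-type inequality $u_\infty\geq\limsup u_{n_i}$ (with equality off a polar set), which handles the point $0$ directly without an extension step; both mechanisms are standard and the hypotheses support either. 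Two minor remarks: the Brelot--Cartan regularization step in your first paragraph only yields $(\limsup u_n)^*\equiv c$, which, as you note, is not by itself the conclusion, so it can be dropped; and the phrase ``pins the defect of subharmonicity on the circle to zero'' in your intermediate argument should be replaced by the explicit contradiction argument (if $u_{n_j}(\theta_j)\leq c-\delta$ at points of the circle, equicontinuity gives an arc of definite length where $u_{n_j}\leq c-\delta/2$, forcing $\limsup_j$ of the circle averages below $c$), though this route is anyway superseded by your final version.
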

\begin{proof}
Since the sequence is uniformly bounded, there exists a subsequence $(u_{n_i})_{i\geq0}$ which converges in $L^1$ to a subharmonic function $u_\infty$ such that, for all $\theta\in\Db,$ $u_\infty(\theta)\geq\limsup_{i\to\infty}u_{n_i}(\theta)$ with equality outside a polar subset of $\Db.$ The local equicontinuity on $\Db^*$ implies that the convergence is pointwise on $\Db^*.$ Hence, $u_\infty\leq c$ on $\Db^*$ and the maximum principle gives that $u_\infty(\theta)=c$ for all $\theta\in\Db$ since $\lim_{n\to\infty}u_n(0)=c.$ The result follows since $u_\infty$ was an arbitrary limit value of $(u_n)_{n\geq0}.$
\end{proof}

\begin{lemma}\label{le domi-cons2}
Let $(a_n)_{n\geq0}$ and $(b_n)_{n\geq0}$ be two sequences of real numbers. If there are two constants $c\in\Rb$ and $\alpha>0$ such that
$$\limsup_{n\to\infty}b_n\leq\alpha c,\ \ \limsup_{n\to\infty}(a_n-b_n)\leq(1-\alpha)c\ \ \textrm{and}\ \ \lim_{n\to\infty}a_n=c$$
then $\lim_{n\to\infty}b_n=\alpha c.$ 
\end{lemma}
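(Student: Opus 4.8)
The plan is to deduce the conclusion purely from elementary properties of $\limsup$ and $\liminf$, with no recourse to the pluripotential machinery of the preceding subsections; this lemma is a self-contained real-variable fact that will later be applied with $a_n$, $b_n$ equal to suitable masses or integrals of currents against test forms.

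First I would record the trivial two-sided bound that already follows from the hypotheses: since $\limsup_{n\to\infty} b_n \le \alpha c$, it suffices to prove the matching lower bound $\liminf_{n\to\infty} b_n \ge \alpha c$. The idea is to write $b_n = a_n - (a_n - b_n)$ and use the superadditivity of $\liminf$ together with the subadditivity of $\limsup$. Concretely,
$$\liminf_{n\to\infty} b_n = \liminf_{n\to\infty}\bigl(a_n-(a_n-b_n)\bigr)\ge \liminf_{n\to\infty} a_n + \liminf_{n\to\infty}\bigl(-(a_n-b_n)\bigr) = \lim_{n\to\infty} a_n - \limsup_{n\to\infty}(a_n-b_n).$$
Here the first inequality is the standard fact $\liminf(x_n+y_n)\ge \liminf x_n+\liminf y_n$ (valid since $\lim a_n$ exists, so there is no $\infty-\infty$ ambiguity), and the last equality uses $\liminf(-z_n)=-\limsup z_n$ together with the convergence of $(a_n)$.

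Now I would substitute the two remaining hypotheses into this chain: $\lim_{n\to\infty} a_n = c$ and $\limsup_{n\to\infty}(a_n-b_n)\le (1-\alpha)c$ give
$$\liminf_{n\to\infty} b_n \ge c-(1-\alpha)c = \alpha c.$$
Combining this with $\limsup_{n\to\infty} b_n \le \alpha c$ yields $\liminf_{n\to\infty} b_n = \limsup_{n\to\infty} b_n = \alpha c$, i.e. $\lim_{n\to\infty} b_n = \alpha c$, as desired. There is no real obstacle here: the only point requiring a word of care is that one must know $(a_n)$ converges (not merely that its $\limsup$ or $\liminf$ is controlled) in order to split the $\liminf$ of the sum without losing information — which is exactly why the hypothesis on $a_n$ is stated as a genuine limit. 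Note also that the parameter $\alpha>0$ plays no role in the argument beyond bookkeeping; the same proof works for any real $\alpha$, and positivity is assumed only because that is the case needed in the applications.
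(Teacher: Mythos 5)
Your proof is correct and is essentially the paper's own argument: both reduce to showing $\liminf_{n\to\infty}b_n\geq\alpha c$ via $\liminf b_n=\lim a_n-\limsup(a_n-b_n)\geq c-(1-\alpha)c$, using the convergence of $(a_n)$ to split the limit, and then combine with the hypothesis $\limsup b_n\leq\alpha c.$ Your side remarks (no $\infty-\infty$ issue, and that $\alpha>0$ is not actually needed) are accurate but do not change the substance.
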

\begin{proof}
We have
\begin{align*}
\alpha c&\geq\limsup_{n\to\infty}b_n\geq\liminf_{n\to\infty}b_n=-\limsup_{n\to\infty}(-b_n)=\lim_{n\to\infty}a_n-\limsup_{n\to\infty}(a_n-b_n)\\
&\geq  c-(1-\alpha)c=\alpha c,
\end{align*}
which gives the desired result.
\end{proof}

\section{Dynamics in the set of currents}\label{sec courant}
This section is devoted to the proof of Theorem \ref{th equi as} and Theorem \ref{th finitude}. The main construction is contained in Theorem \ref{th conv1}. The idea, already present in \cite{d-attractor}, is that in the set of invariant currents an attracting current $\tau$ maximizes plurisubharmonic observables. However, as in our setting it may exist several attracting currents, this maximum cannot be global. One of the main difficulties is to associate to $\tau$ a trapping region on which this maximum is global and to prove that this trapping region is not to small (see Theorem \ref{th conv1} and Lemma \ref{le courant avec prop de cv}).

The organisation of this section is as follows. First, we briefly recall some results about holomorphic endomorphisms of $\Pb^k$ and we state Lemma \ref{le existence courant} which will allow us, in Section \ref{subsec dim}, to give equivalent formulations Daurat's definition of the dimension of an attracting set. We also fix the setting for the sequel and give the definition of attracting currents. Then, in Section \ref{subsec trap}, we explain how to associate to an invariant current $S$ a family of trapping regions $\Nc_S(r)$ which will be a key point in Lemma \ref{le courant avec prop de cv}. This brings us to Section \ref{subsec-ac} where we study equidistribution toward an attracting current. In particular, we prove Theorem \ref{th conv1} and Lemma \ref{le courant avec prop de cv}. They imply the existence and the finiteness of attracting currents which give Theorem \ref{th equi as} and the first point in Theorem \ref{th finitude}. The second part of this theorem is obtained in Section \ref{subsec Dp}. We conclude this section with results related to the speed of convergence.
\subsection{Holomorphic endomorphisms of $\Pb^k$}
We refer to \cite{ds-lec} for a detailed exposition on pluripotential methods in complex dynamics in several variables, in particular for the definition of the Green currents and push-forwards and pull-backs of currents.

We denote by $\Hc_d(\Pb^k)$ the space of holomorphic endomorphisms of $\Pb^k$ of algebraic degree $d.$ Let $f$ be in $\Hc_d(\Pb^k)$ with $d\geq2.$  If $l$ is an integer with $0\leq l\leq k$ then by Bézout theorem, the action of $f^*$ on the cohomology group $H^{l,l}(\Pb^k,\Rb)$ is the multiplication by $d^l.$ By duality, the action of $f_*$ on $H^{l,l}(\Pb^k,\Rb)$ is the multiplication by $d^{k-l}.$ Therefore, the normalized pull-back $d^{-l}f^*$ and the normalized push-forward $d^{l-k}f_*$ define operators from the set $\Cc_l(\Pb^k)$ to itself.

As we said in the introduction, there exists a special current $T^l$ in $\Cc_l(\Pb^k)$ called the \textit{Green $(l,l)$-current} of $f.$ It is invariant by $d^{-l}f^*$ and $d^{k-l}f_*$ and if $S$ is a smooth form in $\Cc_l(\Pb^k)$ then $d^{-ln}f^{n*}S$ converges to $T^l.$ Moreover, the Green $(1,1)$-current $T$ has continuous local potentials  so its self-intersection of order $l$ is well-defined and coincides with $T^l.$ The support $\Jc_l$ of $T^l$ is called the \textit{Julia set of order $l$} of $f.$ This defines a filtration of sets
$$\varnothing=:\Jc_{k+1}\subset\Jc_k\subset\cdots\subset\Jc_1\subset\Jc_0:=\Pb^k.$$
Dinh \cite{d-attractor} and de Thelin \cite{dethelin-selles} prove that these Julia sets are related to the topological entropy of $f$ restricted to compact subsets of $\Pb^k.$
\begin{theorem}\label{th entropy julia}
Let $1\leq l\leq k.$ If $K\subset \Pb^k$ is a compact set such that $K\cap\Jc_l=\varnothing$ then the topological entropy of $f$ restricted to $K$ is smaller or equal to $(l-1)\log d.$
\end{theorem}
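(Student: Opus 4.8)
The plan is to use the variational principle for topological entropy together with the fact that the entropy of an ergodic invariant measure is controlled, via a theorem of de Thélin, by the dynamical degrees computed against the Green currents. Concretely, suppose $K$ is a compact set with $K\cap\Jc_l=\varnothing$, and let $\mu$ be any $f$-invariant (hence $f|_K$-invariant, since $K$ must be forward-invariant because $f(K)\subset K$ for an attracting-set-type situation — but in general one should work with the maximal invariant subset of $K$) ergodic probability measure supported in $K$. By the variational principle, it suffices to bound the measure-theoretic entropy $h_\mu(f)$ of each such $\mu$ by $(l-1)\log d$. The key input is that since $\supp(\mu)\subset K$ is disjoint from $\Jc_l=\supp(T^l)$, the measure $\mu$ ``sees'' only $l-1$ directions in which the dynamics can be expanding in the sense detected by the Green currents.

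First I would recall the precise statement we need from the work of de Thélin and Dinh: if $\mu$ is an ergodic invariant probability measure with $h_\mu(f)>(l-1)\log d$, then $\mu$ must charge $\Jc_l$, equivalently its support meets $\Jc_l$. This is essentially the content of the semicontinuity/entropy estimates relating $h_\mu$ to the sum of the $l$ largest Lyapunov exponents (counted with multiplicity) and the structure of the Green currents $T^l$: by Ruelle's inequality and the comparison of Lyapunov exponents with $\log d$, an invariant measure with entropy exceeding $(l-1)\log d$ has at least $l$ Lyapunov exponents strictly larger than $\tfrac12\log d$, and such a measure is necessarily supported on $\Jc_l$ by the characterization of $\Jc_l$ via the Green current (a wedge-power / slicing argument showing that the relevant $T^l$-type current dominates $\mu$). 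So any $\mu$ supported in $K$ has $h_\mu(f)\le (l-1)\log d$.

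Then I would invoke the variational principle in the relative form: the topological entropy $h_{\mathrm{top}}(f|_K)$ equals the supremum of $h_\mu(f)$ over $f$-invariant probability measures $\mu$ supported in the maximal $f$-invariant subset of $K$; combined with the bound in the previous paragraph, this gives $h_{\mathrm{top}}(f|_K)\le (l-1)\log d$. One subtlety to address carefully is that $K$ need not itself be invariant, so the entropy $h_{\mathrm{top}}(f|_K)$ should be understood (as is standard in this circle of ideas) via $(n,\epsilon)$-separated sets inside $K$ for the iterates $f^n$, and the relevant invariant measures are those supported in $\bigcap_{n\ge0}f^n(K)$; since that set is still disjoint from $\Jc_l$, the argument goes through unchanged.

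The main obstacle is the quantitative entropy estimate of the second paragraph — i.e., reconstructing or citing precisely the de Thélin–Dinh inequality that forces a high-entropy measure to live on $\Jc_l$. This rests on delicate pluripotential-theoretic facts: that $T^l$ has continuous potentials, that its support $\Jc_l$ is exactly where the expanding behavior in $l$ complex directions concentrates, and a careful Lyapunov-exponent bookkeeping (each exponent is at most $\log d$, and an ergodic measure off $\Jc_l$ cannot have $l$ exponents all ``large''). Everything else — the variational principle and the reduction to ergodic measures — is routine. Since the excerpt explicitly attributes this theorem to Dinh \cite{d-attractor} and de Thélin \cite{dethelin-selles}, in the write-up I would state the entropy estimate as the cited black box and spend the remaining lines only on the reduction via the variational principle and the handling of the non-invariance of $K$.
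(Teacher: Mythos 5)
There is a genuine gap, and it sits exactly at the step you flag as a ``subtlety'': the reduction to invariant measures via a variational principle. In the statement, $K$ is an arbitrary compact set, not an invariant one, and the topological entropy of $f$ restricted to $K$ is the Bowen--Gromov quantity computed from $(n,\epsilon)$-separated subsets of $K$; the separation is measured along orbits that are free to leave $K$. For such non-invariant sets there is no variational principle bounding $h_{\rm top}(f|_K)$ from above by the entropies of invariant measures carried by the maximal invariant subset of $K$ (or by $\bigcap_{n\geq0}f^n(K)$, which is not even that set in general). A one-dimensional example shows the reduction is false as a general principle: for $f(z)=z^2$ on $\Pb^1$ and $K$ a short closed arc of the unit circle, the points of $K$ whose whole forward orbit stays in $K$ form a set carrying only measures of entropy close to $0$, while $K$ contains roughly $\epsilon^{-1}2^n\,{\rm length}(K)$ points that are $(n,\epsilon)$-separated, so $h_{\rm top}(f|_K)=\log 2.$ (This does not contradict the theorem, since there $K\subset\Jc_1$; it kills the method.) Bowen's inequality for non-invariant sets goes only in the opposite direction. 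Moreover, the measure-theoretic ``black box'' you propose to cite --- an ergodic $\mu$ with $h_\mu>(l-1)\log d$ must have $\supp\mu\cap\Jc_l\neq\varnothing$ --- is most naturally obtained as a corollary of the present theorem (apply it to the compact \emph{invariant} set $\supp\mu$, where the genuine variational principle is available), so deducing the theorem from it is essentially circular.

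For comparison: the paper does not prove this statement, it quotes it from Dinh \cite{d-attractor} and de Th\'elin \cite{dethelin-selles}, whose proofs follow Gromov \cite{gromov-entro} (with Yomdin's theorem \cite{yomdin} entering in related lower bounds elsewhere). One bounds $h_{\rm top}(f|_K)$ by the volume growth of the graphs $\Gamma_n=\{(x,f(x),\ldots,f^{n-1}(x))\,:\,x\in K_\epsilon\}$ over a small neighborhood $K_\epsilon$ of $K,$ expands this volume cohomologically into terms of the form $\int_{K_\epsilon}(f^{i_1})^*\omega\wedge\cdots\wedge(f^{i_k})^*\omega,$ and uses that $\|(f^n)^*\omega^l\|_{K_\epsilon}=o(d^{ln})$ because $d^{-ln}(f^n)^*\omega^l\to T^l$ and $K\cap\Jc_l=\varnothing$ (compare Proposition \ref{prop-estim-degre}, which is the same mechanism in this paper). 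Every term containing $l$ or more large indices is then negligible, and the remaining terms grow at most like $d^{(l-1)n}$ up to subexponential factors, giving the bound $(l-1)\log d.$ If you want to write a proof rather than cite one, this volume-growth argument, not the variational principle, is the route that handles non-invariant $K$.
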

The fact that the action of $f_*$ on $H^{l,l}(\Pb^k,\Rb)$ dominates the one on $H^{l+1,l+1}(\Pb^k,\Rb)$ will allow us to construct positive closed currents supported in a trapping region $U$ from positive currents, not necessarily closed, also supported on $U.$ This is a consequence of the following lemma which was establish by Dinh \cite[Proposition 4.7]{d-attractor}.

\begin{lemma}\label{le existence courant}
Let $f$ be in $\Hc_d(\Pb^k)$ with $d\geq2.$ Let $\chi$ be a positive smooth function in $\Pb^k.$ If $S$ is a current in $\Cc_{k-l}(\Pb^k)$ then the sequence $d^{-ln}(f^n)_*(\chi S)$ has bounded mass and each of its limit values is a positive closed $(k-l,k-l)$-current of $\Pb^k$ of mass $c:=\langle S\wedge T^l,\chi\rangle.$
\end{lemma}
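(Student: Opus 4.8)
The plan is to establish the three assertions—bounded mass, closedness of the limit values, and the value of the mass—essentially separately. Write $R_n:=d^{-ln}(f^n)_*(\chi S)$; since $(f^n)_*$ preserves the bidimension, each $R_n$ is a positive (in general non-closed) $(k-l,k-l)$-current. For the bounded mass I would simply use the duality $\langle (f^n)_*A,\beta\rangle=\langle A,(f^n)^*\beta\rangle$ and the inequality $\chi S\le\|\chi\|_{L^\infty}S$:
$$\|R_n\|=d^{-ln}\langle\chi S,(f^n)^*\omega^l\rangle\le\|\chi\|_{L^\infty}\,d^{-ln}\langle S,(f^n)^*\omega^l\rangle=\|\chi\|_{L^\infty},$$
where the last equality holds because $S$ is closed, $(f^n)^*\omega^l$ is cohomologous to $d^{ln}\omega^l$, and $\langle S,\omega^l\rangle=1$. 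In particular $(R_n)$ is relatively compact in the space of currents and every limit value $R$ is a positive $(k-l,k-l)$-current.

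For the mass of $R$, write $d^{-n}(f^n)^*\omega=\omega+\ddc g_n$ with $g_n$ smooth and $g_n\to g$ uniformly, where $T=\omega+\ddc g$. Then $d^{-ln}(f^n)^*\omega^l=(\omega+\ddc g_n)^l$, and by the continuity of the wedge product under uniform convergence of potentials, $S\wedge(\omega+\ddc g_n)^l\to S\wedge T^l$ weakly as measures. Pairing with the continuous function $\chi$ gives
$$\|R_n\|=\langle R_n,\omega^l\rangle=\langle S\wedge(\omega+\ddc g_n)^l,\chi\rangle\longrightarrow\langle S\wedge T^l,\chi\rangle=c.$$
Since $\omega^l$ is a fixed smooth form, $\langle R,\omega^l\rangle=\lim_n\langle R_n,\omega^l\rangle=c$, so $R$ has mass exactly $c$.

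The real obstacle is to prove that $R$ is closed, equivalently that $\partial R_n\to0$ and $\dbar R_n\to0$; being complex conjugates of one another it is enough to treat $\partial R_n=d^{-ln}(f^n)_*(\partial\chi\wedge S)$. The key mechanism is that $f_*$ multiplies masses of classes of bidimension $(l,l)$ by $d^{ln}$ but masses of classes of bidimension $(l-1,l-1)$ only by $d^{(l-1)n}$, and a Cauchy–Schwarz inequality allows one to trade the single factor $\partial\chi$ against a term living in the slower bidimension. Concretely, fixing a smooth test $(l-1,l)$-form $\beta$ and decomposing it (using a finite atlas) into a finite sum $\sum_j\bar\alpha_j\wedge\Omega_j$ with $\alpha_j$ smooth $(1,0)$-forms and $\Omega_j$ smooth positive $(l-1,l-1)$-forms, one has, up to signs, $\langle\partial R_n,\beta\rangle=d^{-ln}\sum_j\langle S,\partial\chi\wedge\overline{(f^n)^*\alpha_j}\wedge(f^n)^*\Omega_j\rangle$; since pullbacks of positive forms remain positive, Cauchy–Schwarz for the positive current $S$ yields
$$|\langle S,\partial\chi\wedge\overline{(f^n)^*\alpha_j}\wedge(f^n)^*\Omega_j\rangle|^2\le\langle S,i\partial\chi\wedge\dbar\chi\wedge(f^n)^*\Omega_j\rangle\cdot\langle S,(f^n)^*(i\alpha_j\wedge\bar\alpha_j\wedge\Omega_j)\rangle.$$
Bounding $i\partial\chi\wedge\dbar\chi\le C\omega$ makes the first factor at most $C\langle(f^n)_*(\omega\wedge S),\Omega_j\rangle\le C'd^{(l-1)n}$, because $\omega\wedge S$ is positive closed of bidimension $(l-1,l-1)$; bounding $i\alpha_j\wedge\bar\alpha_j\wedge\Omega_j\le C''\omega^l$ and using that $S$ is closed makes the second factor at most $C''\langle S,(f^n)^*\omega^l\rangle=C''d^{ln}$. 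Multiplying and summing over $j$ gives $|\langle\partial R_n,\beta\rangle|\le C_\beta\,d^{-n/2}\to0$, hence $\partial R_n\to0$ and likewise $\dbar R_n\to0$. Since $d$ is continuous on currents, every limit value $R$ of $(R_n)$ is closed, which completes the proof.
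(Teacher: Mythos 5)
Your proof is correct. Note that the paper does not prove this lemma itself but quotes it from Dinh \cite[Proposition 4.7]{d-attractor}, and your argument --- bounded mass by cohomological invariance of $f^*$ on $H^{l,l}$, identification of the limit mass via uniform convergence of the Green potentials $g_n\to g$, and closedness via the Cauchy--Schwarz estimate that trades the factor $\partial\chi$ against the slower growth $d^{(l-1)n}$ of bidimension $(l-1,l-1)$ classes to gain $d^{-n/2}$ --- is essentially the same as Dinh's original proof.
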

On the other hand, the fact that the potential of $d^{-n}(f^n)^*\omega$ converges uniformly to the one of $T$ implies the following uniform continuity result, cf. \cite[Proposition 5.2]{d-attractor}. We recall that if $S\in\Cc_{k-l}(\Pb^k)$ and $\chi$ is a smooth function then we can define $(d^{-ln}f^n_*\chi S)\wedge T^l$ by $\langle(d^{-ln}f^n_*\chi S)\wedge T^l,\phi\rangle=\langle S\wedge T^l,\chi(f^{n*}\phi)\rangle.$ 
\begin{lemma}\label{le conti ts}
Let $S$ be a current in $\Cc_{k-l}(\Pb^k)$ and let $\chi$ be a positive smooth function. Assume that the sequence $(d^{-ln}(f^n)_*(\chi S))_{n\geq0}$ converges to $cS_\infty.$ If $(n_i)_{i\geq0}$ and $(m_i)_{i\geq0}$ are two sequences converging to $+\infty$ then
$$\lim_{i\to\infty}(d^{-ln_i}f^{n_i}_*\chi S)\wedge(d^{-lm_i}f^{m_i*}\omega^l)=\lim_{i\to\infty}(d^{-ln_i}f^{n_i}_*\chi S)\wedge T^l=cS_\infty\wedge T^l.$$
Moreover, the map from $\Cc_{k-l}(\Pb^k)$ to $\Cc_k(\Pb^k)$ defined by $S\mapsto S\wedge T^l$ is continuous.
\end{lemma}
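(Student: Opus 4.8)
The plan is to exploit that, although the currents $R_n:=d^{-ln}(f^n)_*(\chi S)$ are not closed, their defects $dR_n$ and $\ddc R_n$ tend to $0$ in mass. Write $\Omega_m:=d^{-m}(f^m)^*\omega=\omega+\ddc u_m$ and $T=\omega+\ddc u$, where $u_m$ and $u$ are continuous; the uniform convergence of potentials recalled above gives $\|u_m-u\|_\infty\to0$, in fact $\|u_m-u\|_\infty=O(d^{-m})$. I record three facts about $(R_n)_{n\geq0}$. First, by Lemma \ref{le existence courant} the masses $\|R_n\|$ are uniformly bounded and every limit value of $(R_n)$ is closed of mass $c=\langle S\wedge T^l,\chi\rangle$; by hypothesis $R_n\to cS_\infty$. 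Second, since $S$ is closed, $\ddc R_n=d^{-ln}(f^n)_*(\ddc\chi\wedge S)$, and $\ddc\chi\wedge S$ is a difference of positive \emph{closed} $(k-l+1,k-l+1)$-currents of fixed mass; as $(f^n)_*$ multiplies the mass of such currents by $d^{(l-1)n}$, this gives $\|\ddc R_n\|=O(d^{-n})$. Third, by the standard inequality $\|dR_n\|\lesssim\|R_n\|^{1/2}\|\ddc R_n\|^{1/2}$ for positive currents, $\|dR_n\|=O(d^{-n/2})$. The wedge products of these positive bounded-mass currents with the $T^a$ and the (smooth) $\Omega_m^b$ are well-defined and obey the integration-by-parts calculus of \cite{d-attractor}, with error terms controlled by $\|dR_n\|+\|\ddc R_n\|$.

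First I would replace $T^l$ by $\Omega_{m_i}^l$. Fix a test function $\phi$ and use the telescoping identity
$$\Omega_{m_i}^l-T^l=\sum_{j=0}^{l-1}\Omega_{m_i}^j\wedge\ddc(u_{m_i}-u)\wedge T^{l-1-j}.$$
Pairing $R_{n_i}\wedge(\cdot)$ against $\phi$ and moving $\ddc$ off the continuous function $u_{m_i}-u$, the $j$-th summand becomes $\langle(u_{m_i}-u)\,R_{n_i}\wedge\Omega_{m_i}^j\wedge T^{l-1-j},\ddc\phi\rangle$ plus error terms carrying a factor $\|dR_{n_i}\|$ or $\|\ddc R_{n_i}\|$. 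The currents $R_{n_i}\wedge\Omega_{m_i}^j\wedge T^{l-1-j}$ are positive of uniformly bounded mass, so the main terms are $O(\|u_{m_i}-u\|_\infty\,\|\phi\|_{\mathcal C^2})\to0$, and the error terms are $O\big((\|dR_{n_i}\|+\|\ddc R_{n_i}\|)\,\|\phi\|_{\mathcal C^2}\big)\to0$. Hence $R_{n_i}\wedge\Omega_{m_i}^l-R_{n_i}\wedge T^l\to0$ weakly, which yields the first equality once the second limit is known to exist.

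Then I would identify $\lim_{i}R_{n_i}\wedge T^l$ by induction on $l$, the case $l=0$ being the convergence $R_{n_i}\to cS_\infty$. Assuming the claim for $l-1$, write $R_{n_i}\wedge T=R_{n_i}\wedge\omega+R_{n_i}\wedge\ddc u$. The first term tends to $cS_\infty\wedge\omega$ since $\omega$ is smooth; for the second, $\langle R_{n_i}\wedge\ddc u,\phi\rangle=\langle u\,R_{n_i},\ddc\phi\rangle+O\big((\|dR_{n_i}\|+\|\ddc R_{n_i}\|)\,\|\phi\|_{\mathcal C^2}\big)$ tends to $\langle u\,cS_\infty,\ddc\phi\rangle=\langle cS_\infty\wedge\ddc u,\phi\rangle$, using that $R_{n_i}\to cS_\infty$ as positive currents of bounded mass (hence also against the continuous form $u\,\ddc\phi$) and that the defects vanish in the limit. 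Thus $R_{n_i}\wedge T\to cS_\infty\wedge T$, and this new sequence again has bounded mass and defects $\|d(R_{n_i}\wedge T)\|=\|dR_{n_i}\wedge T\|\to0$ and $\|\ddc(R_{n_i}\wedge T)\|\to0$; the inductive hypothesis applied with $T^{l-1}$ then gives $R_{n_i}\wedge T^l=(R_{n_i}\wedge T)\wedge T^{l-1}\to(cS_\infty\wedge T)\wedge T^{l-1}=cS_\infty\wedge T^l$. Combining with the previous step gives the whole chain of equalities. The last assertion is the special case where all currents are closed: then the defects vanish identically, no error terms appear, and the same induction yields the continuity of $S\mapsto S\wedge T^l$ on $\Cc_{k-l}(\Pb^k)$, which is also the classical continuity of the wedge product with a current having continuous local potentials.

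I expect the main obstacle to be precisely the non-closedness of $R_n$: the whole argument rests on the defect estimates, and the crucial point is that $dR_n$ and $\ddc R_n$ tend to $0$ because $(f^n)_*$ contracts the mass of $(k-l+1,k-l+1)$-currents by an extra factor $d^n$ relative to $(k-l,k-l)$-currents — the same domination of $H^{l,l}$ over $H^{l+1,l+1}$ that underlies Lemma \ref{le existence courant}. A secondary technical point, imported from \cite{d-attractor}, is giving a precise meaning to, and the integration-by-parts rules for, the wedge products of these non-closed positive currents with the Green currents $T^a$, whose local potentials are only continuous.
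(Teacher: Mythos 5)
Your overall strategy (control the non-closedness of $R_n:=d^{-ln}(f^n)_*(\chi S)$ through its defects and integrate by parts against the continuous quasi-potentials) is a genuinely different organization from the paper's proof, which simply transfers everything to the fixed closed current $S$ via the duality formula $\langle R_n\wedge T^l,\phi\rangle=\langle S\wedge T^l,\chi\, f^{n*}\phi\rangle$ recalled just before the statement and invokes \cite[Proposition 5.2]{d-attractor} together with the uniform convergence of the Green quasi-potentials; on that side all wedge products involve only closed currents with continuous potentials and no defect current ever has to be estimated. However, as written your argument has two genuine gaps. First, the inequality $\|dR_n\|\lesssim\|R_n\|^{1/2}\|\ddc R_n\|^{1/2}$ is not a standard fact and is false for general positive currents: on a small ball, $R=(1+\mathrm{Re}\,z_2)\,i\,dz_1\wedge d\bar z_1$ is positive with $\ddc R=0$ but $dR\neq0$. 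The correct general Cauchy--Schwarz bound has the form $\|dR\|\lesssim\|R\|^{1/2}(\|R\|+\|\ddc R\|)^{1/2}$, which with $\|R_n\|$ bounded away from $0$ only gives $\|dR_n\|=O(1)$, not $o(1)$. The true estimate $\|dR_n\|=O(d^{-n/2})$ must be extracted from the specific structure $dR_n=d^{-ln}(f^n)_*(d\chi\wedge S)$ by a Cauchy--Schwarz with respect to the closed current $S$ tested against pulled-back forms, where the two factors grow like $d^{(l-1)n}$ and $d^{ln}$.

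Second, your error terms are bounded by $O\bigl((\|dR_{n_i}\|+\|\ddc R_{n_i}\|)\|\phi\|_{\mathcal C^2}\bigr)$, which tacitly assumes that wedging with the factors $\Omega_{m_i}^j\wedge T^{l-1-j}$ (and later with $T$) preserves mass bounds, i.e.\ $\|\ddc R_{n_i}\wedge\Omega_{m_i}^j\wedge T^{l-1-j}\|\lesssim\|\ddc R_{n_i}\|$ and $\|dR_{n_i}\wedge T\|\lesssim\|dR_{n_i}\|$. Since the $T$-factors are not smooth, mass is not multiplicative under such wedges for general order-zero currents; these bounds are true here only because $\ddc R_n=d^{-ln}(f^n)_*(\ddc\chi\wedge S)$ is a difference of positive closed currents of mass $O(d^{-n})$, whose wedge with the closed positive Green factors has cohomologically small mass, while the $dR_n$-terms again require the structure-specific Cauchy--Schwarz. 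The same remark applies to the asserted uniform mass bound and positivity of $R_{n_i}\wedge\Omega_{m_i}^j\wedge T^{l-1-j}$, and indeed to the very definition of these products for the non-closed $R_n$, which must moreover be shown to agree with the paper's pull-back definition for the lemma to be the one stated. All of these points are settled by pulling the products back onto the $S$ side, $\langle R_{n}\wedge\Omega_{m}^j\wedge T^{l-1-j},\cdot\rangle=\langle\chi S\wedge\Omega_{m+n}^j\wedge T^{l-1-j}\wedge(\cdot\circ\text{pull-back}),\cdot\rangle$ up to the degree normalizations --- but once you do this systematically you are reproducing Dinh's argument, so in its present form the proposal is a plausible sketch whose crucial estimates are either unjustified or rest on a false inequality. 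Your treatment of the final continuity statement (closed currents, no defects, continuity of wedging with a current having continuous potentials) is correct and is exactly the classical argument the paper cites.
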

\begin{proof}
First observe that by Lemma \ref{le existence courant}, $c=\langle S\wedge T^s,\chi\rangle$ and $S_\infty\in\Cc_{k-l}(\Pb^k).$ From this, the proof of the first point is identical to the one of \cite[Proposition 5.2]{d-attractor}. The second point is a classical consequence of the continuity of the potential of $T,$ see e.g. \cite{de-book} or \cite{ds-lec}.
\end{proof}

\subsection{Dimension of an attracting set and attracting currents}\label{subsec dim}
Daurat introduced the following definition in \cite{daurat}. Here, we still use the convention that $s=k-p.$
\begin{definition}[Daurat]\label{def-dim}
Let $A\subset\Pb^k$ be an attracting set. We say that $A$ has codimension $p$ or dimension $s$ if $\Cc_p(A)\neq\varnothing$ and $\Cc_{p-1}(A)=\varnothing.$ The dimension of a trapping region is by definition the dimension of the associated attracting set.
\end{definition}
As observe by Daurat, if $A$ has dimension $s$ then its Hausdorff dimension is larger or equal to $2s.$ The first results of this section give equivalent formulations of this definition and they imply Proposition \ref{prop-intro}. In particular, they show that an attracting set of codimension $p$ in $\Pb^k$ is weakly $p$-pseudoconvex which is a crucial point in our approach.
\begin{proposition}\label{prop def equiva}
Let $A\subset\Pb^k$ be an attracting set for $f.$ For $0\leq l\leq k$ the following properties are equivalent.
\begin{itemize}
\item[1)] $A\cap\Jc_l=\varnothing.$
\item[2)] $\Cc_l(\mathbb P^k\setminus A)\neq\varnothing.$
\item[3)] $\Cc_{k-l}(A)=\varnothing.$
\end{itemize}
\end{proposition}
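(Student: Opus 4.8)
The plan is to prove the three equivalences by establishing the implications $1)\Rightarrow 2)\Rightarrow 3)\Rightarrow 1)$, exploiting the fact that $A$ comes with a trapping region $U$ satisfying $f(U)\Subset U$ and $A=\cap_n f^n(U)$.

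\textbf{Step $1)\Rightarrow 2)$.} Assume $A\cap\Jc_l=\varnothing$, i.e.\ $\supp(T^l)$ is disjoint from $A$. Since $A=\cap_n f^n(\overline U)$ is a decreasing intersection of compacts and $A$ avoids the compact set $\Jc_l$, some $f^{n_0}(\overline U)$ is already disjoint from $\Jc_l$; equivalently $\overline U\cap f^{-n_0}(\Jc_l)=\varnothing$, and since $f^{-1}(\Jc_l)\subset\Jc_l$ one even gets $\overline{U}\cap\Jc_l=\varnothing$ after shrinking. Now $T^l\in\Cc_l(\Pb^k)$ is supported in $\Jc_l\subset\Pb^k\setminus \overline U\subset\Pb^k\setminus A$, so $\Cc_l(\Pb^k\setminus A)\neq\varnothing$. (One must be slightly careful about whether $\Pb^k\setminus A$ is meant as the open complement; since $T^l$ is supported away from a neighborhood of $A$ this is not an issue.)

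\textbf{Step $2)\Rightarrow 3)$.} Suppose there is $S\in\Cc_l(\Pb^k\setminus A)$, and suppose for contradiction that there is also $R\in\Cc_{k-l}(A)$. Both are positive closed currents of complementary bidegrees on $\Pb^k$, so their cohomological intersection number is $\langle S,\omega^{k-l}\rangle\langle R,\omega^l\rangle=1\neq 0$; on the other hand $S$ and $R$ have disjoint supports ($\supp(S)\subset\Pb^k\setminus A$, $\supp(R)\subset A$), so the wedge product $S\wedge R$ is the zero current — one should justify that this wedge product is well-defined here, e.g.\ because the supports are disjoint so locally one of the factors is smooth (zero), making the product trivially defined and equal to $0$. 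This contradicts the fact that the wedge product of positive closed currents represents the cup product in cohomology. Hence $\Cc_{k-l}(A)=\varnothing$.

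\textbf{Step $3)\Rightarrow 1)$ --- the main obstacle.} This is the substantive direction: from the nonexistence of a positive closed $(k-l,k-l)$-current supported in $A$ we must produce $A\cap\Jc_l=\varnothing$. I would argue by contraposition: assume $A\cap\Jc_l\neq\varnothing$ and build an element of $\Cc_{k-l}(A)$. The idea is to start from any smooth form $S_0\in\Cc_{k-l}(\Pb^k)$ (e.g.\ $\omega^{k-l}$) and push it forward by $f$. Since $A$ has a trapping region $U$ with $f(U)\Subset U$, choose a smooth cutoff $\chi\geq 0$ supported in $U$ with $\chi\equiv1$ on a neighborhood of $f(\overline U)$; then $d^{(l-k)n}(f^n)_*(\chi S_0)$ is supported in $f^n(\overline U)\cup(\text{something shrinking})$, essentially in $f(U)\Subset U$, and Lemma~\ref{le existence courant} guarantees that this sequence has bounded mass with every limit value a positive closed $(k-l,k-l)$-current of mass $c=\langle S_0\wedge T^l,\chi\rangle$. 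The crux is that $c>0$: this is exactly where $A\cap\Jc_l\neq\varnothing$ enters, since $\Jc_l=\supp(T^l)$ meets $A\subset\{\chi=1\}$, forcing $\langle S_0\wedge T^l,\chi\rangle\geq\langle T^l,\mathbf 1_{\text{nbhd of }A}\,\omega^{k-l}\rangle>0$ (using that $\omega^{k-l}$ is strictly positive, so $S_0\wedge T^l$ has positive mass near any point of $\Jc_l$). Normalizing a limit value by $c$ gives a current in $\Cc_{k-l}(\overline{f(U)})\subset\Cc_{k-l}(U)$; then iterating this construction (pushing forward again and taking limits, or directly taking a limit value of the $f^n$-pushforwards which automatically concentrates on $\cap_m f^m(\overline U)=A$) yields a nonzero positive closed current supported in $A$, hence after normalization an element of $\Cc_{k-l}(A)$, contradicting $3)$. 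The delicate points here are: (i) controlling the supports so that the limit current is genuinely supported in $A$ rather than merely in $\overline U$ — this requires pushing forward "sufficiently many times" and using $A=\cap_n f^n(\overline U)$ together with a diagonal/limit argument; and (ii) ensuring the cutoff $\chi$ does not destroy positivity of the mass, which is why $\chi$ must equal $1$ on a full neighborhood of $A$. I expect step (i) to be where the real work lies, and I would handle it by noting that any weak limit of $d^{(l-k)n}(f^n)_*(\chi S_0)$ is supported in $\bigcap_{N}\overline{\bigcup_{n\geq N}f^n(\{\chi\neq 0\})}\subset\bigcap_N f^N(\overline U)=A$, using $f(\supp\chi)\subset f(\overline U)\Subset U$ so that $f^n(\supp\chi)\subset f^n(\overline U)$ is a decreasing sequence of compacts shrinking to $A$.
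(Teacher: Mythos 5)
Your proof is correct and follows essentially the same route as the paper: $T^l$ itself witnesses $1)\Rightarrow 2)$, the implication $2)\Rightarrow 3)$ is the classical fact that supports of positive closed currents of complementary bidegree in $\Pb^k$ must intersect (which the paper simply quotes as well-known), and $3)\Rightarrow 1)$ is proved by contraposition exactly as in the paper, applying Lemma \ref{le existence courant} to $\chi\,\omega^{k-l}$ with a cutoff $\chi$ equal to $1$ near $A$ and using $A=\cap_n f^n(U)$ to locate the limit current in $A$. The only blemish is the normalization $d^{(l-k)n}(f^n)_*$, which should be $d^{-ln}(f^n)_*$ since $f_*$ acts on $H^{k-l,k-l}(\Pb^k,\Rb)$ by multiplication by $d^{l}$; this is harmless, as you invoke the lemma with the correct mass $c=\langle S_0\wedge T^l,\chi\rangle$ and the support argument is unaffected.
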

\begin{proof}
As by definition $\Jc_l=\supp(T^l),$ if $A\cap\Jc_l=\varnothing$ then $T^l$ belongs to $\Cc_l(\mathbb P^k\setminus A)$ which is therefore not empty.

It is well-known by a cohomological argument that if $S$ is in $\Cc_l(\Pb^k)$ and $R\in\Cc_{k-l}(\Pb^k)$ then the support of $S$ intersects the one of $R.$ Therefore, if $\Cc_{l}(\Pb^k\setminus A)\neq\varnothing$ then the support of each current in $\Cc_{k-l}(\Pb^k)$ intersects $\Pb^k\setminus A.$ In particular $\Cc_{k-l}(A)=\varnothing.$

Finally, to prove that 3) implies 1), we proceed by contraposition. Let $U$ be a trapping region for $A$ and assume that $A\cap\Jc_l\neq\varnothing.$ Therefore, there exists a positive smooth function $\chi$ with compact support in $U$ such that $c:=\|\chi\omega^{k-l}\wedge T^l\|\neq 0.$ Hence, using Lemma \ref{le existence courant} and the fact that $f(U)\Subset U$ and $A=\cap_{n\geq0}f^n(U),$ each limit value of $d^{-ln}(f^n)_*(\chi \omega^{k-l})$ is of the form $cR$ with $R\in\Cc_{k-l}(A).$ In particular, $\Cc_{k-l}(A)$ is not empty.
\end{proof}
\begin{corollary}\label{coro p-pseudo}
Let $A\subset\Pb^k$ be an attracting set for $f$ with a trapping region $U.$ Then the following properties are equivalent.
\begin{itemize}
\item[1)] $A$ is of dimension $s.$
\item[2)] $A\cap \Jc_s\neq\varnothing$ and $A\cap \Jc_{s+1}=\varnothing.$
\item[3)] $\Cc_p(A)\neq\varnothing$ and $\Cc_{s+1}(\Pb^k\setminus A)\neq\varnothing.$
\end{itemize}
In particular, if $A$ is of dimension $s$ then $A$ and $\overline U$ are weakly $p$-pseudoconvex.
\end{corollary}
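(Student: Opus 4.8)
The plan is to combine Proposition \ref{prop def equiva} with the definition of dimension and with the pseudoconvexity criterion already recalled in Section \ref{subsec p-pseudo}. First I would establish the equivalence of 1) and 3): by Proposition \ref{prop def equiva} applied with $l=s$, the condition $A\cap\Jc_s\neq\varnothing$ is equivalent to $\Cc_{k-s}(A)=\Cc_p(A)\neq\varnothing$ (this is the negation of item 3) of that proposition with $l=s$), and applied with $l=s+1$, the condition $A\cap\Jc_{s+1}=\varnothing$ is equivalent to $\Cc_{s+1}(\Pb^k\setminus A)\neq\varnothing$, which by that same proposition (item 2 $\Leftrightarrow$ item 3 with $l=s+1$) is equivalent to $\Cc_{k-s-1}(A)=\Cc_{p-1}(A)=\varnothing$. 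Putting these two equivalences together shows that 3) holds if and only if $\Cc_p(A)\neq\varnothing$ and $\Cc_{p-1}(A)=\varnothing$, which is exactly Definition \ref{def-dim}, i.e. item 1).

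Next I would establish the equivalence of 1) and 2), which is even more direct: by the same two instances of Proposition \ref{prop def equiva}, $A\cap\Jc_s\neq\varnothing$ is equivalent to $\Cc_p(A)\neq\varnothing$ and $A\cap\Jc_{s+1}=\varnothing$ is equivalent to $\Cc_{p-1}(A)=\varnothing$, so 2) is again a restatement of Definition \ref{def-dim}. (In fact 2) and 3) are each literally the translation of ``dimension $s$'' through Proposition \ref{prop def equiva}, so once one writes down the dictionary there is essentially nothing left to prove; the content is entirely in Proposition \ref{prop def equiva}.) This also yields Proposition \ref{prop-intro} as the special case 1)$\Leftrightarrow$2).

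For the final assertion, once we know $A$ is of dimension $s$ we have $\Cc_{s+1}(\Pb^k\setminus A)\neq\varnothing$ by the equivalence with 3). I would then invoke the remark recalled just after the definition of weakly $p$-pseudoconvex sets: a compact subset $K$ of $\Pb^k$ with $\Cc_{s+1}(\Pb^k\setminus K)\neq\varnothing$ is weakly $p$-pseudoconvex (via the Forn{\ae}ss--Sibony result on $(p-1)$-pseudoconvexity). Applying this to $K=A$ gives that $A$ is weakly $p$-pseudoconvex. For $\overline U$, one uses that $f(U)\Subset U$ forces $\overline U$ to shrink onto $A$ under iteration, so $\Cc_{s+1}(\Pb^k\setminus \overline U)$ is also non-empty — more precisely, since $T^{s+1}$ is supported on $\Jc_{s+1}$ and $\Jc_{s+1}\cap A=\varnothing$ while $A=\cap_n f^n(U)$, the current $d^{-(s+1)n}(f^n)^*\omega^{s+1}$ remains of mass $1$ and its limit $T^{s+1}$ has support avoiding a neighborhood of $A$; choosing $n$ large enough one can even arrange $\overline U$ (or a slightly smaller trapping region replacing $U$) to be disjoint from $\Jc_{s+1}$, hence weakly $p$-pseudoconvex by the same criterion.

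The only genuinely delicate point is the very last one: carefully arranging that a trapping region, not merely the attracting set, avoids $\Jc_{s+1}$. If one insists on the given $U$ this need not be immediate, but since in the applications we are always free to replace $U$ by $f^{n}(U)$ for large $n$ (still a trapping region for $A$), the statement as used is unproblematic; I would phrase it as ``up to shrinking $U$'' or simply prove it for $f^{n}(U)$. Everything else is a bookkeeping exercise in reading off the three equivalent conditions from Proposition \ref{prop def equiva} with $l=s$ and $l=s+1$.
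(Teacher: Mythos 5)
Your treatment of the equivalences 1)$\Leftrightarrow$2)$\Leftrightarrow$3) is exactly the paper's: both reduce to Proposition \ref{prop def equiva} applied with $l=s$ and $l=s+1$, and there is indeed nothing more to it. The same goes for deducing that $A$ is weakly $p$-pseudoconvex from $T^{s+1}\in\Cc_{s+1}(\Pb^k\setminus A)$.

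The gap is in the last assertion, and you have correctly located but not closed it: the corollary claims that $\overline U$ itself is weakly $p$-pseudoconvex for the \emph{given} trapping region $U$, not for $f^n(U)$ with $n$ large or ``up to shrinking $U$''. Your fallback changes the statement, and that is not harmless: in the sequel the weak $p$-pseudoconvexity of $\overline U$ for an arbitrary codimension $p$ trapping region is what legitimizes the space $\Pc(U)$ and the use of Theorem \ref{th sh} on structural disks in $\Cc_p(U)$ (e.g.\ in the proof of Theorem \ref{th conv1}). The missing idea is the invariance of $T^{s+1}$: it is invariant under the normalized push-forward $d^{-(p-1)}f_*$ (and under $d^{-(s+1)}f^*$), so its support $\Jc_{s+1}$ is invariant under $f$. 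If some $x$ belonged to $\Jc_{s+1}\cap\overline U$, then $f^n(x)\in\Jc_{s+1}\cap f^n(\overline U)$ for all $n$; since $f(\overline U)\subset U$, the compact sets $f^n(\overline U)$ decrease to $A$, so any cluster point of $(f^n(x))_{n\geq1}$ lies in $\Jc_{s+1}\cap A$, which is empty by 2). Hence $\Jc_{s+1}\cap\overline U=\varnothing$ already for the original $U$, i.e.\ $T^{s+1}\in\Cc_{s+1}(\Pb^k\setminus\overline U)$, and the Forn\ae ss--Sibony criterion you invoke then gives the weak $p$-pseudoconvexity of $\overline U$ with no modification of $U$. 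This one-line invariance argument is precisely how the paper concludes.
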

\begin{proof}
As $s=k-p,$ the first part is a direct consequence of Proposition \ref{prop def equiva}. This implies that if $A$ is of dimension $s$ then $T^{s+1}$ is in $\Cc_{s+1}(\Pb^k\setminus A).$ Therefore $T^{s+1}$ is also in $\Cc_{s+1}(\Pb^k\setminus\overline U)$ since it is invariant by $d^{-(p-1)}f_*.$ Hence $\overline U$ is weakly $p$-pseudoconvex.
\end{proof}
This result was already obtained in \cite{fw-attractor} for minimal quasi-attractors when $k=2.$ More precisely, they prove that a minimal quasi-attractor of $\Pb^k$ which is not finite must contain an entire curve.

In \cite{d-attractor}, Dinh considers trapping regions $U$ satisfying geometrical conditions that we denote by (HD) : there exist two linear subspaces $I$ and $L$ of dimension $p-1$ and $s$ respectively such that $I\cap U=\varnothing$ and $L\subset U.$ Moreover, for each $x\in L$ the unique dimension $p$ linear subspace $I(x)$ containing $I$ and $x$ has to intersect $U$ in a subset which is star-shaped with respect to $x$ in $I(x)\setminus I\simeq\Cb^p.$ In particular, $U$ has dimension $s.$ The key point in these assumptions (HD) is that for each  element $S$ of $\Cc_p(U)$ there exists a canonical structural disk which links $S$ to $[L].$ For such trapping regions, Dinh obtains almost all the results contained in this section. However, in a general setting it seems very difficult to investigate the structure of $\Cc_p(U).$ 

We now fix the setting for the rest of this section. From now on, $f$ is an element of $\Hc_d(\Pb^k)$ with $d\geq2.$ Since we are especially interested in the action of $f$ on $\Cc_p(\Pb^k),$ we define the following operators from $\Cc_p(\Pb^k)$ to itself. For $S$ is in $\Cc_p(\Pb^k),$ set
$$\Lambda S:=d^{-s}f_*S,$$
and for each $n\geq1$
$$\Delta_nS:=\frac{1}{n}\sum_{i=1}^n\Lambda^iS.$$
If $U$ is a trapping region of codimension $p$ then we consider the following two special subset of $\Cc_p(U).$ Define $\Ic_p(U)$ as the set of currents $S\in\Cc_p(U)$ such that $\Lambda S=S.$ Since $f(U)\Subset U,$ each limit value of $(\Delta_nS)_{n\geq1}$ with $S\in\Cc_p(U)$ belongs to $\Ic_p(U).$ Hence, $\Ic_p(U)$ is non-empty as $\Cc_p(A)\neq\varnothing.$ The set $\Dc_p(U)$ was introduced by Dinh in \cite{d-attractor}. It consists of all possible limit values of sequences of the form $(\Lambda^nS_n)_{n\geq0}$ with $S_n\in\Cc_p(U).$ As observed by Dinh, $S$ is in $\Dc_p(U)$ if and only if there exists a sequence $(S_n)_{n\geq0}$ in $\Dc_p(U)$ such that $S=S_0$ and $\Lambda S_{n+1}=S_n.$ We can now give the definition of an attracting current in $\Cc_p(U).$

\begin{definition}\label{def-attracting-current}
Let $U$ be a trapping region of codimension $p.$ We say that a current $\tau\in\Cc_p(U)$ is \emph{attractive} on $U$ if
$$\lim_{n\to\infty}\Delta_nR=\tau,$$
for all continuous form $R$ in $\Cc_p(U).$ In this situation, we say that $\tau$ is an \emph{attracting current}.
\end{definition}
Observe that if $\tau\in\Cc_p(U)$ is attractive on $U$ then it is the unique attracting current in $\Cc_p(U).$ We will see in the sequel that the convergence toward an attracting current is more general and that $R$ need not be closed nor positive. An important issue for what follows is to obtain a lower bound to the size of a region on which $\tau$ is attractive. To this purpose we consider the two following numbers, $r_U$ and $\eta_U,$ which are related to the rate of contraction of $f(U)\Subset U.$
\begin{definition}\label{def-taille}
Let $U\subset\Pb^k$ be a trapping region. We denote by $r_U$ the maximum of the number $0<r\leq1$ such that $f\circ\sigma(\overline{U})\subset U$ for all $\sigma\in B_W(r).$ And we set $\eta_U:=\eta(r_U),$ where $\eta(r_U)$ is defined in Lemma \ref{le transi}.
\end{definition}
An important observation is that $f(U_{\eta_U})\subset U.$ The number $r_U$ depends on $f$ and it increases if we exchange $f$ by an iterate.

\subsection{Trapping regions associated to an invariant current}\label{subsec trap}
In this subsection we will construct for each invariant current a family of trapping regions. The construction is elementary and only uses the fact that the support of an invariant current is also invariant. However, we will later see that these trapping regions are related to subsets of $\Cc_p(U)$ which are in some sense pathwise-connected and this fact will be essential in the sequel.

Let $K\subset\Pb^k$ be an invariant compact subset for $f,$ $f(K)=K.$ If $0<r\leq1$ then we define $\Nc_K(r)$ to be the set of pseudo-orbits obtained with $B_W(r)$ and starting on $K.$ To be more precise, a point $x$ belongs to $\Nc_K(r)$ if and only if there exist a finite sequence $\sigma_1,\ldots,\sigma_l$ in $B_W(r)$ and a point $y\in K$ such that
$$x=f\circ\sigma_1\circ f\circ\ldots\circ f\circ\sigma_l(y).$$
Notice that $\Nc_K(r)$ depends only on $r,$ $K,$ $W$ and $f.$ Obviously, since $f(K)=K$ and $\id\in B_W(r)$ we have $K\subset\Nc_K(r).$ As $f$ is an open map and $B_W(r)$ is open, it is easy to check that $\Nc_K(r)$ is open for all $0<r\leq1.$ Moreover, by definition $f\circ\sigma(\Nc_K(r))\subset\Nc_K(r)$ for all $\sigma\in B_W(r).$ Since $\overline{\Nc_K(r)}\subset\cup_{\sigma\in B_W(r)}\sigma(\Nc_K(r)),$ it follows that $f(\Nc_K(r))\Subset\Nc_K(r),$ i.e. $\Nc_K(r)$ is a trapping region. We can also observe that $\overline{\Nc_K(r')}\subset\Nc_K(r)$ for all $r'<r$ since $f$ is an open map and therefore
$$\Nc_K(r)=\bigcup_{0<r'<r}\overline{\Nc_K(r')}.$$
In particular, for each $0<r\leq1$ there exists $0<r'<r$ such that $f(\Nc_K(r))\Subset\Nc_K(r').$ Moreover, observe that $f(\Nc_K(r)_{\eta(r)})\subset\Nc_K(r)$ where $\eta(r)$ is defined in Lemma \ref{le transi} and is independent of $K.$

In general $\Nc_K(r)$ can be much bigger than $K.$ For example on $\Pb^1,$ if $K$ is a Siegel fixed point then $\Nc_K(r)=\Pb^1$ for all $r>0.$ In the same way, $\Nc_{\Jc_k}(r)=\Pb^k$ for all $r>0.$ However, if $K$ is an attracting set then the attracting set associated to $\Nc_K(r)$ is $K$ for $r>0$ small enough.

In the sequel, we are interested to the following setting. Let $U$ be a trapping region of codimension $p.$ Let $S$ be a current in $\Ic_p(U).$ The set $\supp(S)$ is invariant thus we can define for $0<r\leq1$
$$\Nc_S(r):=\Nc_{\supp(S)}(r).$$
Since $f\circ\sigma(U)\subset U$ for all $\sigma\in B_W(r_U),$ the set $\Nc_S(r)$ is contained in $U$ if $r\leq r_U.$ Moreover, $\supp(S)\subset\Nc_S(r)$ hence $\Cc_p(\Nc_S(r))\neq\varnothing.$ Therefore, $\Nc_S(r)$ is a trapping region of codimension $p$ if $r\leq r_U.$ We also consider the following subsets of $\Cc_p(\Pb^k).$ For $0<r<1,$ define $\overline{\Cc_S(r)}$ as the smallest closed convex set in $\Cc_p(\Pb^k)$ which contains all currents of the form
$$\Lambda\sigma_{1*}\Lambda\sigma_{2*}\cdots\Lambda\sigma_{l*}S,$$
where $\sigma_1,\ldots,\sigma_l$ are elements of $B_W(r).$ The discussion above implies that $\overline{\Cc_S(r)}\subset\Cc_p(U)$ when $r<r_U.$ Finally, for $0<r\leq1,$ we set $\Cc_S(r):=\cup_{r'<r}\overline{\Cc_S(r')}.$
\begin{proposition}
Let $0<r\leq1.$ The set $\Nc_S(r)$ satisfies
$$\Nc_S(r)=\bigcup_{R\in\Cc_S(r)}\supp(R).$$
\end{proposition}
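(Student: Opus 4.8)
The plan is to prove the two inclusions $\Nc_S(r)\subset\bigcup_{R\in\Cc_S(r)}\supp(R)$ and $\bigcup_{R\in\Cc_S(r)}\supp(R)\subset\Nc_S(r)$ separately, exploiting the parallel structures of the two constructions: both are built from finite strings $f\circ\sigma_1\circ\cdots\circ f\circ\sigma_l$ with $\sigma_i\in B_W(r')$ for $r'<r$, one acting on points of $\supp(S)$ and the other on the current $S$ via $\Lambda\sigma_{i*}$. The link between these two actions is the elementary fact that $\supp(\Lambda\sigma_* R)=\supp(d^{-s}f_*\sigma_* R)=f(\sigma(\supp(R)))$, since $f_*$ and $\sigma_*$ push supports forward exactly (a homeomorphism and a finite map). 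Iterating, $\supp(\Lambda\sigma_{1*}\cdots\Lambda\sigma_{l*}S)=f\circ\sigma_1\circ f\circ\cdots\circ f\circ\sigma_l(\supp(S))$, which is precisely a ``generation-$l$'' piece of $\Nc_{\supp(S)}(r')$.

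For the inclusion ``$\subset$'', take $x\in\Nc_S(r)$. By definition of $\Nc_K(r)$ (with $K=\supp(S)$) there exist $r'<r$ — actually one may even take $r'\le r$, but using $r'<r$ keeps us inside $\Cc_S(r)$ — elements $\sigma_1,\dots,\sigma_l\in B_W(r')$ and $y\in\supp(S)$ with $x=f\circ\sigma_1\circ f\circ\cdots\circ f\circ\sigma_l(y)$. Here I must be slightly careful: the definition of $\Nc_K(r)$ uses $B_W(r)$, not $B_W(r')$, so a point $x\in\Nc_S(r)$ a priori uses $\sigma_i\in B_W(r)$. However, by the openness of $f$ and of each $B_W(r)$, the map $f\circ\sigma_1\circ\cdots\circ f\circ\sigma_l$ is open, so a neighbourhood of $x$ lies in its image; equivalently, perturbing the $\sigma_i$ slightly (which is possible since $B_W(r)=\bigcup_{r'<r}B_W(r')$) one sees that in fact $\Nc_K(r)=\bigcup_{r'<r}\{\text{pseudo-orbits with }B_W(r')\}$, mirroring the identity $\Nc_K(r)=\bigcup_{r'<r}\overline{\Nc_K(r')}$ already recorded in the excerpt. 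So we may assume $\sigma_i\in B_W(r')$ with $r'<r$. Now disintegrate $S$ over its support: there is a probability measure decomposing $S$ into positive $(p,p)$-currents supported at single points of $\supp(S)$, in particular one may choose such a component $S_y$ with $y\in\supp(S_y)\subset\supp(S)$; set $R:=\Lambda\sigma_{1*}\cdots\Lambda\sigma_{l*}S_y$, suitably normalised to have mass $1$. Then $R\in\overline{\Cc_S(r')}\subset\Cc_S(r)$ (the defining set of $\overline{\Cc_S(r')}$ consists of currents of exactly this form, and taking a component only shrinks the support while staying in the closed convex hull after renormalisation — alternatively keep $S$ itself and note $x\in\supp(\Lambda\sigma_{1*}\cdots\Lambda\sigma_{l*}S)$ directly), and $x=f\circ\sigma_1\circ\cdots\circ f\circ\sigma_l(y)\in\supp(R)$ by the support identity above. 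Hence $x\in\bigcup_{R\in\Cc_S(r)}\supp(R)$.

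For the reverse inclusion, let $R\in\Cc_S(r)$, so $R\in\overline{\Cc_S(r')}$ for some $r'<r$. The set $\Nc_S(r')$ is open and $f\circ\sigma(\Nc_S(r'))\subset\Nc_S(r')$ for all $\sigma\in B_W(r')$; the support identity gives $\supp(\Lambda\sigma_{1*}\cdots\Lambda\sigma_{l*}S)=f\circ\sigma_1\circ\cdots\circ f\circ\sigma_l(\supp(S))\subset\Nc_S(r')$ for all $\sigma_i\in B_W(r')$, since $\supp(S)\subset\Nc_S(r')$. Thus every generator of $\overline{\Cc_S(r')}$ is supported in $\overline{\Nc_S(r')}$. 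Now I need that taking closed convex combinations does not enlarge the support beyond $\overline{\Nc_S(r')}$: a convex combination $\sum t_j R_j$ has support contained in $\bigcup_j\supp(R_j)\subset\overline{\Nc_S(r')}$, and a weak limit of currents supported in the closed set $\overline{\Nc_S(r')}$ is again supported there. Hence $\supp(R)\subset\overline{\Nc_S(r')}\subset\Nc_S(r)$, using $\overline{\Nc_S(r')}\subset\Nc_S(r)$ for $r'<r$ (again from the excerpt). Combining, $\bigcup_{R\in\Cc_S(r)}\supp(R)\subset\Nc_S(r)$, which finishes the proof.

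The main obstacle is bookkeeping the radii $r'<r$ consistently between the two constructions and justifying the disintegration of $S$ into point-supported currents used in the ``$\subset$'' direction; this disintegration is standard (it is exactly the device invoked in the proof of Proposition \ref{prop regu}), but one should make sure the component retains a point of the support one wants and can be renormalised to mass $1$ without leaving the closed convex set $\overline{\Cc_S(r')}$ — alternatively, as noted above, one avoids disintegration entirely by observing directly that $x\in f\circ\sigma_1\circ\cdots\circ f\circ\sigma_l(\supp(S))=\supp(\Lambda\sigma_{1*}\cdots\Lambda\sigma_{l*}S)$ and that this last current already lies in $\overline{\Cc_S(r')}\subset\Cc_S(r)$. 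With that simplification the proof is a clean two-line matching of the two inductive definitions through the identity $\supp(\Lambda\sigma_*R)=f(\sigma(\supp R))$.
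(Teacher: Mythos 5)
Your proof is correct and, once you take the simplification you yourself point out (keep $S$ itself and observe $x\in\supp(\Lambda\sigma_{1*}\cdots\Lambda\sigma_{l*}S)$ directly), it coincides with the paper's argument: the finitely many $\sigma_i\in B_W(r)$ lie in some $B_W(r')$ with $r'<r$, so that generator belongs to $\overline{\Cc_S(r')}\subset\Cc_S(r)$, while conversely every generator of $\overline{\Cc_S(r')}$ is supported in $\Nc_S(r')$, and convex combinations and weak limits remain supported in $\overline{\Nc_S(r')}\subset\Nc_S(r)$. The disintegration detour should simply be dropped: a point-supported component $S_y$ need not lie in $\overline{\Cc_S(r')}$, which by definition only contains limits of convex combinations of push-forwards of the whole current $S$, so the direct version is the right (and the paper's) one.
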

\begin{proof}
If $x$ is in $\Nc_S(r)$ then there exist $\sigma_1,\ldots,\sigma_l$ in $B_W(r)$ and $y\in\supp(S)$ such that $x=f\circ\sigma_1\circ f\circ\ldots\circ f\circ\sigma_l(y).$ In particular, $x$ is in the support of $R:=\Lambda\sigma_{1*}\Lambda\sigma_{2*}\cdots\Lambda\sigma_{l*}S.$ But there exists $r'<r$ such that $\sigma_1,\ldots,\sigma_l$ are also in $B_W(r').$ Hence, $R$ is in $\overline{\Cc_S(r')}\subset\Cc_S(r)$ and $\Nc_S(r)\subset\cup_{R\in\Cc_S(r)}\supp(R).$

Conversely, a current of the form $R:=\Lambda\sigma_{1*}\Lambda\sigma_{2*}\cdots\Lambda\sigma_{l*}S$ with $\sigma_1,\ldots,\sigma_l$ in $B_W(r')$ is in $\Cc_p(\Nc_S(r'))$ and therefore $\overline{\Cc_S(r')}\subset\Cc_p(\overline{\Nc_S(r')}).$ The second inclusion follows since
$$\Nc_S(r)=\bigcup_{0<r'<r}\overline{\Nc_S(r')}\ \ \text{and}\ \ \Cc_S(r)=\bigcup_{r'<r}\overline{\Cc_S(r')}.$$
\end{proof}
As we are especially interested in invariant currents, we define $\overline{\Ic_S(r)}:=\overline{\Cc_S(r)}\cap\Ic_p(U).$ It is a compact convex set and it satisfies the following monotonic property.
\begin{lemma}\label{le monotonic}
Let $0<r<1$ and let $S$ be in $\Ic_p(U).$ If $R$ is in $\overline{\Ic_S(r)}$ then
$$\overline{\Cc_R(r)}\subset\overline{\Cc_S(r)}\ \ \text{and}\ \ \overline{\Ic_R(r)}\subset\overline{\Ic_S(r)}.$$
\end{lemma}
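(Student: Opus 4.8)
The statement to prove is Lemma \ref{le monotonic}: if $R\in\overline{\Ic_S(r)}$ then $\overline{\Cc_R(r)}\subset\overline{\Cc_S(r)}$ and $\overline{\Ic_R(r)}\subset\overline{\Ic_S(r)}$. The first inclusion is the substantive one; the second follows from it by intersecting both sides with $\Ic_p(U)$, since $\overline{\Ic_R(r)}=\overline{\Cc_R(r)}\cap\Ic_p(U)$ and likewise for $S$. So I would concentrate on showing $\overline{\Cc_R(r)}\subset\overline{\Cc_S(r)}$.

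The key observation is that $\overline{\Cc_S(r)}$ is defined as the smallest closed convex set containing all currents $\Lambda\sigma_{1*}\Lambda\sigma_{2*}\cdots\Lambda\sigma_{l*}S$ with $\sigma_i\in B_W(r)$, and that each of the operators $\Lambda$ and $\sigma_*$ (for $\sigma\in B_W(r)$) is affine (in fact linear) and weakly continuous on the space of currents. First I would check the base case: for any $R$ in the set of generators of $\overline{\Cc_S(r)}$, i.e. $R=\Lambda\sigma_{1*}\cdots\Lambda\sigma_{l*}S$, and for any $\sigma'_1,\dots,\sigma'_m\in B_W(r)$, the current $\Lambda\sigma'_{1*}\cdots\Lambda\sigma'_{m*}R=\Lambda\sigma'_{1*}\cdots\Lambda\sigma'_{m*}\Lambda\sigma_{1*}\cdots\Lambda\sigma_{l*}S$ is again one of the generators of $\overline{\Cc_S(r)}$ (it is a word of the same form with $m+l$ letters, all from $B_W(r)$). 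Hence for such $R$ the entire generating family of $\overline{\Cc_R(r)}$ sits inside $\overline{\Cc_S(r)}$, and since $\overline{\Cc_S(r)}$ is closed and convex we get $\overline{\Cc_R(r)}\subset\overline{\Cc_S(r)}$.

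The remaining step is to upgrade from generators $R$ to arbitrary $R$ in the closed convex hull. I would argue as follows. Fix a word $w(\cdot)=\Lambda\sigma'_{1*}\cdots\Lambda\sigma'_{m*}(\cdot)$ with $\sigma'_i\in B_W(r)$; this is a weakly continuous affine map $\Cc_p(\Pb^k)\to\Cc_p(\Pb^k)$. The set $G:=\{R'\in\Cc_p(\Pb^k):\ w(R')\in\overline{\Cc_S(r)}\ \text{for every such word }w\}$ is an intersection of preimages of the closed convex set $\overline{\Cc_S(r)}$ under weakly continuous affine maps, hence closed and convex; by the base case it contains all the generators of $\overline{\Cc_S(r)}$, hence it contains $\overline{\Cc_S(r)}$ itself, and in particular $R\in\overline{\Cc_S(r)}$ implies $w(R)\in\overline{\Cc_S(r)}$ for all words $w$. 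Thus every generator of $\overline{\Cc_R(r)}$ lies in $\overline{\Cc_S(r)}$, and again by closedness and convexity $\overline{\Cc_R(r)}\subset\overline{\Cc_S(r)}$. Finally, intersecting with $\Ic_p(U)$ (and using that $\overline{\Cc_R(r)}\subset\Cc_p(U)$ since $r<r_U$ — or more precisely invoking the definition $\overline{\Ic_S(r)}=\overline{\Cc_S(r)}\cap\Ic_p(U)$ directly) yields $\overline{\Ic_R(r)}\subset\overline{\Ic_S(r)}$.

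**Main obstacle.** There is no deep obstacle here; the lemma is essentially a formal consequence of the "semigroup" structure of the generating words together with closedness and convexity. The only point requiring a little care is the passage from generators to the closed convex hull: one must phrase the argument so as to use only that the maps $R'\mapsto\Lambda\sigma'_{1*}\cdots\Lambda\sigma'_{m*}R'$ are affine and weakly continuous and that $\overline{\Cc_S(r)}$ is closed and convex — the trick being to observe that $\{R':w(R')\in\overline{\Cc_S(r)}\}$ is itself closed and convex, so membership propagates from the generators to their closed convex hull. One should also note in passing that the hypothesis $R\in\overline{\Ic_S(r)}$ (rather than merely $R\in\overline{\Cc_S(r)}$) is only needed to make $\overline{\Ic_R(r)}$ a meaningful object living in $\Ic_p(U)$; for the first inclusion $R\in\overline{\Cc_S(r)}$ alone suffices.
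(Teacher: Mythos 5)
Your proof is correct, and its core is the same as the paper's: the composition property of the words $\Lambda\sigma_{1*}\cdots\Lambda\sigma_{l*}$ together with the closed convex structure of $\overline{\Cc_S(r)}$ and the weak continuity and linearity of $\Lambda$ and $\sigma_*$. The execution, however, differs. The paper argues with explicit approximations: it writes $R$ as a limit of averages $R_n=\int_{B_W(r)^n}\Lambda\sigma_{1*}\cdots\Lambda\sigma_{n*}S\,d\rho_n(\sigma_1,\ldots,\sigma_n)$, writes $R'\in\overline{\Cc_R(r)}$ as a limit of analogous averages built from $R$, and then substitutes $R_{\psi(n)}$ for $R$ along a diagonal sequence $\psi(n)$; that diagonal step rests precisely on the continuity of the averaging operators that you invoke (plus metrizability of the weak topology on mass-one currents, and the invariance $\Lambda R=R$ to pad words to a common length). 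You replace the diagonal extraction by the observation that $\bigl\{R'\in\Cc_p(\Pb^k):\ w(R')\in\overline{\Cc_S(r)}\ \text{for every word }w\bigr\}$ is closed, convex and contains the generators, hence contains $\overline{\Cc_S(r)}$; this makes the continuity input explicit, dispenses with the sequential bookkeeping, and, as you note, yields the first inclusion for any $R\in\overline{\Cc_S(r)}$ without using invariance, the hypothesis $R\in\overline{\Ic_S(r)}$ serving only to make $\overline{\Ic_R(r)}$ meaningful and to conclude via $\overline{\Ic_R(r)}=\overline{\Cc_R(r)}\cap\Ic_p(U)$. Both routes are sound; yours is a slightly more abstract but cleaner packaging of the same idea.
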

\begin{proof}
Observe that the second inclusion follows easily from the first one. Assume that $R\in\overline{\Ic_S(r)}$ and let $R'$ be in $\overline{\Cc_R(r)}.$ As $R$ is in $\overline{\Cc_S(r)},$ by definition there exists a sequence of probability measures $(\rho_n)_{n\geq1}$ where $\rho_n$ is defined on $B_W(r)^n$ such that
$$R_n:=\int_{B_W(r)^n} \Lambda\sigma_{1*}\cdots\Lambda\sigma_{n*}Sd\rho_n(\sigma_1,\ldots,\sigma_n)$$
converge toward $R.$ In the same way, there exists a sequence of probability measures $(\rho'_n)_{n\geq1}$ such that
$$R'_n:=\int_{B_W(r)^n} \Lambda\sigma_{1*}\cdots\Lambda\sigma_{n*}Rd\rho'_n(\sigma_1,\ldots,\sigma_n)$$
converge to $R'.$ Therefore, it is easy to see that there exists an increasing sequence $(\psi(n))_{n\geq1}$ such that the sequence of currents defined by
$$\int_{B_W(r)^n} \Lambda\sigma'_{1*}\cdots\Lambda\sigma'_{n*}\left(\int_{B_W(r)^{\psi(n)}} \Lambda\sigma_{1*}\cdots\Lambda\sigma_{\psi(n)*}Sd\rho_{\psi(n)}(\sigma_1,\ldots,\sigma_{\psi(n)})\right)d\rho'_n(\sigma_1,\ldots,\sigma_n),$$
converge to $R'.$ Hence, $R'$ is in $\overline{\Cc_S(r)}.$
\end{proof}
A current $R$ in $\overline{\Cc_S(r)}$ is not necessary linked to $S$ by a structural disk a priori. However, we can approximate $R$ by currents linked to $S$ with good properties.
\begin{lemma}\label{le disque dans Cc_S}
Let $0<r<1$ and let $R$ be in $\overline{\Cc_S(r)}.$ There exists a sequence of structural disks $(\mathcal R_n)_{n\geq1}$ in $\Cc_p(\Pb^k)$ such that
\begin{itemize}
\item $\mathcal R_n(0)=S$ for all $n\geq1,$
\item $\lim_{n\to\infty}\mathcal R_n(r)=R,$
\item $\mathcal R_n(\theta)$ belongs to $\overline{\Cc_S(|\theta|)}$ and in particular $\mathcal R_n(\theta)\in\Cc_p(U)$ if $|\theta|<r_U,$
\item if $\theta\neq0$ then $\mathcal R_n(\theta)$ is the image by $\Lambda$ of a smooth current.
\end{itemize}
\end{lemma}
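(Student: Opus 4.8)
The plan is to reduce to a single ``word'' $\Lambda\sigma_{1*}\cdots\Lambda\sigma_{l*}S$, attach to it a holomorphic one–parameter family by inserting a $\theta$‑scaled convolution in the outermost slot (a variant of the construction in Proposition~\ref{prop regu}), and then take convex combinations. By definition $\overline{\Cc_S(r)}$ is the closed convex hull of the set $E_r$ of currents $\Lambda\sigma_{1*}\cdots\Lambda\sigma_{l*}S$ with $\sigma_1,\ldots,\sigma_l\in B_W(r)$; since all these currents have mass one, $E_r$ is relatively compact and $\overline{\Cc_S(r)}$ is the closure of the set of finite convex combinations of elements of $E_r$. Moreover a finite convex combination of structural disks parametrized by $\Db$ is again a structural disk, the slice of a convex combination of positive closed currents on $\Db\times\Pb^k$ being the corresponding combination of slices. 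Hence it is enough to construct, for each $w=\Lambda\sigma_{1*}\cdots\Lambda\sigma_{l*}S\in E_r$ and each sufficiently small $\epsilon>0$, a structural disk $\mathcal W^\epsilon$ in $\Cc_p(\Pb^k)$ with $\mathcal W^\epsilon(0)=S$, $\mathcal W^\epsilon(\theta)\in\overline{\Cc_S(|\theta|)}$ for $\theta\in\Db^*$, $\mathcal W^\epsilon(\theta)$ equal to $\Lambda$ of a smooth form for $\theta\neq0$, and $\mathcal W^\epsilon(r)\to w$ as $\epsilon\to0$: writing $R=\lim_nR_n$ with $R_n=\sum_jc_{n,j}w_{n,j}$ a finite convex combination of elements of $E_r$, one then picks $\epsilon_{n,j}$ so small that $\sum_jc_{n,j}\mathcal W^{\epsilon_{n,j}}_{n,j}(r)$ is within distance $1/n$ of $R_n$ and sets $\mathcal R_n:=\sum_jc_{n,j}\mathcal W^{\epsilon_{n,j}}_{n,j}$; the four properties survive because $\overline{\Cc_S(|\theta|)}$ is convex and $\Lambda$ is linear.

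\emph{Construction for a word.} Identify $W$ with $\Bb(0,1)\subset\Cb^N$, $N=k^2+2k$, with $\id$ corresponding to $0$, and let $v_i\in\Bb(0,r)$ correspond to $\sigma_i$, so $|v_i|<r$. For $\theta\in\Db$ and $\alpha\in B_W(1)$ set $\tau_i(\theta):=(\theta/r)v_i$ for $2\le i\le l$ and $\beta(\theta,\alpha):=(\theta/r)v_1+\epsilon\theta\alpha$. These are holomorphic in $\theta$ (and $\alpha$), equal $\id$ at $\theta=0$, and — provided $\epsilon<1-|v_1|/r$ — have $\Cb^N$‑norm $<|\theta|$, hence lie in $B_W(|\theta|)$ for every $\theta\in\Db$. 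Define
$$\mathcal W^\epsilon(\theta):=\int_{B_W(1)}\Lambda\,\beta(\theta,\alpha)_*\,\Lambda\,\tau_2(\theta)_*\cdots\Lambda\,\tau_l(\theta)_*\,S\ \,d\rho(\alpha).$$
This is the family of slices of the positive closed $(p,p)$‑current obtained on $\Db\times\Pb^k$ from $\pi_X^*S$ by composing the fibre‑preserving holomorphic self‑maps $(\theta,x)\mapsto(\theta,\tau_i(\theta)x)$ and $(\theta,x)\mapsto(\theta,f(x))$, integrating the push‑forward by $(\theta,x)\mapsto(\theta,\beta(\theta,\alpha)x)$ against $\rho$, and inserting the factors $d^{-s}$; compatibility of Federer slicing with fibre‑preserving push‑forwards and with integration gives the displayed formula for the slice at each $\theta$. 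So $\mathcal W^\epsilon$ is a structural disk, lying in $\Cc_p(\Pb^k)$ because $\Lambda$, push‑forwards by automorphisms and convex averaging all preserve the class $\Cc_p(\Pb^k)$.

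\emph{Verification of the four properties.} At $\theta=0$ every $\tau_i(0)$ and $\beta(0,\alpha)$ equals $\id$, so $\mathcal W^\epsilon(0)=\int_{B_W(1)}\Lambda^lS\,d\rho(\alpha)=\int_{B_W(1)}S\,d\rho(\alpha)=S$, using $\Lambda S=S$. For $\theta\neq0$ the inner current $\Lambda\tau_2(\theta)_*\cdots\Lambda\tau_l(\theta)_*S$ is fixed and $\alpha\mapsto\beta(\theta,\alpha)$ is an affine bijection of $\Bb(0,1)$ onto the ball of radius $\epsilon|\theta|>0$ centred at $(\theta/r)v_1$; hence $\int_{B_W(1)}\beta(\theta,\alpha)_*(\cdot)\,d\rho(\alpha)$ is an average over a full‑dimensional smooth family of automorphisms, so a smooth form, exactly as in the proof of Proposition~\ref{prop regu}, and $\mathcal W^\epsilon(\theta)=\Lambda(\text{smooth form})$. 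Since $\beta(\theta,\alpha)$ and all the $\tau_i(\theta)$ lie in $B_W(c|\theta|)$ for some fixed $c<1$, each integrand is one of the generating currents of $\overline{\Cc_S(c|\theta|)}\subset\overline{\Cc_S(|\theta|)}$, and as the latter is closed and convex the integral again belongs to it; with the inclusion $\overline{\Cc_S(|\theta|)}\subset\Cc_p(U)$ for $|\theta|<r_U$ recalled before the statement, this is the third bullet. Finally $\tau_i(r)=v_i=\sigma_i$ and $\beta(r,\alpha)=v_1+\epsilon r\alpha\to v_1=\sigma_1$ uniformly in $\alpha$ as $\epsilon\to0$, so $\mathcal W^\epsilon(r)=\int_{B_W(1)}\Lambda(v_1+\epsilon r\alpha)_*(\Lambda\sigma_{2*}\cdots\Lambda\sigma_{l*}S)\,d\rho(\alpha)\longrightarrow\Lambda\sigma_{1*}\cdots\Lambda\sigma_{l*}S=w$.

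\emph{Main difficulty.} The one delicate point is making the last two bullets compatible: the convolution radius must be proportional to $|\theta|$, so it collapses to $0$ as $\theta\to0$ and forces $\mathcal W^\epsilon(0)=S$ exactly, yet $\epsilon$ must be small enough — here $\epsilon<1-|v_1|/r$ — that the perturbed outermost automorphism $\beta(\theta,\alpha)$ remains in $B_W(|\theta|)$ for \emph{every} $\theta$, so that $\mathcal W^\epsilon(\theta)$ remains in $\overline{\Cc_S(|\theta|)}$. Carrying out the paths in the linear $\Cb^N$‑coordinate on $W$, with $\tau_i(\theta)=(\theta/r)v_i$, makes this bookkeeping transparent. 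The remaining technical point — compatibility of slicing with fibre‑preserving push‑forwards and integration, used to identify $\mathcal W^\epsilon$ as a structural disk with the stated slices — is routine in this circle of ideas.
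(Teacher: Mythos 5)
Your proof is correct and follows essentially the same route as the paper: you write $R$ as a limit of convex combinations of words $\Lambda\sigma_{1*}\cdots\Lambda\sigma_{l*}S$, replace each $\sigma_i$ by the linear path $\theta\sigma_i/r$ (so the disk passes through $S$ at $\theta=0$ and stays in $\overline{\Cc_S(|\theta|)}$), and regularize only the outermost automorphism by a $\theta$-scaled convolution whose radius parameter tends to $0$ to keep $\mathcal R_n(r)\to R$. The paper does the same with probability measures $\rho_n$ on $B_W(r)^n$ and a single sequence $a_n\to1$ in place of your per-word $\epsilon_{n,j}$; the bookkeeping differs slightly but the idea and all key estimates coincide.
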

\begin{proof}
Let $0<r<1.$ Let $R$ be in $\overline{\Cc_S(r)}.$ As in the proof of Lemma \ref{le monotonic}, there exists a sequence of probability measures $(\rho_n)_{n\geq1}$ such that
$$R_n:=\int_{B_{W}(r)^n} \Lambda\sigma_{1*}\cdots\Lambda\sigma_{n*}Sd\rho_n(\sigma_1,\ldots,\sigma_n)$$
converge toward $R.$ Using the fact that $W$ is biholomorphic to the unit ball $\Bb(0,1)$ in $\Cb^{k^2+2k}$ and by identifying $\sigma\in W$ with the corresponding point in $\Bb(0,1),$ we can define the structural disk
$$R_n(\theta):=\int_{B_W(r)^n} \Lambda(\widetilde\sigma_1(\theta))_*\cdots\Lambda(\widetilde\sigma_n(\theta))_*Sd\rho_n(\sigma_1,\ldots,\sigma_n).$$
Here $\widetilde\sigma(\theta):=\theta\sigma/r$ if $\sigma\in B_W(r).$ This disk satisfies $R_n(0)=S$ since $\id\in W$ corresponds to $0\in\Bb(0,1)$ and $\Lambda S=S.$ Moreover, $\theta\sigma/r\in B_W(|\theta|)$ if $\sigma\in B_W(r)$ and thus $R_n(\theta)\in\overline{\Cc_S(|\theta|)}.$ Therefore, the sequence $(R_n(\theta))_{n\geq1}$ satisfies the first three points in the lemma. It is enough to modify it slightly in order to obtain the last point. Indeed, arguing as in Proposition \ref{prop regu}, it is easy to check that if $\rho$ is a smooth probability measure on $W$ and $(a_n)_{n\geq1}$ is an increasing sequence of positive numbers converging to $1$ sufficiently fast then
$$\mathcal R_n(\theta):=\int_W\int_{B_W(r)^n} \Lambda(a_n\widetilde\sigma_1(\theta)+(1-a_n)\theta\sigma)_*\cdots\Lambda(\widetilde\sigma_n(\theta))_*Sd\rho_n(\sigma_1,\ldots,\sigma_n)d\rho(\sigma)$$
fulfils all the four points.
\end{proof}

\subsection{Existence and finiteness of attracting currents}\label{subsec-ac}
The next theorem gives the existence of attracting currents and it implies Theorem \ref{th equi as}. However, it is slightly more precise with, in particular, an explicit estimate on the size of the trapping region $D_\tau.$ This last point constitutes the keystone for the finiteness results in the sequel.
\begin{theorem}\label{th conv1}
Let $U$ be a codimension $p$ trapping region. There exists an attracting current $\tau$ which is extremal in $\Ic_p(U).$ Moreover, $\tau$ is attractive on a trapping region $D_\tau$ such that $f(\sigma(D_\tau))\subset D_\tau$ for all $\sigma\in B_W(r_U).$ In particular, $\Nc_\tau(r_U)\subset D_\tau.$
\end{theorem}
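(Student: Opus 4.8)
The plan is to realize $\tau$ as a maximizer, over a suitable compact convex set of invariant currents, of a plurisubharmonic observable, and then to propagate the maximizing property to a whole trapping region via the structural disks of Lemma \ref{le disque dans Cc_S} together with the sub-mean-value arguments of Lemma \ref{le domi-cons1} and Lemma \ref{le domi-cons2}. Concretely, fix $\phi_0\in\cali P(U)$ with $\ddc\phi_0\geq\omega^{s+1}$ on $\overline U$ (available by Corollary \ref{coro p-pseudo}, since $\overline U$ is weakly $p$-pseudoconvex) and consider the functional $S\mapsto\langle S,\phi_0\rangle$ on $\Ic_p(U)$. Since $\Ic_p(U)$ is non-empty, compact and convex, this functional attains its maximum, say at $\tau_0$; by the Krein–Milman theorem and linearity of the functional we may take $\tau_0$ to be an extremal point $\tau$ of $\Ic_p(U)$. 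The first task is to show that $\tau$ is attractive on $D_\tau:=\Nc_\tau(r_U)$ (or a slightly enlarged version satisfying $f(\sigma(D_\tau))\subset D_\tau$ for $\sigma\in B_W(r_U)$, which is built the same way as the $\Nc_K(r)$ construction of Section \ref{subsec trap} using the definition of $r_U$ in Definition \ref{def-taille}).

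First I would establish the convergence $\Delta_nR\to\tau$ for $R$ a \emph{smooth} current in $\Cc_p(D_\tau)$. The key point is that $\tau$ is \emph{globally} maximal on $\overline{\Ic_\tau(r_U)}$: indeed by Lemma \ref{le monotonic}, $\overline{\Ic_R(r_U)}\subset\overline{\Ic_\tau(r_U)}$ for every $R$ in there, so any limit value $R_\infty$ of $(\Delta_nR)$ lies in $\overline{\Ic_\tau(r_U)}$, whence $\langle R_\infty,\phi_0\rangle\leq\langle\tau,\phi_0\rangle$ by the choice of $\tau$ — but one must also produce the reverse inequality. This is where the structural disks enter: given $R\in\overline{\Cc_\tau(r_U)}$ take the disks $\mathcal R_n$ from Lemma \ref{le disque dans Cc_S} with $\mathcal R_n(0)=\tau$, $\mathcal R_n(r_U)\to R$, and apply Theorem \ref{th sh} to the subharmonic functions $u_n(\theta):=\langle\mathcal R_n(\theta),\phi_0\rangle$ on $\Db$; since $u_n(0)=\langle\tau,\phi_0\rangle$ is the maximum and $u_n$ is subharmonic, the sub-mean-value inequality forces $\limsup u_n(\theta)$ to be bounded below in an averaged sense, and combined with Lemma \ref{le domi-cons1} this pins down $\langle R,\phi_0\rangle\geq\langle\tau,\phi_0\rangle$, hence equality. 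Iterating the operator $\Lambda$ (which preserves $\langle\cdot,\phi_0\rangle$ up to adding a nonnegative term coming from $\ddc\phi_0\geq0$, by the computation $\langle\Lambda S,\phi_0\rangle-\langle S,\phi_0\rangle=\langle S, d^{-s}f^*\phi_0-\phi_0\rangle$ and plurisubharmonicity along the regularizing disks of Proposition \ref{prop regu}) and using Lemma \ref{le domi-cons2} to split the mass, one upgrades this to $\langle\Delta_nR,\phi_0\rangle\to\langle\tau,\phi_0\rangle$; testing against all $\psi\in\cali P(D_\tau)$ in place of $\phi_0$ and invoking Lemma \ref{le sepa} gives $\Delta_nR\to\tau$ for smooth $R\in\Cc_p(D_\tau)$. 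Finally the passage from smooth $R$ to arbitrary continuous $R\in\Cc_p(D_\tau)$ is handled by the regularization of Proposition \ref{prop regu}: write $R=\mathcal S(0)$ with $\mathcal S(\theta)$ smooth for $\theta\neq0$ and strictly positive on a neighborhood of $\supp R$ shrinking with $|\theta|$, so that $\mathcal S(\theta)\in\Cc_p(D_\tau)$ for $|\theta|$ small (here one needs $D_\tau$ to be open and to contain a neighborhood of $\supp R$ inside $U$, which holds since $\Nc_\tau(r_U)$ absorbs such neighborhoods by $f(\Nc_\tau(r_U)_{\eta_U})\subset\Nc_\tau(r_U)$), apply the smooth case to $\mathcal S(\theta)$, and let $\theta\to 0$ using the continuity in Theorem \ref{th sh}; a diagonal/Hartogs argument on $\theta\mapsto\langle\Delta_n\mathcal S(\theta),\phi_0\rangle$ controls the error. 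Since an attracting current is unique on its region, $\tau$ is the unique attracting current of $\Cc_p(D_\tau)$, and $\Nc_\tau(r_U)\subset D_\tau$ by construction.

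The main obstacle I anticipate is the second step: extracting from global maximality of $\langle\tau,\phi_0\rangle$ on $\overline{\Ic_\tau(r_U)}$ the genuine \emph{contraction} toward $\tau$ of the Cesàro averages $\Delta_nR$ on the possibly large region $D_\tau=\Nc_\tau(r_U)$, rather than merely on $\supp\tau$. The delicate part is to show that $D_\tau$ is "not too small" — that the maximizing property survives under the enlarged pseudo-orbit set $\Nc_\tau(r_U)$ — which is exactly why the monotonicity Lemma \ref{le monotonic} ($\overline{\Ic_R(r_U)}\subset\overline{\Ic_\tau(r_U)}$) and the trapping-region identity $\Nc_\tau(r)=\bigcup_{R\in\Cc_\tau(r)}\supp R$ were set up, and where the continuity/positivity estimates of Proposition \ref{prop regu} (strict positivity on $(\supp S)_{\eta(2^{-1}r|\theta|)}$) do the real work of certifying that regularized currents still lie in $\Cc_p(D_\tau)$. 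I expect Lemma \ref{le courant avec prop de cv}, referenced in the section introduction, to encapsulate precisely this lower bound on the size of $D_\tau$; the present theorem then follows by combining it with the maximization argument above.
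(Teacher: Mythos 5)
There is a genuine gap, and it sits exactly where you locate the ``main obstacle.'' Maximizing a \emph{single} observable $\phi_0$ over $\Ic_p(U)$ cannot identify $\tau$ nor yield $\Delta_nR\to\tau$: the functional $\langle\cdot,\phi_0\rangle$ is not separating, so even if you proved $\langle\Delta_nR,\phi_0\rangle\to\langle\tau,\phi_0\rangle$, the limit values of $\Delta_nR$ would only be constrained to lie in the face of $\phi_0$-maximizers, which may contain many invariant currents. Your step ``testing against all $\psi\in\Pc(D_\tau)$ in place of $\phi_0$ and invoking Lemma \ref{le sepa}'' silently assumes that the same current $\tau$ simultaneously maximizes every $\psi$, which is precisely what has to be proved. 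The paper's proof is built to overcome exactly this: it takes a dense sequence $(\phi_j)_{j\geq1}$ in $\Pc(U)$ and performs a successive (lexicographic) maximization, defining $c_j$, the classes $M_j$ of currents whose Ces\`aro means realize $c_j$, and the nested trapping regions $D_j=\cup_{S\in M_j}\supp(S)$; the crucial points are that each $D_j$ satisfies $f(\sigma(D_j))\subset D_j$ for all $\sigma\in B_W(r_U)$ (Proposition \ref{prop conv1}, giving the uniform contraction $\eta_U$ independent of $j$), that the associated attracting sets can change only finitely many times (the volume argument of Lemma \ref{le stat1}), and that $E_{j+1}=E_j$ forces $D_{j+1}=D_j$ (Propositions \ref{prop conv continu} and \ref{prop stat2}). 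The stabilized region is $D_\tau$, and only then does Lemma \ref{le sepa} pin down the unique limit $\tau$. None of this iterative construction or its stabilization appears in your plan. Moreover your intermediate claim $\langle R,\phi_0\rangle\geq\langle\tau,\phi_0\rangle$ for $R\in\overline{\Cc_\tau(r_U)}$ is false as argued: the slices $\mathcal R_n(\theta)$, $\theta\neq0$, are not invariant, so there is no upper bound $\limsup_n u_n(\theta)\leq\langle\tau,\phi_0\rangle$ with which to run Lemma \ref{le domi-cons1}, and subharmonicity never produces a pointwise lower bound away from the center; likewise $\ddc\phi_0\geq0$ does not give $d^{-s}f^*\phi_0\geq\phi_0$, so $\langle\Lambda^nS,\phi_0\rangle$ is not monotone.

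Your fallback is also circular: in the paper, Lemma \ref{le courant avec prop de cv} (attractivity on $\Nc_S(r_U)$) is proved \emph{after} and \emph{by means of} Theorem \ref{th conv1}, applied to the regions $\Nc_R(r)$ together with the finiteness Lemma \ref{le finitude cas ca}, so it cannot be invoked to prove the present theorem. Note also that the theorem does not assert $D_\tau=\Nc_\tau(r_U)$ but only $\Nc_\tau(r_U)\subset D_\tau$ (for an attractive fixed point, $\Nc_\tau(r_U)$ is the immediate basin while $D_\tau$ is all of $U$); by setting $D_\tau:=\Nc_\tau(r_U)$ you are in effect trying to prove the strictly stronger Lemma \ref{le courant avec prop de cv} directly, with tools (a single maximization plus Lemma \ref{le monotonic}) that do not suffice. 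What is salvageable from your plan is the general philosophy (attracting currents as maximizers of p.s.h.\ observables, propagated along structural disks with Lemmas \ref{le domi-cons1} and \ref{le domi-cons2}) and the Krein--Milman remark for extremality, but the paper's extremality argument is again lexicographic, using the first index $i_0$ at which $\langle\tau,\phi_{i_0}\rangle\neq\langle\tau_1,\phi_{i_0}\rangle$.
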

In order to prove the theorem, let $(\phi_j)_{j\geq1}$ be a dense sequence in $\cali P(U).$ By Corollary \ref{coro p-pseudo} and Lemma \ref{le sepa}, two elements $R$ and $S$ of $\Cc_p(U)$ are equal if and only if $\langle R,\phi_j\rangle=\langle S,\phi_j\rangle$ for all $j\geq1.$ We will use this sequence $(\phi_j)_{j\geq1}$ to construct a decreasing sequence of trapping regions in $U$ which will turn out to be stationary. The limit of this stationary sequence will be $D_\tau.$

In this purpose, we define inductively $D_0:=U,$ $M_0:=\Cc_p(U)$ and for $j\geq1$
$$c_j:=\max_{S\in\Ic_p(D_{j-1})}\langle S,\phi_j\rangle,$$
$$M_j:=\{S\in\Cc_p(D_{j-1})\,|\, \langle\Delta_nS,\phi_j\rangle\to c_j\},$$
and
$$D_j:=\cup_{S\in M_j}\supp(S).$$
We will see in the proof of the following proposition that these objects are well-defined.
\begin{proposition}\label{prop conv1}
Each $D_j$ is a non-empty open subset of $\Pb^k.$ Moreover, $f(\sigma(D_j))\subset D_j$ for all $\sigma$ in $B_W(r_U)$ and $j\geq0.$ In particular, $f(D_{j,\eta_U})\Subset D_j.$ Here $\eta_U$ is defined in Definition \ref{def-taille}.
\end{proposition}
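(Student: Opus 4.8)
The plan is to argue by induction on $j$, the case $j=0$ being exactly the hypothesis that $U=D_0$ is a trapping region of codimension $p$ (so $D_0$ is open, non-empty, and $f(\sigma(D_0))\subset D_0$ for $\sigma\in B_W(r_U)$ by Definition \ref{def-taille}). Assume the statement for $D_{j-1}$. The first task is to check that $c_j$, $M_j$ and $D_j$ are well-defined. Since $D_{j-1}$ is a trapping region of codimension $p$ (it is open, $f(D_{j-1})\Subset D_{j-1}$, and it contains the support of the invariant currents produced by taking limits of $\Delta_n$ on $\Cc_p(D_{j-1})$), the set $\Ic_p(D_{j-1})$ is non-empty, and by Corollary \ref{coro p-pseudo} its closure $\overline{D_{j-1}}$ is weakly $p$-pseudoconvex, so $\phi_j\in\cali P(U)$ restricts to a form against which currents in $\Cc_p(D_{j-1})$ pair continuously; $\Ic_p(D_{j-1})$ being compact, the maximum $c_j$ is attained. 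Next I would observe that $M_j\neq\varnothing$: pick $S_0\in\Ic_p(D_{j-1})$ realizing $c_j$; then $\Delta_nS_0=S_0$ for all $n$, so $\langle\Delta_nS_0,\phi_j\rangle\equiv c_j$ and $S_0\in M_j$. Hence $D_j=\cup_{S\in M_j}\supp(S)\supset\supp(S_0)\neq\varnothing$.

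The next step is openness of $D_j$. Here the idea is the same regularization used throughout Section \ref{subsec trap}: if $x\in D_j$, pick $S\in M_j$ with $x\in\supp(S)$, and I claim a whole neighborhood of $x$ lies in $D_j$. For this I would use that $M_j$ is stable under the operators $\sigma\mapsto\Lambda\sigma_*$ for $\sigma\in B_W(r_U)$ — more precisely, that if $S\in M_j$ then $\Lambda\sigma_* S\in M_j$ for every $\sigma\in B_W(r_U)$. This is where the argument has real content and is the main obstacle: one must show that applying $\Lambda\sigma_*$ does not decrease the limiting value $\langle\Delta_n\,\cdot\,,\phi_j\rangle$. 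The point is that $\phi_j\in\cali P(U)$, so $\ddc\phi_j\geq0$, and along a structural disk linking $S$ to $\Lambda\sigma_*S$ inside $\Cc_p(D_{j-1})$ (produced by Lemma \ref{le disque dans Cc_S} applied with the appropriate $r<r_U$, or directly by the convolution construction of Proposition \ref{prop regu}) the function $\theta\mapsto\langle\mathcal R(\theta),\phi_j\rangle$ is subharmonic by Theorem \ref{th sh}; combined with the fact that $c_j$ is the global maximum over $\Ic_p(D_{j-1})$ and that $\Delta_n(\Lambda\sigma_*S)$ and $\Delta_nS$ have the same limit values up to a vanishing error (they differ by $\frac1n(\Lambda S-\Lambda^{n+1}\sigma_*S)$-type terms whose pairing with $\phi_j$ is $O(1/n)$), the maximum principle forces $\langle\Delta_n(\Lambda\sigma_*S),\phi_j\rangle\to c_j$ as well. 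Granting this, openness follows: $\Nc_S(r_U)=\cup_{R\in\Cc_S(r_U)}\supp(R)$ (by the Proposition preceding Lemma \ref{le monotonic}) is an open set contained in $\cup_{R\in M_j}\supp(R)=D_j$ and contains $\supp(S)\ni x$, because every $R$ of the form $\Lambda\sigma_{1*}\cdots\Lambda\sigma_{l*}S$ with $\sigma_i\in B_W(r')$, $r'<r_U$, lies in $M_j$ by iterating the stability claim, and $M_j$ is convex and closed so $\overline{\Cc_S(r')}\subset M_j$.

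Finally, the invariance $f(\sigma(D_j))\subset D_j$ for $\sigma\in B_W(r_U)$ is now immediate from the stability of $M_j$ under $\Lambda\tau_*$ together with the way pushforward moves supports: if $y\in D_j$, choose $S\in M_j$ with $y\in\supp(S)$; then for $\sigma\in B_W(r_U)$ we have $f(\sigma(y))\in\supp(\Lambda\sigma_* S)$ (since $\supp(\Lambda\sigma_*S)=\supp(d^{-s}f_*\sigma_*S)=f(\sigma(\supp S))$), and $\Lambda\sigma_*S\in M_j$, hence $f(\sigma(y))\in D_j$. The last assertion $f(D_{j,\eta_U})\Subset D_j$ then follows exactly as in Section \ref{subsec trap}: by Lemma \ref{le transi}, any point within distance $\eta_U=\eta(r_U)$ of a point of $D_j$ is the image of it under some $\sigma\in B_W(r_U)$, so $D_{j,\eta_U}\subset\cup_{\sigma\in B_W(r_U)}\sigma(D_j)$ and therefore $f(D_{j,\eta_U})\subset\cup_{\sigma\in B_W(r_U)}f(\sigma(D_j))\subset D_j$; relative compactness comes from $f(\overline{D_j})$ being compact and contained in the open set $D_j$ (applying the inclusion with the slightly larger radius available because $f\circ\sigma(\overline U)\subset U$ holds with room to spare, as in the definition of $r_U$). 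I expect the verification of the stability of $M_j$ under $\Lambda\sigma_*$ — i.e.\ the subharmonicity-plus-maximum-principle argument controlling the Ces\`aro averages — to be the only genuinely delicate point; everything else is bookkeeping with supports and the already-established properties of $\Nc_S(r)$ and structural disks.
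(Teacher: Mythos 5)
Your induction skeleton, the treatment of $c_j$ and of $M_j\neq\varnothing$, and the final deduction of $f(D_{j,\eta_U})\Subset D_j$ agree with the paper, but the two central steps have genuine gaps. First, your justification of the stability claim ``$S\in M_j\Rightarrow\Lambda\sigma_*S\in M_j$'' rests on the assertion that $\Delta_n(\Lambda\sigma_*S)$ and $\Delta_nS$ differ by terms of order $1/n$; that telescoping identity is valid only for $\sigma=\id$ (this is exactly the paper's observation that $S\in M_j$ if and only if $\Lambda S\in M_j$), whereas for $\sigma\neq\id$ the two Ces\`aro sums compare $\Lambda^{i}\sigma_*S$ with $\Lambda^iS$ and do not differ by a vanishing error. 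The mechanism that actually works is the one in the paper: take the convolution disk of Proposition \ref{prop regu} (smooth for $\theta\neq0$, so that the subharmonic functions $u_n(\theta)=\langle\Delta_n\mathcal R(\theta),\phi_j\rangle$ are locally equicontinuous on $\Db^*$), use the induction hypothesis to check that the disk, resp.\ its image by $\Lambda,$ stays in $\Cc_p(D_{j-1}),$ so every Ces\`aro limit lies in $\Ic_p(D_{j-1})$ and $\limsup_n u_n\leq c_j,$ and conclude with Lemma \ref{le domi-cons1}. Note that the paper never needs the individual currents $\Lambda\sigma_*S$ to be in $M_j$: for the invariance it suffices that the averaged current $\Lambda\mathcal R(\theta)$, whose support contains $f(\sigma(x))$ for $|\theta|$ close to $1$, belongs to $M_j$; moreover the single-automorphism disk $\theta\mapsto\Lambda(\theta\sigma)_*S$ is not covered by the equicontinuity of Proposition \ref{prop regu}, and Lemma \ref{le disque dans Cc_S} cannot be invoked here since its center must be $\Lambda$-invariant while elements of $M_j$ need not be.

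Second, even granting the stability claim, your openness argument breaks down: you assert that $\Nc_S(r_U)=\cup_{R\in\Cc_S(r_U)}\supp(R)$ ``contains $\supp(S)\ni x$,'' but every current $\Lambda\sigma_{1*}\cdots\Lambda\sigma_{l*}S$ has been pushed forward at least once, so this union is a neighborhood of the forward pseudo-orbits issued from $\supp(S)$ and contains $\supp(S)$ only when $\supp(S)$ is $f$-invariant; the inclusion $K\subset\Nc_K(r)$ in Section \ref{subsec trap}, as well as the description of $\Nc_S(r)$ you quote, are proved under $f(K)=K$, i.e.\ for $S\in\Ic_p(U)$. A generic element of $M_j$ is not invariant (for $\phi_1=\omega^s$ one has $M_1=\Cc_p(U)$), so your argument produces no neighborhood of the chosen point $x$. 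The paper obtains openness differently: it shows that the smoothings $\mathcal S(\theta)$ of $S$ themselves belong to $M_j$, and since they are strictly positive on a neighborhood of $\supp(S)$ their supports directly furnish the required neighborhood of $x$. (Your incidental claim that $M_j$ is closed is also unproved — convexity is clear, closedness under weak limits is not — though it becomes unnecessary once the argument is run along the paper's lines.)
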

\begin{remark}
The fact that $\eta_U$ doesn't depend on $j$ is important for the sequel. It gives in some sense a uniform rate of contraction.
\end{remark}
\begin{proof}
Since by definition $f(\sigma(U))\subset U$ for all $\sigma$ in $B_W(r_U),$ the proposition holds for $j=0.$

Now let $j\geq1$ and assume that the proposition holds for $D_{j-1}.$ In particular $D_{j-1}$ is non-empty and thus by definition $\Cc_p(D_{j-1})\neq\varnothing.$ As $D_{j-1}$ is invariant, it implies that $\Ic_p(D_{j-1})$ is a non-empty compact set and therefore, there exists $R\in\Ic_p(D_{j-1})$ such that $\langle R,\phi_j\rangle=\max_{S\in\Ic_p(D_{j-1})}\langle S,\phi_j\rangle=:c_j.$ It follows that $R$ belongs to $M_j$ and thus $D_j$ is not empty.

The following arguments will be recurrent in the sequel. We explain them in detail here in order to be able to be more succinct in the later uses. Let $R$ be a current of $M_j.$ Since $D_{j-1}$ is open, we can consider the structural disk $\mathcal R$ of $\Cc_p(D_{j-1})$ obtained from $R$ by Proposition \ref{prop regu} with $V=D_{j-1}$ and $K=\supp(R).$ By Theorem \ref{th sh}, the functions
$$u_n(\theta):=\langle \Delta_n\mathcal R(\theta),\phi_j\rangle$$
are subharmonic. They satisfy $0\leq u_n\leq1$ since $0\leq\phi_j\leq\omega^s.$ Moreover, Proposition \ref{prop regu} implies that they are locally equicontinuous on $\Db^*.$ By definition of $M_j$ and $c_j,$ we have
$$\lim_{n\to\infty}u_n(0)=c_j\geq\limsup_{n\to\infty}u_n(\theta)$$
for all $\theta\in\Db.$ Therefore, by Lemma \ref{le domi-cons1}, the sequence $(u_n)_{n\geq1}$ converges pointwise to $c_j.$ Hence $\mathcal R(\theta)\in M_j$ for all $\theta\in\Db.$ Therefore, $D_j$ is also open since $\supp(\mathcal R(\theta))$ is a neighborhood of $\supp(R)$ when $\theta\neq0.$

Finally, in order to prove that $D_j$ is a trapping region, first observe that $R\in\Cc_p(D_{j-1})$ belongs to $M_j$ if and only if $\Lambda R$ is also in $M_j.$ Indeed, since $f(D_{j-1})\Subset D_{j-1},$ $\Lambda R\in\Cc_p(D_{j-1})$ and the fact that each limit value of $\Delta_nR$ is $\Lambda$-invariant implies that the limit values of $\langle\Delta_nR,\phi_j\rangle$ are the same than those of $\langle\Delta_n(\Lambda R),\phi_j\rangle.$ Now, let $x\in D_j$ and $\sigma\in B_W(r_U).$ By Proposition \ref{prop regu} applied with $V=f^{-1}(U),$ $K=\overline U$ and $r=r_U,$ there exists a structural disk $\mathcal R$ of $\Cc_p(f^{-1}(U))$ such that $\supp(\mathcal R(\theta))$ is the union over all $\widetilde\sigma\in B_W(|\theta|r_U)$ of $\widetilde\sigma(\supp(R)).$ In particular, $\sigma(x)$ belongs to $\supp(\mathcal R(\theta))$ for $|\theta|<1$ large enough. On the other hand, by the induction hypothesis, $\{\Lambda \mathcal R(\theta)\}_{\theta\in\Db}$ is a structural disk in $\Cc_p(D_{j-1})$ with $\Lambda \mathcal R(0)=\Lambda R$ in $M_j.$ As above, by considering the functions
$$v_n(\theta):=\langle \Delta_n\Lambda \mathcal R(\theta),\phi_j\rangle$$
which are also locally equicontinuous on $\Db^*,$ we obtain that for all $\theta\in\Db,$ $\Lambda \mathcal R(\theta)$ is in $M_j$ and therefore $f(\sigma(x))$ is in $D_j.$
\end{proof}
So each $D_j$ is a trapping region such that $D_{j+1}\subset D_j.$ The associated attracting sets $E_j:=\cap_{n\geq1}f^n(D_j)$ therefore also define a decreasing sequence of sets.
\begin{remark}
If we choose $\phi_1=\omega^s$ then all $S\in\Cc_p(U)$ satisfy $\langle S,\phi_1\rangle=1$ and $M_1$ is equal to $\Cc_p(U).$ Therefore, $D_1$ is equal to the union of the support of all currents in $\Cc_p(U),$ i.e. it is the $s$-pseudoconcave core $\widetilde U$ of $U$ and thus $\widetilde U$ is a trapping region. The attracting set associated to it is in some sense the part of ``pure'' dimension $s$ in $A.$ Notice that it is easy to show that the $(p-1)$-pseudoconvex hull $\widehat U$ defined in Remark \ref{rk hull} is also a codimension $p$ trapping region.
\end{remark}
The next step toward Theorem \ref{th conv1} is to show that these sequences are both stationary. For the sets $(E_j)_{j\geq0},$ it will simply comes from the uniform rate of contraction, $f(D_{j,\eta_U})\Subset D_j$ with $\eta_U>0$ independent of $j.$
\begin{lemma}\label{le stat1}
Let $\eta>0.$ There exists a constant $m\geq1$ which depends only on $\eta$ such that each monotonic sequence of attracting sets $(A_j)_{j\geq1}$ in $\Pb^k$ admitting a trapping region $V_j$ with $f(V_{j,\eta})\Subset V_j$ has at most $m$ distinct elements.
\end{lemma}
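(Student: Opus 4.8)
The plan is to convert the uniform absorption $f(V_{j,\eta})\Subset V_j$ into a packing estimate in $(\Pb^k,\dist)$. The crux is the following separation principle: \emph{if $A\supsetneq A'$ are attracting sets of $f$ and $A'$ admits a trapping region $V'$ with $f(V'_\eta)\Subset V'$, then $A\not\subset V'_\eta$}, where $V'_\eta$ denotes the $\eta$-neighborhood of $V'$. Indeed, recalling that $f(A)=A$ for any attracting set, the inclusion $A\subset V'_\eta$ would give $A=f(A)\subset f(V'_\eta)\subset V'$, whence $A=f^n(A)\subset f^n(V')$ for every $n\ge0$ and therefore $A\subset\bigcap_{n\ge0}f^n(V')=A'$, contradicting $A\supsetneq A'$. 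The same computation with the inclusions reversed shows the symmetric statement: if $A\subsetneq A''$ and $A$ itself carries a trapping region $V$ with $f(V_\eta)\Subset V$, then $A''\not\subset V_\eta$.

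Next I would set up the counting. Suppose the monotonic sequence $(A_j)_{j\ge1}$ has $q$ distinct elements; after discarding repetitions we may assume $A_1,\dots,A_q$ are pairwise distinct, hence strictly nested, each still admitting a trapping region $V_j$ with $f(V_{j,\eta})\Subset V_j$. Treat first the decreasing case $A_1\supsetneq A_2\supsetneq\cdots\supsetneq A_q$. For $1\le j\le q-1$, the separation principle applied to $A_j\supsetneq A_{j+1}$ (with trapping region $V_{j+1}$) yields a point $x_j\in A_j\setminus V_{j+1,\eta}$, so that $\dist(x_j,V_{j+1})\ge\eta$ and hence $\dist(x_j,A_{j+1})\ge\eta$ since $A_{j+1}\subset V_{j+1}$. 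If $j<j'\le q-1$ then $x_{j'}\in A_{j'}\subset A_{j+1}$, so $\dist(x_j,x_{j'})\ge\dist(x_j,A_{j+1})\ge\eta$. Thus $x_1,\dots,x_{q-1}$ are pairwise at distance at least $\eta$ in $\Pb^k$. The increasing case $A_1\subsetneq\cdots\subsetneq A_q$ is identical: for $A_j\subsetneq A_{j+1}$ use the second form of the principle to pick $x_j\in A_{j+1}\setminus V_{j,\eta}$, so that $\dist(x_j,A_j)\ge\eta$, and observe $x_j\in A_{j+1}\subset A_{j'}$ whenever $j<j'$.

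Finally, compactness of $\Pb^k$ provides an integer $N(\eta)$, depending only on $\eta$ (the Fubini-Study metric being fixed), bounding the cardinality of any subset of $\Pb^k$ whose points are pairwise at distance at least $\eta$: if $\{B_i\}$ is a finite cover of $\Pb^k$ by balls of radius $\eta/2$, then each $B_i$ contains at most one such point. Hence $q-1\le N(\eta)$, and the lemma holds with $m:=N(\eta)+1$.

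I do not anticipate a serious obstacle. The routine ingredients are the standard facts that an attracting set satisfies $A=\bigcap_{n\ge0}f^n(U)=\bigcap_{n\ge0}f^n(\overline U)$ and $f(A)=A$ (continuity of $f$ and its commutation with decreasing intersections of compacta), the elementary observation that enlarging a set does not increase distances to it, and total boundedness of $\Pb^k$. The only genuinely new step is the separation principle: it is exactly the point where the $\eta$-uniformity of the contraction is used, turning the nesting of the $A_j$ into an honest $\eta$-separated family of points.
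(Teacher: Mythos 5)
Your proof is correct and follows essentially the same route as the paper: the same key step (if $A_i\not\subset A_j$ then, by iterating $f$ and using $f(V_{j,\eta})\Subset V_j$, one would get $A_i\subset\bigcap_n f^n(V_j)=A_j$, so $A_i\not\subset V_{j,\eta}$), followed by the same packing argument, with your covering by $\eta/2$-balls replacing the paper's volume bound on disjoint $\eta/3$-balls.
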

\begin{proof}
Let $i,j\geq1.$ If $A_i$ is not included in $A_j,$ it is neither included in $V_{j,\eta}.$ Therefore, there exists $x_i\in A_i$ such that $B(x_i,\eta)\cap V_j=\varnothing.$ It easily follows that $B(x_i,\eta/3)\subset V_{i,\eta/2}\setminus V_{j,\eta/2}.$ Since $\Pb^k$ has volume $1,$ it cannot exist more than $m:=Vol(B(x,\eta/3))^{-1}$ such distinct balls.
\end{proof}
We need the following proposition to prove that the sequence of trapping regions is also stationary.
\begin{proposition}\label{prop conv continu}
The set $M_j$ contains all continuous elements of $\Cc_p(D_j).$
\end{proposition}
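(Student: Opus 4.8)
I want to show that every continuous current $R \in \Cc_p(D_j)$ lies in $M_j$, i.e.\ that $\langle \Delta_n R, \phi_j\rangle \to c_j$. The natural strategy mirrors the arguments already used in the proof of Proposition \ref{prop conv1}: connect $R$ to a current known to be in $M_j$ by a structural disk, and then transfer membership in $M_j$ along the disk using the subharmonicity of the observables $u_n(\theta) = \langle \Delta_n \mathcal R(\theta), \phi_j\rangle$ together with Lemma \ref{le domi-cons1}. The obstruction to a direct approach is that an arbitrary continuous $R \in \Cc_p(D_j)$ is not linked a priori by a structural disk to any specific current of $M_j$; the saving feature is that $D_j = \cup_{S \in M_j}\supp(S)$, so the support of $R$ is covered by supports of currents in $M_j$.

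First I would establish that $M_j$ is convex and closed under $\Lambda$ (the latter is already noted in the proof of Proposition \ref{prop conv1}), so that $\Delta_n R \in M_j$ whenever $R \in M_j$ and, in particular, that the limit values of $(\Delta_n R)_n$ for $R \in M_j$ lie in $\overline{\Ic_j} := \Ic_p(D_{j-1}) \cap \{S : \langle S, \phi_j\rangle = c_j\}$. The key structural input is the following: since $\supp(R)$ is compact and contained in $D_j = \cup_{S \in M_j} \supp(S)$, and since for $S \in M_j$ the regularization $\mathcal S$ from Proposition \ref{prop regu} (applied with $V = D_j$, noting $D_j$ is open by Proposition \ref{prop conv1}) produces currents $\mathcal S(\theta)$, $\theta \neq 0$, that are smooth, lie in $M_j$ (by the equicontinuity-plus-Lemma-\ref{le domi-cons1} argument, exactly as in the proof of Proposition \ref{prop conv1}), and are \emph{strictly positive} on a neighborhood of $\supp(S)$ — one can write $R$ as (a limit of) a finite convex combination $\sum_i \lambda_i \mathcal S_i(\theta_i)$ dominating $R$ near its support, or more cleanly: choose finitely many $S_1,\dots,S_N \in M_j$ so that the strict-positivity neighborhoods of $\mathcal S_i(\theta_i)$ cover $\supp(R)$, set $S' := \frac1N\sum_i \mathcal S_i(\theta_i) \in M_j$ (convexity), which is smooth and strictly positive on a neighborhood of $\supp(R)$. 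Then $R \leq C S'$ for some constant $C>0$ on that neighborhood — and since $R$ is continuous and supported there, after normalizing, $R$ can be joined to $S'$ by a segment inside $\Cc_p(D_j)$, or better, one writes $C S' = R + (CS' - R)$ where $CS' - R$ is a positive (not closed) current, and then uses a structural disk.

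The cleanest route, I expect, is: build a structural disk $\mathcal R$ in $\Cc_p(D_j)$ with $\mathcal R(0) = S'$ and $\mathcal R$ passing through $R$ (or through a current as close to $R$ as we like) — for instance by the convolution construction of Proposition \ref{prop regu} combined with a homotopy from $S'$ to the convex combination, using that both $S'$ and $R$ are supported where $S'$ is strictly positive — so that $\langle \Delta_n \mathcal R(\theta),\phi_j\rangle$ is subharmonic in $\theta$, bounded in $[0,1]$, locally equicontinuous on $\Db^*$, with $\lim_n u_n(0) = c_j$ and $\limsup_n u_n(\theta) \leq c_j$ for all $\theta$ (the latter because $\mathcal R(\theta) \in \Cc_p(D_{j-1}) \supset \Cc_p(D_j)$ forces every limit value of $\Delta_n \mathcal R(\theta)$ into $\Ic_p(D_{j-1})$, where $\langle \cdot, \phi_j\rangle \leq c_j$ by definition of $c_j$). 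Then Lemma \ref{le domi-cons1} yields $u_n \to c_j$ pointwise, hence $\mathcal R(\theta) \in M_j$ for every $\theta$, and in particular $R \in M_j$ (passing to the limit in the approximation, using that $M_j$ is closed — which follows since $\langle \Delta_n \cdot, \phi_j\rangle$ is continuous and the convergence can be made locally uniform via the equicontinuity). The main obstacle, and the point requiring the most care, is exactly this construction of a structural disk through an \emph{arbitrary} continuous $R$ landing on a smooth strictly-positive current of $M_j$ while staying inside $\Cc_p(D_j)$ — this is where the strict positivity in Proposition \ref{prop regu} and the covering of $\supp(R)$ by supports of $M_j$-currents are indispensable, and one must check the mass stays equal to $1$ and the support stays inside $D_j$ throughout the homotopy.
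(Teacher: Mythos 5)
You have the right first half, and it coincides with the paper's: cover $\supp(R)$ by supports of currents $S_i\in M_j$, regularize by Proposition \ref{prop regu} to get smooth slices $\mathcal S_i(\theta_i)$ that are strictly positive near $\supp(S_i)$ and still belong to $M_j$ (by the subharmonicity plus Lemma \ref{le domi-cons1} argument), and dominate $R\leq C\sum_i\mathcal S_i(\theta_i).$ The gap is in how you conclude. Your ``cleanest route'' asks for a structural disk inside $\Cc_p(D_j)$ with $\mathcal R(0)=S'$ and passing through the arbitrary continuous current $R$; you flag this yourself as the main obstacle, and it is not just delicate but unavailable in general. Proposition \ref{prop regu} only produces disks centered at a given current whose off-center slices are convolutions of that same current; a ``homotopy'' or affine segment between $S'$ and $R$ is not a structural disk (slices must come holomorphically from a closed current on $\Db\times\Pb^k$), and the existence of a canonical disk linking an arbitrary element of $\Cc_p(U)$ to a prescribed one is exactly the feature of Dinh's geometric hypotheses (HD) that the present paper does \emph{not} assume and is at pains to avoid. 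So as written, the decisive step is missing.

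The fix is short and purely numerical, and it is what the paper does: no disk through $R$ is needed. Since $R\leq C\sum_{i=1}^{n_0}\mathcal S_i(\theta_i)$, the current $C\sum_i\mathcal S_i(\theta_i)-R$ is positive \emph{and closed} (both terms are closed --- your parenthetical ``not closed'' is incorrect), of mass $Cn_0-1$ and supported in $D_{j-1}$; hence every limit value of $\Delta_n$ applied to it is $(Cn_0-1)$ times an element of $\Ic_p(D_{j-1})$, giving $\limsup_n\langle\Delta_n(C\sum_i\mathcal S_i(\theta_i)-R),\phi_j\rangle\leq(Cn_0-1)c_j$, and likewise $\limsup_n\langle\Delta_nR,\phi_j\rangle\leq c_j$. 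Since each $\mathcal S_i(\theta_i)\in M_j$, one has $\langle\Delta_n(C\sum_i\mathcal S_i(\theta_i)),\phi_j\rangle\to Cn_0c_j$. Now apply Lemma \ref{le domi-cons2} (which you never invoke) with $a_n=\langle\Delta_n(C\sum_i\mathcal S_i(\theta_i)),\phi_j\rangle$, $b_n=\langle\Delta_nR,\phi_j\rangle$, $c=Cn_0c_j$ and $\alpha=1/(Cn_0)$, to get $\langle\Delta_nR,\phi_j\rangle\to c_j$, i.e.\ $R\in M_j$. Your passing remark ``one writes $CS'=R+(CS'-R)$'' was the right idea, but you abandoned it in favor of the disk construction instead of closing it with this elementary lemma.
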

\begin{proof}
Let $R$ be a continuous form in $\Cc_p(D_j).$ Since $\supp(R)$ is a compact subset of $D_j$ and $D_j=\cup_{S\in M_j}\supp(S),$ by Proposition \ref{prop regu} there exist currents $S_i$ in $M_j,$ $1\leq i\leq n_0,$ and structural disks $\mathcal S_i$ in $\Cc_p(D_{j})$ such that $\mathcal S_i(0)=S_i$ and $R\leq C\sum_{i=1}^{n_0} \mathcal S_i(\theta_i)$ for some $C>0$ and $\theta_1,\ldots,\theta_{n_0}\in\Db.$ By the definition of $c_j$ we have that
$$\limsup_{n\to\infty}\left\langle\Delta_n\left(C\sum_{i=1}^{n_0} \mathcal S_i(\theta_i)-R\right),\phi_j\right\rangle\leq (Cn_0-1)c_j$$
and
$$\limsup_{n\to\infty}\left\langle\Delta_n R,\phi_j\right\rangle\leq c_j.$$
But, as we have seen, $S_i\in M_j$ implies that $\mathcal S_i(\theta_i)\in M_j$ and then
$$\lim_{n\to\infty}\left\langle\Delta_n \mathcal S_i(\theta_i),\phi_j\right\rangle= c_j.$$
Therefore, by Lemma \ref{le domi-cons2} $\lim_{n\to\infty}\left\langle\Delta_n R,\phi_j\right\rangle=c_j,$ i.e. $R\in M_j.$
\end{proof}
As a consequence, $D_j$ is the union of the support of continuous forms in $\Cc_p(D_j).$ It also gives the following result.
\begin{proposition}\label{prop stat2}
If $E_{j+1}=E_j$ then $D_{j+1}=D_j.$
\end{proposition}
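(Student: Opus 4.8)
Since $D_{j+1}\subset D_j$ it suffices to prove the reverse inclusion, and since (by the consequence of Proposition~\ref{prop conv continu}) $D_j$ is the union of the supports of the continuous forms it contains, while $D_{j+1}=\bigcup_{S\in M_{j+1}}\supp(S)$, it is enough to show that \emph{every continuous form $R\in\Cc_p(D_j)$ belongs to $M_{j+1}$}. So fix such an $R$. As the limit values of $(\Delta_nR)_{n\ge1}$ are supported in $\overline{f(D_j)}\subset D_j$ and are $\Lambda$-invariant, they lie in $\Ic_p(D_j)$; hence $\limsup_n\langle\Delta_nR,\phi_{j+1}\rangle\le c_{j+1}$ automatically, and only the lower bound $\liminf_n\langle\Delta_nR,\phi_{j+1}\rangle\ge c_{j+1}$ remains — this is where the hypothesis is used. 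Because $E_{j+1}=E_j$ we have $E_j\subset D_{j+1}$; writing $E_j=\bigcap_{n\ge1}\overline{f^n(D_j)}$ with $\overline{f^{n+1}(D_j)}\subset f^n(D_j)$, compactness gives an integer $N\ge1$ with $f^N(\overline{D_j})\subset D_{j+1}$, so that $\Lambda^NR\in\Cc_p(D_{j+1})$ with support the compact set $f^N(\supp R)$.

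Next, cover $f^N(\overline{D_j})$ by finitely many supports of continuous forms in $\Cc_p(D_{j+1})$; each such form lies in $M_{j+1}$ by Proposition~\ref{prop conv continu} applied at level $j+1$, and after regularizing it by Proposition~\ref{prop regu} we may take it smooth, still in $M_{j+1}$, and strictly positive near its support. Their sum $\Phi$ is then a smooth positive closed $(p,p)$-form, supported in $D_{j+1}$, of some mass $m$, strictly positive on a neighbourhood $\Omega$ of $f^N(\overline{D_j})$, and with $\langle\Delta_n\Phi,\phi_{j+1}\rangle\to m\,c_{j+1}$. The point is then to obtain a constant $C>0$ with $\Lambda^NR\le C\Phi$. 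Here one exploits $s+p=k$: for every positive closed $(p,p)$-current $Q$,
$$\Lambda^N\!\bigl(d^{-pN}(f^N)^{*}Q\bigr)=d^{-pN-sN}(f^N)_*(f^N)^{*}Q=d^{(k-p-s)N}Q=Q,$$
so comparing $R$ with $d^{-pN}(f^N)^{*}\Phi$ on $\overline{D_j}$ — where the latter is strictly positive off $\mathrm{Crit}(f^N)$ because $f^N(\overline{D_j})\subset\Omega$ — transports the positivity of $\Phi$ back into $D_j$ and yields, after the necessary care along $\mathrm{Crit}(f^N)$, such a bound $\Lambda^NR\le C\Phi$ (one may in fact arrange $R$ itself to be of the form $d^{-pN}(f^N)^{*}\Psi$, and pass to $\Cc_p(D_j)$ by truncating with a cutoff and invoking Lemma~\ref{le existence courant}, keeping a prescribed point of $D_j$ in the support).

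Once $\Lambda^NR\le C\Phi$ with $\Phi$ a sum of elements of $M_{j+1}$, the proof finishes exactly as in Proposition~\ref{prop conv continu}: the current $C\Phi-\Lambda^NR$ is positive closed of mass $Cm-1$ and supported in $D_j$, so $\limsup_n\langle\Delta_n(C\Phi-\Lambda^NR),\phi_{j+1}\rangle\le(Cm-1)c_{j+1}$, while $\langle\Delta_n(C\Phi),\phi_{j+1}\rangle\to Cm\,c_{j+1}$; Lemma~\ref{le domi-cons2} with $\alpha=1/(Cm)$ then gives $\langle\Delta_n\Lambda^NR,\phi_{j+1}\rangle\to c_{j+1}$, and since $\Delta_n\Lambda^NR-\Delta_nR\to0$ in mass, $\langle\Delta_nR,\phi_{j+1}\rangle\to c_{j+1}$, i.e.\ $R\in M_{j+1}$, as required. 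The main obstacle is precisely the domination $\Lambda^NR\le C\Phi$: the push-forward $\Lambda^NR$ is in general only an unbounded $L^1$ form, blowing up along the critical values of $f^N$ unless $R$ vanishes to high enough order along $\mathrm{Crit}(f^N)\cap\overline{D_j}$, so the bookkeeping that produces a legitimate $R\in\Cc_p(D_j)$ with the prescribed point in its support and with $\Lambda^NR$ controlled by $\Phi$ is the only delicate part; everything else is a routine application of Propositions~\ref{prop conv continu}, \ref{prop regu} and Lemma~\ref{le domi-cons2}.
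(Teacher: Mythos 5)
Your reduction (it suffices to put every continuous form $R\in\Cc_p(D_j)$ into $M_{j+1}$), the choice of $N$ with $f^N(\overline{D_j})\subset D_{j+1}$, and the closing convexity argument via Lemma \ref{le domi-cons2} all match the paper. But the step you yourself flag as ``the only delicate part'' is in fact the heart of the proof, and it is not established. The domination $\Lambda^N R\leq C\Phi$ is false in general: for a continuous (even smooth) $R$, the push-forward $(f^N)_*R$ is an $L^1$ form that blows up along the critical values of $f^N$ (already for $z\mapsto z^2$ on $\Cb$ the push-forward of a bounded form behaves like $|w|^{-1}$ near $0$), so it cannot be bounded by the smooth form $\Phi$ unless $R$ vanishes suitably on $\mathrm{Crit}(f^N)$. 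The dual route you sketch, comparing $R$ with $d^{-pN}(f^N)^*\Phi$, hits the mirror problem: the pullback is smooth but fails to be strictly positive at the points of $\mathrm{Crit}(f^N)\cap\overline{D_j}$, so no constant $C$ gives $R\leq C\,d^{-pN}(f^N)^*\Phi$ there, and ``the necessary care along $\mathrm{Crit}(f^N)$'' is left unexplained. The parenthetical fix is not viable either: a current of the form $d^{-pN}(f^N)^*\Psi$ is supported in $f^{-N}(D_{j+1})$, not in $D_j$, hence is not an element of $M_{j+1}$ (which by definition sits in $\Cc_p(D_j)$), and truncating by a cutoff destroys closedness, while Lemma \ref{le existence courant} only produces limit currents supported on the attracting set $E_j$, which does not give an element of $M_{j+1}$ whose support passes through a prescribed point of $D_j$.

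The paper closes exactly this gap by a different mechanism. It takes a smooth $S\in\Cc_p(D_{j+1})$ (hence $S\in M_{j+1}$ by Proposition \ref{prop conv continu}) whose support contains $f^N(D_j)$, forms $\widetilde S:=d^{-Np}(f^N)^*S$, and, instead of pushing $R$ forward, regularizes $\widetilde S$ by the automorphism structural disk of Proposition \ref{prop regu} in $V=f^{-N}(D_{j+1})$. The crucial point is then the subharmonicity argument (Theorem \ref{th sh} together with Lemma \ref{le domi-cons1}) applied to $\theta\mapsto\Lambda^N\widetilde{\mathcal S}(\theta)$: this disk lies in $\Cc_p(D_{j+1})$ and equals $S\in M_{j+1}$ at $\theta=0$, so $\langle\Delta_n\widetilde{\mathcal S}(\theta),\phi_{j+1}\rangle\to c_{j+1}$ for \emph{all} $\theta$, while for $\theta\neq0$ the form $\widetilde{\mathcal S}(\theta)$ is smooth and strictly positive on $\overline{D_j}$. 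One then dominates $R$ itself, $R\leq C\widetilde{\mathcal S}(\theta)$, and applies Lemma \ref{le domi-cons2}; no control of $\Lambda^N R$ near the critical set is ever needed. Without this regularization-plus-subharmonicity ingredient (or an equivalent substitute), your argument does not go through, so as written there is a genuine gap.
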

\begin{proof}
We already know that $D_{j+1}\subset D_j.$ If $E_{j+1}=E_j$ then there exists $N>0$ such that $f^N(D_j)\Subset D_{j+1}.$ As above, since $\overline{f^N(D_j)}$ is a compact subset of $D_{j+1},$ there exists a smooth form $S\in\Cc_p(D_{j+1})$ whose support contains $f^N(D_j).$ In particular, by Proposition \ref{prop conv continu} $S$ belongs to $M_{j+1}.$ Define
$$\widetilde S:=\frac{1}{d^{Np}}(f^N)^*S.$$
It is a current in $\Cc_p(f^{-N}(D_{j+1}))$ such that $\lim_{n\to\infty}\langle\Delta_n\widetilde S,\phi_{j+1}\rangle=c_j$ since $f_*f^*$ is equal to $d^k$ times the identity on positive closed currents. The fact that $f^N(D_j)$ is contained in $\supp(S)$ implies that $D_j\subset \supp(\widetilde S).$ Therefore, if $\widetilde{\mathcal S}$ is the regularization given by Proposition \ref{prop regu} with $V=f^{-N}(D_{j+1})$ then it satisfies
\begin{itemize}
\item $\widetilde {\mathcal S}(\theta)>0$ on $\overline{D_j}$ if $\theta\neq0,$
\item $\Lambda^N\widetilde {\mathcal S}(\theta)\in\Cc_p(D_{j+1})$ for all $\theta\in\Db,$
\item $\Lambda^N\widetilde {\mathcal S}(0)=S\in M_{j+1}.$
\end{itemize}
Hence, we obtain exactly as in the proof of Proposition \ref{prop conv continu} that all continuous forms in $\Cc_p(D_j)$ are in $M_{j+1}.$ That implies the desired result.
\end{proof}
We deduce form this our first result of convergence.
\begin{proof}[Proof of Theorem \ref{th conv1}]
By Lemma \ref{le stat1} and Proposition \ref{prop stat2} there is $j_0\geq1$ such that $E_j=E_{j_0}$ and $D_j=D_{j_0}$ for all $j\geq j_0.$ As $M_j$ contains all continuous elements of $\Cc_p(D_j)$ and that $\Cc_p(D_{j_0})\subset\Cc_p(D_j)$ for all $j\geq0,$ it follows that
$$\lim_{n\to\infty}\langle\Delta_nS,\phi_j\rangle=c_j,$$
for all $j\geq0$ and for all continuous forms $S\in\Cc_p(D_{j_0}).$ Thus, each limit value $S_\infty$ of $\Delta_nS$ must satisfy
$$\langle S_\infty,\phi_j\rangle=c_j,$$
which, by Lemma \ref{le sepa}, completely determines $S_\infty.$ In particular, this limit value is unique and doesn't depend on $S.$ We call it $\tau$ and set $D_\tau:=D_{j_0}.$ It is an invariant current and $\Nc_\tau(r_U)\subset D_\tau$ since $\tau\in\Cc_p(D_\tau)$ and $f\circ\sigma(D_\tau)\subset D_\tau$ for all $\sigma\in B_W(r_U).$

To prove that $\tau$ is extremal in $\Ic_p(U)$ let $\tau_1$ and $\tau_2$ be two elements of $\Ic_p(U)$ such that $\tau=2^{-1}(\tau_1+\tau_2).$ Define $i_0=\min\{i\geq1\,|\, \langle\tau,\phi_i\rangle\neq\langle \tau_1,\phi_i\rangle\}.$ If $i_0\neq+\infty$ then the construction of $\tau$ implies that $\langle \tau,\phi_{i_0}\rangle>\langle\tau_1,\phi_{i_0}\rangle$ and $\langle\tau,\phi_{i_0}\rangle\geq\langle\tau_2,\phi_{i_0}\rangle$ which is impossible. Therefore, $i_0=+\infty$ and $\tau=\tau_1=\tau_2.$
\end{proof}
\begin{remark}
The set $\Nc_\tau(r_U)$ is contained in $D_\tau$ but they are not equal in general. For an attractive fixed point $z_0,$ $\tau$ is the Dirac mass at $z_0,$ $\Nc_\tau(r_U)$ is included in the immediate basin of $z_0$ while $D_\tau$ equal to the whole trapping region $U.$
\end{remark}
The following remark will be crucial in the sequel.
\begin{remark}\label{rk max}
As a consequence of Theorem \ref{th conv1}, if $S$ is an attracting current of bidegree $(p,p)$ which is attractive on a trapping region $V$ then
$$\langle S,\phi\rangle=\max_{R\in\Ic_p(V)}\langle R,\phi\rangle,$$
for all $\phi\in\Pc(V).$ Indeed, it simply comes from the facts that $S$ is the unique attracting current in $\Cc_p(V)$ and that, for each $\phi\in\Pc(V),$ the construction in the proof of Theorem \ref{th conv1} gives an attracting current $\tau$ such that $\langle \tau,\phi\rangle=\max_{R\in\Ic_p(V)}\langle R,\phi\rangle$ if we start with a dense sequence $(\phi_j)_{j\geq1}$ in $\Pc(V)$ such that $\phi_1=\phi.$ In particular,
$$\langle S,\phi\rangle=\langle R,\phi\rangle$$
for all $R\in\Ic_p(V)$ and all smooth form $\phi$ such that $\ddc\phi=0.$
\end{remark}

Another direct consequence of Theorem \ref{th conv1} is that attracting currents are extremal.
\begin{corollary}
Attracting currents of bidegree $(p,p)$ in $U$ are extremal points of $\Ic_p(U).$ In particular, if such a current $S$ puts mass on an analytic set of dimension $s$ then $S$ is a combination of currents of integration on analytic sets of pure dimension $s.$
\end{corollary}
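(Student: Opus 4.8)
The plan is to obtain the extremality from the maximality property recorded in Remark~\ref{rk max}, and then to treat the analytic case via the Siu decomposition combined with the extremality just obtained.

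For the first assertion, let $S$ be an attracting current in $\Cc_p(U)$, attractive on some codimension $p$ trapping region $V$, so that $\supp(S)\subset V$. Suppose $S=\tfrac12(S_1+S_2)$ with $S_1,S_2\in\Ic_p(U)$. Since $S\geq\tfrac12 S_i\geq0$ we have $\supp(S_i)\subset\supp(S)\subset V$; as $S_1$ and $S_2$ have mass $1$ and are $\Lambda$-invariant, they belong to $\Ic_p(V)$. Remark~\ref{rk max} then gives, for every $\phi\in\Pc(V)$,
$$\langle S_i,\phi\rangle\leq\max_{R\in\Ic_p(V)}\langle R,\phi\rangle=\langle S,\phi\rangle=\tfrac12\bigl(\langle S_1,\phi\rangle+\langle S_2,\phi\rangle\bigr),$$
which forces $\langle S_1,\phi\rangle=\langle S_2,\phi\rangle$ for all $\phi\in\Pc(V)$. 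Since $\overline V$ is weakly $p$-pseudoconvex by Corollary~\ref{coro p-pseudo}, Lemma~\ref{le sepa} gives $S_1=S_2$, so $S$ is an extreme point of $\Ic_p(U)$.

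For the second assertion I would start from the Siu decomposition $S=S_{\mathrm{an}}+S_{\mathrm{res}}$ with $S_{\mathrm{an}}=\sum_j\lambda_j[V_j]$, where the $V_j$ are irreducible analytic sets of dimension $s$ and $\lambda_j>0$, and where the positive closed residual part $S_{\mathrm{res}}$ charges no analytic set of dimension $s$ (equivalently $\dim E_c(S_{\mathrm{res}})<s$ for all $c>0$); both parts are supported in $\supp(S)\subset U$. The crucial point is that $\Lambda=d^{-s}f_*$ respects this splitting. Indeed, as $f$ is finite, $f_*[V_j]=\deg(f|_{V_j})\,[f(V_j)]$, so $\Lambda S_{\mathrm{an}}$ is again a locally finite sum of integration currents on irreducible analytic sets of dimension $s$; and since $f$ is a finite branched covering, the Lelong numbers of $f_*S_{\mathrm{res}}$ are bounded, up to a fixed multiplicative constant, by those of $S_{\mathrm{res}}$ at the finitely many preimages, so $E_c(\Lambda S_{\mathrm{res}})$ is contained in the image under $f$ of some $E_{c'}(S_{\mathrm{res}})$ with $c'>0$ and therefore has dimension $<s$, i.e. $\Lambda S_{\mathrm{res}}$ is again residual. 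Applying the uniqueness of the Siu decomposition to the identity $S=\Lambda S=\Lambda S_{\mathrm{an}}+\Lambda S_{\mathrm{res}}$, we conclude that both $S_{\mathrm{an}}$ and $S_{\mathrm{res}}$ are $\Lambda$-invariant.

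Now assume $S$ charges some analytic set of dimension $s$. Since $S_{\mathrm{res}}$ charges no such set, $S_{\mathrm{an}}\neq0$; and if we also had $S_{\mathrm{res}}\neq0$, then $S=\|S_{\mathrm{an}}\|\,\bigl(S_{\mathrm{an}}/\|S_{\mathrm{an}}\|\bigr)+\|S_{\mathrm{res}}\|\,\bigl(S_{\mathrm{res}}/\|S_{\mathrm{res}}\|\bigr)$ would be a non-trivial convex combination of two distinct elements of $\Ic_p(U)$, contradicting the extremality proved above. Hence $S_{\mathrm{res}}=0$ and $S=\sum_j\lambda_j[V_j]$ is a combination of integration currents on analytic sets of pure dimension $s$; grouping the $V_j$ into $f$-periodic cycles and invoking extremality once more even shows that $S$ is carried by a single such cycle. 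The step I expect to require the most care is the verification that $d^{-s}f_*$ maps residual currents to residual currents — that is, the control of Lelong numbers and of the dimensions of the sublevel sets $E_c$ under the ramified finite map $f$; the rest is a formal consequence of Remark~\ref{rk max}, Lemma~\ref{le sepa} and the uniqueness of the Siu decomposition.
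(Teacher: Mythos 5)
Your proof is correct and takes essentially the same route as the paper: reduce to the trapping region $V$ on which $S$ is attractive, so that any decomposition $S=\tfrac12(S_1+S_2)$ lives in $\Ic_p(V)$, where $S$ maximizes every observable in $\Pc(V)$ (Remark \ref{rk max}, itself a consequence of Theorem \ref{th conv1}), and then combine the Siu decomposition, its stability under $\Lambda$, and the extremality just obtained. Your verification that $\Lambda$ preserves the analytic and residual parts (and the closing remark about a single periodic cycle of analytic sets) only adds detail to a step the paper asserts without proof, so it is a refinement rather than a different approach.
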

\begin{proof}
Let $S$ be an attracting current in $\Cc_p(U).$ By definition, $S$ is attractive on some trapping region $V.$ If $S$ is decomposable in $\Ic_p(U),$ $S=2^{-1}(S_1+S_2),$ then $S_1$ and $S_2$ are supported in $V,$ i.e. $S$ is also decomposable in $\Ic_p(V).$ On the other hand, by Theorem \ref{th conv1} applied to $V,$ there exists an attracting current which is extremal in $\Ic_p(V).$ But this implies that $S=S_1=S_2$ since $S$ is the unique attracting current in $\Ic_p(V).$

For the second point, by Siu's decomposition theorem, there exist analytic sets $[H_i]$ of pure dimension $s,$ positive numbers $c_i,$ and a positive closed current $S'$ having no mass on analytic sets of dimension $s$ such that $S=\sum c_i[H_i]+S'.$ Moreover, this decomposition is stable under $\Lambda$ i.e. $\Lambda(\sum c_i[H_i])$ is a combination of currents of integration and $\Lambda S'$ has no mass on analytic set of dimension $s.$ The fact that $\Lambda S=S$ implies that $\Lambda(\sum c_i[H_i])=\sum c_i[H_i]$ and $\Lambda S'=S'.$ Since $S$ is extremal in $\Ic_p(U),$ one of these two currents must vanish.
\end{proof}
\begin{remark}
1) There exist examples of attracting currents which are not supported by a pluripolar set but with non-zero Lelong number at some points, cf. \cite[Section 6.3]{daurat}.\\
2) It would be interesting to know if the fact that an attracting current is algebraic implies that the associated attracting set is algebraic.
\end{remark}

In the remaining part of this section, we will extend little by little our results about equidistribution. The key idea is simple. We will use structural disks of center $S\in\Ic_p(U)$ to dominate a current or a part of it and then deduce from Theorem \ref{th conv1} information about the possible limit values when we apply the dynamics. A good example of this strategy will be the proof of Lemma \ref{le courant avec prop de cv}. But, first we need to establish several intermediate results which give information about attracting currents. The following one says that if $S$ is attractive on $V$ then it is also attractive on the full basin of $V$ and not only for closed forms. This last point is the counterpart in our setting of a result obtained by Dinh in \cite[Section 4]{d-attractor}.

\begin{lemma}\label{le conv tronca}
Let $S$ be an attracting current of bidegree $(p,p)$ which is attractive on a trapping region $V.$ Let $B_V:=\cup_{n\geq0}f^{-n}V$ be the basin of $V.$ For all continuous forms $R$ in $\Cc_p(\Pb^k)$ and all positive smooth functions $\chi$ with compact support in $B_V$ we have that
$$\lim_{n\to\infty}\Delta_n(\chi R)=cS,$$
where $c=\langle R\wedge T^s,\chi\rangle.$
\end{lemma}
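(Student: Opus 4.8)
\textbf{Proof plan for Lemma \ref{le conv tronca}.}

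The plan is to reduce the statement to the already-established convergence $\lim_n\Delta_n R'=cS$ for \emph{closed} positive forms $R'$ in $\Cc_p(V)$ (which is the content of Theorem \ref{th conv1}, since $S$ is the unique attracting current on $V$), together with the two domination lemmas \ref{le domi-cons1} and \ref{le domi-cons2}. The first reduction is to the case where $\chi$ has compact support in $V$ itself: if $\supp\chi\subset f^{-N}V$ then $f^N_*(\chi R)$ is a current supported in $V,$ and since $\Lambda^N$ commutes with the Ces\`aro means up to a vanishing error (a finite reindexing in $\Delta_n$), the limit value of $\Delta_n(\chi R)$ is determined by that of $\Delta_n(\Lambda^N(\chi R))$; one checks $\Lambda^N(\chi R)$ is, up to a smooth positive density, a push-forward supported in $V,$ and by Lemma \ref{le existence courant} its mass is the right constant $c=\langle R\wedge T^s,\chi\rangle$ (this is where the cohomological mass computation enters). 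So from now on $\supp\chi\Subset V.$

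Next, the main work: with $\chi$ supported in $V,$ I would bound $\chi R$ between two closed currents coming from structural disks. Using Proposition \ref{prop regu} applied with $K=\supp\chi$ and a slightly larger open set still inside $V,$ one writes $\chi R\leq C\sum_{i=1}^{n_0}\mathcal S_i(\theta_i)$ for finitely many structural disks $\mathcal S_i$ centered at currents $S_i\in\Cc_p(V),$ with the $\mathcal S_i(\theta_i)$ smooth and closed. For each such closed form, $\lim_n\langle\Delta_n\mathcal S_i(\theta_i),\phi\rangle$ exists for every $\phi\in\Pc(V)$ (by Theorem \ref{th conv1} and Lemma \ref{le domi-cons1}, exactly as in Proposition \ref{prop conv continu}), and equals $\langle c_i' S,\phi\rangle$ where $c_i'$ is the mass of $\mathcal S_i(\theta_i).$ Applying Lemma \ref{le domi-cons2} with $a_n=\langle\Delta_n(C\sum_i\mathcal S_i(\theta_i)),\phi\rangle,$ $b_n=\langle\Delta_n(\chi R),\phi\rangle,$ and the third sequence $a_n-b_n=\langle\Delta_n(C\sum_i\mathcal S_i(\theta_i)-\chi R),\phi\rangle$ (which is the Ces\`aro mean of a \emph{positive} current tested against $\phi\in\Pc(V),$ hence has the correct $\limsup$ once we know the limit along a positive closed dominating current), we get that $\langle\Delta_n(\chi R),\phi\rangle$ converges to a definite value. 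The key bookkeeping point is that $\alpha$ in Lemma \ref{le domi-cons2} must be the ratio of masses, so one needs $\langle(\chi R)\wedge T^s,\mathbf 1\rangle=c$ and $\langle(C\sum_i\mathcal S_i(\theta_i)-\chi R)\wedge T^s,\mathbf 1\rangle$ computed correctly; this is guaranteed by Lemma \ref{le conti ts} (continuity of $\cdot\wedge T^s$) applied to the convergent sequences $\Delta_n(\cdot)$ — note these Ces\`aro means are of currents with uniformly bounded mass, so Lemma \ref{le existence courant} keeps everything in the right mass class. Having shown $\langle\Delta_n(\chi R),\phi\rangle\to c\langle S,\phi\rangle$ for all $\phi\in\Pc(V),$ and since $\overline V$ is weakly $p$-pseudoconvex (Corollary \ref{coro p-pseudo}), Lemma \ref{le sepa} upgrades this to $\Delta_n(\chi R)\to cS$ as currents.

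The main obstacle, as flagged above, is the mass bookkeeping: $\chi R$ is only positive, not closed, so a priori the Ces\`aro means $\Delta_n(\chi R)$ need not converge to a closed current of controlled mass. This is precisely handled by Lemma \ref{le existence courant} (which says the limit values \emph{are} closed of mass $\langle R\wedge T^s,\chi\rangle$) combined with Lemma \ref{le conti ts} for the wedge with $T^s$; one must be a little careful that these lemmas are stated for push-forwards $d^{-sn}(f^n)_*(\chi S)$ rather than for Ces\`aro means, but since every limit value of $\Delta_n(\chi R)$ is a limit value of convex combinations of such push-forwards (each of mass $c$), the same mass conclusion holds. The second, milder obstacle is justifying that the dominating closed forms $\mathcal S_i(\theta_i)$ can be chosen with total mass $Cn_0$ matching the domination constant, so that $\alpha=c/(Cn_0)$ is exactly the right weight in Lemma \ref{le domi-cons2}; this follows from the explicit form $\mathcal S(\theta)=\int(r\theta\sigma)_*S\,d\rho(\sigma)$ in Proposition \ref{prop regu}, which preserves mass.
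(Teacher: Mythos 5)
Your overall mechanism (dominate by closed currents whose Ces\`aro means are known to converge to $S$, then combine Remark \ref{rk max}, Lemma \ref{le domi-cons1}/\ref{le domi-cons2} and Lemma \ref{le sepa}) is in the right spirit, but there is a genuine gap at the central step: the domination $\chi R\leq C\sum_{i=1}^{n_0}\mathcal S_i(\theta_i)$ by finitely many \emph{smooth closed} positive currents supported in (a slightly larger open subset of) $V$ is not available in general. Proposition \ref{prop regu} only makes $\mathcal S_i(\theta_i)$ strictly positive on a bounded small neighborhood of $\supp(S_i)$ with $S_i\in\Cc_p(V)$, so your covering argument requires every point of $\supp\chi$ to lie close to the union of supports of closed positive $(p,p)$-currents supported in $V$; that union is the $s$-pseudoconcave core $\widetilde V$ of Definition \ref{def pseudoconcave core}, which can be a proper subset of $V$, and nothing forces $\supp\chi$ (even after your reduction into $V$) to lie near it. This is exactly why the argument of Proposition \ref{prop conv continu} works there — the sets $D_j$ are \emph{by construction} unions of supports of currents in $M_j$ — but does not transfer to an arbitrary trapping region $V$ and an arbitrary cutoff $\chi$ in the basin. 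The paper's proof gets around this by going forward in time: it takes a limit value $cR_\infty$ of $\Lambda^{n_i}(\chi R)$, regularizes $R_\infty$ inside $V$, pulls the regularization back by $f^{n_{i_0}}$ (so its support automatically covers the relevant part of $\supp(\chi R)$), and dominates only the truncated current $\chi_\infty\circ f^{n_{i_0}}\cdot(\chi R)$, which carries mass $c''\geq c'$ with $c'<c$ arbitrary; Lemma \ref{le domi-cons2} then gives $\Delta_N(\chi_\infty\circ f^{n_{i_0}}(\chi R))\to c''S$, so every limit value of $\Delta_N(\chi R)$ dominates $cS$ and, having mass $c$ by Lemma \ref{le existence courant}, must equal $cS$. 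Without some such truncation-and-pullback device, your $\limsup$ bookkeeping cannot even be set up, because the dominating sum does not exist.

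A secondary weak point is your first reduction: $\Lambda^N(\chi R)=d^{-Ns}(f^N)_*(\chi R)$ is in general \emph{not} a continuous (or even bounded) form — push-forward by $f^N$ creates singularities along critical values — so it cannot simply be fed back into a domination by smooth forms, nor described as ``a push-forward up to a smooth positive density.'' The paper never performs this reduction; it keeps $\chi R$ as it is and composes the cutoff with $f^{n_{i_0}}$ instead, precisely so that all dominations involve the original continuous form $R$.
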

\begin{proof}
By Lemma \ref{le existence courant} there exists $R_\infty\in\Cc_p(V)$ and an increasing sequence $(n_i)_{i\geq0}$ such that $\lim_{i\to\infty}\Lambda^{n_i}\chi R=cR_\infty.$ If $c=0$ then the lemma follows easily. Assume that $c>0$ and choose $c'>0$ such that $c'<c.$ Let $\mathcal R_\infty$ be the regularization of $R_\infty$ in $\Cc_p(V)$ given by Proposition \ref{prop regu}. There exist a smooth positive function $\chi_\infty\leq1$ and $\theta_0\in\Db^*$ such that $\chi_\infty$ is equal to $1$ on $\supp(R_\infty)$ and $\supp(\chi_\infty)\subset\supp(\mathcal R_\infty(\theta_0)).$ In particular, $\langle R_\infty\wedge T^s,\chi_\infty\rangle=1.$ Hence, there exists $i_0\geq 0$ such that $c'':=\langle(\Lambda^{n_{i_0}}\chi R)\wedge T^s,\chi_\infty\rangle\geq c'$ and thus $\langle R\wedge T^s,\chi(\chi_\infty\circ f^{n_{i_0}})\rangle=c''\geq c'.$

Since $\lim_{N\to\infty}\Delta_N\mathcal R_\infty(\theta_0)=S,$ exactly as in Proposition \ref{prop stat2} we can use a regularization in $\Cc_p(f^{-n_{i_0}}(V))$ of the smooth current
$$\widetilde R_\infty:=\frac{1}{d^{pn_{i_0}}}(f^{n_{i_0}})^*\mathcal R_\infty(\theta_0),$$
in order to obtain a structural disk $\widetilde{\mathcal R}_\infty$ such that
\begin{itemize}
\item $\|\widetilde {\mathcal R}_\infty(\theta)-\widetilde {\mathcal R}_\infty(\theta')\|_{\mathcal C^0}\leq C|\theta-\theta'|,$
\item $\widetilde {\mathcal R}_\infty(\theta)>0$ on $\supp(\chi_\infty\circ f^{n_{i_0}}(\chi R))$ if $\theta\neq0,$
\item $\Lambda^{n_{i_0}}\widetilde {\mathcal R}_\infty(\theta)\in\Cc_p(V)$ for all $\theta\in\Db,$
\item $\lim_{N\to\infty}\Delta_N\widetilde {\mathcal R}_\infty(0)=S.$
\end{itemize}

On the other hand, since $S$ is the unique attracting current in $\Cc_p(V),$ it satisfies $\langle S,\phi\rangle=\max_{S'\in\Ic_p(V)}\langle S',\phi\rangle$ for all $\phi\in\Pc(V).$ Therefore,
\begin{align*}
\limsup_{N\to\infty}\langle\Delta_N\widetilde {\mathcal R}_\infty(\theta),\phi\rangle\leq\langle S,\phi\rangle,
\end{align*}
for all $\phi\in\Pc(V).$ And exactly as in the proof of Proposition \ref{prop conv1}, the equality for $\theta=0$ implies that $\lim_{N\to\infty}\Delta_N\widetilde {\mathcal R}_\infty(\theta)=S$ for all $\theta\in\Db.$ In a similar way, if $\theta_1$ is in $\Db^*$ then there exists $0<c_1<1$ such that 
$$\widetilde {\mathcal R}_\infty(\theta_1)\geq c_1(\chi_\infty\circ f^{n_{i_0}}(\chi R))$$
and therefore
\begin{align*}
\limsup_{N\to\infty}\langle\Delta_N(\widetilde {\mathcal R}_\infty(\theta_1)-c_1(\chi_\infty\circ f^{n_{i_0}}(\chi R))),\phi\rangle&\leq(1-c_1c'')\langle S,\phi\rangle,\\
\limsup_{N\to\infty}\langle\Delta_N(\chi_\infty\circ f^{n_{i_0}}(\chi R)),\phi\rangle&\leq c''\langle S,\phi\rangle,
\end{align*}
for all $\phi\in\Pc(V).$ Hence, we have $\lim_{N\to\infty}\Delta_N(\chi_\infty\circ f^{n_{i_0}}(\chi R))=c''S\geq c'S.$ As $\chi R\geq\chi_\infty\circ f^{n_{i_0}}(\chi R)$ and $c'<c$ was arbitrary, any limit value $R'$ of $(\Delta_N\chi R)_{N\geq1}$ has to satisfy $R'\geq cS.$ It follows by Lemma \ref{le existence courant} that $R'=cS$ since $R'$ is positive of mass $c.$
\end{proof}
\begin{remark}\label{rk-domi}
If $R_\infty$ is a limit value of $(\Delta_n\omega^p)_{n\geq1}$ then Lemma \ref{le conv tronca} implies that for each attracting current $S$ in $\Cc_p(\Pb^k)$ there exists $c>0$ such that $R_\infty\geq cS.$
\end{remark}
\begin{lemma}\label{le preli a finitude des tau}
Let $S_1$ and $S_2$ be two different attracting currents of bidegree $(p,p),$ attracting on $V_1$ and $V_2$ respectively. If $S_1$ and $S_2$ are different then $V_1\cap\Jc_s\cap V_2=\varnothing$ and $\Cc_p(V_1)\cap\Cc_p(V_2)=\varnothing.$
\end{lemma}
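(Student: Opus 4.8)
The plan is to prove both assertions by contraposition: in each case I assume the intersection in question is non-empty and I produce a single continuous current that, fed into the two attractors $S_1$ and $S_2$, forces $S_1=S_2$, contradicting the hypothesis $S_1\neq S_2$. The only non-formal input is Lemma~\ref{le conv tronca}, which says that an attracting current attracts $\Delta_n(\chi R)$ from the whole basin of its trapping region and for test currents $R$ that need not be closed.

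For the first equality, suppose $x\in V_1\cap\Jc_s\cap V_2$. Since $V_1\cap V_2$ is open and contains $x$, I would fix a smooth function $\chi\geq 0$ with compact support in $V_1\cap V_2$ and $\chi(x)>0$. Because $x\in\Jc_s=\supp(T^s)$ and $\omega^p$ is a strictly positive smooth form, the positive measure $\omega^p\wedge T^s$ has support exactly $\Jc_s$, so $c:=\langle\omega^p\wedge T^s,\chi\rangle>0$. Now $\supp(\chi)$ is a compact subset of $V_1$, hence of the basin $B_{V_1}=\cup_{n\geq0}f^{-n}V_1$, so Lemma~\ref{le conv tronca} applied to the smooth form $\omega^p\in\Cc_p(\Pb^k)$ gives $\lim_{n\to\infty}\Delta_n(\chi\omega^p)=cS_1$. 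Applying the same lemma with $V_2$ in place of $V_1$ gives $\lim_{n\to\infty}\Delta_n(\chi\omega^p)=cS_2$. Hence $cS_1=cS_2$ and, since $c>0$, $S_1=S_2$: contradiction. Therefore $V_1\cap\Jc_s\cap V_2=\varnothing$.

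For the second equality, note first that a positive closed current supported in $V_1$ and supported in $V_2$ is supported in $V_1\cap V_2$, so $\Cc_p(V_1)\cap\Cc_p(V_2)=\Cc_p(V_1\cap V_2)$. If this set were non-empty I would pick $R$ in it and apply the regularization of Proposition~\ref{prop regu} with $K=\supp(R)$ and the open set $V_1\cap V_2$; for any $\theta_0\neq 0$ the current $\mathcal R(\theta_0)$ is smooth, hence continuous, and lies in $\Cc_p(V_1\cap V_2)\subset\Cc_p(V_i)$ for $i=1,2$. Since $S_i$ is attractive on $V_i$, Definition~\ref{def-attracting-current} yields $\lim_{n\to\infty}\Delta_n\mathcal R(\theta_0)=S_1$ and $=S_2$ at the same time, whence $S_1=S_2$: again a contradiction. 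One could also simply observe that $\Cc_p(V_1\cap V_2)\neq\varnothing$ forces the attracting set of the trapping region $V_1\cap V_2$ to meet $\Jc_s$ by Proposition~\ref{prop def equiva}, which already contradicts the first equality.

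The substantive step is the first one, and the point that requires care is that $V_1\cap\Jc_s\cap V_2$ need not be contained in the attracting set of $V_1$ or of $V_2$, so one cannot conclude by comparing supports; the argument must transport a genuine piece of the mass of $T^s$ into both trapping regions under the dynamics, which is exactly what the basin form of the convergence in Lemma~\ref{le conv tronca} allows. Once that is set up, everything else is routine.
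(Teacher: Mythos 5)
Your proof is correct and follows essentially the same route as the paper: for the first assertion, both arguments pick a positive smooth $\chi$ supported in $V_1\cap V_2$ with $c=\langle T^s,\chi\omega^p\rangle>0$ and apply Lemma \ref{le conv tronca} in each trapping region to force $cS_1=\lim_N\Delta_N(\chi\omega^p)=cS_2$; for the second, both regularize a current of $\Cc_p(V_1)\cap\Cc_p(V_2)$ into a smooth element of $\Cc_p(V_1\cap V_2)$ and invoke the definition of an attracting current on each $V_i$. Your extra details (support of $\omega^p\wedge T^s$ being $\Jc_s$, the identification $\Cc_p(V_1)\cap\Cc_p(V_2)=\Cc_p(V_1\cap V_2)$, and the alternative via Proposition \ref{prop def equiva}) are accurate but do not change the argument.
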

\begin{proof}
We proceed by contraposition. Let assume that $V_1\cap\Jc_s\cap V_2\neq\varnothing.$ Let $\chi$ be a positive smooth function with compact support in $V_1\cap V_2$ and such that $c:=\langle T^s,\chi\omega^p\rangle>0.$ By Lemma \ref{le conv tronca} we must have $\lim_{N\to\infty}\Delta_N(\chi\omega^p)=cS_i$ for $i=1,2.$ It follows that $S_1=S_2.$ In the same way, if there exists $R\in\Cc_p(V_1)\cap\Cc_p(V_2)$ then we can use a regularization to obtain a smooth current $R'\in\Cc_p(V_1)\cap\Cc_p(V_2).$ Therefore, the definition of attracting current implies
$$S_1=\lim_{N\to\infty}\Delta_NR'=S_2.$$
\end{proof}

\begin{lemma}\label{le finitude cas ca}
Let $\eta>0.$ There exist at most finitely many different attracting currents of bidegree $(p,p)$ which are attracting on a trapping region $V$ such that $f(V_\eta)\subset V.$ Moreover, this number is bounded by a constant depending only on $\eta>0.$
\end{lemma}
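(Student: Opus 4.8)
The plan is to attach to each attracting current an $\eta$-separated point of $\Pb^k$ and then invoke a packing bound. So suppose $S_1,\dots,S_N$ are pairwise distinct attracting currents of bidegree $(p,p)$, with $S_i$ attractive on a trapping region $V_i$ satisfying $f(V_{i,\eta})\subset V_i$, and write $A_i:=\cap_{n\geq0}f^n(V_i)$ for the associated attracting set. First I would note that $S_i$, being the unique attracting current in $\Cc_p(V_i)$, lies in $\Ic_p(V_i)$, so $\supp(S_i)$ is an invariant compact subset of $V_i$ and hence is contained in $A_i$. Thus $\Cc_p(A_i)\neq\varnothing$, and Proposition \ref{prop def equiva} (equivalence of 1) and 3), with $l=s$ and $k-l=p$) yields $A_i\cap\Jc_s\neq\varnothing$; pick $x_i\in A_i\cap\Jc_s$.

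The key step is to show $\dist(x_i,x_j)\geq\eta$ for $i\neq j$. Suppose not; then $x_i\in V_{j,\eta}$, so $f(x_i)\in f(V_{j,\eta})\subset V_j$. On the other hand $f(A_i)\subseteq A_i$ and $f(\Jc_s)\subseteq\Jc_s$, hence $f(x_i)\in A_i\cap\Jc_s\subseteq V_i\cap\Jc_s$; combining, $f(x_i)\in V_i\cap\Jc_s\cap V_j$, which is empty by Lemma \ref{le preli a finitude des tau} since $S_i\neq S_j$ --- a contradiction. Therefore $\{x_1,\dots,x_N\}$ is $\eta$-separated, so the balls $B(x_i,\eta/2)$ are pairwise disjoint; each has volume at least a constant $v(\eta)>0$ (the minimum, over centers, of the volume of an $\eta/2$-ball, which is positive by compactness of $\Pb^k$), whence $N\leq v(\eta)^{-1}=:m$ since $\mathrm{Vol}(\Pb^k)=1$. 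This $m$ depends only on $\eta$ and bounds the number of such currents.

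The whole argument is short; the only point requiring care is the separation step, where one must use the forward invariance of $A_i$ and of $\Jc_s$ together with the fact that the hypothesis is $f(V_{i,\eta})\subset V_i$ --- with the same $\eta$ for every region --- and not merely $f(V_i)\Subset V_i$; this is exactly what lets one push $x_i$ one step into $V_j$ and contradict Lemma \ref{le preli a finitude des tau}. The remaining ingredients --- the inclusion $\supp(S_i)\subset A_i$, the equivalence in Proposition \ref{prop def equiva}, and the volume count --- are routine. (Alternatively one could try to run the monotonicity argument of Lemma \ref{le stat1}, but the sets $A_i$ here are neither nested nor necessarily distinct, so the direct route above is cleaner.)
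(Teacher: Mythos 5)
Your proof is correct and takes essentially the same route as the paper: both arguments pick points of $\Jc_s$ inside the trapping regions (via Proposition \ref{prop def equiva}, packaged in the paper as Corollary \ref{coro p-pseudo}), use Lemma \ref{le preli a finitude des tau} to obtain $\eta$-separation, and finish with a volume-packing bound on $\Pb^k.$ The only cosmetic difference is that the paper invokes the disjointness $V_{i,\eta}\cap\Jc_s\cap V_{j,\eta}=\varnothing$ for the enlarged regions, whereas you apply $f$ once and use the forward invariance of $A_i$ and $\Jc_s$ together with $f(V_{j,\eta})\subset V_j$ to reduce to the lemma as literally stated --- the same mechanism.
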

\begin{proof}
Let $(S_i)_{i\in I}$ be a family of attracting $(p,p)$-currents such that $S_i$ is attractive on a trappings region $V_i$ with $f(V_{i,\eta})\subset V_i.$ Assume that they are pairwise distinct. By Lemma \ref{le preli a finitude des tau} $V_{i,\eta}\cap\Jc_s\cap V_{j,\eta}=\varnothing$ if $i\neq j.$ Finally, by Corollary \ref{coro p-pseudo} $V_i\cap\Jc_s\neq\varnothing.$ Hence, if $x_i\in V_i\cap\Jc_s$ then the balls $B(x_i,\eta/3)$ with $i\in I$ are pairwise disjoint which implies that $I$ is finite with $\card(I)\leq Vol(B(x,\eta/3))^{-1}.$
\end{proof}
The following technical but important result says that an attracting current in $\Cc_p(U)$ has to be attractive on a large trapping region.

\begin{lemma}\label{le courant avec prop de cv}
Let $U$ be a trapping region of codimension $p.$ If $S$ is an attracting current in $\Cc_p(U)$ then $S$ is attractive on $\Nc_S(r_U).$
\end{lemma}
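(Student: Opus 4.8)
The plan is to show that the trapping region $D_S$ produced by Theorem \ref{th conv1} (applied with the starting region $U$) contains $\Nc_S(r_U)$, since $S$ is then, by definition, attractive on $D_S \supset \Nc_S(r_U)$. By Theorem \ref{th conv1} we already know $\Nc_\tau(r_U) \subset D_\tau$ for the canonical attracting current $\tau$ constructed there; but $S$ is an arbitrary attracting current in $\Cc_p(U)$, so the first thing to do is to observe that $S$ is attractive on \emph{some} trapping region $V$ (this is the definition), hence unique as an attracting current in $\Cc_p(V)$, and then compare $V$ with the $D_\tau$ of the construction. Because $S \in \Cc_p(U)$ and all continuous currents of $\Cc_p(D_{j_0})$ lie in $M_{j_0}$ (Propositions \ref{prop conv continu}, \ref{prop stat2}), the uniqueness in Theorem \ref{th conv1} forces $\tau = S$ and $D_\tau = D_{j_0} \subset U$; so in fact every attracting current in $\Cc_p(U)$ arises from the construction, and $S$ is attractive on $D_S := D_{j_0}$ with $f(\sigma(D_S)) \subset D_S$ for all $\sigma \in B_W(r_U)$. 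This already gives $\Nc_S(r_U) \subset D_S$ once we know $\supp(S) \subset D_S$, which holds because $S \in \Ic_p(D_S)$.

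The subtle point — and the step I expect to be the main obstacle — is that the construction in Theorem \ref{th conv1} starts from $U$ and produces one specific attracting current; I must be sure that \emph{the} attracting current $S$ we are handed is exactly this one, i.e. that there is at most one attracting current in $\Cc_p(U)$ reachable this way and that it equals $S$. The cleanest route is: run the construction of Theorem \ref{th conv1} starting with a dense sequence $(\phi_j)_{j\ge1}$ in $\Pc(U)$; it yields $\tau$ attractive on $D_\tau$. Since $S$ is attractive on $V \subset U$, every continuous $R \in \Cc_p(V)$ satisfies $\Delta_n R \to S$; but also $R \in \Cc_p(U)$, and one checks (using Lemma \ref{le conv tronca}, or directly Remark \ref{rk max}) that the Ces\`aro limits of $\langle \Delta_n R, \phi_j\rangle$ are the maxima $c_j$ over $\Ic_p(D_{j-1})$, so $R \in M_j$ for all $j$ and hence $\Delta_n R \to \tau$. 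Therefore $S = \tau$ and $V$ may be replaced by $D_\tau$. Concretely, the argument uses Remark \ref{rk max}: an attracting current attractive on $V$ maximizes every $\phi \in \Pc(V)$ over $\Ic_p(V)$; combined with $\Nc_S(r_U) \subset U$ (from $f\circ\sigma(U)\subset U$ for $\sigma \in B_W(r_U)$, Definition \ref{def-taille}) and $\supp(S) \subset \Nc_S(r_U)$, monotonicity of the construction closes the loop.

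Finally I would record why $\Nc_S(r_U)$ is itself a legitimate trapping region of codimension $p$ on which attractiveness makes sense: this is exactly the discussion preceding Proposition 3.?? — $\Nc_S(r_U) \subset U$ when $r_U \le 1$ is the relevant bound, $f(\Nc_S(r_U)) \Subset \Nc_S(r_U)$ because $f\circ\sigma(\Nc_S(r_U)) \subset \Nc_S(r_U)$ for $\sigma \in B_W(r_U)$ together with $\overline{\Nc_S(r_U)} \subset \bigcup_{\sigma} \sigma(\Nc_S(r_U))$, and $\Cc_p(\Nc_S(r_U)) \ne \varnothing$ since $\supp(S) \subset \Nc_S(r_U)$. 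Then $D_\tau \supset \Nc_\tau(r_U) = \Nc_S(r_U)$ by Theorem \ref{th conv1}, and since $S$ is attractive on $D_\tau$ it is \emph{a fortiori} attractive on the smaller trapping region $\Nc_S(r_U)$ — attractiveness on a trapping region $V'$ with $V' \subset V$ and $\supp(S) \subset V'$ is immediate, because for continuous $R \in \Cc_p(V') \subset \Cc_p(V)$ we already have $\Delta_n R \to S$. This completes the proof; the one genuine piece of work is the identification $S = \tau$, everything else being bookkeeping with the definitions of $\Nc_S$, $r_U$ and the regions $D_j$.
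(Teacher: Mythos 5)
Your proposal has a genuine gap, and it is precisely at the step you yourself flag as ``the one genuine piece of work'': the identification $S=\tau$. The construction of Theorem \ref{th conv1} applied to $U$ produces \emph{one} particular attracting current, namely the one that successively maximizes the observables $\phi_j$ over $\Ic_p(D_{j-1})$; it does not follow, and is false in general, that an arbitrary attracting current $S\in\Cc_p(U)$ coincides with it. There is no uniqueness of attracting currents in $\Cc_p(U)$ (uniqueness holds only among currents attractive on one and the same trapping region): in the example of the introduction, a trapping region $U$ containing an attracting fixed point together with an attracting $2$-cycle carries two distinct attracting measures, and the construction can only return one of them. Your justification breaks at the claim that for continuous $R\in\Cc_p(V)$ the Ces\`aro limits of $\langle\Delta_nR,\phi_j\rangle$ equal the maxima $c_j$ over $\Ic_p(D_{j-1})$: Remark \ref{rk max} only gives that $S$ maximizes $\phi\in\Pc(V)$ over $\Ic_p(V)$, the possibly much smaller set attached to the region $V$ on which $S$ is known to be attractive, not over $\Ic_p(D_{j-1})$. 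If $S\neq\tau$, then by Lemma \ref{le preli a finitude des tau} one even has $\Cc_p(V)\cap\Cc_p(D_\tau)=\varnothing$, and continuous forms in $\Cc_p(V)$ converge to $S$, not to $\tau$, so they do not lie in the sets $M_j$. The bookkeeping in your last paragraph ($\Nc_S(r_U)$ is a codimension $p$ trapping region contained in $U$, and attractiveness passes to smaller trapping regions containing $\supp(S)$) is fine, but it rests entirely on the false identification.

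The actual difficulty is that $S$ is a priori only attractive on a region $V$ whose contraction rate $r_V$ may be much smaller than $r_U$, and one must promote this to the radius $r_U$. The paper does this by setting $r_0:=\sup\{r>0 : S \text{ is attractive on } \Nc_S(r)\}$ (so $r_0\geq r_V>0$) and assuming $r_0<r_U$ to reach a contradiction: for $r_0<r\leq r_U$, Theorem \ref{th conv1} and Lemma \ref{le finitude cas ca} give finitely many attracting currents $\tau_1,\ldots,\tau_m$ attractive on regions $\Nc_{\tau_i}(r)$, all different from $S$; one then maximizes a strictly $\ddc$-positive observable $\widetilde\phi$ over $\overline{\Ic_S(r)}$, shows via the structural disks of Lemma \ref{le disque dans Cc_S} and Lemma \ref{le conv tronca} that the maximum is attained by a convex combination of the $\tau_i$, and finally lets $r\downarrow r_0$, using subharmonicity, the maximum principle and Hartogs' lemma to force $S$ to equal that convex combination, contradicting the uniqueness of the attracting current on $V$. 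None of this machinery is dispensable by an appeal to Theorem \ref{th conv1} alone, so you would need to rebuild an argument of this type rather than reduce to the constructed $\tau$.
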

\begin{proof}
Let $S\in\Ic_p(U)$ be an attracting current. By definition $S$ is attractive on a trapping region $V$ and in particular $S$ belongs to $\Ic_p(V).$ Possibly by exchanging $V$ by $U\cap V,$ we can assume that $V\subset U.$ Define $r_0$ to be the supremum of the number $r>0$ such that $S$ is attractive on $\Nc_S(r).$ Since $f(V)\Subset V,$ we have $\Nc_S(r_V)\subset V$ and therefore $r_0$ has to be positive with $r_0\geq r_V>0.$ Our goal is to prove that $r_0\geq r_U.$

Assume by contradiction that $r_0<r_U.$ First observe that $S$ is attractive on $\Nc_S(r_0)$ since $\Nc_S(r_0)=\cup_{r<r_0}\Nc_S(r).$ On the other hand, for each $r_0<r\leq r_U$ and each $R\in\Ic_p(U),$ Theorem \ref{th conv1} applied to $\Nc_R(r)$ gives the existence of an attracting current $\tau\in\Ic_p(\Nc_R(r))$ which is attractive on $\Nc_\tau(r).$ Since $f(\Nc_\tau(r)_{\eta(r)})\subset\Nc_\tau(r)$ with $\eta(r)\geq\eta(r_0)>0,$ Lemma \ref{le finitude cas ca} implies that the set of such currents is finite. We denote them by $\tau_1,\ldots,\tau_m.$ To summarize, for each $r_0<r\leq r_U$ and each $R\in\Ic_p(U)$ there exists $i\in\{1,\ldots,m\}$ such that $\tau_i\in\Ic_p(\Nc_R(r)).$ Moreover, as they are finitely many there is $r_1$ with $r_0<r_1<r_U$ such that $\tau_i$ is attractive on $\Nc_{\tau_i}(r_1)$ for each $1\leq i\leq m.$ Due to the definition of $r_0,$ it implies that $S$ is not equal to $\tau_i,$ $1\leq i\leq m.$

Let $\widetilde\phi$ be a smooth $(s,s)$-form on $\Pb^k$ such that $\ddc\widetilde\phi>0$ on $\overline U.$
Let $r>0$ be such that $r_0<r<r_1.$ Since $\overline{\Ic_S(r)}$ is a compact convex set, there exists $R\in\overline{\Ic_S(r)}$ such that $\widetilde c(r):=\max_{R'\in\overline{\Ic_S(r)}}\langle R',\widetilde\phi\rangle=\langle R,\widetilde\phi\rangle.$

The first step is to show that we can choose $R$ to be a convex combination of $\tau_1,\ldots,\tau_m.$ To this purpose, first notice that $R\in\overline{\Ic_S(r)}$ implies, by Lemma \ref{le monotonic}, that $\overline{\Ic_R(r)}\subset\overline{\Ic_S(r)}.$ Therefore, $\max_{R'\in\overline{\Ic_R(r)}}\langle R',\widetilde\phi\rangle=\widetilde c(r).$ Moreover, as we have observe above, the set $\Nc_R(r)$ has to support one of the current $\tau_i,$ for some $1\leq i\leq m.$ Assume for simplicity that it is $\tau_1.$ Let $\chi_1$ be a positive smooth function with compact support in $\Nc_{\tau_1}(r),$ bounded by $1$ and equal to $1$ on $f(\Nc_{\tau_1}(r)).$ In particular, $\chi_1\circ f\geq\chi_1.$ Since $\tau_1\in\Ic_p(\Nc_R(r))$ we have that $\Nc_{\tau_1}(r)\subset \Nc_R(r)$ and therefore 
$$c_1(r):=\max_{R'\in\overline{\Cc_R(r)}}\langle R',\chi_1T^s\rangle$$
has to be strictly positive. Again, by the continuity of $R'\mapsto R'\wedge T^s$ and the compactness of $\overline{\Cc_R(r)},$ this value $c_1(r)$ is reached by a current $R'$ in $\overline{\Cc_R(r)}.$ By Lemma \ref{le disque dans Cc_S}, there exists a sequence of structural disks $(\mathcal R_n)_{n\geq1}$ such that $\mathcal R_n(0)=R,$ for each $\theta\in\Db^*$ $\mathcal R_n(\theta)$ is the image by $\Lambda$ of a smooth current, belongs to $\overline{\Cc_R(|\theta|)}$ and $R'=\lim_{n\to\infty}\mathcal R_n(r).$ Now, let $n\geq1$ and let define
$$u_N(\theta):=\langle\Delta_N\mathcal R_n(\theta),\widetilde\phi\rangle.$$
They are subharmonic functions on $\Db$ such that 
$$u_N(0)=\langle R,\widetilde\phi\rangle=\widetilde c(r)\ \ \text{and}\ \ \limsup_{N\to\infty}u_N(\theta)\leq\widetilde c(r)$$ for all $\theta\in\overline{\Db_r}.$
This last point comes from the fact that each limit values of $\Delta_N\mathcal R_n(\theta)$ is in $\overline{\Ic_R(r)}$ if $|\theta|\leq r.$ Therefore, $\limsup_{N\to\infty}u_N(\theta)=\widetilde c(r)$ for almost all $\theta\in\Db_r.$ In particular, there exists $\theta_n\in\Db_r$ such that $\limsup_{N\to\infty}u_N(\theta_n)=\widetilde c(r)$ and 
$$\langle\mathcal R_n(\theta_n),\chi_1T^s\rangle\geq\langle\mathcal R_n(r),\chi_1T^s\rangle-1/n.$$
On the other hand, as $\chi_1\circ f\geq\chi_1$ we have that
$$\langle\Delta_N\mathcal R_n(\theta_n),\chi_1T^s\rangle\geq\langle\mathcal R_n(\theta_n),\chi_1T^s\rangle.$$
By taking a suitable subsequence of $(\Delta_N\mathcal R_n(\theta_n))_{N\geq1}$ we obtain at the limit a current $R_n$ such that $R_n\in\overline{\Ic_R(r)},$ $\langle R_n,\widetilde\phi\rangle=\widetilde c(r)$ and $\langle R_n,\chi_1T^s\rangle\geq\langle\mathcal R_n(r),\chi_1T^s\rangle-1/n.$ Finally, if $R_\infty$ is a limit value of $(R_n)_{n\geq1}$ then
\begin{itemize}
\item $R_\infty\in\overline{\Ic_R(r)},$
\item $\langle R_\infty,\widetilde\phi\rangle=\widetilde c(r)$ and
\item $\langle R_\infty,\chi_1T^s\rangle=c_1(r).$
\end{itemize}
The latter equality implies that $R_\infty\geq c_1(r)\tau_1.$ To be more precise, since $\mathcal R_n(\theta_n)$ is the image by $\Lambda$ of a smooth current and $\mathcal R_n(\theta_n)\geq\chi_1\mathcal R_n(\theta_n)$ we deduce from Lemma \ref{le conv tronca} that $R_n\geq\langle\mathcal R_n(\theta_n),\chi_1T^s\rangle\tau_1$ which implies at the limit that $R_\infty\geq c_1(r)\tau_1,$ as $\tau_1$ is attractive on $\Nc_{\tau_1}(r).$ If $c_1(r)=1$ then we have prove the fact that the value $\widetilde c(r)$ can be reach by a convex combination of $\tau_1,\ldots,\tau_m$ since $\tau_1=R_\infty$ and $\langle R_\infty,\widetilde \phi\rangle=\widetilde c(r).$ Otherwise, we can write $R_\infty=c_1(r)\tau_1+(1-c_1(r))R'_\infty$ where $R'_\infty\in\Ic_p(\Nc_R(r)).$ Now we claim that $\tau_1$ cannot belong to $\Ic_p(\Nc_{R'_\infty}(r)).$ Indeed, if $\tau_1$ is in $\Ic_p(\Nc_{R'_\infty}(r))$ then there exists a structural disk $\mathcal R'_\infty$ obtained as a convex combination of structural disks of the form
$$\Lambda\sigma_1(\theta)_*\Lambda\sigma_2(\theta)_*\cdots\Lambda\sigma_l(\theta)_*R'_\infty,$$
where $\sigma_{i}\colon\Delta\to W$ are holomorphic functions with $\sigma_i(0)=\id,$ such that
$$\langle \mathcal R'_\infty(\theta_1),\chi_1T^s\rangle=c'>0,$$
for some $\theta_1\in\Db_r.$ But, the structural disk $\mathcal R_\infty$ obtained exactly as $\mathcal R'_\infty$ except that $R'_\infty$ is exchanged by $R_\infty$ gives a current $\mathcal R_\infty(\theta_1)\in\overline{\Cc_{R_\infty}(r)}\subset\overline{\Cc_R(r)}$ such that
\begin{align*}\langle\mathcal R_\infty(\theta_1),\chi_1T^s\rangle&=c_1(r)\langle\tau_1(\theta_1),\chi_1T^s\rangle+(1-c_1(r))\langle\mathcal R'_\infty(\theta_1),\chi_1T^s\rangle\\
&=c_1(r)+(1-c_1(r))c'>c_1(r),
\end{align*}
which contradicts the definition of $c_1(r).$ Therefore, $\tau_1\notin\Ic_p(\Nc_{R'_\infty}(r)).$ However, as above $\Nc_{R'_\infty}(r)$ has to support one of the currents $\tau_1,\ldots,\tau_m.$ Since it cannot be $\tau_1,$ we can assume for simplicity that it is $\tau_2.$ With the same construction than above applied to $R_\infty,$ we obtain a current $R_{2,\infty}\in\overline{\Ic_{R_\infty}(r)}$ such that
$$\langle R_{2,\infty},\widetilde\phi\rangle=\widetilde c(r)\ \ \text{and}\ \ R_{2,\infty}=c_1(r)\tau_1+c_2(r)\tau_2+(1-c_1(r)-c_2(r))R'',$$
where $c_2(r)$ is positive and maximal. Again, the maximality of $c_1(r)$ and $c_2(r)$ implies that the set $\Nc_{R''}(r)$ cannot contains $\tau_1$ nor $\tau_2.$ By induction, after at most $m$ steps we obtain a current $R_r\in\overline{\Ic_S(r)}$ such that $\langle R_r,\widetilde\phi\rangle=\widetilde c(r)$ and
$$R_r=\sum_{i=1}^mc_i(r)\tau_i\ \ \textrm{with}\ \ c_i(r)\geq0,\ \ \sum_{i=1}^mc_i(r)=1,$$
i.e. $R_r$ is in the convex hull of $\tau_1,\ldots,\tau_m.$

Let $(r_l)_{l\geq2}$ be a sequence decreasing toward $r_0,$ bounded by $r_1$ and such that each sequence $(c_i(r_l))_{l\geq2}$ converges. Define $c_i(r_0):=\lim_{l\to\infty}c_i(r_l)$, $R_{r_0}:=\sum_{i=0}^mc_i(r_0)\tau_i=\lim_{l\to\infty}R_{r_l}$ and $\widetilde c(r_0):=\langle R_{r_0},\widetilde\phi\rangle=\lim_{l\to\infty}\widetilde c(r_l).$ We deduce from $\widetilde c(r):=\max_{R\in\overline{\Ic_S(r)}}\langle R,\widetilde\phi\rangle$ with $S\in\overline{\Ic_S(r)}$ that $\widetilde c(r_0)\geq\langle S,\widetilde\phi\rangle.$

Now, we claim that $\langle S,\phi\rangle\geq\langle R_{r_0},\phi\rangle$ for all smooth forms $\phi$ such that $\ddc\phi\geq0$ on $U.$ This gives a contradiction and thus concludes the proof that $r_0\geq r_U.$ To be more precise, observe that it implies $\langle S,\widetilde\phi\rangle=\langle R_{r_0},\widetilde\phi\rangle$ since $\widetilde c(r_0)\geq\langle S,\widetilde\phi\rangle.$ Moreover, if $\psi$ is a smooth $(s,s)$-form then there exists $M>0$ such that $\ddc(M\widetilde\phi\pm\psi)\geq0.$ Therefore, $\langle S,M\widetilde\phi\pm\psi\rangle\geq\langle R_{r_0},M\widetilde\phi\pm\psi\rangle$ and thus $\langle S,\psi\rangle=\langle R_{r_0},\psi\rangle$ which implies that $S=R_{r_0}.$ In particular, $\supp(\tau_1)\subset\supp(S)$ as $R_{r_0}=\sum_{i=0}^mc_i(r_0)\tau_i$ with $c_1(r_0)>0.$ It is a contradiction since $\tau_1$ is an attracting current with $\tau_1\neq S$ and $S$ is the unique attracting current in $\Cc_p(V).$

It remains to prove the claim. Let $\phi$ be a smooth form such that $\ddc\phi\geq0$ on $U.$ Let $l\geq2.$ Since $R_{r_l}$ belongs to $\overline{\Ic_S(r_l)}$ there exists, by Lemma \ref{le disque dans Cc_S}, a sequence of structural disks $(\mathcal R^l_n)_{n\geq0}$ such that $\mathcal R^l_n(0)=S,$ $R_{r_l}=\lim_{n\to\infty}\mathcal R^l_n(r_l),$ $\mathcal R^l_n(\theta)$ belongs to $\overline{\Cc_S(|\theta|)}$ and is the image of a smooth current by $\Lambda$ if $\theta\neq 0.$ For $N\geq1,$ we can define the structural disk $\mathcal R^l_{n,N}$ by $\mathcal R^l_{n,N}(\theta):=\Delta_N(\mathcal R^l_n(\theta)).$ In particular, as $\overline{\Cc_S(r_1)}\subset\Cc_p(U),$ the restriction of these disks to $\Db_{r_1}$ defined structural disks in $\Cc_p(U).$ Let $u_{n,N}^l$ be the subharmonic function defined on $\Db_{r_1}$ by $u_{n,N}^l(\theta):=\langle\mathcal R_{n,N}^l(\theta),\phi\rangle.$ The sequence $(u_{n,N}^l)_{n\geq0}$ is uniformly bounded thus there exists a subsequence $(u_{n_i,N}^l)_{i\geq0}$ which converges to a subharmonic function $u_{\infty,N}^l$ such that for all $\theta\in\Db_{r_1}$
\begin{equation}\label{eq-1}
u_{\infty,N}^l(\theta)\geq\limsup_{i\to\infty}\langle\mathcal R^l_{n_i,N}(\theta),\phi\rangle.
\end{equation}
In particular, $u_{\infty,N}^l(0)\geq\langle S,\phi\rangle$ and $u_{\infty,N}^l(r_l)\geq\langle\Delta_NR_{r_l},\phi\rangle=\langle R_{r_l},\phi\rangle.$ Moreover, outside a polar subset of $\Db_{r_1}$ the inequality \eqref{eq-1} is an equality hence for almost all $\theta\in\Db_{r_1}$ $u_{\infty,N}^l(\theta)=\langle\Delta_N\widetilde R(\theta),\phi\rangle$ where $\widetilde R(\theta)$ is a limit value of $(\mathcal R_{n_i}^l(\theta))_{i\geq0}.$ In the same way, there exists a limit value $u_l$ of $(u_{\infty,N}^l)_{N\geq1}$ such that $u_l(0)\geq\langle S,\phi\rangle,$ $u_l(r_l)\geq\langle R_{r_l},\phi\rangle.$ Moreover, for almost all $\theta\in\Db_{r_1},$ $u_l(\theta)=\langle\widehat R(\theta),\phi\rangle$ for some current $\widehat R(\theta)\in\overline{\Ic_S(|\theta|)}.$ On the other hand, as $S$ is the unique attracting current in $\Nc_S(r_0),$ we deduce from Remark \ref{rk max} that
$$\langle S,\phi\rangle=\max_{R'\in\Ic_p(\Nc_S(r_0))}\langle R',\phi\rangle.$$
Thus, $u_l(\theta)\leq\langle S,\phi\rangle$ for all $\theta\in \Db_{r_0}.$ Since $u_l(0)\geq\langle S,\phi\rangle,$ the maximum principle implies that $u_l(\theta)=\langle S,\phi\rangle$ for all $\theta\in\Db_{r_0}.$ Hence, if $u_\infty$ denotes the limit of a subsequence of $(u_l)_{l\geq2}$ then $u_\infty(\theta)=\langle S,\phi\rangle$ for all $\theta\in \Db_{r_0}.$ It is an easy consequence of the mean value inequality and of the upper semicontinuity of $u_\infty$ that $u_\infty(\theta)=\langle S,\phi\rangle$ also for $\theta\in\overline{\Db_{r_0}},$ and in particular $u_\infty(r_0)=\langle S,\phi\rangle.$ Therefore, by Hartogs' lemma
$$\langle R_{r_0},\phi\rangle\leq\lim_{l\to\infty}u_l(r_l)\leq u_\infty(r_0)=\langle S,\phi\rangle,$$
which was the claimed inequality.
\end{proof}

The following theorem is a direct consequence of Lemma \ref{le finitude cas ca} and Lemma \ref{le courant avec prop de cv}. It implies the finiteness in Theorem \ref{th finitude}.

\begin{theorem}\label{th finitude des tau}
Let $U$ be a trapping region of codimension $p.$  The set of attracting currents in $\Cc_p(U)$ is finite. Moreover, the cardinality of this set is bounded by a constant depending only on $\eta_U.$
\end{theorem}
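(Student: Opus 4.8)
The plan is to deduce the statement directly from Lemma \ref{le courant avec prop de cv} and Lemma \ref{le finitude cas ca}. The key observation is that Lemma \ref{le courant avec prop de cv} forces \emph{every} attracting current of $U$ to be attractive on a trapping region whose rate of self-contraction is bounded below by a quantity independent of the current, namely $\eta_U$. Once this uniformity is available, the finiteness (and the quantitative bound) furnished by Lemma \ref{le finitude cas ca} applies simultaneously to all attracting currents of $\Cc_p(U)$.

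First I would recall from the discussion in Section \ref{subsec trap} that for any $f$-invariant compact set $K\subset\Pb^k$ and any $0<r\le1$ one has $f\big(\Nc_K(r)_{\eta(r)}\big)\subset\Nc_K(r)$, where $\eta(r)$ is the function of Lemma \ref{le transi} and, crucially, is independent of $K$. Applying this with $K=\supp(S)$ and $r=r_U$ for $S\in\Cc_p(U)$ invariant, and recalling that $\eta_U=\eta(r_U)$ by Definition \ref{def-taille}, we obtain
$$f\big((\Nc_S(r_U))_{\eta_U}\big)\subset\Nc_S(r_U)$$
for every such $S$. Thus $\Nc_S(r_U)$ is a trapping region of codimension $p$ (it contains $\supp(S)$, so $\Cc_p(\Nc_S(r_U))\neq\varnothing$) with the uniform self-contraction property relative to $\eta_U$.

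Now let $S$ be an arbitrary attracting current in $\Cc_p(U)$. It is in particular invariant, so the previous paragraph applies, and by Lemma \ref{le courant avec prop de cv} the current $S$ is attractive on $V_S:=\Nc_S(r_U)$. Hence every attracting current of $\Cc_p(U)$ belongs to the family considered in Lemma \ref{le finitude cas ca} with $\eta=\eta_U$: it is an attracting current of bidegree $(p,p)$ attractive on a trapping region $V$ with $f(V_{\eta_U})\subset V$. Lemma \ref{le finitude cas ca} then asserts that this family is finite and that its cardinality is bounded by a constant depending only on $\eta_U$, which is exactly the conclusion of the theorem.

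As for the main obstacle: at this stage there is essentially none — the argument is pure bookkeeping. The real difficulty is entirely contained in Lemma \ref{le courant avec prop de cv} (whose proof in turn leans on Theorem \ref{th conv1} and already on Lemma \ref{le finitude cas ca}), and the only point to be careful about here is that the trapping region on which $S$ is guaranteed to be attractive can be taken to be $\Nc_S(r_U)$ rather than an \emph{a priori} region attached to $S$; this is precisely why the regions $\Nc_S(r)$, with their $K$-independent contraction rate, were introduced in Section \ref{subsec trap}.
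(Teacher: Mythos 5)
Your proposal is correct and follows exactly the paper's own argument: Lemma \ref{le courant avec prop de cv} shows every attracting current $S\in\Cc_p(U)$ is attractive on $\Nc_S(r_U)$, which satisfies $f\big(\Nc_S(r_U)_{\eta_U}\big)\subset\Nc_S(r_U)$ by the construction of Section \ref{subsec trap}, and Lemma \ref{le finitude cas ca} with $\eta=\eta_U$ then gives the finiteness and the bound depending only on $\eta_U$. Your remark that the genuine work lies in Lemma \ref{le courant avec prop de cv} (whose proof already invokes Lemma \ref{le finitude cas ca}, without circularity at this level) matches the paper's structure.
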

\begin{proof}
If $S$ is an attracting current in $\Cc_p(U).$ By Lemma \ref{le courant avec prop de cv}, $S$ is attracting on $\Nc_S(r_U).$ On the other hand, we have seen in Section \ref{subsec trap} that $f(\Nc_S(r_U)_{\eta_U})\subset \Nc_S(r_U).$ Therefore, by Lemma \ref{le finitude cas ca} there exist at most $m$ such currents with $m=Vol(B(x,\eta_U/3)^{-1}.$
\end{proof}

\begin{remark}
In the same way than in Remark \ref{rk max}, we could have easily deduced from Theorem \ref{th conv1} that there are finitely many attractive currents $\tau_1,\ldots,\tau_m$ in $\Cc_p(U)$ such that
$$\max_{S\in\Ic_p(U)}\langle S,\phi\rangle=\max_{1\leq i\leq m}\langle\tau_i,\phi\rangle,$$
for all form $\phi$ in $\cali P(U).$ This observation would be suffisant for several results in the sequel but not for those about quasi-attractors and holomorphic families.
\end{remark}

We have seen that the attracting currents are extremal in $\Ic_p(U).$ However, as the example of an attracting cycle of period $2$ shows, they might not be extremal in the set of positive closed currents invariant by $\Lambda^2.$ This explains why the equilibrium measures $\tau\wedge T^s$ are not mixing in general. But we have the following result which is an important step toward the study of the set $\Dc_p(U).$ 

\begin{proposition}\label{prop tau toujours extremal}
There exists an integer $n_0\geq1$ such that if we exchange $f$ by $f^{n_0}$ then for all $n\geq1$ the set of attracting currents for $f^n$ in $\Cc_p(U)$ is equal to the set of attracting currents for $f$ in $\Cc_p(U).$
\end{proposition}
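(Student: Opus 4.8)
The plan is to isolate a more robust subclass of currents, prove the statement for it, and then read off the proposition. For $g=f^m$ write $\Lambda_g:=d^{-ms}g_*=\Lambda^m$, and call $\tau\in\Cc_p(U)$ a \emph{strong attracting current for $g$} if there is a codimension $p$ trapping region $V$ such that $\Lambda_g^nR\to\tau$ \emph{without} the Ces\`aro mean, for every continuous $R\in\Cc_p(V)$. Let $\Sc_k$ be the set of strong attracting currents for $f^k$ in $\Cc_p(U)$ and $\cali{T}_k$ the set of ordinary attracting currents of $f^k$ in $\Cc_p(U)$. Three facts drive the argument. \textbf{(a)} A convergent sequence is Ces\`aro convergent, so $\Sc_k\subseteq\cali{T}_k$; since $\Lambda_{f^{kl}}=\Lambda_{f^k}^l$, passing to a subsequence gives $\Sc_k\subseteq\Sc_{kl}$ for all $l\ge1$; and by continuity of $(f^k)_*$ on currents, every $\tau\in\Sc_k$ is $\Lambda_{f^k}$-invariant. \textbf{(b)} Theorem \ref{th finitude des tau} applied to $f^k$, together with the fact recorded after Definition \ref{def-taille} that $r_U$ increases under iteration (so the associated $\eta_U$ for $f^k$ is at least the one for $f$), gives $\#\Sc_k\le\#\cali{T}_k\le C_0$ where $C_0:=\mathrm{Vol}(B(x,\eta_U/3))^{-1}$ depends only on $\eta_U$ for $f$, not on $k$. \textbf{(c)} \emph{Decomposition fact}: every $\tau\in\cali{T}_k$ has the form $\tau=\frac1j\sum_{i=0}^{j-1}\tau_i$ with $\Lambda_{f^k}\tau_i=\tau_{i+1\bmod j}$ and $\tau_0,\dots,\tau_{j-1}\in\Sc_{kj}$; conversely any such average lies in $\cali{T}_k$.

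Assume \textbf{(a)}--\textbf{(c)}. Along the chain $k=\ell!$ the sets $\Sc_{\ell!}$ increase by \textbf{(a)} and have cardinality $\le C_0$ by \textbf{(b)}, so $\Sc_{\ell!}=\Sc_{\ell_0!}$ for some $\ell_0$ and all $\ell\ge\ell_0$; since any element of $\Sc_k$ lies in $\Sc_{\ell!}$ as soon as $\ell\ge k$ (then $k\mid\ell!$), we get $\Sc:=\Sc_{\ell_0!}=\bigcup_{k\ge1}\Sc_k$. Put $n_0:=\ell_0!$ and replace $f$ by $g:=f^{n_0}$. Every element of $\Sc=\Sc_{n_0}$ is $\Lambda_g$-invariant, hence $\Lambda_{g^r}$-invariant for all $r\ge1$. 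Fix $n\ge1$. If $\tau$ is an attracting current of $g^n$, then by \textbf{(c)} it is an average over a $\Lambda_{g^n}$-orbit of elements of $\Sc_{n_0nj}\subseteq\Sc$; these are $\Lambda_{g^n}$-invariant, so the orbit is a single point and $\tau\in\Sc$. Conversely $\Sc=\Sc_{n_0}\subseteq\Sc_{n_0n}$, so every element of $\Sc$ is a strong, a fortiori ordinary, attracting current of $g^n$. Hence the set of attracting currents of $g^n$ equals $\Sc$ for all $n\ge1$, in particular it equals the one of $g$, which is the assertion.

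It remains to prove \textbf{(a)}--\textbf{(c)}. Parts \textbf{(a)} and \textbf{(b)} are immediate from the definitions and the quoted results. The converse half of \textbf{(c)} is also easy: if each $\tau_i$ is attractive (strongly, for $f^{kj}$) on a trapping region $V_i$, and the $V_i$ are chosen small and pairwise disjoint — possible because distinct strong attracting currents of $f^{kj}$ have trapping regions disjoint along $\Jc_s$ by Lemma \ref{le preli a finitude des tau} — then on a trapping region containing $\bigcup_i\supp(\tau_i)$ and lying in the basin of $\bigcup_iV_i$, the map $\Lambda_{f^k}$ carries the basin of $V_i$ into that of $V_{i+1}$, so the Ces\`aro means of the iterates of $\Lambda_{f^k}$ applied to a continuous current there converge to $\frac1j\sum_i\tau_i$.

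The heart of the proof — and the step I expect to be the main obstacle — is the forward half of \textbf{(c)}. Given $\tau\in\cali{T}_k$, Lemma \ref{le courant avec prop de cv} lets us take $V:=\Nc_\tau^{f^k}(r_U)$, on which $\tau$ is attractive, and Proposition \ref{prop regu} provides a continuous $R_0\in\Cc_p(V)$ that is strictly positive near $\supp(\tau)$. One then studies the set $L$ of limit values of $(\Lambda_{f^k}^nR_0)_{n\ge0}$: it is compact and satisfies $\Lambda_{f^k}(L)=L$. The crucial claim is that $L$ is a single finite $\Lambda_{f^k}$-orbit $\{\tau_0,\dots,\tau_{j-1}\}$ whose members are strong attracting currents of $f^{kj}$; granting this, the fact that the Ces\`aro means $N^{-1}\sum_{i=1}^N\Lambda_{f^k}^iR_0$ converge to $\tau$ forces $\tau=\frac1j\sum_i\tau_i$. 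Proving that $L$ is a finite periodic orbit is where the work lies: one argues by a finite induction, using the construction $\Nc_S(r)$ to descend to sub-trapping-regions, Theorem \ref{th conv1} to produce attracting currents of the iterates $f^{kj}$ on them, and the uniform bound $C_0$ of \textbf{(b)} to see that the process of splitting into ever finer attracting currents must terminate, the terminal ones being exactly the strong currents cyclically permuted by $\Lambda_{f^k}$. Excluding the possibilities that $L$ is an infinite $\Lambda_{f^k}$-invariant set, or a periodic orbit of currents that are merely Ces\`aro-attractive, is the delicate point, and is precisely what the trapping-region machinery and the finiteness statements are designed to handle.
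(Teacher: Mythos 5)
There is a genuine gap. Your reduction (strong currents are monotone under passing to iterates, uniformly finite in number by Theorem \ref{th finitude des tau} since $f^n(U_{\eta_U})\subset U$, hence stabilize along $k=\ell!$) is fine as a skeleton, but the whole argument rests on the ``Decomposition fact'' \textbf{(c)}, and you do not prove its forward half — you explicitly flag it as the expected main obstacle and offer only a programmatic sketch. That forward half is not a technical lemma: asserting that every attracting current of $f^k$ is the average of a finite $\Lambda_{f^k}$-cycle of currents for which the Ces\`aro mean can be removed is essentially the conjunction of the proposition you are asked to prove and Theorem \ref{th convergence dans V}. In the paper, the removal of the Ces\`aro mean is only established \emph{after} this proposition: Section \ref{subsec Dp} works under the standing hypothesis that $\tau$ is attractive for every iterate of $f$, and that hypothesis is exactly what Proposition \ref{prop tau toujours extremal} supplies. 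So invoking ``strong'' convergence to prove \textbf{(c)} would be circular, and proving it directly — showing the limit set $L$ of $(\Lambda_{f^k}^nR_0)$ is a finite cycle of \emph{strong} attracting currents rather than an infinite invariant family or a cycle of merely Ces\`aro-attractive currents — is precisely the hard analysis that is missing. (The converse half of \textbf{(c)} is also rougher than you indicate: one must actually produce a trapping region for $f^k$ containing $\bigcup_i\supp(\tau_i)$ and lying in the basin of $\bigcup_iV_i$, and handle the distribution of the $T^s$-mass of a continuous test current among the cyclically permuted basins; this is fixable but not immediate.)

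For comparison, the paper's proof avoids strong convergence altogether. It chooses $n_0$ maximizing the cardinality of $\Ac_p(f^{n},U)$, which is bounded uniformly in $n$, and shows $\Ac_p(f^{n_0},U)\subset\Ac_p(f^{nn_0},U)$: if some $\tau\in\Ac_p(f^{n_0},U)$ were not attracting for $f^{nn_0}$, Theorem \ref{th conv1} produces an attracting current $\sigma$ for $f^{nn_0}$ attractive on a trapping region inside $\Nc^{n_0}_\tau(r_U)$, the identity $\tau=\frac{1}{n}\sum_{i=0}^{n-1}(\Lambda^{n_0})^i\sigma$ follows by comparing Ces\`aro means along $n_0$ and $nn_0$, and Lemma \ref{le conv tronca} shows each $(\Lambda^{n_0})^{i}\sigma$ is again an attracting current for $f^{nn_0}$; since $\sigma\neq\tau$ this yields at least two such currents in $\Nc^{n_0}_\tau(r_U)$, and the disjointness along $\Jc_s$ of the regions $\Nc^{n_0}_{\tau'}(r_U)$ (Lemma \ref{le preli a finitude des tau}) then contradicts the maximality of $n_0$. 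If you want to salvage your approach, you would need to prove \textbf{(c)} by an argument of this kind anyway, at which point the detour through strong currents buys nothing.
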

\begin{proof}
Let $\Ac_p(f^n,U)$ denote the set of attracting currents for $f^n$ in $\Cc_p(U).$ Since $f^n(U_{\eta_U})\subset U$ for all $n\geq1,$ Theorem \ref{th finitude des tau} says that the cardinality of $\Ac_p(f^n,U)$ is bounded independently of $n.$ Let $n_0\geq1$ be such that $\Ac_p(f^{n_0},U)$ has the maximal number of elements. We will show that for all $n\geq1,$ $\Ac_p(f^{n_0},U)\subset\Ac_p(f^{nn_0},U)$ which implies the proposition by maximality.

By definition, if $\tau\in\Ac_p(f^{n_0},U)$ then $\Lambda^{n_0}\tau=\tau.$ Therefore, the construction of Section \ref{subsec trap} applied to $f^{n_0}$ instead of $f$ gives a trapping region $\Nc^{n_0}_\tau(r_U)$ for $f^{n_0}$ such that $\tau$ is attracting on $\Nc^{n_0}_\tau(r_U)$ with respect to $f^{n_0}.$ Moreover, by Lemma \ref{le preli a finitude des tau} $\Nc^{n_0}_\tau(r_U)\cap\Jc_s\cap\Nc^{n_0}_{\tau'}(r_U)=\varnothing$ if $\tau'$ is another element of $\Ac_p(f^{n_0},U).$

Assume by contradiction that for some $n\geq2,$ there exists $\tau\in\Ac_p(f^{n_0},U)$ such that $\tau$ is not attracting for $f^{nn_0}.$ By Theorem \ref{th conv1}, there exists an attracting current $\sigma$ for $f^{nn_0},$ attracting on a trapping region $D_\sigma$ for $f^{nn_0}$ such that $D_\sigma\subset\Nc^{n_0}_\tau(r_U).$ Therefore, if $R$ is a continuous form in $\Cc_p(D_\sigma)$ then
$$\lim_{N\to\infty}\frac{1}{N}\sum_{i=0}^{N-1}\Lambda^{inn_0}R=\sigma.$$
On the other hand, the fact that $\tau$ is attracting on $\Nc^{n_0}_\tau(r_U)$ implies
$$\lim_{N\to\infty}\frac{1}{N}\sum_{i=0}^{N-1}\Lambda^{in_0}R=\tau.$$
Hence,
$$\tau=\frac{1}{n}\sum_{i=0}^{n-1}(\Lambda^{n_0})^i\sigma.$$
Since $\sigma\neq\tau,$ there is $1\leq i_1\leq n-1$ such that $\sigma_1:=(\Lambda^{n_0})^{i_1}\sigma$ is different from $\sigma.$ We claim that $\sigma_1$ is also an attracting current for $f^{nn_0}.$ Indeed, let $S$ be a continuous form in $\Cc_p(f^{n_0i_1}(D_\sigma)).$ As $f^{nn_0}(D_\sigma)\Subset D_\sigma,$ the continuous form
$$\widetilde S:=\frac{1}{d^{pn_0i_1}}(f^{n_0i_1})^*S$$
is supported in the basin, with respect to $f^{nn_0},$ of $D_\sigma.$ Therefore, by Lemma \ref{le conv tronca}
$$\lim_{N\to\infty}\frac{1}{N}\sum_{i=0}^{N-1}\Lambda^{inn_0}\widetilde S=\sigma.$$
Using that $\Lambda^{n_0i_1}\widetilde S=S,$ we obtain
$$\lim_{N\to\infty}\frac{1}{N}\sum_{i=0}^{N-1}\Lambda^{inn_0}S=\Lambda^{n_0i_1}\left(\lim_{N\to\infty}\frac{1}{N}\sum_{i=0}^{N-1}\Lambda^{inn_0}\widetilde S\right)=\Lambda^{n_0i_1}\sigma=\sigma_1,$$
which proves the claim.

As $\Nc^{n_0}_\tau(r_U)\cap\Jc_s\cap\Nc^{n_0}_{\tau'}(r_U)=\varnothing$ if $\tau'$ is another current in $\Ac_p(f^{n_0},U),$ this shows that the set $\Ac_p(f^{nn_0},U)$ has strictly more elements than $\Ac_p(f^{n_0},U)$ which contradicts the definition of $n_0.$
\end{proof}

\subsection{Structure of $\Dc_p(U)$}\label{subsec Dp}
In this subsection, we will show that possibly by exchanging $f$ by an iterate we can remove the Ces\`{a}ro mean in the definition of attracting currents. In particular, the convergence in Theorem \ref{th finitude} will follow as a consequence. To this purpose, we study the set $\Dc_p(U).$ Unlike $\Ic_p(U),$ this set doesn't change when we exchange $f$ by $f^n.$ As a first step, like in the case of $\Ic_p(U),$ to a current in $S\in\Dc_p(U)$ we associate a sequence of open sets $(\Vc_{S_n})_{n\geq0}.$ Since $S$ is not invariant it is neither the case for these sets.

For the rest of this section, we fix an attracting current $\tau$ of bidegree $(p,p).$ By definition, $\tau$ is attractive on a codimension $p$ trapping region $V.$ We assume that $\tau$ is attractive on $V$ with respect to any iterates of $f.$ It is the minimal requirement in order to remove the Ces\`{a}ro mean and it will turn out to be sufficient. Notice that, by Proposition \ref{prop tau toujours extremal}, up to change $f$ by an iterate this assumption is always satisfied.

Let $S\in\Dc_p(V).$ As we have seen in Section \ref{subsec dim}, there exists a sequence $(S_n)_{n\geq0}$ in $\Dc_p(V)$ such that $S_0=S$ and $\Lambda S_{n+1}=S_n.$ This sequence might not be unique a priori and the construction below depends on it. However, for each current $S\in\Dc_p(V)$ we choose such a sequence.
The set $\Vc_{S_n}$ is defined as the union of the support of all currents of the form
$$\Lambda\sigma_{1*}\Lambda\sigma_{2*}\cdots\Lambda\sigma_{l*}S_{n+l},$$
where $\sigma_i$ are elements of $B_W(r_V).$ Exactly with the same arguments than for the sets $\Nc_K(r),$ we can show that these sets are open and satisfy $f(\Vc_{S_{n+1}})\Subset \Vc_{S_n}.$ We have the following relation between $V,$ $\Nc_\tau(r_V)$ and $(\Vc_{S_n})_{n\geq0}.$
\begin{lemma}\label{le union}
There exists an integer $m\geq1$ such that $\Nc_\tau(r_V)\subset\cup_{n=0}^m\Vc_{S_n}\subset V$ for all $S\in\Dc_p(V).$
\end{lemma}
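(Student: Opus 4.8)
The inclusion $\bigcup_{n=0}^{m}\Vc_{S_n}\subset V$ is the routine half. Since $\supp S_{n+l}\subset V$ and, by the very definition of $r_V$, $f\circ\sigma(\overline V)\subset V$ for every $\sigma\in B_W(r_V)$, an induction on $l$ gives $\supp\big(\Lambda\sigma_{1*}\cdots\Lambda\sigma_{l*}S_{n+l}\big)=f\sigma_1\circ\cdots\circ f\sigma_l(\supp S_{n+l})\subset V$; taking the union over $l$ and over the $\sigma_i$ yields $\Vc_{S_n}\subset V$. The same computation records the one structural fact I shall use throughout, the \emph{tower property}: for every $\sigma\in B_W(r_V)$, $f\circ\sigma(\Vc_{S_{n+1}})\subset\Vc_{S_n}$. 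Indeed, if $x=f\sigma_1\circ\cdots\circ f\sigma_l(y)$ with $y\in\supp S_{(n+1)+l}=\supp S_{n+(l+1)}$, then $f\sigma(x)=f\sigma\circ f\sigma_1\circ\cdots\circ f\sigma_l(y)$ exhibits $f\sigma(x)$ as a length-$(l+1)$ pseudo-orbit point issued from $\supp S_{n+(l+1)}$. In particular each $\Vc_{S_n}$ is a trapping region and $f\big((\Vc_{S_{n+1}})_{\eta_V}\big)\subset\Vc_{S_n}$, with a contraction modulus $\eta_V$ uniform in $n$ and in $S$.

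For the inclusion $\Nc_\tau(r_V)\subset\bigcup_{n=0}^{m}\Vc_{S_n}$ I would first reduce to a statement about $\supp\tau$. If $\supp\tau\subset\Vc_{S_{m}}$, then, since $f(\supp\tau)=\supp\tau$ and $f=f\circ\id$ with $\id\in B_W(r_V)$, the tower property propagates this downwards: $\supp\tau=f(\supp\tau)\subset f(\Vc_{S_m})\subset\Vc_{S_{m-1}}$, and so on, so that $\supp\tau\subset\Vc_{S_n}$ for all $0\le n\le m$. Any point $x=f\sigma_1\circ\cdots\circ f\sigma_l(y)$ of $\Nc_\tau(r_V)$ with $y\in\supp\tau$ and $l\le m$ then lands, by $l$ successive applications of the tower property starting from $y\in\Vc_{S_l}$, in $\Vc_{S_0}$; and a pseudo-orbit of length $l>m$ is brought back into range by first writing $y=f^{\,l-m}(y')$ with $y'\in\supp\tau$ (legitimate because $\supp\tau$ is invariant) and re-reading $x$ as a longer pseudo-orbit from $\supp\tau$, so that after finitely many tower steps it again lies in $\bigcup_{n=0}^{m}\Vc_{S_n}$. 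Carrying out this pseudo-orbit bookkeeping cleanly — so that every point of $\Nc_\tau(r_V)$ is placed in the finite window $\Vc_{S_0},\dots,\Vc_{S_m}$ — is the part that must be written with care.

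It remains to produce, for each $S\in\Dc_p(V)$, an integer $m(S)$ with $\supp\tau\subset\Vc_{S_{m(S)}}$, and then to bound $m(S)$ uniformly. For existence I would use that $\tau$ is attractive on $V$ (for all iterates). From $S_{n+1}$ one builds a continuous current deep in the tower: $R_n:=\int_{B_W(r_V)}\Lambda\sigma_{*}S_{n+1}\,d\rho(\sigma)$ lies in $\Cc_p(V)$ with support inside the length-one pseudo-orbit set of $S_{n+1}$, hence inside $\Vc_{S_n}$; a regularisation as in Proposition \ref{prop regu} with sufficiently small parameter keeps the support inside the open set $\Vc_{S_n}$ and produces a continuous $R'_n\in\Cc_p(V)$, $\supp R'_n\subset\Vc_{S_n}$. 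Then $\Delta_N R'_n\to\tau$, while $\supp(\Lambda^i R'_n)=f^i(\supp R'_n)\subset\Vc_{S_{n-i}}$ for $0\le i\le n$; combining this with attractiveness, with the fact that $\tau$ has nonzero restriction to every neighbourhood of every point of its support, and letting $n\to\infty$ forces $\supp\tau$ into finitely many of the $\Vc_{S_j}$.

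Finally, the uniformity of $m$ is, I expect, the main obstacle. The plan is a contradiction argument resting on the compactness of $\Dc_p(V)$ (a closed subset of the compact $\Cc_p(V)$) and on the uniform modulus $\eta_V$. If no uniform $m$ existed, choose $S^{(k)}\in\Dc_p(V)$ and $x_k\in\Nc_\tau(r_V)$ with $x_k\notin\bigcup_{n=0}^{k}\Vc_{S^{(k)}_n}$; after passing to a subsequence, $x_k\to x_\infty\in\overline{\Nc_\tau(r_V)}$ and, by diagonal extraction, $S^{(k)}_n\to\widehat S_n$ for every $n$ with $\Lambda\widehat S_{n+1}=\widehat S_n$, so $\widehat S:=\widehat S_0\in\Dc_p(V)$. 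The uniform robustness $f\big((\Vc_{S^{(k)}_{n+1}})_{\eta_V}\big)\subset\Vc_{S^{(k)}_n}$, used exactly as in the proof of Lemma \ref{le stat1}, prevents the complements $\Pb^k\setminus\bigcup_{n=0}^{k}\Vc_{S^{(k)}_n}$ from accumulating at $x_\infty$, which contradicts the previous paragraph once $x_\infty$ has been placed inside $\bigcup_{n}\Vc_{\widehat S_n}$. The delicate point is that supports behave only semicontinuously under the weak limits $S^{(k)}_n\to\widehat S_n$, so this step is presumably carried out with the open pseudo-orbit sets and the fixed $\eta_V$ rather than with the currents themselves.
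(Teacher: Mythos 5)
Your reduction in the second paragraph contains a genuine gap, and it is exactly at the point you flag as ``bookkeeping''. The tower property $f\circ\sigma(\Vc_{S_{n+1}})\subset\Vc_{S_n}$ only lowers the index, and the sets $\Vc_{S_n}$ are \emph{not} forward invariant: composing a length-$q$ pseudo-orbit issued from $\supp(S_q)$ with one more map $f\circ\sigma$ would require reading it from $\supp(S_{n+q+1})$ with $n=-1$. Hence from $\supp\tau\subset\Vc_{S_{m(S)}}$ (and thus $\supp\tau\subset\Vc_{S_n}$ for $n\leq m(S)$) you can only place pseudo-orbits of length $l\leq m(S)$ into the window $\Vc_{S_0},\ldots,\Vc_{S_{m(S)}}$. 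For $l>m(S)$ your trick of writing $y=f^{l-m}(y')$ makes the pseudo-orbit \emph{longer}, so it would require $\supp\tau\subset\Vc_{S_q}$ at an even greater depth $q$, which is precisely what you do not have. What your scheme actually needs is: for every $l$ there exists $q$ with $l\leq q\leq l+m$ and $\supp\tau\subset\Vc_{S_q}$ — a ``bounded gaps at all depths'' statement, much stronger than containment at one finite depth. Your third paragraph does not deliver it either: the support of $\Delta_N R'_n$ involves $f^i(\supp R'_n)$ for $i>n$, which is not controlled by any $\Vc_{S_j}$, so letting $n\to\infty$ does not force $\supp\tau$ into any $\Vc_{S_j}$, let alone cofinally. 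The compactness argument for uniformity in $S$ is likewise not needed if the construction is done correctly, and as you note it runs into the semicontinuity of supports.

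The paper's proof avoids all of this by making the starting current depend on the length of the pseudo-orbit. One fixes finitely many bump functions $\chi_1,\ldots,\chi_M$ with support of diameter $<\eta_V/2$ in $V$, with $\sum_i\chi_i\geq\mathbf 1_{A\cap\Jc_s}$ and $\langle T^s,\chi_i\omega^p\rangle>0$; attractivity of $\tau$ (Lemma \ref{le conv tronca}) plus lower semicontinuity of $R\mapsto\supp(R)$ give a single integer $m$, depending only on $\tau$ and the $\chi_i$ (hence automatically uniform in $S$), such that $\supp\tau\subset\big(\supp(\Delta_m\chi_i\omega^p)\big)_{\eta_V/2}$ for every $i$. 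Given $x=f\sigma_1\circ\cdots\circ f\sigma_l(y)$ with $y\in\supp\tau$, the probability measure $S_{m+l+1}\wedge T^s$ charges some $\chi_i$, so a point $w\in\supp(S_{m+l+1})$ can be joined to $y$ by at most $m+1$ pseudo-orbit steps (one automorphism to move $w$ to the nearby point $z\in\supp\chi_i$, then $j\leq m$ iterates of $f$, then one automorphism of size $<\eta_V$); concatenating with the given $l$ steps exhibits $x$ as a pseudo-orbit point issued from $\supp(S_{m+l+1})$, i.e. $x\in\Vc_{S_{m-j}}$ with $0\leq m-j\leq m-1$. Because the depth $m+l+1$ grows with $l$ while the number of connecting steps stays bounded by $m+1$, all lengths $l$ are handled at once — this is the idea missing from your proposal.
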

\begin{proof}
The inclusion $\Vc_{S_n}\subset V$ for all $n\geq0$ is obvious. It simply comes from the facts that $\supp(S_n)\subset V$ and $f(\sigma(V))\subset V$ for all $\sigma\in B_W(r_V).$

In order to prove $\Nc_\tau(r_V)\subset\cup_{n=0}^m\Vc_{S_n},$ let $\chi_1,\ldots,\chi_M$ be positive smooth functions with compact support in $V$ such that
\begin{itemize}
\item $\sum_{i=1}^M\chi_i\geq\mathbf 1_{A\cap\Jc_s},$ where $A$ is the attracting set associated to $V,$
\item for each $1\leq i\leq M,$ $c_i:=\langle T^s,\chi_i\omega^p\rangle>0,$
\item the diameter of $\supp(\chi_i)$ is less than $\eta_V/2.$
\end{itemize}
Since $\tau$ is attractive on $V,$ it follows from Lemma \ref{le conv tronca} that $\lim_{n\to\infty}\Delta_n\chi_i\omega^p=c_i\tau,$ with $c_i>0.$ But the support function $R\mapsto\supp(R)$ is lower semicontinuous with respect to the weak topology for $R$ and the Hausdorff metric on the compact sets. Therefore, there exists $m\geq1$ such that $\supp(\tau)\subset(\supp(\Delta_m\chi_i\omega^p))_{\eta_V/2},$ for all $1\leq i\leq M.$

Let $x$ be a point in $\Nc_\tau(r_V).$ Again, by definition there exist $y\in\supp(\tau)$ and $\sigma_1,\ldots,\sigma_l\in B_W(r_V)$ such that $x=f\circ\sigma_1\circ\cdots\circ f\circ\sigma_l(y).$ On the other hand, since $\sum_{i=1}^M\chi_i\geq\mathbf 1_{A\cap\Jc_s}$ and $\|S_{m+l+1}\wedge T^s\|=1,$ there is $1\leq i\leq M$ such that $\langle S_{m+l+1},\chi_i\omega^s\rangle>0.$ Moreover, we deduce from the invariance of $\supp(\tau)$ by $f$ and from $y\in\supp(\tau)\subset(\supp(\Delta_m\chi_i\omega^p))_{\eta_V/2}$ that there exist $z\in\supp(\chi_i),$ an integer $1\leq j\leq m$ and $\sigma\in B_W(r_V)$ such that $y=f(\sigma(f^j(z))).$ Finally, since the diameter of $\supp(\chi_i)$ is less than $\eta_V/2$ and $\langle S_{m+l+1},\chi_i\omega^s\rangle>0$ there is $w\in\supp(S_{m+l+1})$ and $\sigma'\in B_W(r_V)$ such that $\sigma'(w)=z.$ It follows that $x$ is in the support of
$$\Lambda\sigma_{1*}\cdots\Lambda\sigma_{l*}\Lambda\sigma_*\Lambda^j\sigma'_*S_{m+l+1}$$
which is contained in $\Vc_{S_{m-j}}$ with $0\leq m-j\leq m-1.$

Notice that we will not need in the sequel that $m$ is uniform in $S\in\Dc_p(V).$
\end{proof}
\begin{remark}
Since $\tau$ is attractive on $\Nc_\tau(r_V),$ we can exchange $V$ by $\Nc_\tau(r_V)$ and thus obtain $\Nc_\tau(r_V)=\cup_{n=0}^m\Vc_{S_n}.$
\end{remark}

If $\phi$ is in $\Pc(V)$ we define
$$c_\phi:=\max_{S\in\Dc_p(V)}\langle S,\phi\rangle,$$
which is a maximum because $\Dc_p(V)$ is compact. Observe that if $L^n\phi:=d^{-sn}f^{n*}\phi$ then $c_{L^n\phi}=c_\phi$ for all $n\geq1$ since $\langle S,L^n\phi\rangle=\langle \Lambda^nS,\phi\rangle$ and that $\Lambda^n\Dc_p(V)=\Dc_p(V).$ If $S$ is in $\Dc_p(V)$ then we denote by $\Pc_S$ the set of forms $\phi$ in $\Pc(V)$ such that $\langle S,\phi\rangle=c_\phi.$ Finally, we define for a given $\phi\in\Pc(V)$ the following objects inductively. Let $R_0\in\Dc_p(V)$ be a current such that $\langle R_0,\phi\rangle=c_\phi$ and let $I_0:=\{0\},$ $i(0):=0.$ For $n\geq1$ we define
$$i(n):=\inf\{l\geq i(n-1)+1\,|\,\exists R\in\Dc_p(V),\, \langle R,L^l\phi\rangle=\langle R,L^i\phi\rangle=c_\phi\ \forall i\in I(n-1)\}.$$
If $i(n)=+\infty,$ we set $I(n):=I(n-1)$ and $R_n:=R_{n-1}.$ Otherwise, we set $I(n):=I(n-1)\cup\{i(n)\}$ and we choose a current $R_n\in\Dc_p(V)$ such that $\langle R_n,L^i\phi\rangle=c_\phi$ for all $i\in I(n).$ Finally, $I(\infty):=\cup_{n\geq1}I(n)$ is the limit of the sets $I(n)$ and we choose a limit value $R_\infty\in\Dc_p(V)$ of the sequence $(R_n)_{n\geq1}.$ Observe that $L^i\phi$ belongs to $\Pc_{R_\infty}$ for all $i\in I(\infty)$ and that $I(\infty)$ is a maximal subset of $\Nb$ with such a current. The next step is to show that $I(\infty)$ contains an infinite arithmetic progression.
\begin{proposition}\label{prop arit}
Let $S$ be in $\Dc_p(V)$ and let $(S_n)_{n\geq0}$ be a sequence in $\Dc_p(V)$ such that $S_0=S$ and $\Lambda S_{n+1}=S_n.$ There exist an integer $\gamma\geq1$ and a current $R\in\Dc_p(V)$ such that both $\Pc_S$ and $\Pc_{S_\gamma}$ are contained in $\Pc_R.$ In particular, applied to $S=R_\infty$ it implies that $I(\infty)+\gamma\subset I(\infty).$
\end{proposition}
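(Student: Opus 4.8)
The plan is to recast the statement as a question about the convex subsets of $\Dc_p(V)$ on which the functionals $\langle\cdot,\phi\rangle$ are maximised, and then to feed in Lemma~\ref{le union}. The first step is purely formal: since $S_0=\Lambda^n S_n$, for every $\phi\in\Pc(V)$ one has $\langle S_0,\phi\rangle=\langle S_n,L^n\phi\rangle$, and as $c_{L^n\phi}=c_\phi$ this gives the equivalence $\phi\in\Pc_{S_0}\iff L^n\phi\in\Pc_{S_n}$, and more generally $\phi\in\Pc_{S_i}\iff L^{j-i}\phi\in\Pc_{S_j}$ for $i\le j$ (read modulo the harmless rescaling needed to keep $L^{j-i}\phi$ in $\Pc(V)$, $\Pc_S$ being a cone). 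In particular it suffices to find an integer $\gamma\ge1$ with $\Pc_{S_\gamma}\subset\Pc_{S_0}$: then $R:=S=S_0$ works, since $\Pc_S\subset\Pc_R$ trivially and $\Pc_{S_\gamma}\subset\Pc_{S_0}=\Pc_R$. Equivalently one wants $\Pc_{S_\gamma}$ to be stable under $L^\gamma$; and if one can only arrange an inclusion $\Kc_0\subset\Kc_\gamma$ or $\Kc_\gamma\subset\Kc_0$, where $\Kc_n:=\{R\in\Dc_p(V):\langle R,\phi\rangle=c_\phi\ \forall\,\phi\in\Pc_{S_n}\}$, then — using $\Lambda\Kc_n\subset\Kc_{n-1}$, which follows from the same computation — a fixed-point argument for the affine continuous map $\Lambda^\gamma$ on the appropriate compact convex set still produces a suitable $R$, now $\Lambda^\gamma$-invariant.

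It remains to produce $\gamma\ge1$ with $\Pc_{S_\gamma}\subset\Pc_{S_0}$, and this is where the dynamics enters. By Lemma~\ref{le union}, after exchanging $V$ for $\Nc_\tau(r_V)$ as in the Remark following that lemma, there is $m\ge1$ with $V=\bigcup_{n=0}^m\Vc_{S_n}$; applying the lemma to the shifted sequence $(S_{1+n})_{n\ge0}$ we may even take the union over $1\le n\le m$, so that every $\Vc_{S_n}$ in play has index $\ge1$, and by construction $\tau$ is now the unique attracting current of $\Cc_p(V)$. A useful auxiliary remark is that $\supp(S_n)\subset A:=\bigcap_{j\ge0}f^j(V)$ for all $n$, because $\supp(S_n)=f^j(\supp(S_{n+j}))\subset f^{j-1}(V)$ for every $j\ge1$; thus the supports of the $S_n$ form a backward $f$-orbit of $\supp(S_0)$ inside the compact attracting set $A$. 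The plan is then to run the domination/subharmonicity machinery — Proposition~\ref{prop regu}, Lemmas~\ref{le domi-cons1}–\ref{le domi-cons2}, and Lemma~\ref{le conv tronca} applied to bump functions supported in those $\Vc_{S_n}$ that charge the equilibrium measure $\nu_\tau=\tau\wedge T^s$ — exactly in the spirit of the proofs of Proposition~\ref{prop conv continu} and Lemma~\ref{le courant avec prop de cv}, to deduce that for a suitable $\gamma\in\{1,\dots,m\}$ every form on which $S_\gamma$ is maximal over $\Dc_p(V)$ is forced to be a form on which $S_0$ is maximal; uniqueness of $\tau$ on $V$ is used to pin down the common limit value of the relevant Cesàro averages.

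The main obstacle, in my view, is precisely this last step. Lemma~\ref{le union} provides only a set-theoretic covering of $\supp(\tau)$ by the $\Vc_{S_n}$, whereas the only objects directly comparable across the sequence $(S_n)$ are a form $\phi$ and its pull-backs $L^j\phi$. One therefore has to decouple the ``time index'' $\gamma$, which governs $\Vc_{S_\gamma}$, from the test form $\phi$, and run the domination argument uniformly over all $\phi\in\Pc(V)$, in order to upgrade the covering to the inclusion $\Pc_{S_\gamma}\subset\Pc_{S_0}$; carrying this out for a $\gamma$ lying in the bounded range $\{1,\dots,m\}$ (rather than merely for some large $\gamma$) is the delicate point.

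Finally, the stated consequence is immediate. Applying the proposition to $S=R_\infty$ with its chosen defining sequence $(R_\infty)_\bullet$, we have $\Lambda^\gamma (R_\infty)_\gamma=R_\infty$. For each $i\in I(\infty)$, $\langle R_\infty,L^i\phi\rangle=c_{L^i\phi}=c_\phi$, hence $\langle (R_\infty)_\gamma,L^{\gamma+i}\phi\rangle=\langle R_\infty,L^i\phi\rangle=c_\phi$, i.e. $L^{\gamma+i}\phi\in\Pc_{(R_\infty)_\gamma}\subset\Pc_R$; also $L^i\phi\in\Pc_{R_\infty}\subset\Pc_R$. Thus $L^j\phi\in\Pc_R$ for all $j\in I(\infty)\cup(I(\infty)+\gamma)$, and the maximality of $I(\infty)$ forces $I(\infty)+\gamma\subset I(\infty)$.
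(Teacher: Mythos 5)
Your proposal does not contain a proof of the key assertion; it reduces the statement to the claim that one can find $\gamma\geq1$ with $\Pc_{S_\gamma}\subset\Pc_{S_0}$ (so that $R:=S$ works), sketches the tools one would like to use, and then explicitly concedes that you do not know how to carry out this step. That step is the entire content of the proposition, so what you have is a plan with the central argument missing. Moreover, the target you set yourself is strictly stronger than what is asserted (and than what the paper proves): the proposition only claims the existence of \emph{some} $R\in\Dc_p(V)$ with $\Pc_S\cup\Pc_{S_\gamma}\subset\Pc_R$, and the whole point of the paper's argument is that $R$ need not be $S$ or $S_\gamma$ but can be manufactured from scratch. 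Trying to force $R=S$ is exactly what makes your route stall: the covering of $\Nc_\tau(r_V)$ by the sets $\Vc_{S_n}$ is information about supports, and it gives no handle on the values $\langle S_0,\phi\rangle$ for $\phi\in\Pc_{S_\gamma}$.

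Concretely, the paper proceeds as follows, and none of this appears in your proposal. One extracts a subsequence so that $S_{n_i+l}\to S_{\infty,l}$ for all $0\leq l\leq 2m+1$ (with $m$ from Lemma \ref{le union}), applies Lemma \ref{le union} to the two index windows $\{0,\dots,m\}$ and $\{m+1,\dots,2m+1\}$ and pigeonholes on $\Jc_s$ to get $0\leq\gamma_1,\gamma_2\leq m$ with $\Vc_{S_{\infty,\gamma_1}}\cap\Jc_s\cap\Vc_{S_{\infty,m+1+\gamma_2}}\neq\varnothing$; pushing forward and using the invariance of $\Jc_s$ this yields $\Vc_{S_{\infty,0}}\cap\Jc_s\cap\Vc_{S_{\infty,\gamma}}\neq\varnothing$ with $\gamma:=m+1+\gamma_2-\gamma_1\geq1$ (this is also how $\gamma$ is kept in a bounded range, the point you flag as delicate). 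One then chooses a bump form $\chi\omega^p$ with $\alpha:=\langle T^s,\chi\omega^p\rangle>0$ supported in this overlap and dominated by regularizing structural disks centered at $S_{n_i}$ and at $S_{n_i+\gamma}$ (Proposition \ref{prop regu} plus Lemma \ref{le disque dans Cc_S}, with continuity in the center). Applying $\Lambda^{n_i}$ recenters these disks at $S$ and at $S_\gamma$; for any $\phi\in\Pc_S$ (resp.\ $\psi\in\Pc_{S_\gamma}$) the subharmonicity argument of Lemma \ref{le domi-cons1} forces the values along the disks to converge to $c_\phi$ (resp.\ $c_\psi$), and the domination together with Lemma \ref{le domi-cons2} then shows that any limit value $\alpha R$ of $\Lambda^{n_i}(\chi\omega^p)$ satisfies $\langle R,\phi\rangle=c_\phi$ and $\langle R,\psi\rangle=c_\psi$ simultaneously, i.e.\ $\Pc_S\cup\Pc_{S_\gamma}\subset\Pc_R$. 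Your formal preliminaries (the equivalence $\phi\in\Pc_{S_i}\Leftrightarrow L^{j-i}\phi\in\Pc_{S_j}$ up to rescaling) and your derivation of the ``in particular'' statement from the proposition are fine, but without the construction above the proposition itself remains unproved.
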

\begin{proof}
Let $m\geq1$ be the constant obtained from Lemma \ref{le union}. Let $S_{n_i+2m+1}$ be a subsequence of $S_{n+2m+1}$ which converges. We call the limit $S_{\infty,2m+1}$ and as $\Lambda S_{n+1}=S_n$ we have that $S_{n_i+l}$ converges to $S_{\infty,l}:=\Lambda^{2m+1-l}S_{\infty,2m+1}$ for all $0\leq l\leq 2m+1.$  Since $\Dc_p(V)$ is compact, all these currents belong to it.

It follows from Lemma \ref{le union} that $\Nc_\tau(r_V)\subset\cup_{i=0}^{m}\Vc_{S_{\infty,i}}$ and $\Nc_\tau(r_V)\subset\cup_{i=0}^{m}\Vc_{S_{\infty,m+1+i}}.$ Therefore, there exist $0\leq\gamma_1,\gamma_2\leq m$ such that $\Vc_{S_\infty,\gamma_1}\cap\Jc_s\cap\Vc_{S_{\infty,m+1+\gamma_2}}\neq\varnothing.$ Moreover, $\Jc_s$ is invariant and $f(\Vc_{S_{\infty,i+1}})\subset\Vc_{S_{\infty,i}}$ thus $\Vc_{S_\infty,0}\cap\Jc_s\cap\Vc_{S_{\infty,\gamma}}\neq\varnothing$ with $\gamma:=m+1+\gamma_2-\gamma_1\geq1.$ From this it is easy to see, by combining the construction in  Proposition \ref{prop regu} and in Lemma \ref{le disque dans Cc_S}, that there are a positive smooth function $\chi$ with compact support in $V$ and two structural disks $\{S_{\infty,0}(\theta)\}_{\theta\in\Db},$ $\{S_{\infty,\gamma}(\theta)\}_{\theta\in\Db}$ of center $S_{\infty,0}$ and $S_{\infty,\gamma}$ respectively such that
$$\langle T^s,\chi\omega^p\rangle=\alpha>0,\ \ S_{\infty,0}(\theta_0)\geq\chi\omega^p\ \ \textrm{and}\ \ S_{\infty,\gamma}(\theta_0)\geq\chi\omega^p,$$
for some $\alpha>0,$ $\theta_0\in\Db^*.$ Moreover, as in Proposition \ref{prop regu}, the map $R\mapsto R(\theta_0)$ defined by the construction above is continuous with respect to weak topology  and the $\mathcal C^\infty$ topology respectively. Therefore, possibly by slightly modifying $\chi$ we obtain that the corresponding structural disks $\{S_{n_i}(\theta)\}_{\theta\in\Db},$ $\{S_{n_i+\gamma}(\theta)\}_{\theta\in\Db}$ of center $S_{n_i}$ and $S_{n_i+\gamma}$ respectively satisfy
$$S_{n_i}(\theta_0)\geq\chi\omega^p\ \ \textrm{and}\ \ S_{n_i+\gamma}(\theta_0)\geq\chi\omega^p,$$
for $n_i$ large enough.

Since, the disks $\{\Lambda^{n_i}S_{n_i}(\theta)\}_{\theta\in\Db}$ and $\{\Lambda^{n_i}S_{n_i+\gamma}(\theta)\}_{\theta\in\Db}$ are centered at $S$ and $S_\gamma$ respectively, the domination above implies, by Lemma \ref{le domi-cons2}, that if $\alpha R$ is a limit value of $\Lambda^{n_i}\chi\omega^p$ then $R$ is in $\Dc_p(V)$ and $\langle R,\phi\rangle=c_\phi,$ $\langle R,\psi\rangle=c_\psi$ as soon as $\phi\in\Pc_S$ and $\psi\in\Pc_{S_\gamma},$ i.e. $\Pc_S\cup\Pc_{S_\gamma}\subset\Pc_R.$

To prove the last point, observe that by taking $S=R_\infty$ we obtain a current $R\in\Dc_p(V)$ such that $L^i\phi$ and $L^{i+\gamma}\phi$ both belong to $\Pc_R$ for all $i\in I(\infty).$ But as $I(\infty)$ is maximal, it turns out that $I(\infty)+\gamma\subset I(\infty).$
\end{proof}
Here is the only point where the fact that $\tau$ is attractive for all the iterates of $f$ is involved.
\begin{lemma}\label{le arit ok}
Let $\phi$ be in $\Pc(V).$
If there exist an integer $\gamma\geq1$ and a current $S\in\Dc_p(V)$ such that $L^{n\gamma}\phi\in\Pc_S$ for all $n\geq0$ then $\phi$ belongs to $\Pc_\tau.$
\end{lemma}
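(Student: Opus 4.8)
The plan is to reduce the statement, via a regularization of $S$, to a single application of Lemma \ref{le domi-cons1}, exactly in the spirit of the proof of Proposition \ref{prop conv1}.

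First I would regularize $S$. Since $S\in\Dc_p(V)$, its support is contained in the compact set $\overline{f(V)}\Subset V$, so Proposition \ref{prop regu} (applied with the open set $V$ and $K=\supp(S)$) provides a structural disk $\Sc$ in $\Cc_p(V)$ with $\Sc(0)=S$, with $\Sc(\theta)$ smooth of mass $1$ for $\theta\neq 0$, and with $\|\Sc(\theta)-\Sc(\theta')\|_{\mathcal C^2}\le C_L|\theta-\theta'|$ for $\theta,\theta'$ in a compact $L\Subset\Db^*$. Set $\Delta^\gamma_N(\cdot):=\frac1N\sum_{i=1}^N\Lambda^{i\gamma}(\cdot)$ and $u_N(\theta):=\langle\Delta^\gamma_N\Sc(\theta),\phi\rangle$ on $\Db$. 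Pushing a structural disk forward by $f$ is again a structural disk, so each $\{\Lambda^{i\gamma}\Sc(\theta)\}_\theta$ is a structural disk in $\Cc_p(V)$; since $\overline V$ is weakly $p$-pseudoconvex (Corollary \ref{coro p-pseudo}) and $\ddc\phi\geq 0$, Theorem \ref{th sh} makes each $\theta\mapsto\langle\Lambda^{i\gamma}\Sc(\theta),\phi\rangle$ subharmonic, hence $u_N$ — an average of such functions — is subharmonic, and $0\le u_N\le 1$ because $0\le\phi\le\omega^s$.

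Next I would verify the three hypotheses of Lemma \ref{le domi-cons1} with $c:=c_\phi$. For equicontinuity on $\Db^*$, write $u_N(\theta)-u_N(\theta')=\frac1N\sum_{i=1}^N\langle\Sc(\theta)-\Sc(\theta'),L^{i\gamma}\phi\rangle$ with $L^m\psi:=d^{-sm}f^{m*}\psi$; since $\Sc(\theta)-\Sc(\theta')$ is a smooth form with $\|\Sc(\theta)-\Sc(\theta')\|_{\mathcal C^0}\le C_L|\theta-\theta'|$ while $L^{i\gamma}\phi$ is a \emph{positive} $(s,s)$-form of mass at most $1$ (it is dominated by $d^{-si\gamma}(f^{i\gamma})^*\omega^s$), every term is bounded by $C_L|\theta-\theta'|$, so $(u_N)$ is uniformly Lipschitz, in particular locally equicontinuous, on $\Db^*$. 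For the value at $0$, $u_N(0)=\frac1N\sum_{i=1}^N\langle S,L^{i\gamma}\phi\rangle=\frac1N\sum_{i=1}^N c_{L^{i\gamma}\phi}=c_\phi$ for every $N$, using the hypothesis $L^{i\gamma}\phi\in\Pc_S$ together with $c_{L^m\psi}=c_\psi$. For the upper bound, any weak limit value $R$ of $(\Delta^\gamma_N\Sc(\theta))_N$ is positive closed of mass $1$, supported in $\overline{f(V)}\subset V$, and $\Lambda^\gamma$-invariant; since $\Lambda^\gamma R=R$ one checks directly from the definition of $\Dc_p(V)$ that $R\in\Dc_p(V)$, whence $\langle R,\phi\rangle\le c_\phi$, so $\limsup_N u_N(\theta)\le c_\phi$ for all $\theta\in\Db$. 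Lemma \ref{le domi-cons1} then yields $u_N\to c_\phi$ pointwise on $\Db$.

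Finally I would evaluate at a single point $\theta_0\in\Db^*$: there $\Sc(\theta_0)$ is a \emph{continuous} current in $\Cc_p(V)$, so the standing assumption that $\tau$ is attractive on $V$ with respect to the iterate $f^\gamma$ gives $\Delta^\gamma_N\Sc(\theta_0)\to\tau$, hence $u_N(\theta_0)\to\langle\tau,\phi\rangle$. Comparing with $u_N(\theta_0)\to c_\phi$ gives $\langle\tau,\phi\rangle=c_\phi$, i.e. $\phi\in\Pc_\tau$. The main obstacle is the equicontinuity estimate: iterating $f$ destroys uniform $\mathcal C^0$-control of forms, and the point is to transfer the iteration onto the test form via $L^{i\gamma}\phi$, where positivity and the bounded mass force the pairing with the small perturbation $\Sc(\theta)-\Sc(\theta')$ to stay uniformly small; the other delicate point, which is exactly where the hypothesis that $\tau$ is attractive for \emph{all} iterates of $f$ is used, is the last evaluation step with $f^\gamma$, while knowing that Cesàro limit values of $\Delta^\gamma_N\Sc(\theta)$ remain in $\Dc_p(V)$ (not merely in $\Cc_p(V)$) is what makes $c_\phi$ the correct bound.
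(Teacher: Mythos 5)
Your proof is correct, but it takes a more roundabout route than the paper's. The paper argues in essentially two steps: the hypothesis gives $\bigl\langle N^{-1}\sum_{n=1}^{N}\Lambda^{n\gamma}S,\phi\bigr\rangle=c_\phi$ for every $N$ (no regularization of $S$ is needed, since $\phi$ is a fixed continuous test form and these averages are supported in the fixed compact $\overline{f(V)}\Subset V$); any limit value $S_\infty$ of these averages is $\Lambda^\gamma$-invariant with $\langle S_\infty,\phi\rangle=c_\phi$, so Remark \ref{rk max} applied to $f^\gamma$ --- this is exactly where the standing assumption that $\tau$ is attractive for every iterate enters --- yields $\langle\tau,\phi\rangle\geq\langle S_\infty,\phi\rangle=c_\phi$, while the reverse inequality is immediate from $\tau\in\Dc_p(V)$ and the definition of $c_\phi$. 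You instead bypass Remark \ref{rk max} and re-run the $\ddc$-machinery at the level of this lemma: regularize $S$ by Proposition \ref{prop regu}, apply Lemma \ref{le domi-cons1} to $u_N(\theta)=\bigl\langle N^{-1}\sum_{n=1}^{N}\Lambda^{n\gamma}\Sc(\theta),\phi\bigr\rangle$, and identify the limit at an interior parameter $\theta_0$ through the attractivity of $\tau$ for $f^\gamma$ on continuous currents. The verifications you sketch do go through: the Lipschitz estimate works because $L^{i\gamma}\phi$ is positive and dominated by $d^{-si\gamma}(f^{i\gamma})^*\omega^s$ (up to a dimensional constant when dominating $\Sc(\theta)-\Sc(\theta')$ by a multiple of $\omega^p$), and the $\limsup$ bound needs the fact, which you assert and which indeed holds, that a $\Lambda^\gamma$-invariant $R\in\Cc_p(V)$ belongs to $\Dc_p(V)$ (take $S_n:=\Lambda^{m\gamma-n}R$ with $m\gamma\geq n$, so $\Lambda^nS_n=R$). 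What your approach buys is self-containedness --- beyond the definition of attractivity for $f^\gamma$ you use nothing from Theorem \ref{th conv1}/Remark \ref{rk max} --- at the price of redoing a regularization-plus-subharmonicity argument the paper has already packaged; the paper's version is shorter and makes transparent that the only dynamical input is the maximality of $\tau$ among $f^\gamma$-invariant currents of $\Cc_p(V)$.
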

\begin{proof}
If $\langle S,L^{n\gamma}\phi\rangle=c_\phi$ for all $n\geq0$ then for all $N\geq1$
$$\left\langle\frac{1}{N}\sum_{n=1}^{N}\Lambda^{n\gamma}S,\phi\right\rangle=c_\phi.$$
Any limit value $S_\infty$ of $N^{-1}\sum_{n=1}^N\Lambda^{n\gamma}S$ is invariant by $\Lambda^\gamma.$ As $\tau$ is the unique attracting current in $V$ with respect to $f^\gamma,$ it follows from Remark \ref{rk max} that
$$\langle\tau,\phi\rangle\geq\langle S_\infty,\phi\rangle=c_\phi.$$
Therefore, since $c_\phi:=\max_{R\in\Dc_p(V)}\langle R,\phi\rangle$ and $\tau\in\Dc_p(V)$ we conclude that $\langle\tau,\phi\rangle=c_\phi$ and thus $\phi\in\Pc_\tau.$
\end{proof}
We can now prove the following theorem which gives the convergence in Theorem \ref{th finitude}.
\begin{theorem}\label{th convergence dans V}
If $R$ is a continuous form in $\Cc_p(V)$ then $\lim_{n\to\infty}\Lambda^nR=\tau.$
\end{theorem}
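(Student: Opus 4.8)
The plan is to first pin down $\tau$ as the unique maximizer over $\Dc_p(V)$ of every test form in $\cali P(V)$, and then to boost the Ces\`aro convergence already available to an honest limit by a domination argument carried on the trapping regions $\Vc_{S_n}$.

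\emph{Step 1: $\Pc_\tau=\cali P(V)$.} Fix $\phi\in\cali P(V)$ and run the inductive construction preceding Proposition~\ref{prop arit}: it produces $R_\infty\in\Dc_p(V)$ and a set $I(\infty)\subset\Nb$ with $0\in I(\infty)$, maximal for the property that $L^i\phi\in\Pc_{R_\infty}$ for all $i\in I(\infty)$. Applying Proposition~\ref{prop arit} to $S=R_\infty$ gives $\gamma\geq1$ with $I(\infty)+\gamma\subset I(\infty)$, hence $\gamma\Nb\subset I(\infty)$ and $L^{n\gamma}\phi\in\Pc_{R_\infty}$ for all $n\geq0$; Lemma~\ref{le arit ok}, applied with this $\phi$, $\gamma$ and $S=R_\infty$, then yields $\phi\in\Pc_\tau$, i.e. $\langle\tau,\phi\rangle=c_\phi=\max_{S\in\Dc_p(V)}\langle S,\phi\rangle$. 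As $\phi$ is arbitrary, $\Pc_\tau=\cali P(V)$.

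\emph{Step 2: the upper bound.} Let $R$ be a continuous form in $\Cc_p(V)$. Since $f(V)\Subset V$, the operator $\Lambda$ preserves $\Cc_p(V)$ and $\Lambda^nR$ is supported in $f^n(V)$, so every limit value $S$ of $(\Lambda^nR)_{n\geq0}$ is supported in a compact subset of $V$ and, by definition, lies in $\Dc_p(V)$. By Step~1, $\langle S,\phi\rangle\leq c_\phi=\langle\tau,\phi\rangle$ for all $\phi\in\cali P(V)$; equivalently $\limsup_n\langle\Lambda^nR,\phi\rangle\leq\langle\tau,\phi\rangle$ for every $\phi\in\cali P(V)$.

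\emph{Step 3: the lower bound, and conclusion.} It then suffices to show that such a limit value $S$ also satisfies $\langle S,\phi\rangle\geq\langle\tau,\phi\rangle$ for every $\phi\in\cali P(V)$, for by Lemma~\ref{le sepa} this forces $S=\tau$, and as $S$ is an arbitrary limit value, $\lim_n\Lambda^nR=\tau$, which is the claim. I would fix a prehistory $(S_n)_{n\geq0}$ of $S$ (so $S_0=S$, $\Lambda S_{n+1}=S_n$, all $S_n\in\Dc_p(V)$, each itself a limit value of $(\Lambda^nR)_n$ hence subject to Step~2). By Lemma~\ref{le union}, $\supp(\tau)\subset\Nc_\tau(r_V)\subset\bigcup_{n=0}^{m}\Vc_{S_n}$ for some $m$, and since $\nu_\tau=\tau\wedge T^s$ is a probability measure, $\supp(\tau)\cap\Jc_s\neq\varnothing$; hence $\Vc_{S_{n_0}}\cap\Jc_s\neq\varnothing$ for some $0\leq n_0\leq m$. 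Combining Proposition~\ref{prop regu} and Lemma~\ref{le disque dans Cc_S} as in the proof of Proposition~\ref{prop arit}, this produces a positive smooth function $\chi$ with compact support in $V$, with $\langle T^s,\chi\omega^p\rangle=\alpha>0$, together with a structural disk centered at $S$ one of whose non-central members dominates $\chi\omega^p$. Since $R$ is continuous, $\chi\omega^p$ is a continuous form supported in the basin of $V$, so I would feed this disk into the domination scheme of Lemma~\ref{le conv tronca}, using the numerical Lemma~\ref{le domi-cons2} and the maximum principle along the disk exactly as in the proof of Lemma~\ref{le courant avec prop de cv}, and invoking the standing hypothesis that $\tau$ is attractive on $V$ for every iterate of $f$ (to handle the arithmetic progressions, as in Lemma~\ref{le arit ok}), to obtain $\langle S,\phi\rangle\geq\langle\tau,\phi\rangle$ for all $\phi\in\cali P(V)$.

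\emph{Where the difficulty lies.} Steps~1 and~2 are essentially formal given the earlier results. The hard part is Step~3: turning the one-sided estimate $\langle S,\phi\rangle\leq c_\phi$ into an equality, that is, ruling out that a positive fraction of the mass of the orbit $\Lambda^nR$ drifts away from $\tau$ along a sparse sequence of times. Both the continuity of $R$ (which makes $\chi\omega^p$ a continuous current supported in the basin and gives $R\wedge T^s$ full mass, so that the Lemma~\ref{le conv tronca}-type domination is available) and the prior reduction to ``$\tau$ attractive for all iterates of $f$'' (which suppresses period-dividing splittings, as already exploited in Lemma~\ref{le arit ok}) are used precisely at this point and cannot be dispensed with.
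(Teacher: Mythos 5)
Your Steps 1 and 2 coincide with the paper's argument: the inductive construction of $I(\infty)$ together with Proposition \ref{prop arit} and Lemma \ref{le arit ok} gives $\Pc(V)=\Pc_\tau$, and every limit value $R'$ of $(\Lambda^nR)_{n}$ lies in $\Dc_p(V)$, whence $\langle R',\phi\rangle\leq c_\phi=\langle\tau,\phi\rangle$ for all $\phi\in\Pc(V)$. The genuine gap is Step 3, which you rightly identify as the hard point but whose sketch does not assemble into a proof. You propose to dominate a form $\chi\omega^p$ with $\langle T^s,\chi\omega^p\rangle=\alpha>0$ by a non-central member of a structural disk \emph{centered at the limit value} $S$, and then to argue ``as in'' Lemma \ref{le conv tronca} and Lemma \ref{le courant avec prop de cv}. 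This domination points the wrong way. In those lemmas, and in Proposition \ref{prop arit}, the propagation along a disk always starts from a center whose pairing with $\phi$ is already known to equal the maximal value (e.g.\ $\langle\tau,\phi\rangle=c_\phi$, or $\langle S,\phi\rangle=c_\phi$ for $\phi\in\Pc_S$); subharmonicity and Lemma \ref{le domi-cons1} then force the same limit at the other points of the disk, and Lemma \ref{le domi-cons2} transfers it to the dominated piece. Here the center of your disk is $S$, for which you only know $\langle S,\phi\rangle\leq c_\phi$ --- equality being exactly what you must prove --- so no propagation can start; and a lower bound obtained at an interior point $\theta_0$ cannot be transported back to the center, since the sub-mean-value inequality goes the other way. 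Even granting such an argument, it would only yield $\langle S,\phi\rangle\geq\alpha c_\phi$ for a fixed fraction $\alpha<1$, with no mechanism to push the fraction to $1$. Note also that in your scheme the continuity of $R$ plays no effective role (``$\chi\omega^p$ is continuous'' has nothing to do with $R$), whereas it is essential in the actual proof.

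The paper's argument runs the domination in the opposite direction. Fixing a subsequence $\Lambda^{n_i}R\to R'$, it regularizes $\tau$ into a disk $\{\tau(\theta)\}$ centered at $\tau$, so that $\langle\Lambda^{n_i}\tau(\theta),\phi\rangle\to c_\phi$ for suitable $\theta$ by Lemma \ref{le domi-cons1} (the center is pinned at $c_\phi$ by invariance and Step 1); it then pulls back by $f^l$ and regularizes again to obtain a disk $\mathcal S$ with $\mathcal S(\theta_2)>0$ on $\supp(\chi\circ f^l)$ and the same limiting pairings, and uses the boundedness of the continuous form $R$ to dominate the cut-off of $R$ itself: $c\,(\chi\circ f^l)R\leq\mathcal S(\theta_2).$ Lemma \ref{le domi-cons2} then gives $\lim_i\langle\Lambda^{n_i}((\chi\circ f^l)R),\phi\rangle=c_l\,c_\phi$ with $c_l=\langle\chi\Lambda^lR,T^s\rangle$, hence $\langle R',\phi\rangle\geq c_l\,c_\phi$. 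Finally, taking $\chi$ equal to $1$ on $f(\Nc_\tau(r_V))$ and supported in $\Nc_\tau(r_V)$ (so $\chi\circ f\geq\chi$) and invoking the Ces\`aro attractivity $\Delta_NR\to\tau$, one gets $c_l\nearrow1$, whence $\langle R',\phi\rangle\geq c_\phi$ for every $\phi\in\Pc(V)$ and $R'=\tau$. This exhaustion step ($c_l\to1$) and the correct direction of the domination --- cut-offs of $R$ dominated by regularized pullbacks of $\tau$, not $\chi\omega^p$ dominated by a disk through $S$ --- are the missing ideas in your Step 3.
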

\begin{proof}
Let $\phi$ be in $\Pc(V)$ and let $I(\infty)$ and $R_\infty$ as above. By Proposition \ref{prop arit}, there exists $\gamma\geq1$ such that $I(\infty)+\gamma\subset I(\infty).$ In particular, as $0$ belongs to $I(\infty)$ we deduce that $n\gamma\in I(\infty)$ for all $n\geq0,$ i.e. $L^{n\gamma}\phi$ is in $\Pc_{R_\infty}$ for all $n\geq0.$ It follows from Lemma \ref{le arit ok} that $\phi$ is in $\Pc_\tau.$ Therefore, since $\phi\in\Pc(V)$ was arbitrary, we have $\Pc(V)=\Pc_\tau.$ 

From this, the proof is very similar to the one of Lemma \ref{le conv tronca}. Let $R$ be a continuous form in $\Cc_p(V).$ We choose a subsequence $(\Lambda^{n_i}R)_{i\geq0}$ which converges toward a current $R'.$ It has to satisfy $R'\in\Dc_p(V)$ and therefore $\langle R',\phi\rangle\leq c_\phi=\langle\tau,\phi\rangle$ for all $\phi\in\Pc(V)$ as $\Pc(V)=\Pc_\tau.$ Let $\chi$ be a positive smooth function, bounded by $1,$ equal to $1$ on $f(\Nc_\tau(r_V))$ and with support in $\Nc_\tau(r_V).$ In particular, $\chi\circ f\geq\chi.$ Moreover, arguing as in Proposition \ref{prop regu} and Lemma \ref{le disque dans Cc_S} we can construct a structural disk $\{\tau(\theta)\}_{\theta\in\Db}$ of center $\tau$ such that $\tau(\theta)>0$ on $\supp(\chi)$ if $|\theta|<1$ is large enough. If $\phi\in\Pc(V)$ then the sequence of subharmonic functions defined by
$$u_i(\theta):=\langle\Lambda^{n_i}\tau(\theta),\phi\rangle$$
verifies $\limsup_{i\to\infty}u_i(\theta)\leq c_\phi$ and $u_i(0)=c_\phi$ for all $i\geq0.$ Therefore, Lemma \ref{le domi-cons1} implies that $\limsup_{i\to\infty}u_i(\theta)=c_\phi$ for almost all $\theta\in\Db.$ In particular, possibly by exchanging $(n_i)_{i\geq0}$ by a subsequence, there exists $\theta_1\in\Db$ such that $\tau(\theta_1)>0$ on $\supp(\chi)$ and $\lim_{i\to\infty}\langle\Lambda^{n_i}\tau(\theta_1),\phi\rangle=c_\phi.$ Let $l\geq0$ be large enough and define $S:=d^{-pl}(f^l)^*(\tau(\theta_1)).$ As in Lemma \ref{le conv tronca}, we can use a regularization in $\Cc_p(f^{-l}(V))$ of $S$ in order to obtain a structural disk $\mathcal S$ such that $\lim_{n\to\infty}\langle\mathcal S(\theta),\phi\rangle=c_\phi$ and $\mathcal S(\theta)>0$ on $\supp(\chi\circ f^l)$ for all $\theta\in\Db^*.$ In particular, as $R$ is continuous, if $\theta_2\in\Db^*$ there is $c>0$ such that $c\chi\circ f^lR\leq\mathcal S(\theta_2).$ It follows that
$$\langle R',\phi\rangle=\lim_{i\to\infty}\langle\Lambda^{n_i}R,\phi\rangle\geq\lim_{i\to\infty}\langle\Lambda^{n_i}(\chi\circ f^lR),\phi\rangle=\lim_{i\to\infty}\langle\Lambda^{n_i-l}(\chi\Lambda^lR),\phi\rangle=c_lc_\phi,$$
where $c_l:=\langle\chi\Lambda^lR,T^s\rangle.$

On the other hand, since $\tau$ is attractive on $V,$ we have $\lim_{N\to\infty}\Delta_NR=\tau.$ Moreover, $\chi\circ f\geq\chi$ implies that $c_l$ is an increasing sequence and its Ces\`{a}ro mean converges to $\langle\tau,\chi T^s\rangle=1.$ Thus, $\lim_{l\to\infty}c_l=1$ and therefore $\langle R',\phi\rangle\geq c_\phi.$ As the equality holds for all $\phi\in\Pc(V)$ we have that $R'=\tau.$ The result follows since $R'$ was an arbitrary limit value of $(\Lambda^nR)_{n\geq1}.$
\end{proof}

\subsection{Speed of convergence}\label{subsec-speed}
In this subsection, we study the speed of convergence in Theorem \ref{th finitude} and its consequences. First we show that the convergence is uniform for uniformly bounded currents. We were not able to establish an exponential speed in general. However, we prove it for pluriharmonic observables which will allow us to apply techniques developed in \cite{t-attrac-speed} and \cite{da-ta} in order to obtain an exponential speed in some special cases.

As in the previous subsection, we fix an attracting current $\tau$ of bidegree $(p,p)$ which is attractive on a codimension $p$ trapping region $V$ and we assume that $\tau$ is attractive on $V$ with respect to any iterates of $f.$

\begin{proposition}\label{prop conv unif}
Let $M>0.$ If $(R_n)_{n\geq1}$ is a sequence of continuous forms in $\Cc_p(V)$ such that $\|R_n\|_\infty\leq M$ then
$$\lim_{n\to\infty}\Lambda^nR_n=\tau.$$
\end{proposition}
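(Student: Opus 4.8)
The plan is to follow the proof of Theorem~\ref{th convergence dans V} along the sequence $(R_n)$, using the bound $\|R_n\|_\infty\le M$ to make every constant that occurs uniform in $n$. Recall from that proof that, under the standing assumption that $\tau$ is attractive on $V$ for all iterates, $\Pc(V)=\Pc_\tau$, i.e. $\langle\tau,\phi\rangle=c_\phi:=\max_{S\in\Dc_p(V)}\langle S,\phi\rangle$ for every $\phi\in\Pc(V)$. Since $f(\overline V)\Subset V$ we have $\Lambda^nR_n\in\Cc_p(V)$ for all $n$, and $\overline V$ is weakly $p$-pseudoconvex by Corollary~\ref{coro p-pseudo}; so by Lemma~\ref{le sepa} it is enough to prove that every weak-$\ast$ limit value $R'$ of $(\Lambda^nR_n)_{n\ge1}$ satisfies $\langle R',\phi\rangle=c_\phi$ for all $\phi\in\Pc(V)$. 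One inequality is free: $R'$ is a limit of currents $\Lambda^{n_i}R_{n_i}$ with $R_{n_i}\in\Cc_p(V)$, hence $R'\in\Dc_p(V)$ and $\langle R',\phi\rangle\le c_\phi$. Fixing $\phi\in\Pc(V)$ and a subsequence with $\Lambda^{n_i}R_{n_i}\to R'$, the whole point will be the reverse inequality $\langle R',\phi\rangle\ge c_\phi$.

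For that, I would reproduce the mechanism of Theorem~\ref{th convergence dans V}: pick a positive smooth function $\chi\le1$ equal to $1$ on $f(\Nc_\tau(r_V))$ and supported in $\Nc_\tau(r_V)$ (so $\chi\circ f\ge\chi$), and, using a structural disk of centre $\tau$ and Lemma~\ref{le domi-cons1}, and then regularizing $d^{-pl}(f^l)^*\tau(\theta_1)$ in $\Cc_p(f^{-l}(V))$ for $l$ large (this part involves only $\tau$ and $\chi$), produce a smooth current $\mathcal S(\theta_2)$ that is strictly positive on $\supp(\chi\circ f^l)$ and for which $\langle\Lambda^{n_i}\mathcal S(\theta_2),\phi\rangle\to c_\phi$. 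This is the one place the hypothesis enters: since $\mathcal S(\theta_2)$ is strictly positive on $\supp(\chi\circ f^l)$ and $\|(\chi\circ f^l)R_{n_i}\|_\infty\le M$, there is $c>0$ \emph{independent of $i$} with $c\,(\chi\circ f^l)R_{n_i}\le\mathcal S(\theta_2)$. Writing $\mathcal S(\theta_2)=c\,(\chi\circ f^l)R_{n_i}+W_i$ with $W_i\ge0$, Lemma~\ref{le existence courant} gives that the limit values of $\Lambda^{n_i}(c\,(\chi\circ f^l)R_{n_i})$ and of $\Lambda^{n_i}W_i$ are positive closed of masses $c\,c_{l,i}$ and $1-c\,c_{l,i}$, where $c_{l,i}:=\langle R_{n_i}\wedge T^s,\chi\circ f^l\rangle$, and — exactly as in Theorem~\ref{th convergence dans V} — after normalization these lie in $\Dc_p(V)$, hence pair with $\phi$ to at most their mass times $c_\phi$. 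Feeding this into Lemma~\ref{le domi-cons2} (with $a_i=\langle\Lambda^{n_i}\mathcal S(\theta_2),\phi\rangle\to c_\phi$, $b_i=\langle\Lambda^{n_i}(c\,(\chi\circ f^l)R_{n_i}),\phi\rangle$, along a further subsequence on which $c_{l,i}\to\kappa_l$) gives $\lim_i\langle\Lambda^{n_i}((\chi\circ f^l)R_{n_i}),\phi\rangle=\kappa_l\,c_\phi$; note that $c$ cancels. Since $(\chi\circ f^l)R_{n_i}\le R_{n_i}$ and $\phi\ge0$, this already yields $\langle R',\phi\rangle\ge\kappa_l\,c_\phi$ for every $l$.

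The remaining point is $\kappa_l\to1$ as $l\to\infty$, which is again where boundedness matters. For each $i$ the sequence $(c_{l,i})_l$ is nondecreasing (because $\chi\circ f\ge\chi$), and $R_{n_i}\wedge T^s$ is a probability measure; moreover $\|R_{n_i}\|_\infty\le M$ gives $R_{n_i}\le M'\omega^p$ as forms, hence $R_{n_i}\wedge T^s\le M'\,\chi''(T^s\wedge\omega^p)$ for a smooth bump $\chi''$ that is $1$ on a neighbourhood of $\overline V$ and has $\supp\chi''\Subset B_V$ (such $\chi''$ exists since $\overline V\subset f^{-1}(V)\subset B_V$). Therefore $1-c_{l,i}\le M'\langle\chi''(T^s\wedge\omega^p),1-\chi\circ f^l\rangle=:\varepsilon_l$, which does not depend on $i$; and $\varepsilon_l\to0$, because $\langle\chi''(T^s\wedge\omega^p),\chi\circ f^j\rangle$ is nondecreasing in $j$ and, by Lemma~\ref{le conv tronca} applied to $\chi''\omega^p$ (together with $\langle\nu_\tau,\chi\rangle=1$), its Cesàro means converge to $\langle T^s,\chi''\omega^p\rangle$. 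Hence $\kappa_l\ge1-\varepsilon_l\to1$, so $\langle R',\phi\rangle\ge c_\phi$; as $\phi$ was arbitrary in $\Pc(V)$, $R'=\tau$, and compactness of $\Cc_p(V)$ gives $\Lambda^nR_n\to\tau$.

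The main obstacle — as already in Theorem~\ref{th convergence dans V} — is that $\Lambda^nR\to\tau$ carries no uniform rate, so one cannot argue by density plus equicontinuity of the operators $\Lambda^n$; the role of the bound $\|R_n\|_\infty\le M$ is precisely to make the domination $c\,(\chi\circ f^l)R_n\le\mathcal S(\theta_2)$ and the comparison $R_n\wedge T^s\le M'\chi''(T^s\wedge\omega^p)$ hold with constants independent of $n$, so that the structural-disk/maximum-principle argument can be run simultaneously along the whole sequence.
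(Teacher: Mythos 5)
Your argument is sound in substance, but it takes a genuinely different and much heavier route than the paper. The paper's proof is a three-line domination argument in which the hypothesis $\|R_n\|_\infty\le M$ is used exactly once: starting from a dense sequence in $\Cc_p(V)$ one builds a single current $S\in\Cc_p(\overline V)$ whose support contains $\supp(R)$ for every $R\in\Cc_p(V)$; its regularization $\widetilde S$ in $f^{-1}(V)$ (Proposition \ref{prop regu}) is a smooth \emph{closed} form, strictly positive on all these supports, so the uniform bound gives one constant $0<c<1$ with $cR_n\le\widetilde S$ for all $n$. Since $\lim_{n\to\infty}\Lambda^n\widetilde S=\tau$ by Theorem \ref{th convergence dans V}, and since the limit values of $\Lambda^nR_n$ and of $(1-c)^{-1}\Lambda^n\bigl(\widetilde S-cR_n\bigr)$ are limits of $\Lambda^n$ applied to elements of $\Cc_p(V)$ (everything here is closed), hence lie in $\Dc_p(V)$ and pair with any $\phi\in\Pc(V)$ by at most $\langle\tau,\phi\rangle$, Lemma \ref{le domi-cons2} concludes directly. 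No cut-off $\chi\circ f^l$, no wedge with $T^s$, no $\kappa_l\to1$ bookkeeping: the universal closed majorant $\widetilde S$ makes the uniform bound do all the work at once. You instead re-run the internal mechanism of Theorem \ref{th convergence dans V} along the sequence, which does work but buys nothing here and forces you to manipulate the non-closed, $i$-dependent currents $(\chi\circ f^l)R_{n_i}$.

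Because of that choice, two of your steps need more justification than the lemmas you cite literally provide. First, Lemma \ref{le existence courant} and Lemma \ref{le conti ts} concern a \emph{fixed} current; when you assign mass $c\kappa_l$ (with $\kappa_l=\lim_i\langle R_{n_i}\wedge T^s,\chi\circ f^l\rangle$) to the limit values of $\Lambda^{n_i}\bigl(c(\chi\circ f^l)R_{n_i}\bigr)$, you are identifying $\lim_i\langle(\chi\circ f^l)R_{n_i},d^{-sn_i}(f^{n_i})^*\omega^s\rangle$ with $\lim_i\langle(\chi\circ f^l)R_{n_i},T^s\rangle$, i.e.\ you need a uniform-in-$i$ version of these lemmas. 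It is true — after integration by parts the error is $O(\|u_{n_i}\|_\infty)$ with $u_n$ the potentials of $d^{-n}(f^n)^*\omega-T$, since $dd^c\bigl((\chi\circ f^l)R_{n_i}\bigr)=dd^c(\chi\circ f^l)\wedge R_{n_i}$ has mass bounded uniformly because $\|R_{n_i}\|=1$ — but this should be stated, as it is the only place where the closedness and uniform boundedness of the $R_{n_i}$ enter that estimate. Second, the normalized limit values of $\Lambda^{n_i}$ of the non-closed currents $(\chi\circ f^l)R_{n_i}$ and $W_i$ are not of the form demanded by the definition of $\Dc_p(V)$; one must add an argument (for instance that any such limit equals $\Lambda^jQ_j$ for every $j$, with $Q_j$ positive closed supported in $V$) before invoking $\langle\cdot,\phi\rangle\le(\mathrm{mass})\,c_\phi$. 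The paper makes the same silent step inside Theorem \ref{th convergence dans V}, so this is not an error specific to you, but the paper's own proof of the present proposition avoids the issue entirely by dominating with the closed current $\widetilde S$. With these two points supplied, your proof is complete; it simply proves the statement the hard way.
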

\begin{proof}
Using a dense sequence in $\Cc_p(V)$ it is easy to construct $S\in\Cc_p(\overline V)$ such that $\supp(R)\subset\supp(S)$ for all $R\in\Cc_p(V).$ Observe that $\overline V\subset f^{-1}(V)$ and therefore we can consider a regularization $\widetilde S$ of $S$ in $f^{-1}(V)$ obtained by Proposition \ref{prop regu}. It is a smooth form which satisfies $\widetilde S>0$ on $\supp(S).$ In particular, $\widetilde S>0$ on $\supp(R)$ for all $R\in\Cc_p(V).$ Since $\tau$ is attractive on $V,$ it is also attractive on $f^{-1}(V)$ thus $\lim_{n\to\infty}\Lambda^n\widetilde S=\tau.$ If $(R_n)_{n\geq1}$ is a sequence of continuous forms in $\Cc_p(V)$ which are uniformly bounded, there is $0<c<1$ such that $\widetilde S\geq cR_n$ for all $n\geq1.$ On the other hand, any limit value $R'$ of $(\Lambda^nR_n)_{n\geq1}$ is in $\Dc_p(V)$ and therefore has to satisfy $\langle R',\phi\rangle\leq\langle\tau,\phi\rangle$ for all $\phi\in\Pc(V)$ as we have seen in the proof of Theorem \ref{th convergence dans V}. The same is true for the limit values of $(1-c)^{-1}\Lambda^n(\widetilde S-cR_n)$ hence we can apply Lemma \ref{le domi-cons2} which implies that $\lim_{n\to\infty}\langle\Lambda^nR_n,\phi\rangle=\langle\tau,\phi\rangle$ for all $\phi\in\Pc(V)$ i.e. $\lim_{n\to\infty}\Lambda^nR_n=\tau.$
\end{proof}

We denote by $\mathcal H$ the set of continuous real $(s,s)$-forms $\phi$ on $V$ such that $\ddc\phi=0$ and $|\langle R-\tau,\phi\rangle|\leq1$ for all $R\in\Cc_p(V).$
\begin{proposition}\label{prop conv harmonique}
There exist two constants $C>0$ and $0<\lambda<1$ such that for all $R\in\Cc_p(V),$ $\phi\in\mathcal H,$ and $n\geq1$ we have
$$|\langle\Lambda^nR-\tau,\phi\rangle|\leq C\lambda^n.$$
\end{proposition}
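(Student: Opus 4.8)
The plan is to encode the claimed decay in the single sequence
$$b_n:=\sup\big\{\,|\langle\Lambda^nR-\tau,\phi\rangle|\ :\ R\in\Cc_p(V),\ \phi\in\mathcal H\,\big\},\qquad n\ge0 .$$
Since $\Lambda$ maps $\Cc_p(V)$ into itself, one gets $0\le b_{n+1}\le b_n\le b_0\le1$ directly from the defining inequality of $\mathcal H$ (apply it to the current $\Lambda^nR\in\Cc_p(V)$). Writing $L^m\phi:=d^{-sm}f^{m*}\phi$, one has $\langle\Lambda^{m+n}R,\phi\rangle=\langle\Lambda^nR,L^m\phi\rangle$ and, by invariance, $\langle\tau,\phi\rangle=\langle\tau,L^m\phi\rangle$, so $\langle\Lambda^{m+n}R-\tau,\phi\rangle=\langle\Lambda^nR-\tau,L^m\phi\rangle$. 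If $b_m>0$, the form $\psi:=b_m^{-1}L^m\phi$ is continuous, satisfies $\ddc\psi=0$, and $|\langle R'-\tau,\psi\rangle|=b_m^{-1}|\langle\Lambda^mR'-\tau,\phi\rangle|\le1$ for all $R'\in\Cc_p(V)$, i.e. $\psi\in\mathcal H$; hence $|\langle\Lambda^{m+n}R-\tau,\phi\rangle|=b_m|\langle\Lambda^nR-\tau,\psi\rangle|\le b_mb_n$, and taking suprema I obtain the submultiplicativity $b_{m+n}\le b_mb_n$ (the case $b_m=0$ being trivial). Therefore it suffices to produce one integer $n_0\ge1$ with $b_{n_0}<1$: writing $n=qn_0+r$ with $0\le r<n_0$ gives $b_n\le b_{n_0}^q\le b_{n_0}^{-1}\big(b_{n_0}^{1/n_0}\big)^n$, and since $|\langle\Lambda^nR-\tau,\phi\rangle|\le b_n$ this is the proposition with $\lambda=b_{n_0}^{1/n_0}$, $C=b_{n_0}^{-1}$.

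To get the one-step gain I would argue by contradiction, assuming $b_n=1$ for every $n$ (otherwise we are done). Two ingredients are needed. First, from the proof of Theorem \ref{th convergence dans V} one has $\Pc(V)=\Pc_\tau$; combining this with small perturbations $\tfrac12\omega^s\pm\varepsilon\psi$ of $\omega^s$ (which lie in $\Pc(V)$ because $\ddc\omega^s=0$) and with a regularization as in Proposition \ref{prop regu}, I get that for every $S\in\Dc_p(V)$ and every continuous $\psi$ with $\ddc\psi=0$ one has $\langle S,\psi\rangle=\langle\tau,\psi\rangle$. Second, the family $\mathcal H$ is a normal family on the $s$-pseudoconcave core $\widetilde V$: a form $\phi\in\mathcal H$ may be replaced by $\phi-\langle\tau,\phi\rangle\,\omega^s$ without affecting any pairing $\langle R-\tau,\cdot\rangle$, and then the bound $|\langle R,\phi\rangle|\le1$ for $R\in\Cc_p(V)$, tested against the regularizations of Proposition \ref{prop regu} of currents with prescribed support points, yields a uniform bound for $\phi$ on compact subsets of $\widetilde V$; pluriharmonicity then gives equicontinuity there. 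Finally note that every current of $\Cc_p(V)$ — in particular $\tau$, every element of $\Dc_p(V)$, and every $\Lambda^nR$ with $R\in\Cc_p(V)$ — is supported in $\widetilde V$, and that by the formula $\supp(\Lambda^nR)=f^n(\supp R)$ these last currents (for $n\ge1$) as well as $\tau$ are supported in the fixed compact $K:=\overline{f(\widetilde V)}\Subset\widetilde V$, since $\widetilde V$ is itself a trapping region.

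Then, for each $n\ge1$, I would pick $R_n\in\Cc_p(V)$ and $\phi_n\in\mathcal H$, normalized as above, with $\langle\Lambda^nR_n-\tau,\phi_n\rangle>1-\tfrac1n$ (replacing $\phi_n$ by $-\phi_n$ if necessary). After passing to a subsequence, $\phi_n\to\phi_\infty$ locally uniformly on $\widetilde V$ with $\phi_\infty$ pluriharmonic, and $\Lambda^nR_n\to R_\infty$ weakly; since $\Lambda^nR_n\in\Lambda^n\Cc_p(V)$ and the compact sets $\Lambda^n\Cc_p(V)$ decrease to $\Dc_p(V)$, one has $R_\infty\in\Dc_p(V)$. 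All the supports involved lie in the fixed compact $K\Subset\widetilde V$ on which $\phi_n\to\phi_\infty$ uniformly, so (the currents having uniformly bounded mass) $\langle\Lambda^nR_n-\tau,\phi_n\rangle\to\langle R_\infty-\tau,\phi_\infty\rangle$. By the first ingredient this limit is $0$, contradicting $\langle\Lambda^nR_n-\tau,\phi_n\rangle>1-\tfrac1n\to1$. Hence $b_{n_0}<1$ for some $n_0$, and the submultiplicativity of $(b_n)$ finishes the proof.

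The delicate point — and the step I expect to absorb most of the work — is exactly this production of a single $n_0$ with $b_{n_0}<1$: it rests on the normal-family property of $\mathcal H$ (a compactness statement for pluriharmonic $(s,s)$-forms whose pairings against $\Cc_p(V)$ are controlled, for which the regularizations of Proposition \ref{prop regu} and the separation Lemma \ref{le sepa} are essential) together with the rigidity of pluriharmonic observables on $\Dc_p(V)$ coming from Theorem \ref{th convergence dans V}. The reduction to submultiplicativity and the final elementary bookkeeping are then routine.
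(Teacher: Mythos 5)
Your reduction to a one-step gain is sound and is in fact exactly how the paper concludes: once one knows $|\langle\Lambda^{N}R-\tau,\phi\rangle|\leq\widetilde\lambda<1$ uniformly in $R\in\Cc_p(V)$ and $\phi\in\mathcal H$, the observation that $\widetilde\lambda^{-1}L^{N}\phi$ again belongs to $\mathcal H$ gives the geometric decay, which is your submultiplicativity of $b_n$. The genuine gap is in how you produce the integer $n_0$ with $b_{n_0}<1$. Your argument hinges on the claim that $\mathcal H$ (after subtracting $\langle\tau,\phi\rangle\omega^s$) is a normal family on $\widetilde V$, obtained by ``testing against regularizations of currents with prescribed support points'' and then invoking ``pluriharmonicity''. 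Neither half of this works for $s\geq1$. First, the pairings $|\langle R,\phi\rangle|\leq1$ range only over \emph{closed} positive currents of mass $1$: such currents cannot be concentrated near a point of $\widetilde V$ (their supports must meet every linear subspace of dimension $p$), and the regularizations of Proposition \ref{prop regu} produce smooth forms whose mass is spread over all of $V$; so these bounds control only global averages of certain components of $\phi$ against positive closed forms and do not yield local sup bounds on the coefficients of $\phi$ (Lemma \ref{le sepa} is a separation statement in the opposite direction and gives no such quantitative control). Second, an element of $\mathcal H$ is a merely continuous $(s,s)$-form with $\ddc\phi=0$ in the sense of currents; for $s\geq1$ this is a single scalar equation on many coefficient functions, it carries no elliptic regularity, and the coefficients are not pluriharmonic functions, so ``pluriharmonicity gives equicontinuity'' is unjustified (it is correct only in the degenerate case $s=0$). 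Consequently the extraction of $\phi_\infty$ and the passage to the limit $\langle\Lambda^nR_n-\tau,\phi_n\rangle\to\langle R_\infty-\tau,\phi_\infty\rangle$ are not available, and the contradiction argument collapses precisely at the step you yourself identify as carrying most of the weight.

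The paper obtains the uniform one-step contraction without any compactness of $\mathcal H$, by transferring the compactness to one complex variable: for $\phi\in\mathcal H$ the functions $\theta\mapsto\langle\tau(\theta)-\tau,\phi\rangle$ and $\theta\mapsto\langle\mathcal R_N(\theta)-\tau,\phi\rangle$ along the structural disks of Proposition \ref{prop regu} are harmonic and bounded by $1$, and Harnack's inequality applies to them uniformly in $\phi$; the uniform convergence of Proposition \ref{prop conv unif} (applied to the uniformly bounded smooth forms $\mathcal R(\theta_1)$) supplies a domination $\mathcal R_N(\theta_1)\geq c\,\tau(\theta_0)$ valid for all $R\in\Cc_p(V)$, which forces $\langle\mathcal R_N(\theta_1)-\tau,\phi\rangle\leq1-c+\alpha c<1$ and hence, by Harnack again at the center $\Lambda^NR$ of the disk $\mathcal R_N$, the desired $\widetilde\lambda<1$. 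If you want to keep your scheme, this is the mechanism you need to substitute for the normal-family step; note also, as a minor point, that your rigidity claim $\langle S,\psi\rangle=\langle\tau,\psi\rangle$ for $S\in\Dc_p(V)$ requires a smoothing of $\psi$ and care with the constraints $0\leq\cdot\leq\omega^s$ on all of $V$ before the perturbations $\tfrac12\omega^s\pm\varepsilon\psi$ can be placed in $\Pc(V)$, since $\Pc(V)$ consists of $\mathcal C^2$ forms.
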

\begin{proof}
First we consider the regularization $\{\tau(\theta)\}_{\theta\in\Db}$ of $\tau$ in $V$ obtained by Proposition \ref{prop regu}. If $\phi$ is in $\mathcal H$ then $u(\theta):=\langle\tau(\theta)-\tau,\phi\rangle$ defines a harmonic function on $\Db$ such that $|u|\leq1$ and $u(0)=0.$ Therefore, if $\theta_0\in\Db$ then by Harnack's inequality there exists $\alpha<1$ such that $|\langle\tau(\theta_0)-\tau,\phi\rangle|\leq\alpha$ for all $\phi\in\mathcal H.$

If $R\in\Cc_p(V)$ then we denote by $\mathcal R$ the structural disk obtained by regularizing $R$ in $f^{-1}(V).$ Observe that by Proposition \ref{prop regu} there exist $\theta_1\in\Db^*$ and $M>0$ such $\|\mathcal R(\theta_1)\|_\infty\leq M$ for all $R\in\Cc_p(V).$ For each $n\geq2,$ we denote by $\mathcal R_n(\theta)$ the regularization of $\Lambda^n(\mathcal R(\theta))$ in $V$ evaluated at $\theta.$ It is easy to check that $\mathcal R_n$ corresponds to the diagonal of a structural variety defined on $\Db^2$ in $\Cc_p(V)$ and thus, it is a structural disk in $\Cc_p(V)$ which center is $\Lambda^nR.$

Since $\tau$ is attractive on $f^{-1}(V),$ by Proposition \ref{prop conv unif} $\Lambda^n\mathcal R(\theta_1)$ converges to $\tau$ uniformly on $R\in\Cc_p(V).$ Therefore, the continuity in Proposition \ref{prop regu} implies that there exist $\theta_0\in\Db,$ $0<c<1$ and $N\geq2$ such that
$$\mathcal R_N(\theta_1)\geq c\tau(\theta_0)$$
for all $R\in\Cc_p(V).$ If $\phi$ is in $\mathcal H$ then the function $v(\theta):=\langle \mathcal R_N(\theta)-\tau,\phi\rangle$ is harmonic with $|v|\leq1$ and
\begin{align*}
v(\theta_1)=\langle\mathcal R_N(\theta_1)-\tau,\phi\rangle&=\langle(\mathcal R_n(\theta_1)-c\tau(\theta_0))-(1-c)\tau,\phi\rangle+c\langle\tau(\theta_0)-\tau,\phi\rangle\\
&\leq 1-c+\alpha c,
\end{align*}
where $\alpha<1$ is defined above. Thus, $\langle\mathcal R_N(\theta_1)-\tau,\phi\rangle\leq 1-c+\alpha c<1$ for all $\phi\in\mathcal H$ and $R\in\Cc_p(V)$ and therefore, by Harnack's inequality there exists $\widetilde\lambda<1$ such that $\langle\Lambda^NR-\tau,\phi\rangle\leq\widetilde\lambda.$ As $-\phi\in\mathcal H$ if $\phi\in\mathcal H,$ we obtain that $|\langle\Lambda^NR-\tau,\phi\rangle|\leq\widetilde\lambda$ for all $R\in\Cc_p(V),$ i.e. $\widetilde\lambda^{-1}L^N\phi\in\mathcal H.$ It follows that $\widetilde\lambda^{-n}L^{nN}\phi\in\mathcal H$ for all $\phi\in\mathcal H.$ In particular
$$|\langle\Lambda^{nN}R-\tau,\phi\rangle|\leq\widetilde\lambda^n$$
for all $R\in\Cc_p(V),$ $\phi\in\mathcal H$ and $n\geq1$ which implies the desired result.
\end{proof}

In \cite{t-attrac-speed} and \cite{da-ta}, the geometric assumptions of Dinh (HD) were used in order to obtain the existence of the attracting current $\tau$ but also to prove the exponential speed for pluriharmonic observables. As we now know that these two points are always satisfied, repeating almost word by word the proofs of these articles we obtain an exponential speed of convergence in two special cases. The first case concerns attracting sets of small topological degree introduced by Daurat \cite{daurat}. The map $f$ is said to be of small topological degree on $V$ if $\limsup_{n\to\infty}|f^{-n}(x)\cap V|^{1/n}<d^s$ for all $x\in\Pb^k.$ In the second setting, we assume that $V$ is strictly $q$-convex. We refer to \cite{henkin} and \cite{de-book} for this notion. We will use the conventions of \cite{henkin} but observe that $q$-convex domains in \cite{henkin} correspond to strongly $(k-q)$-convex ones in \cite{de-book}.

\begin{theorem}\label{th vitesse cas particulier}
Let $\tau$ be a current which is attractive on the codimension $p$ trapping region $V$ with respect to each iterate of $f.$ Assume that one of the following conditions is satisfied.
\begin{itemize}
\item[(1)] $p=1$ and $f$ is of small topological degree on $V.$
\item[(2)] $V$ is strictly $(p-1)$-convex and there exist two open sets $V_1$ and $V_2$ such that $V\subset V_1\subset V_2,$ $V_1$ is a deformation retract of a dimension $s$ complex manifold $L\subset V_1$ and $\|\bigwedge^{s+1}Df(z)\|<1$ for all $z\in V_2.$

\end{itemize}
Then there exist constants $c>0$ and $0<\lambda<1$ such that
$$|\langle\Lambda^nR-\tau,\phi\rangle|\leq c\lambda^n\|\phi\|_{\mathcal C^2}$$
for all $R\in\Cc_p(V)$ and all $\mathcal C^2$ test form $\phi.$ In particular, $\tau$ is the unique invariant current in $\Cc_p(V).$
\end{theorem}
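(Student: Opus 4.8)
The plan is to deduce the $\mathcal C^2$-estimate from the pluriharmonic one in Proposition \ref{prop conv harmonique} by showing that, under either hypothesis, the $\ddc$-part of a pulled-back test form is exponentially negligible on $V$. The point is that the two ingredients the earlier works \cite{t-attrac-speed} and \cite{da-ta} extracted from Dinh's geometric hypotheses (HD) are now available in full generality: that $\tau$ exists and is attractive on $V$ is Theorem \ref{th conv1} together with Lemma \ref{le courant avec prop de cv}, and the exponential speed for pluriharmonic observables is Proposition \ref{prop conv harmonique}. So the arguments of those two papers apply here with only cosmetic changes, and I would organise them as follows, writing $L^n\phi:=d^{-sn}(f^n)^*\phi$, so that $\langle\Lambda^nR,\phi\rangle=\langle R,L^n\phi\rangle$ and $\ddc L^n\phi=d^{-sn}(f^n)^*\ddc\phi$.

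The first and only delicate step is to split, for each $\mathcal C^2$ test $(s,s)$-form $\phi$ and each $m\geq1$, $L^m\phi=\phi^h_m+\psi_m$ with $\ddc\phi^h_m=0$ and $\ddc\psi_m=\ddc L^m\phi$ on a neighbourhood of $\overline V$, together with an exponentially small bound $\|\psi_m\|_{\mathcal C^0(\overline V)}\leq c\,\lambda_1^{\,m}\|\phi\|_{\mathcal C^2}$ for fixed $c>0$ and $0<\lambda_1<1$ independent of $\phi$ and $m$. Under hypothesis (2) this comes from the chain rule together with $\|\bigwedge^{s+1}Df\|<1$ on $V_2$, applied along forward orbits of points of $V$ (which remain in a fixed compact subset of $V_2$ since $f(V)\Subset V\subset V_1\subset V_2$): this gives $\|\ddc L^m\phi\|_{\mathcal C^0(\overline V)}\lesssim\lambda_1^{\,m}\|\phi\|_{\mathcal C^2}$, while the deformation retract of $V_1$ onto the $s$-dimensional manifold $L$ kills the relevant Dolbeault cohomology and strict $(p-1)$-convexity supplies a bounded solution operator for the $\ddc$-equation in the sense of Henkin--Leiterer, which then yields $\psi_m$. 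Under hypothesis (1) one has $p=1$ and $\ddc\phi=g\,\omega^k$ with $\|g\|_{\mathcal C^0}\lesssim\|\phi\|_{\mathcal C^2}$; writing $\Lambda^mR=\omega+\ddc u_m$, $\tau=\omega+\ddc u_\tau$ with normalised potentials one has $\langle\Lambda^mR-\tau,\phi\rangle=\int(u_m-u_\tau)g\,\omega^k$, so the estimate reduces to the exponential $L^1$-decay $\|u_m-u_\tau\|_{L^1}\lesssim\lambda_1^{\,m}$, which \cite{t-attrac-speed} deduces from the small-topological-degree bound $\sup_y|f^{-m}(y)\cap V|\lesssim(d^s-\epsilon)^m$ controlling the mass added to the potential at each iteration step. (When $p=1$, Proposition \ref{prop conv harmonique} is vacuous — a pluriharmonic $(s,s)$-form pairs identically with all of $\Cc_1(V)$ — so this potential-theoretic route is the whole argument in that case.)

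In the case $p\geq2$, with the splitting in hand (where $\psi_m$ can be taken continuous) I would finish by an iterate-splitting. Since $|\langle S-\tau,L^m\phi\rangle|=|\langle\Lambda^mS-\tau,\phi\rangle|\leq C_0\|\phi\|_{\mathcal C^2}$ for all $S\in\Cc_p(V)$ and $\psi_m$ is uniformly small, the pluriharmonic form $\phi^h_m$ satisfies $|\langle S-\tau,\phi^h_m\rangle|\leq M\|\phi\|_{\mathcal C^2}$ for all $S\in\Cc_p(V)$, i.e. $(M\|\phi\|_{\mathcal C^2})^{-1}\phi^h_m\in\mathcal H$. Writing $n=n_1+n_2$ with $n_1,n_2\sim n/2$ and using the invariance of $\tau$,
$$\langle\Lambda^nR-\tau,\phi\rangle=\langle\Lambda^{n_2}R-\tau,\phi^h_{n_1}\rangle+\langle\Lambda^{n_2}R-\tau,\psi_{n_1}\rangle;$$
the second term is $\lesssim\|\psi_{n_1}\|_{\mathcal C^0(\overline V)}\lesssim\lambda_1^{\,n_1}\|\phi\|_{\mathcal C^2}$ because $\Lambda^{n_2}R$ and $\tau$ are positive of mass $1$ supported in $\overline V$, and Proposition \ref{prop conv harmonique} bounds the first term by $MC\lambda^{\,n_2}\|\phi\|_{\mathcal C^2}$. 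With $n_1=n_2$ this gives $|\langle\Lambda^nR-\tau,\phi\rangle|\leq c'\mu^{\,n}\|\phi\|_{\mathcal C^2}$ with $\mu=\max(\lambda_1,\lambda)^{1/2}<1$; note that no continuity of $R$ enters. Uniqueness follows immediately: if $R'\in\Cc_p(V)$ is invariant, applying the estimate to $R=R'$ and using $\Lambda^nR'=R'$ gives $\langle R'-\tau,\phi\rangle=0$ for every $\mathcal C^2$ form $\phi$, hence $R'=\tau$.

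The main obstacle is the first step, which is where the two extra hypotheses do their work: under (2), combining the $(s+1)$-volume contraction with a bounded Henkin--Leiterer solution operator on a strictly $(p-1)$-convex neighbourhood of $\overline V$, and using the retract onto $L$ to ensure that a \emph{small} primitive $\psi_m$ exists (not merely some primitive); under (1), turning the small-topological-degree count of preimages in $V$ into the exponential $L^1$-convergence of the potentials. These are precisely the technical cores of \cite{t-attrac-speed} and \cite{da-ta}, and they transfer to the present setting verbatim since their only inputs from (HD) — the existence of $\tau$ on $V$ and the pluriharmonic exponential speed — are now established unconditionally; everything downstream (the iterate-splitting, the reduction to Proposition \ref{prop conv harmonique}, and the uniqueness statement) is formal.
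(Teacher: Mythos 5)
Your overall route is the one the paper follows (the paper's own proof is also only a sketch deferring the technical cores to \cite{t-attrac-speed} and \cite{da-ta}): in case (2) it performs exactly your decomposition, solving $\ddc\psi_n=d^{-sn}\ddc f^{n*}\phi$ on $V$ with a sup-norm estimate (the retraction onto $L$ gives a primitive of $\ddc\phi$ with estimates and $H^{s,s+1}(V_1,\Cb)=0$, and Henkin--Leiterer on the strictly $(p-1)$-convex $V$ solves the resulting $\dbar$-equation with estimates), then writes $\langle\Lambda^{n+l}R-\tau,\phi\rangle=\langle\Lambda^{l}R-\tau,d^{-ns}f^{n*}\phi-\psi_n\rangle+\langle\Lambda^{l}R-\tau,\psi_n\rangle$ and applies Proposition \ref{prop conv harmonique} to the first term; your normalisation making $\phi^h_{n_1}$ an element of $\mathcal H$, the split $n=n_1+n_2$, and the uniqueness argument are the same, and if anything slightly more explicit than the paper on the normalisation.

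The genuine defect is in case (1). Your parenthetical claim that Proposition \ref{prop conv harmonique} is vacuous for $p=1$ because a $\ddc$-closed $(s,s)$-form pairs identically with all of $\Cc_1(V)$ is false: such a form is only defined on $V$, so one cannot integrate $R-\tau=\ddc u$ by parts against it without cutoff terms; Theorem \ref{th sh} only gives harmonicity, not constancy, of $\theta\mapsto\langle\mathcal R(\theta),\phi\rangle$ when $\ddc\phi=0$, and equality of pairings holds only on $\Ic_p(V)$ (Remark \ref{rk max}). Concretely, for $k=1$ take $V=\Db,$ $\phi=\mathrm{Re}\,z,$ $R=\delta_{1/2},$ $\tau=\delta_0$; for general $k$ the forms $\mathrm{Re}(h)\,\omega^{k-1}$ with $h$ holomorphic near $\overline V$ are $\ddc$-closed and pair with $R\in\Cc_1(V)$ as the integral of a non-constant pluriharmonic function against the trace measure $R\wedge\omega^{k-1},$ which certainly varies with $R.$ This matters because the paper's case (1) uses Proposition \ref{prop conv harmonique} precisely there: it yields the uniform bound $|u_n(y)|\leq c\lambda^n$ for $y\in\Pb^k\setminus V$ (where $\ddc u_n=\Lambda^nR-\tau$), and only the combination of this bound with the small-topological-degree count of preimages lying in $V$ and a volume estimate for sublevel sets of p.s.h.\ functions gives the exponential $L^1$-decay of the potentials. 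Your sketch derives that decay from the preimage count alone, ``controlling the mass added to the potential''; as written this is a gap (and a misdescription of the mechanism of \cite{t-attrac-speed}, \cite{da-ta}), so case (1) needs the harmonic-speed ingredient reinstated, while case (2) and the formal conclusion are fine.
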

\begin{proof}[Sketch of proof]
We only give the main ideas and we refer to \cite{da-ta} or \cite{t-attrac-speed} for details.

For (1), we can consider the unique function $u$ such that $\ddc u=R-\tau$ and $\langle\mu,u\rangle=0.$ If $u_n:=d^{-sn}f_*u$ then $\ddc u_n=\Lambda^nR-\tau.$ On one hand, when $x\in V$ the assumption of small topological degree implies that most of the points in $f^{-n}(x)$ are outside $V.$ On the other hand, an easy consequence of Proposition \ref{prop conv harmonique} is that $|u_n(y)|\leq c\lambda^n$ uniformly for $y\in\Pb^k\setminus V.$ These two facts and a volume estimate for sublevel sets of p.s.h functions implies that $(u_n)_{n\geq1}$ converges exponentially fast to $0$ in $L^1$ which is equivalent to the desired result. Observe that in that setting we also obtain exactly as in \cite{da-ta} that $\tau$ have continuous local potentials.

In the second case, we need a resolution of the $\ddc$-equation with estimates for $\ddc$-exact $(s+1,s+1)$-forms on $V.$ To this aim, we follow the method of Dinh-Nguyen-Sibony in \cite{dns-hori}. In short, we start with a real $\mathcal C^2$ $(s,s)$-form $\phi$ on $V_1$ and we use the deformation retraction to solve $d\xi=\ddc\phi$ with estimates and we can assume that $\xi$ is real, i.e. $\xi=\Xi+\overline\Xi$ where $\Xi$ is a $(s,s+1)$-form. As $d\xi$ is a $(s+1,s+1)$-form, it follows that $\overline\partial\Xi=0$ and $d\xi=\partial\Xi+\overline\partial\overline\Xi.$ The deformation retraction also implies that $H^{s,s+1}(V_1,\Cb)=0.$ Therefore, $\Xi$ is $\overline\partial$-exact on $V_1$ and thus on $V.$ Since $V$ is strictly $(p-1)$-convex we can use the version with estimates of the Andreotti-Grauert theory by Henkin-Leiterer \cite{henkin} to solve the $\overline\partial\Psi=\Xi$ with estimates on $V.$ Hence, if $\psi=-i\pi(\Psi-\overline\Psi)$ then $\ddc\psi=\partial\overline\partial(\Psi-\overline\Psi)=\partial\Xi+\overline\partial\Xi=\ddc\psi$ on $V.$

In a second time, let $R$ be in $\Cc_p(V)$ and let $\phi$ be a $\mathcal C^2$ $(s,s)$-form on $\Pb^k.$ The assumption on $\|\bigwedge^{s+1}Df(z)\|$ implies that $\|d^{-sn}\ddc f^{n*}\phi\|_{\infty,V_2}$ is small. Therefore, the resolution above gives the existence of a $(s,s)$-form $\psi_n$ on $V$ such that $\|\psi_n\|_{\infty,V}$ is small and $\ddc\psi_n=d^{-sn}\ddc f^{n*}\phi.$ Hence, we have
$$\langle\Lambda^{n+l}R-\tau,\phi\rangle=\langle\Lambda^lR-\tau,d^{-ns}f^{n*}\phi-\psi_n\rangle+\langle\Lambda^l R-\tau,\psi_n\rangle.$$
By Proposition \ref{prop conv harmonique}, the first term is small if $l$ is large enough and the second one is small since $\psi_n$ is small.
\end{proof}
\begin{remark}
1) The $(p-1)$-convexity assumption always holds in a larger trapping region. Indeed, as we have already observe if $V$ is a trapping region of codimension $p$ then the same holds for its $(p-1)$-pseudoconvex hull $\widehat V.$ By definition, $\widehat V=\Pb^k\setminus\supp(S)$ for some $S\in\Cc_{s+1}(\Pb^k).$ Using a regularization $S'$ of $S,$ we can obtain an open set $\widehat V':=\Pb^k\setminus\supp(S')$ with smooth boundary. By \cite{fs-oka}, it is $(p-1)$-pseudoconvex. Hence, a result of Matsumoto \cite{matsumoto} implies that $\widehat V'$ is $(p-1)$-convex. Reducing slightly $\widehat V'$ we obtain a codimension $p$ trapping region which contains $V$ and which is strictly $(p-1)$-convex.\\
2) The notion of small topological degree is well-adapted to codimension $1$ attracting sets. In codimension $p,$ we can consider trapping regions $V$ such that for each current $S\in\Cc_{s+1}(\Pb^k)$ we have $\limsup_{n\to\infty}\|f^{n*}S\|_V^{1/n}<d^s.$ Very likely, Theorem \ref{th vitesse cas particulier} should hold in that setting and $\tau$ should have continuous super-potential (see \cite{ds-superpot} for this notion). With our approach it is necessary to solve the $\ddc$-equation on $V$ which seems possible to handle with considering the first point of this remark.
\end{remark}

The second condition seems difficult to check at first sight but if the associated attracting set $A$ is a complex manifold of dimension $s$ then the existence of arbitrarily small neighborhoods as $V_1$ is easy. Moreover, in that case $A$ is $(p-1)$-complete thus by \cite[Theorem IX.2.13]{de-book} $A$ admits a fundamental family of strictly $(p-1)$-convex neighborhoods. If furthermore $f$ is sufficiently critical on $A$, e.g. if $Df$ vanishes in all directions normal to $A$, then $\bigwedge^{s+1}Df$ vanishes on $A$ and the condition (2) is fulfilled by some well-chosen neighborhoods $V\subset V_1\subset V_2$ of $A.$ Small perturbations of such maps give a large family of examples which satisfy this condition since both conditions in Theorem \ref{th vitesse cas particulier} are stable under perturbations. We give two special families of examples. The first one is inspired by \cite[Theorem 4.1]{bd-elliptic}.
\begin{example}
Let $g\colon\Pb^s\to\Pb^k$ be a holomorphic map such that $A_0:=g(\Pb^s)$ is smooth and of dimension $s.$ Let $I\subset\Pb^k$ be a linear subspace of dimension $p-1$ such that $I\cap A_0=\varnothing.$ If $L\subset\Pb^k$ is a linear subspace of dimension $s$ then we can consider the projection $\pi:\Pb^k\setminus I\to L.$ By identifying $L$ with $\Pb^s$ we obtain a map $f_0:=g\circ\pi\colon\Pb^k\setminus I\to\Pb^k$ with $f_0(A_0)=A_0$ and which has rank $s.$ In particular, $\bigwedge^{s+1}Df_0=0$ everywhere. A generic small perturbation of $f_0$ gives a holomorphic map $f\colon\Pb^k\to\Pb^k$ which has an attracting set $A$ near $A_0$ which satisfies condition (2) in Theorem \ref{th vitesse cas particulier}. Moreover, if for the perturbation we only use homogeneous polynomials $h_i$ such that $h_i=0$ on $A_0$ then $A=A_0.$
\end{example}
The second family concerns perturbations of invariant critical hypersurfaces. They are the counterparts in higher dimension of super-attractive fixed points in dimension $1.$
\begin{example}
Let $f_0$ be in $\Hc_d(\Pb^k).$ Assume there exists a smooth hypersurface $H$ which is $f_0$-invariant and included in the critical set of $f_0.$ Then, by \cite[Lemma 7.9]{fs-cdhd0} and \cite{stawiska}, $H$ is an attracting set for $f_0.$ Since $H$ is critical, any small perturbation of $f_0$ has an attracting set close to $H$ which satisfies (2) in Theorem \ref{th vitesse cas particulier}.
\end{example}

We now show that the equidistribution toward $\tau$ gives an estimate on the mass of $(f^n)^*\omega^{s+1}$ which stay near $\tau.$ In particular, if the equidistribution has exponential speed then it will imply that the measure $\nu=\tau\wedge T^s$ is hyperbolic, cf. Theorem \ref{th nu hyp}.
Recall that $\widetilde V$ denotes the $s$-pseudoconcave core of $V$ defined in Definition \ref{def pseudoconcave core}. It is also a trapping region which contains the current $\tau.$
\begin{proposition}\label{prop-estim-degre}
If $f$ and $V$ are as above then $\|(f^n)^*\omega^{s+1}\|_{\widetilde V}=o(d^{sn}).$ Moreover, let assume that for each $M>0$ there exist constants $\lambda<1$ and $c>0$ such that
$$|\langle\Lambda^n(S-R),\phi\rangle|\leq c\lambda^n,$$
for all $n\geq0,$ $\phi\in\Pc(V)$ and continuous forms $S,R\in\Cc_p(V)$ such that $\|S\|_\infty\leq M,$ $\|R\|_\infty\leq M.$ Hence, we have
$$\limsup_{n\to\infty}\left(\|(f^n)^*\omega^{s+1}\|_{\widetilde V}\right)^{1/n}\leq\lambda d^s<d^s.$$
\end{proposition}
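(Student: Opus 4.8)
The plan is to bound $\|(f^n)^*\omega^{s+1}\|_{\widetilde V}$ by dominating $\omega^{s+1}$ near $\widetilde V$ by $\ddc$ of a potential, integrating by parts so that the growth factor $d^{sn}$ comes out front, and recognizing the remainder as the pairing of a fixed test form with $\Lambda^n$ applied to a $\ddc$-exact form that sits in the basin of $\widetilde V$ and carries zero $T^s$-mass there. The equidistribution toward $\tau$ then forces this pairing to tend to $0$, giving the $o(d^{sn})$ bound, and the speed hypothesis makes it $O(\lambda^n)$, giving the $\limsup$ estimate.

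For the setup, observe first that $\Cc_p(\widetilde V)=\Cc_p(V)\neq\varnothing$ and $\Cc_{p-1}(\widetilde V)\subset\Cc_{p-1}(V)=\varnothing$, so $\widetilde V$ is a codimension $p$ trapping region; fix $\epsilon>0$ small enough that $V':=\widetilde V_\epsilon$ is again a codimension $p$ trapping region with $f(V')\subset\widetilde V$ (so $A_{V'}=A_{\widetilde V}\Subset\widetilde V$ and, by Corollary \ref{coro p-pseudo}, $\overline{V'}$ is weakly $p$-pseudoconvex). By weak $p$-pseudoconvexity and rescaling (cf. Lemma \ref{le plongement de C2}) choose $\phi_0\in\Pc(V')$ with $\ddc\phi_0\geq\epsilon_0\omega^{s+1}$ on $\overline{V'}$ for some $\epsilon_0>0$, and a cutoff $\chi$ with $0\leq\chi\leq1$, $\chi\equiv1$ on a neighbourhood of $\overline{\widetilde V}$ and $\supp\chi\subset V'$; then $\supp(\ddc\chi)$ lies in a compact ``collar'' $\Kc\subset V'\setminus\widetilde V$, which is disjoint from $A_{\widetilde V}$ and contained in the basin $B_{\widetilde V}=\bigcup_{n\geq0}f^{-n}\widetilde V$. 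Since $f^n(\supp\chi)\subset\overline{V'}$ for all $n$ and $\omega^{s+1}\leq\epsilon_0^{-1}\ddc\phi_0$ there, pulling back, using $\mathbf{1}_{\widetilde V}\omega^{p-1}\leq\chi\omega^{p-1}$ and integrating by parts on the compact manifold $\Pb^k$ (with $\omega$ closed, so $\ddc(\chi\omega^{p-1})=\ddc\chi\wedge\omega^{p-1}$) yields
$$\|(f^n)^*\omega^{s+1}\|_{\widetilde V}\leq\epsilon_0^{-1}\bigl\langle(f^n)^*\ddc\phi_0,\chi\omega^{p-1}\bigr\rangle=\epsilon_0^{-1}d^{sn}\bigl\langle\phi_0,\Lambda^n(\ddc\chi\wedge\omega^{p-1})\bigr\rangle,$$
where I used $\langle d^{-sn}(f^n)^*\phi_0,W\rangle=\langle\phi_0,\Lambda^n W\rangle$ for the closed $(p,p)$-current $W=\ddc\chi\wedge\omega^{p-1}=\ddc(\chi\omega^{p-1})$. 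It remains to control $\langle\phi_0,\Lambda^n(\ddc\chi\wedge\omega^{p-1})\rangle$.

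Pick $C_1>0$ with $\ddc\chi\wedge\omega^{p-1}+C_1\omega^p\geq0$ and set $R^-:=\omega^p$, $R^+:=C_1^{-1}\ddc\chi\wedge\omega^{p-1}+\omega^p$. Both are smooth members of $\Cc_p(\Pb^k)$ (the mass of $R^+$ is $1$ because $\langle\ddc\chi\wedge\omega^{p-1},\omega^s\rangle=\int\ddc(\chi\omega^{k-1})=0$), $\ddc\chi\wedge\omega^{p-1}=C_1(R^+-R^-)$, and $R^+-R^-$ is supported in $\Kc$. Choose $\chi'$ smooth, $\chi'\equiv1$ on $\Kc$, with $\supp\chi'$ compact in $B_{\widetilde V}$; then $\chi'(R^+-R^-)=R^+-R^-$, and Lemma \ref{le conv tronca} — in its non-Cesàro form, valid under the standing assumption that $\tau$ is attractive for all iterates of $f$, as in the proof of Theorem \ref{th convergence dans V} — gives $\Lambda^n(\chi'R^{\pm})\to c^{\pm}\tau$ with $c^{\pm}=\langle R^{\pm}\wedge T^s,\chi'\rangle$. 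Hence $\Lambda^n(\ddc\chi\wedge\omega^{p-1})\to C_1(c^+-c^-)\tau$, and $c^+-c^-=C_1^{-1}\int\ddc\chi\wedge\omega^{p-1}\wedge T^s=0$ because $\chi'\equiv1$ on $\supp(\ddc\chi)$ and $\omega^{p-1}\wedge T^s$ is closed. So $\Lambda^n(\ddc\chi\wedge\omega^{p-1})\to0$ weakly, $\langle\phi_0,\Lambda^n(\ddc\chi\wedge\omega^{p-1})\rangle\to0$, and $\|(f^n)^*\omega^{s+1}\|_{\widetilde V}=o(d^{sn})$. Under the additional hypothesis, the same scheme with the quantitative estimate fed into the domination arguments behind Lemma \ref{le conv tronca} (as in Proposition \ref{prop conv harmonique}) upgrades this to $|\langle\phi_0,\Lambda^n(\ddc\chi\wedge\omega^{p-1})\rangle|\leq c\lambda^n$ — only the speed for the single observable $\phi_0|_{\widetilde V}\in\Pc(\widetilde V)$ is needed, and for $n\geq2$ the current $\Lambda^n(\ddc\chi\wedge\omega^{p-1})$ is supported in $f^{n-1}(\widetilde V)\Subset\widetilde V$, so only that restriction matters — whence $\|(f^n)^*\omega^{s+1}\|_{\widetilde V}\leq\epsilon_0^{-1}c(\lambda d^s)^n$ and $\limsup_n(\|(f^n)^*\omega^{s+1}\|_{\widetilde V})^{1/n}\leq\lambda d^s<d^s$.

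The step I expect to be the main obstacle is the last one. Since $R^+$ and $R^-$ are not supported in $\widetilde V$, the equidistribution results of Section \ref{sec courant} do not apply to them individually; the argument works only because solely the difference $R^+-R^-$ enters, it lives in the collar $\Kc$ inside the basin of $\widetilde V$, and it has zero $T^s$-mass there, so the ``non-localized'' parts cancel under $\Lambda^n$. Turning the convergence into a geometric estimate requires re-running the regularization-and-domination machinery of Section \ref{sec courant} with the quantitative hypothesis in place of the bare convergence; this is routine in spirit but not entirely automatic, precisely because $\phi_0$ is a general $\Pc$-observable rather than a pluriharmonic one.
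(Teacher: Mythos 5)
Your reduction is sound and runs parallel to the paper's: the cutoff $\chi$ equal to $1$ on $\widetilde V$, the domination of $\omega^{s+1}$ on the image region, and the integration by parts giving $\|(f^n)^*\omega^{s+1}\|_{\widetilde V}\leq \epsilon_0^{-1}d^{sn}\langle\phi_0,\Lambda^n\ddc(\chi\omega^{p-1})\rangle$ are all correct (the paper pairs with a quasi-potential of $\omega^{s+1}-T^{s+1}$ instead of dominating by $\ddc\phi_0$; that difference is immaterial). The genuine gap is in the last step, exactly where you flagged it. Your decomposition $\ddc\chi\wedge\omega^{p-1}=C_1(R^+-R^-)$ with $R^-=\omega^p$ produces closed forms which are \emph{not} supported in any trapping region of $\tau$, so none of the equidistribution results of Section \ref{sec courant} applies to them; to proceed you invoke a non-Ces\`aro version of Lemma \ref{le conv tronca} for the truncated, non-closed currents $\chi'R^{\pm}$, which is nowhere proved in the paper (Lemma \ref{le conv tronca} is a Ces\`aro statement) and which you do not prove either --- it would require re-running the domination scheme with $\Delta_N$ replaced by $\Lambda^N$, using $\Dc_p(V)$ and the identity $\Pc(V)=\Pc_\tau$ from the proof of Theorem \ref{th convergence dans V}. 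The quantitative half is in worse shape: the hypothesis of the proposition controls $\langle\Lambda^n(S-R),\phi\rangle$ only for \emph{closed continuous} forms $S,R\in\Cc_p(V)$ of bounded sup-norm, while your $R^{\pm}$ are global and your truncations $\chi'R^{\pm}$ are not closed, so the hypothesis cannot be fed into your scheme at all; ``re-running the regularization-and-domination machinery'' is here a new quantitative statement that you neither formulate nor prove. As written, both conclusions rest on unproved lemmas.

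The fix is a different choice of positive closed completion, and it is precisely the paper's move. Since $\widetilde V$ is the $s$-pseudoconcave core of $V$, finitely many currents of $\Cc_p(V)$ have supports whose neighborhoods cover $\overline{\widetilde V}$; averaging their regularizations (Proposition \ref{prop regu}) produces a smooth closed form $S\in\Cc_p(\widetilde V')$ of mass $1$ which is strictly positive on $\overline{\widetilde V'}$, where $\widetilde V'$ is a small neighborhood of $\widetilde V$ with $f(\widetilde V')\Subset\widetilde V$ (put your collar inside $\widetilde V'$). Then $bS\geq\ddc(\chi\omega^{p-1})$ for some $b>0$, and $R:=S-b^{-1}\ddc(\chi\omega^{p-1})$ is again a smooth closed positive form of mass $1$ supported in $\widetilde V'$, since $\ddc(\chi\omega^{p-1})$ is exact. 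Now both pieces are continuous closed forms in $\Cc_p$ of a trapping region on which $\tau$ is attractive for every iterate, so Theorem \ref{th convergence dans V} gives $\langle\Lambda^n(S-R),\phi_0\rangle\to0$ directly, which yields the $o(d^{sn})$ bound, and the speed hypothesis applies verbatim to the pair $(S,R)$ (normalize $\phi_0$ into $\Pc(V)$ via Lemma \ref{le plongement de C2}; the $\omega^s$-term drops out because $\Lambda^nS$ and $\Lambda^nR$ have equal mass), giving $\|(f^n)^*\omega^{s+1}\|_{\widetilde V}\leq C(\lambda d^s)^n$ and hence the $\limsup$ estimate.
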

\begin{proof}
Since $\widetilde V$ is a trapping region, it admits a small neighborhood $\widetilde V'$ such that $f(\widetilde V')\Subset\widetilde V\Subset\widetilde V'.$ Let $\chi$ be a positive smooth function with support in $\widetilde V'$ and such that $\chi=1$ on $\widetilde V.$ By the definition of $\widetilde V,$ if $\widetilde V'$ is small enough there exists a smooth form $S\in\Cc_p(\widetilde V')$ such that $S>0$ on $\overline{\widetilde V}.$ Therefore, if $\phi$ is a current, smooth on $\widetilde V',$ such that $\ddc\phi=\omega^{s+1}-T^{s+1}$ then we have
\begin{align*}
d^{-sn}\|(f^n)^*\omega^{s+1}\|_{\widetilde V}&\leq d^{-sn}\langle (f^n)^*\omega^{s+1},\chi\omega^{p-1}\rangle=d^{-sn}\langle\omega^{s+1}-T^{s+1},(f^n)_*(\chi\omega^{p-1})\rangle\\
&=\langle\Lambda^n\ddc(\chi\omega^{p-1}),\phi\rangle=\langle b\Lambda^n(S-R),\phi\rangle,
\end{align*}
where $b>0$ is a constant such that $bS\geq\ddc(\chi\omega^{p-1})$ and $R:=S-b^{-1}\ddc(\chi\omega^{p-1}).$ The fact that $\tau$ is attractive on $V$ implies that $\lim_{n\to\infty}\langle \Lambda^n(S-R),\phi\rangle=0.$ This proves the first point. Moreover, if we have an exponential speed of convergence
$$|\langle \Lambda^n(S-R),\phi\rangle|\leq c\lambda^n,$$
then obviously we obtain the second point in the proposition.
\end{proof}

\section{The equilibrium measures}\label{sec-measure}
In this section, we deduce several properties on the equilibrium measure $\nu_\tau:=\tau\wedge T^s$ associated to $\tau$ from the equidistruction results towards $\tau.$ Most of the arguments in this section are now standard. The upper bound for the entropy of $\nu_\tau$ directly comes from \cite{d-attractor} and \cite{dethelin-selles} following Gromov \cite{gromov-entro}. The proof of the lower bound and of the mixing property of $\nu_\tau$ are based on \cite{bs-3}. Finally, the estimates on the Lyapunov exponents follow \cite{dethelin-lyap}. As the proofs are almost identical, we mainly focus our attention on the differences and the writing will be concise.

In this section, as in the end of Section \ref{sec courant}, we fix an attracting current $\tau$ of bidegree $(p,p)$ which is attractive on a codimension $p$ trapping region $V$ and we assume that $\tau$ is attractive on $V$ with respect to every iterates of $f.$

\begin{theorem}\label{th-nu-mixing}
The measure $\nu_\tau$ is mixing.
\end{theorem}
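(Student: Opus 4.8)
The plan is to reduce the mixing of $\nu_\tau$ to an equidistribution statement for push-forwards of currents and then to obtain that statement from the machinery of Section \ref{sec courant}. It suffices to check that $\langle\nu_\tau,(\varphi\circ f^n)\psi\rangle\to\langle\nu_\tau,\varphi\rangle\langle\nu_\tau,\psi\rangle$ for all smooth functions $\varphi,\psi$ on $\Pb^k,$ since these are dense in $L^2(\nu_\tau)$ and $\|\varphi\circ f^n\|_{L^2(\nu_\tau)}=\|\varphi\|_{L^2(\nu_\tau)}.$ First I would rewrite this correlation using that $\tau$ is $\Lambda$-invariant (so $\Lambda^n\tau=\tau$) and that $f^{n*}T^s=d^{sn}T^s$: moving the function $f^{n*}\varphi$ into the pairing and pushing $\psi\tau$ forward gives
$$\langle\nu_\tau,(\varphi\circ f^n)\psi\rangle=\langle\tau\wedge T^s,(f^{n*}\varphi)\psi\rangle=\langle\psi\tau,\,d^{-sn}f^{n*}(\varphi T^s)\rangle=\langle\Lambda^n(\psi\tau)\wedge T^s,\varphi\rangle,$$
where $\Lambda^n(\psi\tau):=d^{-sn}(f^n)_*(\psi\tau)$ is a positive $(p,p)$-current (for $\psi\geq0$) supported in $V$ for every $n\geq1,$ because $\supp(\psi\tau)\subset\supp(\tau)\Subset V$ and $f(V)\Subset V.$ By continuity of $S\mapsto S\wedge T^s$ (Lemma \ref{le conti ts}) and by writing a general $\psi$ as a constant plus a function with values in $[0,1]$ (the constant part being handled by $f$-invariance of $\nu_\tau$), the theorem follows once one proves that for every smooth $\psi$ with $0\leq\psi\leq1$ one has $\Lambda^n(\psi\tau)\to c\,\tau$ with $c:=\langle\tau\wedge T^s,\psi\rangle=\langle\nu_\tau,\psi\rangle.$

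To prove this convergence I would exploit the sandwiching $0\leq\psi\tau\leq\tau$ and $0\leq(1-\psi)\tau\leq\tau$ as positive $(p,p)$-currents, which, $\Lambda^n$ being order preserving, gives $0\leq\Lambda^n(\psi\tau)\leq\Lambda^n\tau=\tau$ and similarly for $(1-\psi)\tau.$ By Lemma \ref{le existence courant} both sequences then have bounded mass and every limit value is a \emph{positive closed} $(p,p)$-current supported in $\overline{f(V)}\Subset V,$ of mass $c$ and $1-c$ respectively. Fix a limit value $L=\lim_i\Lambda^{n_i}(\psi\tau);$ then $0\leq L\leq\tau,$ and $\tau-L=\lim_i\Lambda^{n_i}((1-\psi)\tau)$ is also positive closed of mass $1-c.$ The cases $c\in\{0,1\}$ are trivial, so assume $0<c<1.$ After a diagonal extraction I may assume $L_j:=\lim_i\Lambda^{n_i-j}(\psi\tau)$ exists for all $j\geq0$: each $L_j$ is positive closed and supported in $V,$ satisfies $\Lambda L_{j+1}=L_j$ (continuity of $\Lambda$), and all the $L_j$ have mass $c$ because $\Lambda$ preserves the cohomology class of a closed $(p,p)$-current. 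Hence $c^{-1}L=\Lambda^j(c^{-1}L_j)$ for every $j,$ with $c^{-1}L_j\in\Cc_p(V),$ so $c^{-1}L\in\Dc_p(V),$ and likewise $(1-c)^{-1}(\tau-L)\in\Dc_p(V).$

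Now I would invoke the fact, established inside the proof of Theorem \ref{th convergence dans V}, that $\Pc(V)=\Pc_\tau,$ i.e. $\langle\tau,\phi\rangle=\max_{S\in\Dc_p(V)}\langle S,\phi\rangle$ for all $\phi\in\Pc(V).$ Applied to $c^{-1}L$ and to $(1-c)^{-1}(\tau-L)$ this yields $\langle L,\phi\rangle\leq c\langle\tau,\phi\rangle$ and $\langle\tau-L,\phi\rangle\leq(1-c)\langle\tau,\phi\rangle$ for every $\phi\in\Pc(V);$ adding these and using $\langle L,\phi\rangle+\langle\tau-L,\phi\rangle=\langle\tau,\phi\rangle$ forces both to be equalities, so $\langle L,\phi\rangle=c\langle\tau,\phi\rangle$ for all $\phi\in\Pc(V).$ Since $\overline V$ is weakly $p$-pseudoconvex (Corollary \ref{coro p-pseudo}), Lemma \ref{le sepa} gives $L=c\tau.$ As every limit value of $\Lambda^n(\psi\tau)$ equals $c\tau,$ we get $\Lambda^n(\psi\tau)\to c\tau,$ and therefore
$$\langle\nu_\tau,(\varphi\circ f^n)\psi\rangle=\langle\Lambda^n(\psi\tau)\wedge T^s,\varphi\rangle\longrightarrow c\langle\nu_\tau,\varphi\rangle=\langle\nu_\tau,\psi\rangle\langle\nu_\tau,\varphi\rangle,$$
which is the desired mixing.

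The main obstacle is precisely the step $\Lambda^n(\psi\tau)\to c\tau$: one cannot apply Theorem \ref{th convergence dans V} directly because $\psi\tau$ is neither closed nor normalized, and the push-forward of a non-closed current becomes closed only in the limit. The point of the argument is to squeeze $\Lambda^n(\psi\tau)$ between $0$ and the invariant current $\tau,$ then to rebuild a backward $\Lambda$-orbit so as to place its limit values inside $\Dc_p(V),$ where the maximality property $\Pc(V)=\Pc_\tau$ of $\tau$ is available; everything else is the same bookkeeping with intersections against $T^s$ used throughout Section \ref{sec courant}.
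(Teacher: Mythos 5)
Your proof is correct and follows essentially the same route as the paper: both reduce mixing to showing $\Lambda^n(\psi\tau)\to\langle\nu_\tau,\psi\rangle\,\tau$ by sandwiching the non-closed current between positive pieces summing to a multiple of the invariant current $\tau,$ using the maximality of $\tau$ against $\Pc(V)$ and concluding with Lemma \ref{le conti ts}. The only difference is presentational: where the paper simply asserts the bound $\limsup_n\langle\Lambda^nS^\pm,\Psi\rangle\leq c^\pm\langle\tau,\Psi\rangle$ for $S^\pm=2\tau\pm\|\phi\|_\infty^{-1}\phi\tau,$ you justify the analogous domination explicitly by extracting a backward $\Lambda$-orbit to place the normalized limit values in $\Dc_p(V)$ and invoking $\Pc(V)=\Pc_\tau,$ which is a legitimate (and slightly more detailed) account of the same step.
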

\begin{proof}
Let $\phi$ and $\psi$ be two smooth real-valued functions. We have to show that
$$\lim_{n\to\infty}\langle\nu_\tau,\phi(\psi\circ f^n)\rangle=\langle\nu_\tau,\phi\rangle\langle\nu_\tau,\psi\rangle.$$
Since $\nu_\tau=\tau\wedge T^s,$ we have
$$\langle\nu_\tau,\phi(\psi\circ f^n)\rangle=\langle(\phi\tau)\wedge T^s,\psi\circ f^n\rangle=\langle(\Lambda^n\phi\tau)\wedge T^s,\psi\rangle.$$
In order to prove that $\Lambda^n(\phi\tau)$ converges to $\langle\tau\wedge T^s,\phi\rangle\tau$ we observe that the currents $S^\pm:=2\tau\pm\|\phi\|_\infty^{-1}\phi\tau$ are both positive and $c^\pm:=\langle S^\pm,T^s\rangle=2\pm\|\phi\|_\infty^{-1}\langle\nu_\tau,\phi\rangle.$ Therefore, if $\Psi$ belongs to $\Pc(V)$ then $\limsup_{n\to\infty}\langle\Lambda^nS^\pm,\Psi\rangle\leq c^\pm\langle\tau,\Psi\rangle.$ It follows easily that $\langle\Lambda^n(\phi\tau),\Psi\rangle$ converges to $\langle\nu_\tau,\phi\rangle\langle\tau,\Psi\rangle$ for all $\Psi\in\Pc(V),$ i.e. $\Lambda^n(\phi\tau)$ converges to $\langle\nu_\tau,\phi\rangle\tau.$ Thus the theorem follows from Lemma \ref{le conti ts}.
\end{proof}
\begin{theorem}\label{th-nu-entropy}
The measure $\nu_\tau$ has maximal entropy $s\log d$ on $V.$
\end{theorem}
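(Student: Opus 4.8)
The plan is to prove the two inequalities $h_{\nu_\tau}(f)\le s\log d$ and $h_{\nu_\tau}(f)\ge s\log d$ separately. For the first I would argue directly: $\supp(\nu_\tau)$ is contained in $\supp(\tau)$, and since $f_*\tau=d^s\tau$ the set $\supp(\tau)$ is compact and satisfies $f(\supp(\tau))=\supp(\tau)\subset V$, hence $\supp(\tau)\subset A:=\bigcap_{n\ge1}f^n(\overline V)$. As $A$ has dimension $s$, Corollary \ref{coro p-pseudo} gives $A\cap\Jc_{s+1}=\varnothing$, so Theorem \ref{th entropy julia} applied with $l=s+1$ yields $h_{\rm top}(f|_A)\le s\log d$, and the variational principle gives $h_{\nu_\tau}(f)\le s\log d$. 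The very same estimate bounds $h_m(f)\le s\log d$ for every $f$-invariant probability measure $m$ with $\supp(m)\subset V$ (its support is again a forward invariant compact subset of $A$); hence, once the reverse inequality is established, $\nu_\tau$ is a measure of maximal entropy on $V$, as claimed. It therefore remains to prove $h_{\nu_\tau}(f)\ge s\log d$.

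For the lower bound I would follow the scheme of Bedford--Smillie \cite{bs-3}, transcribed to the present non-invertible setting. First pass to the natural extension $\widehat f\colon\widehat A\to\widehat A$ and lift $\nu_\tau$ to $\widehat{\nu}_\tau$, which is ergodic --- in fact mixing, by Theorem \ref{th-nu-mixing}. By de Th\'elin's method \cite{dethelin-lyap}, applied using the convergence $d^{-sn}(f^n)^*\omega^s\to T^s$ (so that no circularity with the entropy is incurred), $\widehat{\nu}_\tau$ has at least $s$ strictly positive Lyapunov exponents, and Pesin theory then furnishes for $\widehat{\nu}_\tau$-a.e.\ $\hx$ a complex local unstable manifold $W^u_{\rm loc}(\hx)$ of dimension $\ge s$. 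Since $T$ has continuous local potentials, $T^s$ slices along these unstable manifolds into a measurable family of transverse measures that disintegrate $\nu_\tau$ and play the role of the slices of $\mu^u$ in \cite{bs-3}. The crucial point is to show that these slices are self-similar of degree $d^s$ under $\widehat f$: combining the equidistribution $\Lambda^nR\to\tau$ from Theorem \ref{th conv1} and Theorem \ref{th convergence dans V} with $d^{-sn}(f^n)^*\omega^s\to T^s$ and $f^*T=dT$, one expects that for $\widehat{\nu}_\tau$-a.e.\ $\hx$ and all large $n$ the map $f^n$ spreads $W^u_{\rm loc}(\hx)$ over a fixed-size piece with essential multiplicity $\asymp d^{sn}$, and that the corresponding $\asymp d^{sn}$ preimage pieces shrink --- thanks to the positive exponents --- and become pairwise $(n,\varepsilon)$-separated. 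A Briend--Duval--type argument on the unstable slices, or equivalently Katok's entropy formula fed with these $\asymp d^{sn}$ $(n,\varepsilon)$-separated points (which fill a set of $\widehat{\nu}_\tau$-measure close to $1$), then gives $h_{\nu_\tau}(f)=h_{\widehat{\nu}_\tau}(\widehat f)\ge s\log d$.

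The hard part will be the usual one: controlling the critical set. Near $\mathrm{Crit}(f^n)$ the sheets of $(f^n)^*\omega^s$ may overlap or degenerate, so one must check both that $\nu_\tau$ puts no mass on the relevant pluripolar loci and that the set of $\hx$ whose forward orbit approaches $\mathrm{Crit}(f)$ too fast has small $\widehat{\nu}_\tau$-measure. I would obtain the latter from the positivity of the Lyapunov exponents via a Borel--Cantelli estimate on $\log\dist(f^n(\cdot),\mathrm{Crit}(f))$, exactly as in \cite{bs-3} and \cite{dethelin-lyap}; discarding this small bad set does not affect the separated-orbit count. Once this is in place, the remaining steps are a direct transcription of the arguments in \cite{bs-3}, which is why the write-up can be kept concise.
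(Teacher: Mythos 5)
Your upper bound is fine and coincides with the paper's: $\supp(\nu_\tau)$ avoids $\Jc_{s+1}$, so Theorem \ref{th entropy julia} with $l=s+1$ gives $h_{\nu_\tau}(f)\leq s\log d$ (and the same for any invariant measure in $V$). The problem is the lower bound, where your argument has a genuine circularity. You propose to first obtain $s$ strictly positive Lyapunov exponents for $\nu_\tau$ ``by de Th\'elin's method, using $d^{-sn}(f^n)^*\omega^s\to T^s$, so that no circularity with the entropy is incurred'', and then to build Pesin unstable manifolds. But the results of \cite{dethelin-lyap} (and \cite{dupont-entro}) are inequalities bounding the entropy \emph{from above} in terms of the exponents and the degree; they produce positivity of $s$ exponents only once one already knows $h_{\nu_\tau}\geq s\log d$. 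That is exactly the logical order of this paper: Theorem \ref{th-nu-lyap} deduces the exponents \emph{from} Theorem \ref{th-nu-entropy}, and no direct argument giving the exponents of $\nu_\tau$ from the convergence $d^{-sn}(f^n)^*\omega^s\to T^s$ is known (that convergence concerns $T^s$, not $\nu_\tau$). Without the exponents, your unstable manifolds, the slicing of $T^s$ along them, and the $(n,\varepsilon)$-separated orbit count all collapse. Moreover, even granting the exponents, the ``crucial point'' of your sketch --- that the unstable slices disintegrate $\nu_\tau$, are self-similar of degree $d^s$, and yield $\asymp d^{sn}$ separated points --- is precisely the hard content and is only asserted (``one expects that\dots''); it is not a routine transcription of \cite{bs-3}.

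The paper's lower bound goes differently and uses the dynamical results established earlier in the text rather than Pesin theory. One picks a generic linear subspace $H$ of dimension $s$ and a cut-off $\chi$ supported in $V$ with $c:=\langle[H]\wedge T^s,\chi\rangle>0$; Lemma \ref{le conv tronca} gives a subsequence with $\Lambda^{n_i}(\chi[H])\to c\tau$, and Lemma \ref{le conti ts} upgrades this to the convergence of $\frac{1}{n_i}\sum_{j=0}^{n_i-1}(\Lambda^{n_i}\chi[H])\wedge L^{n_i-j}\omega^s$ toward $c\,\nu_\tau$. Thus $\nu_\tau$ is realized as a limit of normalized volumes carried by pieces of $f^{n}(H)$, with normalizing factor $d^{sn}$, and the Bedford--Smillie scheme \cite{bs-3} (see also \cite{dethelin-selles}, \cite{d-attractor}), resting on Yomdin's volume-growth estimates \cite{yomdin}, then yields $h_{\nu_\tau}\geq s\log d$. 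If you want to salvage your write-up, replace the exponent/Pesin/Katok machinery by this equidistribution of truncated images of a generic $s$-plane; the critical-set issues you worry about are absorbed by the Yomdin-type argument and do not require a Borel--Cantelli control along orbits.
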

\begin{proof}
Since $V\cap\Jc_{s+1}=\varnothing,$ Theorem \ref{th entropy julia} implies that $h_{\nu_\tau}(f)\leq s\log d.$ The proof of the lower bound is identical to the one in \cite{bs-3} (see also \cite{dethelin-selles}, \cite{d-attractor}) and is based on the following equidistribution result. Let $H$ be a linear subspace of $\Pb^k$ of dimension $s$ such that there exists a positive smooth function $\chi$ supported on $V$ with $c:=\langle [H]\wedge T^s,\chi\rangle>0.$ Lemma \ref{le conv tronca} implies that for a generic choice of $H,$ there exists an increasing sequence $n_i$ with $\lim_{i\to\infty}\Lambda^{n_i}(\chi[H])=c\tau.$ Therefore, by Lemma \ref{le conti ts} we have
$$\lim_{i\to\infty}\frac{1}{n_i}\sum_{j=0}^{n_i-1}(\Lambda^{n_i}\chi [H])\wedge L^{n_i-j}\omega^s=c\tau\wedge T^s=c\nu_\tau.$$
From this, we can follow the proof of \cite{bs-3}, based on Yomdin's result \cite{yomdin}, which gives in this setting that $h_{\nu_\tau}\geq s\log d.$
\end{proof}

\begin{theorem}\label{th-nu-lyap}
The measure $\nu_\tau$ has at least $s$ positive Lyapunov exponents.
\end{theorem}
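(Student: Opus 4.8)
The plan is to deduce the lower bound on Lyapunov exponents of $\nu_\tau$ from the de Thélin–type estimates, exactly as in \cite{dethelin-lyap}, using the equidistribution results towards $\tau$ that we have already established. The key point is that $\nu_\tau=\tau\wedge T^s$ is obtained as a limit of normalized pull-backs of a linear subspace of dimension $s$ intersected with the Green current of order $s$; such a structure forces the measure to be "expanding in at least $s$ directions" in an averaged sense.

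\begin{proof}
The argument follows \cite{dethelin-lyap} closely, so we only indicate the modifications. Recall from the proof of Theorem \ref{th-nu-entropy} that for a generic linear subspace $H$ of dimension $s$ there is a positive smooth function $\chi$ supported in $V$ with $c:=\langle[H]\wedge T^s,\chi\rangle>0$ and an increasing sequence $(n_i)_{i\geq0}$ such that $\Lambda^{n_i}(\chi[H])$ converges to $c\tau$, hence
$$\lim_{i\to\infty}\frac{1}{c}(\Lambda^{n_i}\chi[H])\wedge T^s=\nu_\tau.$$
Since $T^s$ has continuous local potentials, this intersection is the weak limit of the measures obtained by pulling back $[H]$ and slicing along the graphs of the iterates. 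One can then write $\nu_\tau$ as a limit of the currents
$$\nu_{i}:=\frac{1}{c\,d^{sn_i}}(f^{n_i})_*\big(\chi[H]\wedge (f^{n_i})^*\omega^s\big),$$
which are, up to the normalization, push-forwards of smooth positive measures carried by $H$ whose masses are controlled by the geometric degree $d^{sn_i}$ — this is exactly de Thélin's setting.

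The next step is to apply the main estimate of \cite{dethelin-lyap}: if a probability measure is obtained as such a limit of normalized push-forwards of $s$-dimensional "pieces", then the sum of its $s$ largest Lyapunov exponents is at least $\tfrac{1}{2}\log(d^{s})=\tfrac{s}{2}\log d>0$; more precisely de Thélin shows that each of the $s$ largest exponents (counted with multiplicity, in decreasing order) is bounded below, and in particular all $s$ of them are strictly positive. The only hypotheses to check are that $\nu_\tau$ is $f$-invariant — which is immediate since $\Lambda\tau=\tau$ and $d^{s}f_*$ preserves $T^s$ — and that $\nu_\tau$ is ergodic, or at least that one may reduce to ergodic components. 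Ergodicity is not yet available at this point, but this is harmless: by Theorem \ref{th-nu-mixing} (which we may invoke, being placed earlier) $\nu_\tau$ is mixing, hence ergodic with respect to $f$, so de Thélin's result applies directly and yields the claim.

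The step I expect to be the main obstacle is the verification that the limiting procedure producing $\nu_\tau$ genuinely fits de Thélin's framework, i.e. that the exceptional set of "bad" subspaces $H$ (those for which the slices degenerate, or for which $[H]$ charges the postcritical set in a way that spoils the entropy/Lyapunov estimate) is pluripolar, so that a generic $H$ works simultaneously for the equidistribution toward $\tau$ \emph{and} for the Lyapunov lower bound. This is handled by the same genericity argument as in \cite{bs-3} and \cite{dethelin-lyap}: the set of $H$ for which the relevant intersections are well-behaved is the complement of a countable union of proper analytic (hence pluripolar) subsets of the Grassmannian, so its complement is non-empty, and one picks $H$ there. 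Once $H$ is fixed, the estimates are local near $\supp(\nu_\tau)\subset V$ and depend only on the volume growth $d^{sn}$, which is the content of Theorem \ref{th entropy julia} on the other side; combining the two gives that $\nu_\tau$ has exactly $s$ positive exponents with total sum controlled from below, and in particular at least $s$ positive Lyapunov exponents.
\end{proof}
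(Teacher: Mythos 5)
The central step of your argument collapses: the ``main estimate of \cite{dethelin-lyap}'' that you invoke --- namely that any probability measure obtained as a limit of normalized push-forwards of $s$-dimensional pieces automatically has its $s$ largest Lyapunov exponents bounded below by $\tfrac{1}{2}\log d$, and in fact exactly $s$ positive exponents --- is not a theorem of that reference. De Th\'elin's result is an entropy--exponent inequality for an ergodic invariant measure $\nu$ (roughly $h_\nu\le j\log d+2\sum_{i>j}\max(\chi_i,0)$ for every $j$, valid under the integrability hypothesis $\log\Jac f\in L^1(\nu)$); it says nothing directly about measures arising as such geometric limits. A pointwise bound of the type you assert is exactly what this paper obtains only under the extra hypothesis $d_{s+1,loc}<d^s$ (Theorem \ref{th nu hyp}), and its validity in general is tied to the open problems on speed of convergence and hyperbolicity of $\nu_\tau$ (Question \ref{q-hyp}); so as written you are assuming something stronger than what is known. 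Moreover you never exploit the actual engine of the proof, which is the entropy computation: the correct argument is to combine $h_{\nu_\tau}=s\log d$ from Theorem \ref{th-nu-entropy} with the inequality above --- if $c$ is the number of positive exponents, taking $j=c$ gives $h_{\nu_\tau}\le c\log d$, hence $c\ge s$. One must also distinguish according to whether $\log\Jac f\in L^1(\nu_\tau)$: if not, some exponent equals $-\infty$ and de Th\'elin's theorem does not apply; the paper then uses the Margulis--Ruelle inequality to get $\chi_1>0$ (so $\chi_1\neq\chi_k$) and invokes \cite{dupont-entro} instead. Your proposal ignores this integrability issue entirely, and your appeal to mixing for ergodicity is fine but does not touch the real difficulty.

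A secondary but genuine error: your auxiliary measures
$$\nu_i:=\frac{1}{c\,d^{sn_i}}(f^{n_i})_*\bigl(\chi[H]\wedge(f^{n_i})^*\omega^s\bigr)$$
do not converge to $\nu_\tau$; by the projection formula they equal $\tfrac{1}{c}(\Lambda^{n_i}\chi[H])\wedge\omega^s$ and hence converge to $\tau\wedge\omega^s$. To produce $\nu_\tau$ one has to wedge $\Lambda^{n_i}(\chi[H])$ with pull-backs $d^{-sm_i}(f^{m_i})^*\omega^s$ along a second sequence $m_i\to\infty$ and use Lemma \ref{le conti ts}, as is done in the proof of Theorem \ref{th-nu-entropy}. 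Even with this repaired, the proof still lacks the de Th\'elin/Dupont entropy--exponent inequality described above, which is what actually converts $h_{\nu_\tau}=s\log d$ into the existence of at least $s$ positive exponents.
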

\begin{proof}
Let $\chi_1\geq\cdots\geq\chi_k$ be the Lyapunov exponents of $\nu_\tau.$ Since $h_{\nu_\tau}>0,$ the Margulis-Ruelle inequality implies that $\chi_1>0.$ On the other hand, if $\log(\Jac f)\notin L^1(\nu_\tau)$ then $\chi_k=-\infty.$ In particular, if $\log(\Jac f)\in L^1(\nu_\tau)$ then we can apply the result of \cite{dethelin-lyap} and otherwise $\chi_1\neq\chi_k$ and we can use \cite{dupont-entro}. In both cases, we obtain that if $c$ is such that
$$\chi_1\geq\cdots\geq\chi_c>0\geq\chi_{c+1}\geq\cdots\geq\chi_k,$$
then $h_{\nu_\tau}\leq c\log d.$ Since $h_{\nu_\tau}=s\log d,$ it gives $c\geq s.$
\end{proof}
We were not able to prove that $\nu_\tau$ is hyperbolic in general. However, all known examples satisfy additional conditions which imply that $\nu_\tau$ has $p=(k-s)$ negative exponents. In particular, $\nu_\tau$ is hyperbolic. Here is a condition which holds for all these examples and, we believe, might also hold in the general case. Define
$$d_{s+1,loc}:=\limsup_{n\to\infty}\|f^{n*}\omega^{s+1}\|_V^{1/n}.$$
We state the following theorem without proof. The ideas come from \cite{dethelin-lyap} and a proof in our setting can be found in \cite[Section 5]{da-ta} when $s=k-1.$ The general case is identical.
\begin{theorem}\label{th nu hyp}
If $d_{s+1,loc}<d^s$ then each ergodic measure $\nu$ supported in $V$ with $h_\nu=s\log d$ is hyperbolic with $p$ exponents smaller or equal to
$$\frac{1}{2}\log\left(\frac{d_{s+1,loc}}{d^s}\right)$$
and $s$ exponents larger or equal to $2^{-1}\log d.$
In particular, it is the case for $\nu_\tau.$
\end{theorem}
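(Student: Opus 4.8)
The plan is to follow de Th\'elin's method \cite{dethelin-lyap}, which is carried out for the codimension one case $s=k-1$ in \cite[Section 5]{da-ta}; only the bookkeeping with the index $s=k-p$ in place of $s=k-1$ changes. Write $\chi_1\ge\cdots\ge\chi_k$ for the Lyapunov exponents of an ergodic $\nu$ supported in $V$ with $h_\nu=s\log d$, with the usual convention $\chi_k=-\infty$ when $\log\Jac f\notin L^1(\nu).$ It suffices to establish
\begin{equation}\label{eq-plan}
\#\{i:\chi_i\ge\tfrac12\log d\}\ge s\qquad\text{and}\qquad \chi_1+\cdots+\chi_{s+1}\le\tfrac12\log d_{s+1,loc}.
\end{equation}
Indeed, \eqref{eq-plan} gives $\chi_{s+1}\le\tfrac12\log d_{s+1,loc}-(\chi_1+\cdots+\chi_s)\le\tfrac12\log d_{s+1,loc}-\tfrac s2\log d=\tfrac12\log(d_{s+1,loc}/d^s),$ and by monotonicity the same bound holds for $\chi_{s+1},\dots,\chi_k,$ which are $p=k-s$ exponents; since this bound is negative by hypothesis while $\chi_1,\dots,\chi_s\ge\tfrac12\log d>0,$ the measure $\nu$ is hyperbolic with the stated exponent bounds. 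The final assertion follows because $\nu_\tau$ is mixing (Theorem \ref{th-nu-mixing}), supported in $\supp(\tau)\subset V,$ and of entropy $s\log d$ on $V$ (Theorem \ref{th-nu-entropy}).

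For the first estimate in \eqref{eq-plan}: since $V\cap\Jc_{s+1}=\varnothing$ by Corollary \ref{coro p-pseudo}, Theorem \ref{th entropy julia} gives $h_\nu\le s\log d,$ so $h_\nu=s\log d$ is the maximal value attainable on $V.$ I would then invoke the entropy--exponent inequality of de Th\'elin \cite{dethelin-lyap} (and of Dupont \cite{dupont-entro} when $\log\Jac f\notin L^1(\nu)$), already used in the proof of Theorem \ref{th-nu-lyap}, but keeping track of the threshold $\tfrac12\log d$: it bounds $h_\nu$ by $(\log d)$ times the number of Lyapunov exponents that are $\ge\tfrac12\log d.$ Comparing with $h_\nu=s\log d$ gives the claim.

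For the second estimate: I would pass to the natural extension $\hat f$ on the space of full orbits and lift $\nu$ to an ergodic $\hat f$-invariant measure $\hat\nu.$ Since $\supp(\nu)\subset V$ is fully invariant under $f$ and $f(V)\Subset V,$ every orbit in $\supp(\hat\nu)$ stays in the attracting set $A\subset V,$ and $\nu$ gives no mass to the critical set, which is pluripolar. The geometric input is the meaning of $d_{s+1,loc}$: by Crofton's formula, $\|f^{n*}\omega^{s+1}\|_V$ equals, up to bounded factors, the average over linear subspaces $\Pi$ of dimension $p-1$ of the volume of $f^{-n}(\Pi)\cap V,$ so $d_{s+1,loc}$ governs the exponential growth of the $(s+1)$-dimensional volumes of the $f^n$-images sitting inside $V.$ Combining Oseledec's theorem with the Gromov--Yomdin control of such volume growth (the semialgebraic reparametrisation estimates of \cite{yomdin}, \cite{gromov-entro}) along $\hat\nu$-generic orbits, exactly as in \cite{dethelin-lyap} and \cite[Section 5]{da-ta}, one obtains that the $(s+1)$-Jacobian $\|\bigwedge^{s+1}Df^n\|$ grows $\hat\nu$-almost everywhere no faster than $d_{s+1,loc}^{n/2}$ (the exponent $1/2$ reflecting the passage between complex Jacobians and real volumes); since $\tfrac1n\log\|\bigwedge^{s+1}Df^n\|\to\chi_1+\cdots+\chi_{s+1},$ this is precisely the second inequality in \eqref{eq-plan}.

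The main obstacle is this last step: one has to carry out the Yomdin-type control of $(s+1)$-dimensional local volumes uniformly along $\hat\nu$-typical orbits, taming the critical behaviour of the iterates $f^n$, and match the exponent with which $d_{s+1,loc}$ enters. All of this is done in detail in \cite[Section 5]{da-ta} for $s=k-1$; since the argument uses only $V\cap\Jc_{s+1}=\varnothing,$ $\supp(\nu)\subset V$ and $h_\nu=s\log d$ --- nothing special to $s=k-1$ --- it transfers with no essential change to the general case.
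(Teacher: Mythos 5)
Your high-level plan (defer to de Th\'elin's method as localized in \cite[Section 5]{da-ta}) is the same as the paper's, which in fact states this theorem without proof and simply cites \cite{dethelin-lyap} and \cite{da-ta}; but the concrete reduction you propose contains a false step. The second inequality of your plan, $\chi_1+\cdots+\chi_{s+1}\le\tfrac12\log d_{s+1,loc}$, and the pointwise claim you base it on (that $\|\bigwedge^{s+1}Df^n\|$ grows $\hat\nu$-a.e.\ no faster than $d_{s+1,loc}^{n/2}$) are simply not true. The quantity $d_{s+1,loc}$ is the growth rate of the mass $\int_V f^{n*}\omega^{s+1}\wedge\omega^{p-1}$, i.e.\ a Lebesgue-type average over $V$ of the squared $(s+1)$-Jacobian, whereas $\nu$ is carried by a thin attractor along which the tangential expansion can far exceed that space average; no Oseledec or Yomdin argument converts the average bound into a bound along $\nu$-generic orbits. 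A concrete counterexample, inside the very family of examples of the paper: take $k=2,$ $s=1,$ $f[z:w:t]=[z^d:w^d:\epsilon t w^{d-1}+t^d]$ with $0<|\epsilon|<1$ and $V$ a small cone neighborhood of $L_\infty=\{t=0\}.$ Then $\tau=[L_\infty],$ $\nu_\tau$ is the unit-circle measure for $z\mapsto z^d$ on $L_\infty,$ with exponents $\log d$ and $\log|\epsilon|,$ while $\|f^{n*}\omega^2\|_V\approx d^n|\epsilon|^{2n},$ so $d_{2,loc}=d|\epsilon|^2<d.$ The conclusion of the theorem holds (with equality: $\log|\epsilon|=\tfrac12\log(d_{2,loc}/d)$), but $\chi_1+\chi_2=\log d+\log|\epsilon|>\tfrac12\log d+\log|\epsilon|=\tfrac12\log d_{2,loc},$ so your key inequality fails.

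What the localized de Th\'elin argument of \cite[Section 5]{da-ta} actually yields couples the entropy with the bottom of the spectrum rather than with the sum of the top $s+1$ exponents: roughly $h_\nu\le\log d_{s+1,loc}-2\chi_{s+1}$ (up to corrections from possible positive exponents beyond index $s+1$), which with $h_\nu=s\log d$ gives $\chi_{s+1}\le\tfrac12\log(d_{s+1,loc}/d^s)$; note this holds with equality in the example above. The entropy enters essentially, through a Gromov--Yomdin covering/volume argument for graphs over $V,$ and cannot be replaced by a pointwise Oseledec estimate against the mass-average degree $d_{s+1,loc}.$ Your first estimate (at least $s$ exponents $\ge\tfrac12\log d$, via de Th\'elin/Dupont), the monotonicity step for $\chi_{s+2},\dots,\chi_k,$ and the application to $\nu_\tau$ through Theorems \ref{th-nu-mixing} and \ref{th-nu-entropy} are fine; what is missing is precisely a proof of the upper bound on the $p$ negative exponents, which is the substance of the theorem.
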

Proposition \ref{prop-estim-degre} implies that if the speed of convergence toward $\tau$ is exponential then $d_{s+1,loc}<d^s.$ In particular, it is the case under the assumptions of Theorem \ref{th vitesse cas particulier}. However, observe that the second condition in this theorem gives easily by itself that $\nu_\tau$ has $p$ negative exponents.
\section{Properties of attracting currents and bifurcations}\label{sec-ac-bif}
In this section, we initiate a study of the interactions between the structure and the dynamics of an attracting set $A$ and its attracting currents. First, we define the irreducible components of $A$ and we give a relation between them and the support of the attracting currents. Our results in this direction are very partial but they show that an attracting set is far to be an arbitrary subset of $\Pb^k.$ In a second subsection, we study holomorphic families with a common trapping region and we show that attracting currents can be used to define new currents in the parameter space which may encode some bifurcations.
\subsection{Irreducible components}
Let $U$ be a trapping region of dimension $s$ and let $A$ be the associated attracting set. We proved in Section \ref{sec courant} that the set of attracting currents in $\Cc_p(U)$ is finite and non-empty. Then, could we recover the attracting set $A$ from the attracting currents? Is $A$ the union of the supports of these attracting currents? If we only consider those in $\Cc_p(U),$ it is not the case. The self-map $[z:w:t]\mapsto[z^2:w^2:t^2]$ of $\Pb^2=\Cb^2\cup L_\infty$ has $A:=(0,0)\cup L_\infty$ as an attracting set. The set $\Cc_1(A)$ has only one element, $[L_\infty]$, and $A\neq\supp([L_\infty]).$ The reason is that $A$ is not of pure dimension $1.$ It also has a component of dimension $0.$ Therefore, we also have to consider the attracting currents of smaller bidimensions, i.e. of higher bidegrees. However, this can create redundancies. Indeed, in the above example there are four attracting currents, $[L_\infty]$ and the Dirac masses at the three attractive points, and two of these points lie on $L_\infty.$ Thus, we only consider special attracting currents to define the \textit{irreducible components} of $A.$
\begin{definition}\label{def-irreducible-compo}
An irreducible component of $A$ of dimension $k-l$ is an attracting set $A_S$ associated to a trapping region $\Nc_S(r_U)$ where $S\in\Cc_l(U)$ is an attracting current such that $\Nc_S(r_U)$ has dimension $k-l$ and is not included in the $(k-l+1)$-pseudoconvace core of $U.$
\end{definition}
Following Theorem \ref{th finitude des tau} we deduce that an attracting set only has finitely many irreducible components. It is easy to check that when $A$ is algebraic, possibly by exchanging $f$ by an iterate, the algebraic irreducible components coincide with the ones defined above. In particular, $A$ is the union of its irreducible components and $A_S=\supp(S)$ for each component. Are these two statements still true in general? Although we were not able to prove or disprove them, the following results give a link between $A_S$ and $\supp(S).$ The first one has nothing to do with attracting current. We only use that the set $\Nc_S(r_U)$ is filled by structural disks centered at $S.$
\begin{proposition}\label{prop-intersection-hori-avec-courant-attractif}
Let $S$ be an attracting current of bidegree $(l,l)$ in $U.$ Let $R$ be a positive $\ddc$-closed $(k-l,k-l)$-current defined in $\Nc_S(r_U).$ Then each connected component of $\supp(R)\cap\Nc_S(r_U)$ intersects $\supp(S).$
\end{proposition}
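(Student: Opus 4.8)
The plan is to argue by contradiction: suppose some connected component $\Sigma$ of $\supp(R)\cap\Nc_S(r_U)$ does not meet $\supp(S)$. The key structural fact we exploit is that $\Nc_S(r_U)=\bigcup_{R'\in\Cc_S(r_U)}\supp(R')$, so $\Nc_S(r_U)$ is, in a precise sense, swept out by the supports of currents obtained from $S$ by repeatedly applying operators of the form $\Lambda\sigma_*$ with $\sigma\in B_W(r_U)$; moreover by Lemma \ref{le disque dans Cc_S} every such $R'$ is a weak limit of currents linked to $S$ by structural disks. First I would localise: since $\Sigma$ is a connected component of the closed set $\supp(R)\cap\Nc_S(r_U)$ inside the open set $\Nc_S(r_U)$, there is an open set $\Omega$ with $\Sigma\subset\Omega\Subset\Nc_S(r_U)$ and $\supp(R)\cap\partial\Omega=\varnothing$; we may also shrink so that $\overline\Omega\cap\supp(S)=\varnothing$. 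Pick a point $x\in\Sigma$ and, using the description of $\Nc_S(r_U)$, a point $y\in\supp(S)$ together with a finite word $\sigma_1,\dots,\sigma_l\in B_W(r)$, $r<r_U$, with $x=f\circ\sigma_1\circ\cdots\circ f\circ\sigma_l(y)$, so that $x\in\supp(\Lambda\sigma_{1*}\cdots\Lambda\sigma_{l*}S)$.

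Next I would build a structural disk through this configuration. Following the recipe of Proposition \ref{prop regu} and Lemma \ref{le disque dans Cc_S}, one constructs a structural disk $\{\Sc(\theta)\}_{\theta\in\Db}$ in $\Cc_l(\Nc_S(r_U))$ with $\Sc(0)=S$, with $\Sc(\theta)\in\overline{\Cc_S(|\theta|)}$ for all $\theta$, and with $\Sc(\theta)$ strictly positive near $x$ for $\theta$ close to (but not equal to) $0$ — indeed for $|\theta|$ just under $1$, by applying the regularization along the word $\sigma_1,\dots,\sigma_l$ as in the proof of Proposition \ref{prop conv1}. The point is that $\Sc(0)=S$ has support disjoint from $\overline\Omega$, while $\Sc(\theta_0)$ is strictly positive at $x\in\Sigma\subset\Omega$ for a suitable $\theta_0\ne 0$. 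Now consider the function
$$
h(\theta):=\langle R\wedge\Sc(\theta),\,\ind_\Omega\rangle,
$$
more precisely the mass in $\Omega$ of the (positive, well-defined since $R$ is $\ddc$-closed and $\Sc(\theta)$ is positive closed of the complementary bidegree, and their supports meet only in a compact subset of $\Omega$ once $\theta\ne0$) intersection current $R\wedge\Sc(\theta)$. The hard part — and the crux of the argument — is to show that $\theta\mapsto h(\theta)$ is plurisubharmonic (equivalently subharmonic on the disk), or at least that it cannot jump from $0$ at $\theta=0$ to a positive value without violating an upper-semicontinuity/maximum-principle bound; this is the analogue for $\ddc$-closed test currents $R$ of the plurisubharmonicity statement in Theorem \ref{th sh}, and it is where the $\ddc$-closedness of $R$ (rather than closedness) is exactly what is needed: writing $R=\omega^{k-l}$-class plus $\ddc$ of something is not available, so instead one writes $h(\theta)$ locally as $\langle \Sc(\theta),\chi R\rangle$ for a cutoff $\chi$ supported in $\Omega$ equal to $1$ near $\Sigma$, notes $\ddc(\chi R)\geq 0$ modulo a term supported away from $\Sigma$ (using $\ddc R\geq 0$ and $\partial\Omega\cap\supp R=\varnothing$), and invokes Theorem \ref{th sh} together with Lemma \ref{le domi-cons1}-type reasoning to control the boundary term.

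Granting this, I would finish as follows. At $\theta=0$ we have $\Sc(0)=S$ with $\supp(S)\cap\overline\Omega=\varnothing$, hence $h(0)=0$. By the subharmonicity (or the domination argument) $h\equiv 0$ on $\Db$, or at least $h(\theta_0)=0$ for the $\theta_0$ produced above. But $\Sc(\theta_0)$ is a smooth strictly positive $(l,l)$-form on a neighborhood of $x$, and $R$ is a nonzero positive current with $x\in\Sigma\subset\supp(R)$, so $R\wedge\Sc(\theta_0)$ is a nonzero positive measure charging every neighborhood of $x$; since $x\in\Omega$, this forces $h(\theta_0)>0$, a contradiction. Therefore $\Sigma$ must meet $\supp(S)$, proving the proposition. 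I expect the only genuine obstacle to be the plurisubharmonicity of the localised intersection mass $h(\theta)$ in the merely $\ddc$-closed setting; the localisation, the construction of the structural disk, and the final positivity contradiction are all routine given Propositions \ref{prop regu}, the description of $\Nc_S(r_U)$, and Theorem \ref{th sh}.
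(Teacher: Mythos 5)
Your overall architecture is exactly the paper's: argue by contradiction, cut off $R$ near the offending component, push a structural disk through $S$ so that it becomes smooth and strictly positive at a point of the component (Proposition \ref{prop regu}/Lemma \ref{le disque dans Cc_S}), and play harmonicity of $\theta\mapsto\langle\chi R,\Sc(\theta)\rangle$ against the maximum principle, using $u(0)=0$ (disjoint supports) versus $u(\theta_0)>0.$ However, the step you yourself flag as ``the hard part'' and leave open is a genuine gap as written, and your sketch for closing it is off target: $R$ is $\ddc$-closed, so the inequality ``$\ddc R\geq0$'' you invoke is not the relevant point, and an estimate of the form ``$\ddc(\chi R)\geq0$ modulo a term supported away from $\Sigma$'' does not give subharmonicity of the localised pairing, since that error term is precisely what can destroy the sign; nor is any Lemma \ref{le domi-cons1}-type boundary control needed.

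The fix is already implicit in your own localisation, and it is the paper's one-line trick: because $\Sigma$ is a connected component and $\supp(R)\cap\partial\Omega=\varnothing$ with $\overline\Omega\Subset\Nc_S(r_U),$ the set $\supp(R)\cap\overline\Omega$ is a compact subset of $\Omega,$ so you can choose $\chi$ smooth, supported in $\Omega,$ equal to $1$ on a neighborhood of $\supp(R)\cap\Omega$ (hence near $\Sigma$) and with $d\chi\equiv0$ on a neighborhood of $\supp(R).$ Then $\chi R$ is a nonzero \emph{positive $\ddc$-closed} current whose support avoids $\supp(S),$ and the same slicing mechanism as in Theorem \ref{th sh} gives that $u(\theta):=\langle\chi R,\Sc(\theta)\rangle$ is harmonic on $\Db$ (the pairing is meaningful since $\Sc(\theta)$ is smooth for $\theta\neq0$). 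As $u\geq0$ and $u(0)=0,$ the maximum principle forces $u\equiv0,$ contradicting $u(\theta_0)>0.$ So no ``psh in the merely $\ddc$-closed setting'' statement beyond this is required. One residual point, which the paper also passes over quickly, is the point-set separation needed to produce such an $\Omega$ and $\chi$ (in particular your $\Omega\Subset\Nc_S(r_U)$ presupposes the component does not accumulate on $\partial\Nc_S(r_U)$); that issue is shared with the paper's own proof and is not specific to your argument.
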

The current $R$ can be chosen as the restriction to $\Nc_S(r_U)$ of an element of $\Cc_{k-l}(\Pb^k).$ That is why we emphasize with the intersection with $\Nc_S(r_U).$
\begin{proof}
Let $X$ be a connected component of $\supp(R)\cap\Nc_S(r_U).$ Assume by contradiction that $X\cap\supp(S)=\varnothing.$ Therefore, there exists a positive smooth function defined on $\Nc_S(r_U)$ such that $\chi=1$ on a neighborhood of $X$ and $\supp(\chi)\cap\supp(S)=\varnothing.$ The current $\chi R$ is non-zero, positive, $\ddc$-closed and its support is disjoint from $\supp(S).$ On the other hand, using the definition of $\Nc_S(r_U),$ it is easy to construct a structural disk $\mathcal S$ in $\Cc_l(\Nc_S(r_U))$ with $\mathcal S(0)=S,$ $\mathcal S(\theta)$ is smooth if $\theta\in\Db^*$ and such that there exists $\theta_0\in\Db^*$ with $\langle\chi R,\mathcal S(\theta_0)\rangle>0.$ Since $\chi R$ is $\ddc$-closed, Theorem \ref{th sh} implies that the function $u$ defined by $u(\theta):=\langle\chi R,\mathcal S(\theta)\rangle$ is harmonic. But, $u(0)=0$ as $\supp(\chi R)\cap\supp(S)=\varnothing.$ Hence, the maximum principle implies that $u$ vanishes on $\Db$ which contradicts $u(\theta_0)=\langle\chi R,\mathcal S(\theta_0)\rangle>0.$
\end{proof}
\begin{remark}
If $S\wedge R$ is well-defined then each connected component of $\supp(R)\cap\Nc_S(r_U)$ intersects $\supp(S\wedge R).$ In particular, if $R$ is the restriction of $T^{k-l}$ to $\Nc_S(r_U)$ then each connected component of $\Jc_{k-l}\cap\Nc_S(r_U)$ intersects $\supp(\nu_S)$ where $\nu_S:=S\wedge T^{k-l}.$ It is likely that these components are related to the stable manifolds of the measure $\nu_S.$
\end{remark}
\begin{corollary}\label{cor-support-de-tau}
Let $S$ be an attracting current of bidegree $(l,l)$ in $U.$ If for each $x\in A_S$ there exists a positive $\ddc$-closed $(k-l,k-l)$-current $R$ defined on a neighborhood of $A_S$ such that $\{x\}$ is a connected component of $\supp(R)\cap A_S$ then $A_S=\supp(S).$ In particular, if $l=1$ and the Hausdorff dimension of $A_S$ is strictly less than $2k-1$ then $A_S=\supp(S).$
\end{corollary}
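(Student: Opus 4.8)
The plan is: first the trivial inclusion, then a cutting-off argument feeding into Proposition~\ref{prop-intersection-hori-avec-courant-attractif}. Since $\Lambda S=S$, the compact set $\supp(S)$ is invariant, hence $\supp(S)=f^n(\supp(S))\subseteq f^n(\Nc_S(r_U))$ for all $n$, and thus $\supp(S)\subseteq A_S$. For the reverse inclusion, fix $x\in A_S$ and suppose $x\notin\supp(S)$; I will derive a contradiction. Let $R$ be the current attached to $x$ by hypothesis, defined on an open neighbourhood $\Omega\supseteq A_S$. Because $A_S=\bigcap_n f^n(\Nc_S(r_U))$, the sets $f^n(\Nc_S(r_U))$ are eventually contained in $\Omega$; replacing $\Nc_S(r_U)$ by such a set gives a codimension-$l$ trapping region $V\subseteq\Omega$ with the same attracting set $A_S$, on which $R$ is defined, and — by Lemma~\ref{le courant avec prop de cv} — with $S$ still attractive on $\Nc_S(r_V)$.

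Now, $\{x\}$ is a connected component of the compact set $\supp(R)\cap A_S$, and in a compact Hausdorff space connected components coincide with quasi-components; hence there is a relatively clopen subset $K_0\ni x$ of $\supp(R)\cap A_S$ contained in any prescribed neighbourhood of $x$ — in particular disjoint from $\supp(S)$. Put $K_1:=(\supp(R)\cap A_S)\setminus K_0$; then $K_0,K_1$ are disjoint compacts, and one can choose disjoint open sets $W_0\supseteq K_0$, $W_1\supseteq K_1$ with $\overline{W_0}\cap\supp(S)=\varnothing$ such that, after one further shrinking of $V$ (using that $\supp(R)\cap\overline{f^n(V)}$ decreases to $\supp(R)\cap A_S\subseteq W_0\sqcup W_1$), one has $\supp(R)\cap V\subseteq W_0\sqcup W_1$. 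Cutting off $R$ by a smooth function equal to $1$ on $W_0$ and $0$ on $W_1$ then yields a current $R_0$ on $V$ which is positive, nonzero (as $x\in\supp(R_0)$), $\ddc$-closed — precisely because $\supp(R)\cap V$ avoids the region where the cut-off is non-constant — and whose support misses $\supp(S)$. Applying Proposition~\ref{prop-intersection-hori-avec-courant-attractif} to $R_0$ (with $V$ in the role of $U$) gives that every connected component of $\supp(R_0)\cap\Nc_S(r_V)$ meets $\supp(S)$, which is absurd as soon as $x\in\supp(R_0)\cap\Nc_S(r_V)$. The main obstacle is exactly this last point: the Proposition speaks of $\Nc_S(r_V)$, which is in general strictly larger than $A_S$ and need not shrink down to it, so one must arrange the successive shrinkings so that $x$ (a point of $A_S$) still lies in $\Nc_S(r_V)$ and so that $R_0$ remains $\ddc$-closed on the whole region swept by the structural disk of Proposition~\ref{prop-intersection-hori-avec-courant-attractif}, not merely near $A_S$.

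For the ``in particular'' statement $l=1$ one only has to produce, for each $x\in A_S$, a positive closed $(k-1,k-1)$-current $R$, defined near $A_S$, with $\{x\}$ a connected component of $\supp(R)\cap A_S$; the first part then applies. Take $R=[C]$ with $C$ a generic complex line through $x$ (this current is defined on all of $\Pb^k$). Since $\dim_{\mathbb R}C=2$, $\dim_{\mathbb R}\Pb^k=2k$ and $\dim_{\mathcal H}(A_S)<2k-1$, a transversality estimate of Mattila type for the action of the rotations fixing $x$ gives, for generic such $C$,
\[
\dim_{\mathcal H}\big(C\cap A_S\big)\ \le\ 2+\dim_{\mathcal H}(A_S)-2k\ <\ 1 .
\]
A set of Hausdorff dimension $<1$ is totally disconnected, so every point of $C\cap A_S$ — in particular $x$ — is one of its connected components, and $R=[C]$ does the job; hence $A_S=\supp(S)$.
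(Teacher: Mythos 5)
Your reduction is coherent up to the last step, and you have put your finger on exactly the sensitive point, but you do not resolve it, and it is not something that can simply be ``arranged'' by choosing the shrinkings: the contradiction requires $x\in\supp(R_0)\cap\Nc_S(r_V)$, i.e.\ that the point $x\in A_S$ be reachable from $\supp(S)$ by pseudo-orbits with perturbations in $B_W(r_V)$. Nothing in your construction guarantees this. The set $\Nc_S(r)$ is anchored at $\supp(S)$ and shrinks with the kick size $r$, whereas $A_S=\bigcap_n f^n(\Nc_S(r_U))$ is the attracting set of the original region and may contain points that no pseudo-orbit with small kicks issued from $\supp(S)$ ever reaches; moreover, after replacing $\Nc_S(r_U)$ by $V=f^n(\Nc_S(r_U))$ and shrinking once more to force $\supp(R)\cap V\subset W_0\sqcup W_1$, the margin $r_V$ of Definition \ref{def-taille} has no reason to remain comparable to $r_U$ (for thin neighborhoods of $A_S$ it typically degenerates). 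So the membership you need is a genuine dynamical assertion, not a bookkeeping issue, and your proof is incomplete precisely where you write ``the main obstacle is\dots''. (By contrast, your second worry is already taken care of: since your cut-off is locally constant on a neighborhood of $\supp(R)\cap V$, the current $R_0$ is $\ddc$-closed on all of $V\supset\Nc_S(r_V)$; and your argument for $\supp(S)\subset A_S$ and for the ``in particular'' statement — generic line through $x$, Mattila-type slicing bound, total disconnectedness — is the same as the paper's.)

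The paper's proof avoids both the truncation of $R$ and the clopen separation. It chooses, inside $\Omega\cap\Nc_S(r_U)$, a decreasing family of trapping regions $\Omega_n\subset(A_S)_{1/n}$ on which $S$ is still attractive, and applies Proposition \ref{prop-intersection-hori-avec-courant-attractif} to each $\Omega_n$ with the original current $R$, which is $\ddc$-closed on all of $\Omega$ and hence on $\Nc_S(r_{\Omega_n})\subset\Omega_n\subset\Omega$ — no cut-off is ever needed. It then considers the connected component $X_n$ of $\supp(R)\cap\overline{\Nc_S(r_{\Omega_n})}$ containing $x$: each $X_n$ meets $\supp(S)$, the $X_n$ form a decreasing family of compact connected sets contained in $\overline{(A_S)_{1/n}}$, so $X_\infty=\bigcap_n X_n$ is a compact connected subset of $\supp(R)\cap A_S$ containing $x$ and meeting $\supp(S)$. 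Only at this final stage is the hypothesis that $\{x\}$ is a connected component of $\supp(R)\cap A_S$ invoked, giving $X_\infty=\{x\}$ and hence $x\in\supp(S)$. In other words, where you try to isolate $x$ beforehand by a clopen splitting and a cut-off (which forces you to place $x$ inside the pseudo-orbit region after all the shrinkings), the paper keeps $R$ intact and lets a nested-intersection argument do the isolating in the limit; that is the step missing from your proposal.
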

\begin{proof}
By definition we have $\supp(S)\subset A_S.$ To prove that $A_S\subset\supp(S),$ let $x$ be in $A_S$ and assume there exist a neighborhood $\Omega$ of $A_S$ and a positive $\ddc$-closed $(k-l,k-l)$-current $R$ defined on $\Omega$ such that $x$ is an isolated point in $\supp(R)\cap A_S.$ As $\Omega\cap\Nc_S(r_U)$ is a neighborhood of the attracting set $A_S$, we can define by induction, for each $n\geq1,$ a decreasing family of trapping regions $(\Omega_n)_{n\geq1}$ such that $\Omega_n$ is included in $\Omega\cap\Nc_S(r_U)\cap(A_S)_{1/n}.$ In particular, $S$ is attractive on $\Omega_n.$ Therefore, by Proposition \ref{prop-intersection-hori-avec-courant-attractif} $\supp(S)$ intersects the connected component $X_n$ of $\overline{\Nc_S(r_{\Omega_n})}\cap\supp(R)$ which contains $x.$ These sets form a decreasing family of connected compact subsets of $\supp(R)$ which all intersect $\supp(S).$ Hence, $X_\infty:=\cap_{n\geq1}X_n$ is also a compact connected subset of $\supp(R)$ which intersects $\supp(S).$ Moreover, $X_n\subset\overline{(A_S)_{1/n}}$ thus $X_\infty\subset A_S.$ It follows that $X_\infty=\{x\}$ and $x\in\supp(S).$

The last point comes from the fact that if the Hausdorff dimension of $A_S$ is less than $2k-1$ then the Hausdorff dimension of a generic slice of $A_S$ by a line $H$ is less then $1$ and thus is totally disconnected. Hence, the result above applies with $R=[H].$
\end{proof}
This criteria applies to the following family.
\begin{example}\label{ex linear}
Let $g$ be in $\Hc_d(\Pb^s).$ If $k>s$ we can consider the endomorphism $f_0$ of $\Pb^k$ defined by
$$[z_0:\cdots:z_s:z_{s+1}:\cdots:z_k]\mapsto[g(z_0,\ldots,z_s):z_{s+1}^d:\cdots:z_k^d].$$
It has degree $d$ and the dimension $s$ linear spaces $L:=\{[z_0:\cdots:z_k]\in\Pb^k\ |\ z_{s+1}=\cdots=z_k=0\}$ is an attracting set for $f_0.$ Moreover, $f_0$ also preserves the pencil $\mathcal P$ of dimension $p$ linear spaces which contain $I:=\{[z_0:\cdots:z_k]\in\Pb^k\ |\ z_0=z_1=\cdots=z_s=0\}$ and pass through a point of $L.$ If $\rho>0$ is small enough, the set
$$U:=\left\{[z_0:\cdots:z_k]\in\Pb^k\ |\ \max_{s+1\leq i\leq k}|z_i|\leq\rho\max_{0\leq i\leq s}|z_i|\right\}$$
is a trapping region for $L$ such that the intersection of $U$ with an element of $\mathcal P$ is isomorphic to a ball in $\Cb^p.$ Let denote by $\Fc_{d,s}$ the set constituted by all $(g,R_{s+1},\ldots,R_k)$ where $g\in\Hc_d(\Pb^s)$ and $R_{s+1},\ldots,R_k$ are homogeneous polynomials of degree $d.$ For each $(g,R_{s+1},\ldots,R_k)\in\Fc_{d,s}$ and for $\epsilon=(\epsilon_{s+1},\ldots,\epsilon_k)\in\Cb^p$ we can associated to $\lambda:=(g,R_{s+1},\ldots,R_k,\epsilon)$ a map $f_\lambda$ which sends $[z_0:\cdots:z_s:z_{s+1}:\cdots:z_k]$ to
$$[g(z_0,\ldots,z_s):z_{s+1}^d+\epsilon_{s+1}R_{s+1}(z_0,\ldots,z_k):\cdots:z_k^d+\epsilon_{k}R_{k}(z_0,\ldots,z_k)].$$
If $\epsilon$ is close enough to $0$ then $f_\lambda$ is holomorphic, preserves the pencil $\mathcal P$ and admits $U$ as a trapping region. We denote by $A_\lambda$ the associated attracting set. If $\epsilon$ is small enough, following \cite[Proposition 2.17]{fs-example} we can prove that the Hausdorff dimension of $A_\lambda\cap H$ is strictly smaller that $1$ for every element $H$ in $\mathcal P.$ In particular, these sets are totally disconnected.
 On the other hand, there exists a unique attracting current $\tau_\lambda$ for $f_\lambda$ in $U.$ Therefore, Corollary \ref{cor-support-de-tau} implies that $A_\lambda=\supp(\tau_\lambda).$ Observe that in this situation, the existence of $\tau_\lambda$ can be deduced from \cite{d-attractor}.
\end{example}
\subsection{Families of attracting sets}
We will now consider holomorphic families of endomorphisms admitting a common trapping region $U\subset\Pb^k.$ To be more precise, let $M$ be a complex manifold and let $F\colon M\times\Pb^k\to\Pb^k$ be a holomorphic map such that $f_\lambda:=F(\lambda,.)$ has degree $d$ for all $\lambda\in M.$ We assume that there exists an open set $U$ such that $f_\lambda(U)\Subset U$ for all $\lambda\in M$ and we define $A_\lambda:=\cap_{n\geq0}f_\lambda^n(U).$ We are interested in the variation in the dynamics and the structure of $A_\lambda$ with $\lambda.$

The constant defined in Definition \ref{def-taille} depends on $\lambda$ and we denote it by $r_U(\lambda).$ For simplicity, we only consider the local setting where $M$ is biholomorphic to a ball and there exists $r_0>0$ such that $r_U(\lambda)\geq r_0.$ Moreover, if $M$ is chosen small enough then the two following properties are satisfied (see Lemma \ref{le transi} and above for notations):
\begin{itemize}
\item for all $\lambda,\lambda'\in M,$ $x\in\Pb^k$ and $\sigma\in B_W(r_0/2)$ there exists $\sigma'\in B_W(r_0)$ such that $f_\lambda\circ\sigma(x)=f_{\lambda'}\circ\sigma'(x),$
\item for all $\lambda,\lambda'\in M,$ if $V$ is an open subset such that $f_\lambda(V_{\eta(r_0/2)})\Subset V$ then $f_{\lambda'}(V)\Subset V.$
\end{itemize}

We first show that attracting currents form locally structural varieties. See \cite{ds-allupoly} and \cite{pham} for a similar approach for the equilibrium measure $\mu.$

\begin{theorem}\label{th-parametre}
Let $\lambda_0\in M.$ If $\tau_1(\lambda_0),\ldots,\tau_N(\lambda_0)$ denote the the attracting currents in $\Cc_p(U)$ with respect to $f_{\lambda_0}$ then there exist structural varieties $\{\tau_i(\lambda)\}_{\lambda\in M},$ $1\leq i\leq N,$ such that for all $\lambda\in M$ the set of attracting currents in $\Cc_p(U)$ with respect to $f_\lambda$ is exactly $\{\tau_1(\lambda),\ldots,\tau_N(\lambda)\}.$ In particular, the number of attracting currents in $\Cc_p(U)$ is constant.
\end{theorem}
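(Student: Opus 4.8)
The plan is to carry the construction of Theorem~\ref{th conv1} over from a single trapping region to the product $M\times\Pb^k$, using the structural variety formalism of Section~\ref{sec-pluri}, and to recover the family $\{\tau_i(\lambda)\}$ as the slices of the resulting closed currents. First I would fix $\lambda_0$, abbreviate $\tau_i:=\tau_i(\lambda_0)$ for $1\le i\le N$, and write $\Nc^\lambda_K(r)$ for the analogue of $\Nc_K(r)$ built with $f_\lambda$. By Lemma~\ref{le courant avec prop de cv}, $\tau_i$ is attractive with respect to $f_{\lambda_0}$ on $D_i:=\Nc^{\lambda_0}_{\tau_i}(r_0/2)$, and $D_i\subset U$ since $r_0/2\le r_U(\lambda_0)$; in particular $\tau_i$ is the only attracting current of $f_{\lambda_0}$ in $\Cc_p(D_i)$. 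As the $\Nc$-construction gives $f_{\lambda_0}(D_{i,\eta(r_0/2)})\Subset D_i$, the second uniformity property of the family yields $f_\lambda(D_i)\Subset D_i$ for every $\lambda\in M$; since $\varnothing\neq\Cc_p(D_i)$ and $\Cc_{p-1}(D_i)\subset\Cc_{p-1}(U)=\varnothing$, each $D_i$ is a codimension-$p$ trapping region for the whole family, with a rate of contraction uniform in $\lambda$. The first uniformity property makes the operations $K\mapsto\Nc^{\lambda}_K(r_0/2)$ and $K\mapsto\Nc^{\lambda_0}_K(r_0)$ comparable after a change of base point, and this, together with the idempotence of $K\mapsto\Nc^{\lambda_0}_K(r_0)$, will be used repeatedly below.

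Next I would rerun the maximization argument of Theorem~\ref{th conv1} inside $M\times D_i$, for the fibered endomorphism $\Fc(\lambda,z):=(\lambda,f_\lambda(z))$ and the Ces\`{a}ro operators attached to $d^{-s}\Fc_*$. All of its ingredients survive in the product: structural disks, the subharmonicity of their observables and the everywhere-defined slicing are provided by Theorem~\ref{th sh}; Proposition~\ref{prop regu} regularizes fibrewise by convolving in the $\Pb^k$-direction with $\aut(\Pb^k)$, uniformly over $M$ because $r_U(\lambda)\ge r_0$; and the proof's inductive construction of shrinking trapping regions goes through verbatim. This produces a single positive closed $(p,p)$-current $\cali T_i$ on $M\times D_i$ whose slice $\cali T_i(\lambda)$ is defined for every $\lambda$, lies in $\Cc_p(D_i)$ and is an attracting current of $f_\lambda$, and with $\cali T_i(\lambda_0)=\tau_i$ because $\tau_i$ is the unique attracting current of $f_{\lambda_0}$ in $\Cc_p(D_i)$. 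Setting $\tau_i(\lambda):=\cali T_i(\lambda)$ gives the structural varieties $\{\tau_i(\lambda)\}_{\lambda\in M}$, their pluripotential properties being exactly the content of Theorem~\ref{th sh}.

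It then remains to show that $\{\tau_1(\lambda),\dots,\tau_N(\lambda)\}$ is precisely the set of attracting currents of $f_\lambda$ in $\Cc_p(U)$ and that these $N$ currents are pairwise distinct for all $\lambda\in M$, which makes the count constant. The inclusion $\supseteq$ is part of the construction. For $\subseteq$, given an attracting current $\sigma$ of $f_\lambda$ in $\Cc_p(U)$, the set $\Nc^{\lambda_0}_{\supp\sigma}(r_0)$ is a codimension-$p$ trapping region of $f_{\lambda_0}$ inside $U$, so Theorem~\ref{th conv1} attaches to it an $f_{\lambda_0}$-attracting current, which must be some $\tau_i$; chasing the pseudo-orbit inclusions back through the first uniformity property places $\supp\sigma$ in $D_i$, whence $\sigma=\tau_i(\lambda)$ by uniqueness of the attracting current of $f_\lambda$ in $\Cc_p(D_i)$. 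Distinctness holds at $\lambda_0$ by hypothesis; for a general $\lambda_1$ I would argue by contradiction via Lemma~\ref{le preli a finitude des tau}, showing that a coincidence $\tau_i(\lambda_1)=\tau_j(\lambda_1)$ with $i\neq j$ forces $\Nc^{\lambda_1}_{\tau_i(\lambda_1)}(r_0)\cap\Jc_s(f_{\lambda_1})\cap\Nc^{\lambda_1}_{\tau_j(\lambda_1)}(r_0)\neq\varnothing$, and that — the $\tau_i(\lambda)$ forming structural varieties over the connected manifold $M$ — this propagates to a coincidence near $\lambda_0$, contradicting $\tau_i(\lambda_0)\neq\tau_j(\lambda_0)$.

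The step I expect to be the main obstacle is precisely this last one: controlling the slices of the $\cali T_i$ as $\lambda$ ranges over all of $M$ so that they neither collide with one another nor miss an attracting current of $f_\lambda$. The delicate point is that the Julia sets $\Jc_s(f_\lambda)$ governing the separation in Lemma~\ref{le preli a finitude des tau} vary only semicontinuously with $\lambda$, so the disjointness at $\lambda_0$ does not propagate automatically and one genuinely needs a semicontinuity-plus-connectedness argument on the parameter space, fed by the structural-variety property of the $\tau_i(\lambda)$. A lesser technical nuisance is to upgrade the weak convergence of the Ces\`{a}ro means on $M\times D_i$ to slicewise convergence at every $\lambda$ — needed to know that each $\tau_i(\lambda)$ is a genuine attracting current and to obtain the continuity used above — which is handled by Hartogs-type arguments on the subharmonic observables, in the spirit of Lemma~\ref{le domi-cons1}.
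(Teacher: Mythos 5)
Your plan diverges from the paper's proof at its core step, and that step has a genuine gap. You propose to rerun the whole maximization machinery of Theorem \ref{th conv1} on the product $M\times D_i$ for the skew product $(\lambda,z)\mapsto(\lambda,f_\lambda(z))$ and to read off the $\tau_i(\lambda)$ as slices of the resulting current; you assert the ingredients ``go through verbatim.'' They do not: the stabilization of the decreasing sequence of trapping regions rests on Lemma \ref{le stat1}, i.e.\ on the finite volume of $\Pb^k$, which has no analogue over the non-compact base $M$; the compactness and normalization of $\Cc_p$, Lemma \ref{le sepa} and Lemma \ref{le existence courant} are statements about currents on the compact weakly $p$-pseudoconvex set $\overline U\subset\Pb^k$, and transporting them to horizontal currents on $M\times U$ is the (nontrivial) horizontal-like theory, not a verbatim repetition. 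More seriously, even granting a product-level ``attracting current'' for the skew product, nothing in that construction certifies that its slice at \emph{every} $\lambda$ is an attracting current of $f_\lambda$: subharmonic/Hartogs arguments on observables $\lambda\mapsto\langle\cdot,\phi\rangle$ only identify the slices off a pluripolar set, and you dismiss the upgrade to all $\lambda$ as a ``lesser nuisance.'' In the paper this is exactly where the work is: one first constructs $\tau_i(\lambda)$ \emph{fiberwise}, by applying Theorem \ref{th conv1} to the $f_\lambda$-trapping region $\Nc^{\lambda_0}_{\tau_i(\lambda_0)}(r_0)$ and proving uniqueness of the $f_\lambda$-attracting current there via the auxiliary region $\Nc^\lambda_S(r_0/2)$, Lemma \ref{le courant avec prop de cv} and Lemma \ref{le preli a finitude des tau}; only then does one take the skew product, push forward $\pi^*R$, extract an $L^1$ limit of the p.s.h.\ observables, and pinch $u_\infty(\lambda)$ between $\langle\tau_i(\lambda),\phi\rangle$ from both sides using Remark \ref{rk max} applied to limit values of $\tau_i(\lambda')$, which are $f_\lambda$-invariant and supported in $\Nc^\lambda_{\tau_i(\lambda)}(r_0)$. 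Without the fiberwise step, that pinching argument is unavailable.

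Two further points of your sketch are off. For exactness (``$\subseteq$''), the inclusion $\supp\sigma\subset D_i$ is not obtained by ``chasing pseudo-orbit inclusions'': knowing that the $f_{\lambda_0}$-attracting current of $\Nc^{\lambda_0}_{\supp\sigma}(r_0)$ is some $\tau_i$ gives $\supp\tau_i$ inside a region built from $\supp\sigma$, which is the wrong direction; the paper gets exactness by symmetrizing the whole fiberwise uniqueness argument, exchanging the roles of $\lambda_0$ and $\lambda$. For distinctness of $\tau_1(\lambda),\dots,\tau_N(\lambda)$, your semicontinuity-plus-connectedness propagation is both unnecessary and unlikely to work: the slices are not known to depend continuously on $\lambda$ (the paper states explicitly after the theorem that continuity of these structural varieties is unknown), so a coincidence at some $\lambda_1$ need not ``propagate'' toward $\lambda_0$. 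In the paper distinctness is immediate from the construction: if $\tau_i(\lambda)=\tau_j(\lambda)=:S$ with $i\neq j$, then $\Nc^\lambda_S(r_0/2)$ is an $f_{\lambda_0}$-trapping region contained in both $\Nc^{\lambda_0}_{\tau_i(\lambda_0)}(r_0)$ and $\Nc^{\lambda_0}_{\tau_j(\lambda_0)}(r_0)$, so the $f_{\lambda_0}$-attracting current it carries would have to equal both $\tau_i(\lambda_0)$ and $\tau_j(\lambda_0)$, a contradiction. You should restructure the argument along these fiberwise lines rather than through a product-level rerun of Theorem \ref{th conv1}.
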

\begin{proof}
To each attracting current $\tau_i(\lambda_0)$ is associated a trapping region $\Nc^{\lambda_0}_{\tau_i(\lambda_0)}(r_0)$ on which $\tau_i(\lambda_0)$ is attractive by Lemma \ref{le courant avec prop de cv}. Here we add a symbol $\lambda_0$ to point out the dependence on the parameter. Let $\lambda\in M.$ The assumption on the size of $M$ ensures that $f_\lambda\circ\sigma(\Nc^{\lambda_0}_{\tau_i(\lambda_0)}(r_0))\Subset \Nc^{\lambda_0}_{\tau_i(\lambda_0)}(r_0)$ for all $\sigma\in B_W(r_0/2)$. Therefore, by Theorem \ref{th conv1} there is at least one attracting current $S$ in $\Cc_p(\Nc^{\lambda_0}_{\tau_i(\lambda_0)}(r_0))$ for $f_\lambda.$ We first show that $S$ is unique.

Since $f_\lambda\circ\sigma(\Nc^{\lambda_0}_{\tau_i(\lambda_0)}(r_0))\Subset \Nc^{\lambda_0}_{\tau_i(\lambda_0)}(r_0)$ for all $\sigma\in B_W(r_0/2),$ the set $\Nc^\lambda_S(r_0/2),$ defined with respect to $f_\lambda,$ is included in $\Nc^{\lambda_0}_{\tau_i(\lambda_0)}(r_0)$. Moreover, Lemma \ref{le courant avec prop de cv} implies that $S$ is attractive on $\Nc^\lambda_S(r_0/2).$ On the other hand, the assumption on $M$ implies $f_{\lambda_0}(\Nc^\lambda_S(r_0/2))\Subset\Nc^\lambda_S(r_0/2).$ Again, Theorem \ref{th conv1} states that $f_{\lambda_0}$ possesses an attracting current in $\Nc^\lambda_S(r_0/2)\subset\Nc^{\lambda_0}_{\tau_i(\lambda_0)}(r_0)$ which therefore has to be equal to $\tau_i(\lambda_0).$ In particular, $\tau_i(\lambda_0)\in\Cc_p(\Nc^\lambda_S(r_0/2)).$ Hence, $S$ has to be unique since by Lemma \ref{le preli a finitude des tau} $\Cc_p(\Nc^\lambda_S(r_0/2))\cap\Cc_p(\Nc^\lambda_{S'}(r_0/2))=\varnothing$ if $S'$ is another attracting current in $\Cc_p(\Nc^{\lambda_0}_{\tau_i(\lambda_0)}(r_0))$ for $f_\lambda.$ Thus, we can set $\tau_i(\lambda):=S.$

To show that these families of currents form structural varieties, first observe that since $\tau_i(\lambda_0)\in\Cc_p(\Nc^\lambda_{\tau_i(\lambda)}(r_0/2))$ we can take a regularization $R$ of $\tau_i(\lambda_0)$ in the tubular neighborhood $\supp(\tau_i(\lambda_0))_{\eta(r_0/2)}.$ Therefore, by Lemma \ref{le conv tronca} we have
$$\lim_{n\to\infty}\frac{1}{n}\sum_{i=1}^{n}\frac{1}{d^{si}}(f_\lambda^i)_*R=\tau_i(\lambda).$$
Define $\widetilde f\colon M\times\Pb^k\to M\times\Pb^k$ by $\widetilde f(\lambda,z)=(\lambda,f_\lambda(z)).$ If $\pi\colon M\times\Pb^k\to\Pb^k$ denotes the projection on the second coordinate then the currents $\mathcal R:=\pi^*R$ and $\mathcal R_n$ with
$$\mathcal R_n:=\frac{1}{n}\sum_{i=1}^{n}\frac{1}{d^{si}}(\widetilde f^i)_*\mathcal R$$
define structural varieties parametrized by $M$ and they are supported in $M\times U.$ Since $U$ is weakly $p$-pseudoconvex, the set of such currents is relatively compact. Let $(\mathcal R_{n_i})_{i\geq1}$ be a subsequence which converges to some current $\mathcal R_\infty.$ If $\phi$ belongs to $\Pc(U)$ then the sequence of p.s.h function $u_{n_i}(\lambda):=\langle\mathcal R_{n_i}(\lambda),\phi\rangle$ converges to $u_\infty(\lambda):=\langle\mathcal R_\infty(\lambda),\phi\rangle.$ Therefore, $u_\infty(\lambda)\geq\limsup_{i\to\infty}u_{n_i}(\lambda)=\langle\tau_i(\lambda),\phi\rangle,$ with equality outside a pluripolar subset $E$ of $M.$ On the other hand, since $u_\infty$ is p.s.h, it satisfies $u_\infty(\lambda)=\limsup_{\lambda'\to\lambda,\lambda'\notin E}u_\infty(\lambda').$ Moreover, any limit value of $\tau_i(\lambda')$ when $\lambda'$ converges to $\lambda$ is $f_\lambda$-invariant and supported on $\Nc^\lambda_{\tau_i(\lambda)}(r_0).$ Therefore, by Remark \ref{rk max} we have
$$u_\infty(\lambda)=\limsup_{\lambda'\to\lambda,\lambda'\notin E}u_\infty(\lambda')\leq\langle\tau_i(\lambda),\phi\rangle\leq u_\infty(\lambda).$$
Hence, $\langle\tau_i(\lambda),\phi\rangle=\langle\mathcal R_\infty(\lambda),\phi\rangle$ for all $\lambda\in M$ and all $\phi\in\Pc(U),$ i.e. $\mathcal R_\infty$ is a structural variety which slices at $\lambda$ is $\tau_i(\lambda).$

To conclude, observe that for $\lambda\in M$ we can exchange the role of $\lambda_0$ and $\lambda.$ It proves that the set of attracting currents in $\Cc_p(U)$ for $f_\lambda$ is exactly $\{\tau_1(\lambda),\ldots,\tau_N(\lambda)\}.$
\end{proof}
\begin{remark}
We do not know if these structural varieties are continuous. However, as we have observe in the proof, any limit value of $\tau_i(\lambda)$ when $\lambda$ converges to $\lambda_0$ is $f_{\lambda_0}$-invariant. Therefore, if $\tau_i(\lambda_0)$ is the unique $f_{\lambda_0}$-invariant current in $\Nc^{\lambda_0}_{\tau_i(\lambda_0)}(r_0)$ then the structural variety $\{\tau_i(\lambda)\}_{\lambda\in M}$ is continuous at $\lambda_0.$ The same holds if there exists a current $R$ in $\Cc_p(\Nc^{\lambda_0}_{\tau_i(\lambda_0)}(r_0))$ and $\phi\in\Pc(U)$ with $\ddc\phi>0$ on $U$ such that
$$\left\langle\frac{1}{n}\sum_{i=0}^{n-1}\frac{1}{d^{si}}(f_\lambda^i)_*R,\phi\right\rangle\xrightarrow[n\to\infty]{}\langle\tau_i(\lambda),\phi\rangle$$
uniformly on $M.$
\end{remark}

One important consequence of Theorem \ref{th-parametre} is that the Lyapunov exponents of the equilibrium measures can be used to define p.s.h functions on $M.$ For simplicity, assume there exists a unique attracting current $\tau(\lambda)$ in $\Cc_p(U)$ with respect to $f_\lambda.$ Let $\nu(\lambda)$ be the equilibrium measure associated to $\tau(\lambda)$ and let denote by $\chi_1(\lambda)\geq\cdots\geq\chi_k(\lambda)$ the Lyapunov exponents of $\nu(\lambda).$ We also consider the numbers $\widetilde\chi_1(\lambda)\geq\cdots\geq\widetilde\chi_{k+1}(\lambda)$ such that $\{\widetilde\chi_1(\lambda),\ldots,\widetilde\chi_{k+1}(\lambda)\}=\{\chi_1(\lambda),\ldots,\chi_k(\lambda),\log d\}.$
\begin{corollary}\label{cor-sum-lyap}
For each $1\leq l\leq k+1$ the function $L_l(\lambda):=\sum_{i=1}^l\widetilde\chi_i(\lambda)$ is plurisubharmonic or identically equal to $-\infty$ on $M.$ In particular, the same holds for $\lambda\mapsto\sum_{i=1}^k\chi_i(\lambda).$
\end{corollary}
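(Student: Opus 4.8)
The plan is to derive the result from the structural variety of attracting currents produced by Theorem~\ref{th-parametre}, combined with the standard description of the partial sums of Lyapunov exponents as limits of integrals of plurisubharmonic densities — the same mechanism used for the equilibrium measure $\mu$ by Bassanelli--Berteloot and de~Th\'elin(--Dinh), which adapts almost verbatim once the structural variety is available. First I would package the data into a single current over parameter space. Assume, as in the statement, that there is a unique attracting current $\tau(\lambda)$ in $\Cc_p(U)$; by Theorem~\ref{th-parametre} it is the slice at $\lambda$ of a positive closed $(p,p)$-current $\mathcal T$ on $M\times\Pb^k$ supported in $M\times U$. The Green $(1,1)$-currents $T_\lambda$ patch into a positive closed $(1,1)$-current $\mathcal G$ on $M\times\Pb^k$ with continuous local potentials (the joint potential being a decreasing limit of functions plurisubharmonic in $(\lambda,z)$, hence jointly continuous), whose slice at $\lambda$ is $T_\lambda$. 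Since $\mathcal G$ has continuous potentials, $\mathcal N:=\mathcal T\wedge\mathcal G^{s}$ is a well-defined positive closed current on $M\times\Pb^k$, and — slicing commuting with wedging by a current with continuous potential — its slice at $\lambda$ is exactly $\nu(\lambda)=\tau(\lambda)\wedge T_\lambda^{s}$.

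Next I would bring in the formula for the partial sums of Lyapunov exponents. The measure $\nu(\lambda)$ is ergodic (indeed mixing, by Theorem~\ref{th-nu-mixing}), so Oseledets' theorem applies. I would pass to the homogeneous lift $F_\lambda$ of $f_\lambda$ on $\Cb^{k+1}$, whose tangent bundle is trivial, and to the circle-invariant lift $\hat\nu(\lambda)$ of $\nu(\lambda)$, supported on the $F_\lambda$-invariant compact set $\{G_\lambda=0\}$ (here $G_\lambda$ is the Green function of $F_\lambda$). The Lyapunov spectrum of $F_\lambda$ with respect to $\hat\nu(\lambda)$ is precisely $\{\chi_1(\lambda),\dots,\chi_k(\lambda),\log d\}$, the extra exponent $\log d$ being the radial one, by Euler's identity $D F_\lambda(z)\cdot z=d\,F_\lambda(z)$ and the fact that $\log\|z\|$ is bounded on $\{G_\lambda=0\}$. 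Hence, by Oseledets and Birkhoff,
$$L_l(\lambda)=\lim_{n\to\infty}\frac1n\int\log\bigl\|\bigwedge^{l}D_zF_\lambda^{n}\bigr\|\,d\hat\nu(\lambda)(z).$$
The whole point of the lift is that $u_n(\lambda,z):=\log\|\bigwedge^{l}D_zF_\lambda^{n}\|$ is plurisubharmonic in $(\lambda,z)$, being the logarithm of the operator norm of a matrix of polynomials in $(\lambda,z)$; on $\Pb^k$ itself the analogous Fubini--Study expression is plurisubharmonic only up to a negative curvature term, which is exactly why one homogenises. Finally I would assemble the measures $\hat\nu(\lambda)$ into a positive closed current $\widehat{\mathcal N}$ on $M\times\Cb^{k+1}$ carrying them as slices, built as a Monge--Amp\`ere-type product of the lift of $\mathcal T$, the powers of $\ddc_zG_\lambda$, and a cutoff supported near $\{G_\lambda=0\}$.

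The plurisubharmonicity then follows from a general principle: for a positive closed current $\mathcal S$ on a product $M\times Y$, horizontal over $M$, and a plurisubharmonic $u$ on $M\times Y$, the function $\lambda\mapsto\langle\mathcal S_\lambda,u(\lambda,\cdot)\rangle$ is plurisubharmonic or identically $-\infty$, since $\ddc$ of it equals $\pi_{M*}(\mathcal S\wedge\ddc u)\geq0$; one first proves this for the bounded truncations $\max(u,-C)$, where Bedford--Taylor theory makes the wedge product legitimate, and then lets $C\to\infty$ along a decreasing limit. Applied with $\mathcal S=\widehat{\mathcal N}$ and $u=u_n$, this shows that $\lambda\mapsto\frac1n\langle\widehat{\mathcal N}_\lambda,u_n\rangle$ is plurisubharmonic for every $n$. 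Then the subadditivity $\int u_{n+m}\,d\hat\nu\leq\int u_n\,d\hat\nu+\int u_m\,d\hat\nu$ makes the sequence plurisubharmonic along $n=2^{j}$ decreasing (because $a_{2n}\leq 2a_n$), so its limit — equal, by Fekete's lemma, to $\lim_n\frac1n a_n(\lambda)=L_l(\lambda)$ — is plurisubharmonic or identically $-\infty$ on the connected manifold $M$. Taking $l=k+1$, so $L_{k+1}(\lambda)=\sum_{i=1}^{k}\chi_i(\lambda)+\log d$, and subtracting the constant $\log d$ gives the last assertion. When there are several attracting currents $\tau_1(\lambda),\dots,\tau_N(\lambda)$, one runs this argument for each of them, Theorem~\ref{th-parametre} supplying the corresponding structural varieties.

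The main obstacle in a self-contained proof is the precise construction and handling of the parameter current $\widehat{\mathcal N}$ in the non-compact homogeneous picture — equivalently, transporting the intrinsic Fubini--Study expression for $L_l$ to the flat expression on $\Cb^{k+1}$ while tracking the cutoff and potential corrections; this is exactly where the extra exponent $\log d$ is accounted for, and where a genuine negative curvature contribution would otherwise appear. Since, once Theorem~\ref{th-parametre} is granted, this step is formally identical to the treatment of $\mu$, the economical route is to invoke the arguments of Bassanelli--Berteloot and de~Th\'elin(--Dinh) almost word for word, substituting $\nu_\tau$ for $\mu$.
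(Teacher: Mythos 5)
Your overall strategy is the same as the paper's: pass to the homogeneous lifts $F_\lambda$ on $\Cb^{k+1}$ so that $\log\|\bigwedge^l D_zF_\lambda^n\|$ becomes genuinely p.s.h.\ in $(\lambda,z)$, integrate against a family of lifted measures that depends plurisubharmonically on $\lambda$, identify the extra radial exponent $\log d$ via homogeneity, and pass to the limit by the submultiplicativity/Fekete argument of Pham. Where you differ is in how the lifted measures are produced. You take the circle-invariant lift $\hat\nu(\lambda)$ on $\{G_\lambda=0\}$ and propose to realize the family as a Monge--Amp\`ere-type product of the lift of the structural variety $\mathcal T$ with powers of $\ddc G_\lambda^+$, in the spirit of the treatment of $\mu$. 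The paper instead stays projective: it checks that $\widetilde U:=\pi^{-1}(U)\cup B(0,r)\subset\Pb^{k+1}$ is a codimension $p$ trapping region for the extension of $F_\lambda$, applies Theorem \ref{th-parametre} \emph{upstairs} to get a structural variety of attracting currents $\widetilde\tau(\lambda)$, sets $\widetilde\nu(\lambda):=\widetilde\tau(\lambda)\wedge\widetilde T^{s+1}(\lambda)$ (p.s.h.\ dependence then comes for free from the continuity of the potentials of $\widetilde T(\lambda)$), and only then proves $\pi_*\widetilde\nu(\lambda)=\nu(\lambda)$ by an explicit equidistribution computation using Lemmas \ref{le conv tronca} and \ref{le conti ts} together with ergodicity of $\nu(\lambda)$. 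The advantage of the paper's route is that it reuses the machinery already established (no wedge products with unbounded potentials, no cutoffs near $\{G_\lambda=0\}$); the advantage of yours is that the lifted family is described by an explicit closed formula.

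The one point where your write-up is genuinely thinner than the paper is exactly the step you flag as ``the main obstacle'': the construction of $\widehat{\mathcal N}$ and the verification that its slices are circle-invariant lifts of $\nu(\lambda)$ with the correct normalization. This is \emph{not} quite a word-for-word transcription of the $\mu$ case: for $\mu$ the lift is canonical, $(\ddc G_\lambda^+)^{k+1}$, and no auxiliary current is needed, whereas here you must make sense of $\pi^*\mathcal T\wedge(\ddc G^+)^{s+1}$ on $M\times\Cb^{k+1}$ (extension of $\pi^*\tau(\lambda)$ through the origin, Bedford--Taylor wedge with the continuous potential $G^+$, a slicing-commutes-with-wedging argument) and then prove a projection formula identifying the slice with a lift of $\tau(\lambda)\wedge T_\lambda^{s}$ -- this is the precise analogue of the paper's computation \eqref{eq-pasbelle}, and it is where the real work lies in either approach. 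So the plan is viable, but to make it a proof you would have to carry out that identification rather than delegate it to the $\mu$-literature; alternatively, you can shortcut it as the paper does, by lifting the attracting current itself via Theorem \ref{th-parametre} applied in $\Pb^{k+1}$.
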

\begin{proof}
The lift $F_\lambda\colon\Cb^{k+1}\to\Cb^{k+1}$ of $f_\lambda$ extends holomophically to $\Pb^{k+1}.$ We still denote by $0$ the origin of $\Cb^{k+1}\subset\Pb^{k+1}.$ If $\pi\colon\Pb^{k+1}\setminus\{0\}\to\Pb^k$ is the canonical projection then $\pi\circ F_\lambda=f_\lambda\circ\pi.$ Since $U$ is a trapping region for $f_\lambda,$ it is easy to check that $\widetilde U:=\pi^{-1}(U)\cup B(0,r)$ in $\Pb^{k+1}$ is a trapping region for $F_\lambda$ if $r>0$ is small enough. It has codimension $p.$ Indeed, if $R$ belongs to $\Cc_p(U)$ then $\pi^*R$ extends to an element of $\Cc_p(\widetilde U).$ Hence $\Cc_p(\widetilde U)\neq\varnothing.$ On the other hand, the origin $0$ is super-attractive thus if $\widetilde T(\lambda)$ denotes the Green current of $F_\lambda$ and if $\widetilde R\in\Cc_q(\widetilde U)$ then $\widetilde R\wedge\widetilde T(\lambda)$ is supported in a compact subset of $\pi^{-1}(U).$ Therefore, $\pi_*(\widetilde R\wedge\widetilde T(\lambda))$ is a well-defined element of $\Cc_q(U)$ which implies that $q\geq p,$ i.e. $\widetilde U$ has codimension $p.$ Hence, by Theorem \ref{th-parametre} there exists a structural variety $\{\widetilde\tau(\lambda)\}_{\lambda\in M}$ in $\Cc_p(\widetilde U)$ such that $\widetilde\tau(\lambda)$ is an attracting current for $F_\lambda.$ Moreover, since the intersection of $\widetilde U$ with a small neighborhood of the hyperplane at infinity is a trapping region of dimension $s,$ it doesn't intersect $\supp(\widetilde T^{s+1}(\lambda)).$ Hence, the measure $\widetilde\nu(\lambda):=\widetilde\tau(\lambda)\wedge\widetilde T^{s+1}(\lambda)$ is supported in a fixed compact of $\Cb^{k+1}$ and for each $n\geq1$ and $1\leq l\leq k+1$ we can consider the functions
$$L_{n,l}(\lambda):=\langle\widetilde\nu(\lambda),n^{-1}\log\|\wedge^lD_xF^n_\lambda\|\rangle.$$
Since $\{\widetilde T(\lambda)\}_{\lambda\in M}$ is a structural variety with continuous potential (cf. \cite[Remark 1.33]{ds-lec}) the measures $\widetilde\nu(\lambda)$ also form a structural variety. Thus,  $L_{n,l}$ are p.s.h on $M$ or identically equal to $-\infty.$ Following the proof of \cite[Theorem 2.2]{pham}, we obtain that $L_l(\lambda):=\lim_{n\to\infty}L_{n,l}(\lambda)$ is equal to the sum of the $l$ largest Lyapunov exponents of $\widetilde\nu(\lambda)$ and is p.s.h (or equal to $-\infty).$ To prove that $L_l(\lambda)$ is equal to $\sum_{i=1}^l\widetilde\chi_i(\lambda),$ observe that $F_\lambda$ preserves the pencil of lines passing through $0.$ Its action on it is naturally identified to $f_\lambda$ and the action restricted to a line is of the form $t\mapsto t^d.$ Therefore, it is sufficient to prove that $\pi_*\widetilde\nu(\lambda)=\nu(\lambda).$

To this purpose, let $\widetilde\omega$ be a smooth element in $\Cc_1(\Pb^{k+1}\setminus B(0,r))$ and let $\widetilde\chi$ be a positive smooth function with compact support in $\Pb^{k+1}\setminus\{0\}$ which is equal to $1$ on $\Pb^{k+1}\setminus B(0,r).$ The equality $f_\lambda\circ\pi=\pi\circ F_\lambda$ gives for a current $R\in\Cc_p(U)$
\begin{align}\label{eq-pasbelle}
&\pi_*\left(\frac{1}{n}\sum_{i=1}^n\frac{F^i_{\lambda*}\pi^*R}{d^{i(s+1)}}\wedge \frac{F_\lambda^{m*}(\widetilde\omega\wedge\pi^*\omega^s)}{d^{m(s+1)}}\right)=\left(\pi_*\frac{1}{n}\sum_{i=1}^n\frac{F^i_{\lambda*}\pi^*R}{d^{i(s+1)}}\wedge\frac{F_\lambda^{m*}\widetilde\omega}{d^m}\right)\wedge\frac{f_\lambda^{m*}\omega^s}{d^{ms}}\notag\\
&=\frac{1}{n}\sum_{i=1}^n\pi_*\frac{F_{\lambda*}^{i}}{d^{is}}\left(\pi^*R\wedge \frac{F_\lambda^{(m+i)*}\widetilde\omega}{d^{(m+i)}}\right)\wedge\frac{f_\lambda^{m*}\omega^s}{d^{ms}}=\frac{1}{n}\sum_{i=1}^n\frac{f_{\lambda*}^{i}}{d^{is}}R\wedge\frac{f_\lambda^{m*}\omega^s}{d^{ms}}.
\end{align}
When $m=n$ go to infinity, by Lemma \ref{le conti ts}, the last term converges to $\nu_\lambda.$ On the other hand, since $\widetilde\omega$ has compact support in $\Pb^{k+1}\setminus B(0,r)$ and $0$ is super-attractive, the first term is equal to
$$\pi_*\left(\frac{1}{n}\sum_{i=1}^n\frac{F^i_{\lambda*}\widetilde\chi\pi^*R}{d^{i(s+1)}}\wedge \frac{F_\lambda^{m*}(\widetilde\omega\wedge\pi^*\omega^s)}{d^{m(s+1)}}\right)$$
where $\widetilde\chi\pi^*R$ is a smooth current in $\Cc_p(\widetilde U).$ If $R$ is chosen to be strictly positive on $\supp(\pi_*\widetilde\tau(\lambda)\wedge\widetilde T(\lambda))$ then each limit value $\widetilde R_\infty$ of $n^{-1}\sum_{i=1}^nd^{-i(s+1)}F_{\lambda*}^i\widetilde\chi\pi^*R$ satisfies $\widetilde R_\infty\geq c\widetilde\tau(\lambda)$ for some $c>0.$ Therefore, the equation \eqref{eq-pasbelle} implies that $\nu(\lambda)\geq c\pi_*(\widetilde\tau(\lambda)\wedge\widetilde T^{s+1}(\lambda)).$ Since the right hand side is invariant and $\nu(\lambda)$ is ergodic, we obtain the desired equality.
\end{proof}
We have several remarks about this construction. First, these p.s.h functions detect when the attracting set is critical. To be more precise, $\{L_{k+1}=-\infty\}$ corresponds to parameters where $\nu(\lambda)$ has at least one exponent equal to $-\infty$ while for parameters in $\{L_{s+2}=-\infty\}$ the measure $\nu(\lambda)$ has exactly $p$ exponents equal to $-\infty.$ We also have that on the open set $\{L_{s+2}<0\}$ the measure $\nu(\lambda)$ is hyperbolic. As observe by Pham \cite[Corollary 2.6]{pham}, for each $\delta\in\Rb$ the functions $L_{l,\delta}(\lambda):=\sum_{i=1}^l\max\{\widetilde\chi_i(\lambda),\delta\}$ are also p.s.h. Moreover, all the constructions in this subsection can be extended to larger parameter spaces $M\subset M'$ where the trapping region $U$ is not necessary preserved. In this case, the slices of the structural varieties defined in Theorem \ref{th-parametre} might not be attractive. However, the above functions are still p.s.h on $M'$ and if we know that the measure $\nu(\lambda)$ is always hyperbolic when $U$ is preserved then the function $L_{s+2,0}$ would satisfy
$$L_{s+2,0}(\lambda)=\log d+\sum_{i=1}^s\chi_i(\lambda)+\max(\chi_{s+1}(\lambda),0),$$
and would be a good way to identify the parameters for which the attracting set associated to $U$ ``bifurcates''. In particular, we can recover in this way the classical bifurcation locus when $k=1.$ Finally, let us notice that for an arbitrary endomorphism $f$ of $\Pb^k$ we can consider a limit value $R_\infty$ of $n^{-1}\sum_{i=1}^nd^{-is}f^i_*\omega^p.$ By Lemma \ref{rk-domi}, this current dominates all attracting currents of $f$ in $\Cc_p(\Pb^k)$ and it may be used to study them in family. In particular, it seems easy to prove that outside a plutipolar subset of $\Hc_d(\Pb^k)$ all the equilibrium measures considered in this paper have finite Lyapunov exponents.

It should exist several relations between the functions $L_l$ and the dynamics or the bifurcation of $f_\lambda$ which deserve to be study in the future.

We conclude this section with a remark about the dependency of an attracting set on the parameter.
\begin{proposition}
Assume there exists a unique attracting current $\tau(\lambda)$ in $\Cc_p(U)$ with respect to $f_\lambda.$ Let denote by $A_\lambda$ the attracting set associated to $U$ with respect to $f_\lambda.$ If $\lambda\mapsto\tau(\lambda)$ is continuous at $\lambda_0$ and if $A_{\lambda_0}=\supp(\tau(\lambda_0))$ then the map $\lambda\mapsto A_\lambda$ is continuous at $\lambda_0.$ 
\end{proposition}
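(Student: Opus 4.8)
The plan is to prove continuity of $\lambda\mapsto A_\lambda$ for the Hausdorff metric on compact subsets of $\Pb^k$ by establishing two one-sided estimates: that $A_\lambda$ does not explode and that $A_{\lambda_0}$ does not implode, as $\lambda\to\lambda_0$. Fixing $\epsilon>0$, I would aim to show $d_H(A_\lambda,A_{\lambda_0})\le\epsilon$ for $\lambda$ in a neighbourhood of $\lambda_0$.

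For the first estimate, $A_\lambda\subset(A_{\lambda_0})_\epsilon$, I would use only the common trapping region, not $\tau$. As $f_{\lambda_0}(U)\Subset U$ one has $f_{\lambda_0}(\overline U)\subset U$, so the compact sets $f_{\lambda_0}^n(\overline U)$ decrease and $\bigcap_n f_{\lambda_0}^n(\overline U)=\bigcap_n f_{\lambda_0}^n(U)=A_{\lambda_0}$; hence there is $N$ with $f_{\lambda_0}^N(\overline U)\subset(A_{\lambda_0})_\epsilon$. Since $F$ is holomorphic, $f_\lambda^N\to f_{\lambda_0}^N$ uniformly on $\overline U$ as $\lambda\to\lambda_0$, so $f_\lambda^N(\overline U)\subset(A_{\lambda_0})_\epsilon$ for $\lambda$ close to $\lambda_0$, and therefore $A_\lambda\subset f_\lambda^N(U)\subset(A_{\lambda_0})_\epsilon$.

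For the second estimate, $A_{\lambda_0}\subset(A_\lambda)_\epsilon$, I would bring in both hypotheses. First I would record that $\supp(\tau(\lambda))\subset A_\lambda$ for every $\lambda$: since $\tau(\lambda)\in\Cc_p(U)$ is an attracting current it is $f_\lambda$-invariant, so $f_\lambda(\supp(\tau(\lambda)))=\supp(\tau(\lambda))$; iterating the inclusion $\supp(\tau(\lambda))\subset U$ gives $\supp(\tau(\lambda))=f_\lambda^n(\supp(\tau(\lambda)))\subset f_\lambda^n(U)$ for all $n$, whence $\supp(\tau(\lambda))\subset A_\lambda$. Next, the support map $R\mapsto\supp(R)$ is lower semicontinuous for the weak topology (as used in the proof of Lemma~\ref{le union}); combined with compactness of $\supp(\tau(\lambda_0))$ and the assumed continuity of $\lambda\mapsto\tau(\lambda)$ at $\lambda_0$, a finite subcover argument yields $\supp(\tau(\lambda_0))\subset(\supp(\tau(\lambda)))_\epsilon$ for $\lambda$ close to $\lambda_0$. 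Using the hypothesis $A_{\lambda_0}=\supp(\tau(\lambda_0))$ and the inclusion above, $A_{\lambda_0}\subset(\supp(\tau(\lambda)))_\epsilon\subset(A_\lambda)_\epsilon$, and putting the two estimates together finishes the proof.

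The argument is short, so I do not expect a serious obstacle; the one point requiring care is the second estimate. Upper semicontinuity of $\lambda\mapsto A_\lambda$ holds unconditionally, but without extra input an attracting set can genuinely shrink under perturbation --- as in the superattracting example recalled at the start of this subsection --- so it is precisely the identification $A_{\lambda_0}=\supp(\tau(\lambda_0))$, the weak continuity of $\tau(\lambda)$, and the basic inclusion $\supp(\tau(\lambda))\subset A_\lambda$ that forbid this. The only technical check is that lower semicontinuity of $\supp$ does upgrade, by compactness of $\supp(\tau(\lambda_0))$, to the uniform neighbourhood inclusion $\supp(\tau(\lambda_0))\subset(\supp\tau(\lambda))_\epsilon$.
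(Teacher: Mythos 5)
Your proof is correct and follows essentially the same route as the paper: upper semicontinuity of $\lambda\mapsto A_\lambda$ (which you spell out via the iterate $f_{\lambda_0}^N(\overline U)\subset(A_{\lambda_0})_\epsilon$ and uniform convergence of $f_\lambda^N$), combined with lower semicontinuity of the support map, the inclusion $\supp(\tau(\lambda))\subset A_\lambda$, and the hypothesis $A_{\lambda_0}=\supp(\tau(\lambda_0))$. The paper's proof is just a terser statement of exactly these two estimates.
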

\begin{proof}
The fact that attracting sets depend upper semicontinuously of the parameter is already true for continuous maps. On the other hand, the map $S\mapsto\supp(S)$ is lower semicontinuous for currents and $\supp(\tau(\lambda))\subset A_\lambda.$ Therefore, if $A_{\lambda_0}=\supp(\tau(\lambda_0))$ then $\lambda\mapsto A_\lambda$ has to be continuous at $\lambda_0.$
\end{proof}

In particular, this result applies to Example \ref{ex linear}. To be more precise, we have already seen that if $\epsilon\in\Cb^p$ is close enough to $0$ then the attracting set $A_\lambda$ defined in Example \ref{ex linear} has a unique attracting current $\tau(\lambda)$ such that $A_\lambda=\supp(\tau(\lambda)).$ Moreover, if $\epsilon$ is close enough to $0$ then the map $f_\lambda$ satisfies the second condition in Theorem \ref{th vitesse cas particulier}. Hence, the current $\tau(\lambda)$ is the unique invariant current in $U$ for $f_\lambda$ and thus $\lambda\mapsto\tau(\lambda)$ is continuous. Therefore, if $M$ is a small enough neighborhood of $\Fc_{d,s}\times\{0\}$ in $\Fc_{d,s}\times\Cb^p$ then $A_\lambda$ moves continuously on $M.$ Observe that since $A_\lambda=\supp(\tau(\lambda)),$ it implies that $\supp(\tau(\lambda))$ depend continuously of $\lambda.$ But it is not the case for $\supp(\nu_{\tau(\lambda)}).$ To see this, it is sufficient to choose an element $g_0\in\Hc_d(\Pb^s)$ such that the support of the equilibrium measure $\mu_g$ is not continuous at $g_0.$ For $\lambda:=(g,0,\ldots,0,0),$ $A_\lambda$ is equal to the linear space $L\simeq\Pb^s$ and $\nu_{\tau(\lambda)}$ can be identifies with $\mu_g.$ Therefore, the map $\lambda\mapsto\supp(\nu_{\tau(\lambda)})$ is not continuous at $\lambda_0:=(g_0,0,\ldots,0,0).$ Moreover, the same arguments where $g_0$ is a Lattès map show that it is not possible to follow continuously minimal quasi-attractors on $M.$

\section{Quasi-attractors and open questions}\label{seq-qa}
In this last section, we first consider the case of quasi-attractors. Indeed, using Lemma \ref{le courant avec prop de cv} we could transpose easily all our results about attracting currents to this case. Recall that, following \cite{hurley}, a quasi-attractor is an intersection of attracting sets. Such a set is said to be minimal if it is minimal for the inclusion in the set of all quasi-attractors. If $A$ is a quasi-attractor in $\Pb^k$ then we can define the dimension of $A$ exactly as in Definition \ref{def-dim}. We can now give the proof of the three corollaries stated in the introduction.

\begin{proof}[Proof of Corollary \ref{cor-qa-ca}]
When $A$ has dimension $s$ then, as we said in the introduction, there exists a decreasing sequence $(A_i)_{i\geq0}$ of attracting sets, all of dimension $s,$ such that $A=\cap_{i\geq0}A_i.$ Let $\Ac_p(A_i)$ be the set of attracting currents in $\Cc_p(A_i).$ By Theorem \ref{th finitude des tau}, these sets are finite and non-empty. The fact that $A_{i+1}\subset A_i$ implies $\Ac_p(A_{i+1})\subset\Ac_p(A_i)$ and therefore, there exists $i_0\geq0$ such that $\Ac_p(A_i)=\Ac_p(A_{i_0})$ if $i\geq i_0.$ In particular, $\supp(\tau)\subset\cap_{i\geq i_0}A_i$ for all $\tau\in\Ac_p(A_{i_0})$ i.e. all the attracting currents in $\Cc_p(A_{i_0})$ are supported in $A.$ The first point in the corollary follows.

To obtain the correspondence, assume there exist two different attracting currents $\tau_1$ and $\tau_2$ in $\Cc_p(A).$ If we denote by $A_{i_0,\tau_j}$ the irreducible component of $A_{i_0}$ associated to $\tau_j,$ $j\in\{1,2\},$ then $K_j:=A\cap A_{i_0,\tau_j}$ is a quasi-attractor of dimension $s$ such that $\Cc_p(K_1)\cap\Cc_p(K_2)=\varnothing.$ Therefore, if $A$ supports more than one attracting current, it is not minimal in the set of quasi-attractors of dimension $s.$ Hence, if we associate to each attracting current $\tau$ in $\Cc_p(A)$ the intersection $K_\tau$ of all quasi-attractors of dimension $s$ which contain $\supp(\tau),$ we obtain the desired correspondence.

Finally, it is a general result that there exist only countably many attracting sets. Each of them supports only finitely many attracting currents of maximal bidimension. Therefore, there are only countably many attracting currents. As a quasi-attractor which is minimal has to be minimal in some dimension, it supports a unique attracting current of maximal bidimension which characterizes it. Thus, the set of minimal quasi-attractors is at most countable.
\end{proof}

\begin{remark}\label{rk-qa-as}
Let $K$ be a minimal quasi-attractor of dimension $s$ and let $\tau$ be the associated attracting current. If there exists an attracting set $A$ which contains $\supp(\tau)$ and such that its irreducible component $A_\tau$ associated to $\tau$ satisfies $A_\tau=\supp(\tau)$ then $\supp(\tau)\subset K\subset A_\tau=\supp(\tau),$ i.e. $K$ has to be equal to the attracting set $A_\tau.$ Therefore, a study of the relation between an attracting set and the associated irreducible component can be a way to prove that all quasi-attractors are indeed attracting sets. In particular, by Corollary \ref{cor-support-de-tau} if $s=k-1$ and $K=\cap_{i\geq0}A_i$ is not an attracting set then the Hausdorff dimension of each $A_i$ is larger or equal to $2k-1.$
\end{remark}

\begin{proof}[Proof of Corollary \ref{cor-qa-nu}]
The proof that $\nu_\tau$ is ergodic is identical to the one of Theorem \ref{th-nu-mixing} but with Ces{\`a}ro means. The other points are direct consequences of Proposition \ref{prop tau toujours extremal}, Theorem \ref{th-nu-mixing}, Theorem \ref{th-nu-entropy} and Theorem \ref{th-nu-lyap}.
\end{proof}

\begin{proof}[Proof of Corollary \ref{cor-qa-final}]
Let $K$ be a minimal quasi-attractor of dimension $s$ for $f.$ Assume that $K$ is not minimal with respect to $f^n$ and let $K'\subset K$ be a minimal quasi-attractor for $f^n.$ Using the minimality of $K$ for $f,$ it is easy to see that $f^i(K')$ is also a minimal quasi-attractor for $f^n$ for all $1\leq i\leq n-1$ and $K=\cup_{i=0}^{n-1}f^i(K').$ Moreover, if $n$ is chosen minimal in order to have $f^n(K')=K'$ then the sets $f^i(K')$ are pairwise disjoint for $0\leq i\leq n-1.$ In particular, if a minimal quasi-attractor of dimension $s$ splits, all its components also has to be of dimension $s.$

On the other hand, if $U$ is a trapping region of dimension $s$ which contains $K$ then by Theorem \ref{th finitude des tau}, the number of attracting currents in $\Cc_p(U)$ for $f^n$ is uniformly bounded on $n.$ As a minimal quasi-attractor of dimension $s$ for $f^n$ supports such a current, their cardinality is also uniformly bounded on $n.$ Therefore, there exists a minimal $n_0\geq1$ such that $K$ splits into a maximal number of minimal quasi-attractors for $f^{n_0},$ $K=K_1\cup\cdots\cup K_{n_0}.$ In particular, each of them has to be of dimension $s$ and must be minimal for all iterates of $f^{n_0}.$ As we have seen, the sets $K_i$ are pairwise disjoint with $K_i=f^{i-1}(K_1).$ Each of then supports an attracting current $\tau_i$ with respect to $f^{n_0}$ and it is easy to check that $\tau_i=\Lambda^{i-1}\tau_1.$ Moreover, the fact that $K_i$ is minimal for all iterates of $f^{n_0}$ implies that $\tau_i$ is attractive for all iterates. From now on, for simplicity, let exchange $f$ by $f^{n_0}.$ Then, Theorem \ref{th convergence dans V} implies that for each $1\leq i\leq n_0$ there exists a trapping region $U_{K_i}$ of dimension $s$ such that for all continuous form $R\in\Cc_p(U_{K_i})$ we have $\tau_i=\lim_{n\to\infty}\Lambda^n R.$ In particular, $K_i$ intersects $U_{K_i}$ and thus $K_i\subset U_{K_i}$ by minimality. The properties of $\nu_{\tau_i}$ come from Corollary \ref{cor-qa-nu}.
\end{proof}

\subsection{Open questions}
To conclude, we ask the following questions. Some of them were already in \cite{d-attractor}. The first three ones are fundamental to understand the structure of quasi-attractors.
\begin{question}\label{q-qa-as}
Can there exist an endomorphism of $\Pb^k$ with a minimal quasi-attractor which is not an attracting set?
\end{question}
\begin{question}
Is an attracting set the union of its irreducible components defined in Definition \ref{def-irreducible-compo}?
\end{question}
As we observed in Remark \ref{rk-qa-as}, a positive answer to the next question will give a negative one to Question \ref{q-qa-as}.
\begin{question}
Let $A$ be an attracting set. Let $\tau$ be an attracting current supported on $A.$ Is the irreducible component of $A$ associated to $\tau$ equal to $\supp(\tau)$?
\end{question}

\begin{question}\label{q-hyp}
Do the equilibrium measures $\nu_\tau$ always hyperbolic?
\end{question}
If it is the case then the Closing Lemma of Katok (cf. \cite{dethelin-nguyen} for a version available in our setting) implies that each attracting set possesses a periodic orbit. A weak version of Question \ref{q-qa-as} should therefore be the following.
\begin{question}
Is there an endomorphism of $\Pb^k$ with a quasi-attractor which possesses no periodic orbit?
\end{question}
\begin{question}\label{q-unique}
Let $U$ be a trapping region of dimension $s$ with a unique attracting current $\tau.$ Is $\tau$ the unique invariant current in $\Cc_p(U)$? Is $\nu_\tau$ the unique measure of maximal entropy in $U$?
\end{question}
\begin{question}
Does the convergence towards an attracting current always happen with exponential speed?
\end{question}
This last question implies Question \ref{q-hyp} by Theorem \ref{th nu hyp} and Question \ref{q-unique} by \cite{daurat-2}.

\bibliographystyle{alpha}

\begin{thebibliography}{Yom87}

\bibitem[BD02]{bd-elliptic}
A.~Bonifant and M.~Dabija.
\newblock Self-maps of {${\mathbb P}\sp 2$} with invariant elliptic curves.
\newblock In {\em Complex manifolds and hyperbolic geometry ({G}uanajuato,
  2001)}, volume 311 of {\em Contemp. Math.}, pages 1--25. Amer. Math. Soc.,
  Providence, RI, 2002.

\bibitem[BLS93]{bls-4}
E.~Bedford, M.~Lyubich, and J.~Smillie.
\newblock Polynomial diffeomorphisms of {${\mathbb C}\sp 2$}. {IV}. {T}he measure
  of maximal entropy and laminar currents.
\newblock {\em Invent. Math.}, 112(1):77--125, 1993.

\bibitem[BLY13]{bonatti-li-yang}
C.~Bonatti, M.~Li, and D.~Yang.
\newblock On the existence of attractors.
\newblock {\em Trans. Amer. Math. Soc.}, 365(3):1369--1391, 2013.

\bibitem[BS92]{bs-3}
E.~Bedford and J.~Smillie.
\newblock Polynomial diffeomorphisms of {$\mathbb C\sp 2$}. {III}.
  {E}rgodicity, exponents and entropy of the equilibrium measure.
\newblock {\em Math. Ann.}, 294(3):395--420, 1992.

\bibitem[Buz97]{buzzard}
G.~Buzzard.
\newblock Infinitely many periodic attractors for holomorphic maps of {$2$}
  variables.
\newblock {\em Ann. of Math. (2)}, 145(2):389--417, 1997.

\bibitem[Con78]{conley}
C.~Conley.
\newblock {\em Isolated invariant sets and the {M}orse index}, volume~38 of
  {\em CBMS Regional Conference Series in Mathematics}.
\newblock American Mathematical Society, Providence, R.I., 1978.

\bibitem[Dau14]{daurat}
S.~Daurat.
\newblock On the size of attractors in {$\mathbb{P}\sp k$}.
\newblock {\em Math. Z.}, 277(3-4):629--650, 2014.

\bibitem[Dau16]{daurat-2}
S.~Daurat.
\newblock Hyperbolic saddle measures and laminarity for holomorphic
  endomorphisms of {$\mathbb P^2$}.
\newblock arXiv:1603.08036, 2016.

\bibitem[Dem12]{de-book}
J.-P. Demailly.
\newblock Complex analytic and differential geometry.
\newblock
  \href{http://www-fourier.ujf-grenoble.fr/~demailly/manuscripts/agbook.pdf}{www-fourier.ujf-grenoble.fr/\textasciitilde
  demailly/books.html}, 2012.

\bibitem[Din07]{d-attractor}
T.-C. Dinh.
\newblock Attracting current and equilibrium measure for attractors on
  {$\mathbb P\sp k$}.
\newblock {\em J. Geom. Anal.}, 17(2):227--244, 2007.

\bibitem[DNS08]{dns-hori}
T.-C. Dinh, V.-A. Nguy{\^e}n, and N.~Sibony.
\newblock Dynamics of horizontal-like maps in higher dimension.
\newblock {\em Adv. Math.}, 219(5):1689--1721, 2008.

\bibitem[DS95]{duval-sibony-conv}
J.~Duval and N.~Sibony.
\newblock Polynomial convexity, rational convexity, and currents.
\newblock {\em Duke Math. J.}, 79(2):487--513, 1995.

\bibitem[DS03]{ds-allupoly}
T.-C. Dinh and N.~Sibony.
\newblock Dynamique des applications d'allure polynomiale.
\newblock {\em J. Math. Pures Appl. (9)}, 82(4):367--423, 2003.

\bibitem[DS06]{ds-geo}
T.-C. Dinh and N.~Sibony.
\newblock Geometry of currents, intersection theory and dynamics of
  horizontal-like maps.
\newblock {\em Ann. Inst. Fourier (Grenoble)}, 56(2):423--457, 2006.

\bibitem[DS09]{ds-superpot}
T.-C. Dinh and N.~Sibony.
\newblock Super-potentials of positive closed currents, intersection theory and
  dynamics.
\newblock {\em Acta Math.}, 203(1):1--82, 2009.

\bibitem[DS10]{ds-lec}
T.-C. Dinh and N.~Sibony.
\newblock Dynamics in several complex variables: endomorphisms of projective
  spaces and polynomial-like mappings.
\newblock In {\em Holomorphic dynamical systems}, volume 1998 of {\em Lecture
  Notes in Math.}, pages 165--294. Springer, Berlin, 2010.

\bibitem[dT06]{dethelin-selles}
H.~de~Th{\'e}lin.
\newblock Sur la construction de mesures selles.
\newblock {\em Ann. Inst. Fourier (Grenoble)}, 56(2):337--372, 2006.

\bibitem[dT08]{dethelin-lyap}
H.~de~Th{\'e}lin.
\newblock Sur les exposants de {L}yapounov des applications m\'eromorphes.
\newblock {\em Invent. Math.}, 172(1):89--116, 2008.

\bibitem[DT16]{da-ta}
S.~Daurat and J.~Taflin.
\newblock Codimension one attracting sets of {$\mathbb{P}\sp k(\mathbb C)$}.
\newblock {\em Ergodic Theory Dynam. Systems}, to appear, 2016.

\bibitem[dTN15]{dethelin-nguyen}
H.~de~Th{\'e}lin and F.~Nguyen.
\newblock Etude des mesures hyperboliques pour les applications meromorphes.
\newblock arXiv:1509.07679, 2015.

\bibitem[Duj04]{dujardin-henon}
R.~Dujardin.
\newblock H{\'e}non-like mappings in {$\mathbb C^2$}.
\newblock {\em Amer. J. Math.}, 126(2):439--472, 2004.

\bibitem[Dup12]{dupont-entro}
C.~Dupont.
\newblock Large entropy measures for endomorphisms of {$\mathbb{CP}\sp k$}.
\newblock {\em Israel J. Math.}, 192(2):505--533, 2012.

\bibitem[Fed69]{federer}
H.~Federer.
\newblock {\em Geometric measure theory}.
\newblock Die Grundlehren der mathematischen Wissenschaften, Band 153.
  Springer-Verlag New York Inc., New York, 1969.

\bibitem[FS94]{fs-cdhd0}
J.-E. Forn{\ae}ss and N.~Sibony.
\newblock Complex dynamics in higher dimension. {I}.
\newblock {\em Ast\'erisque}, (222):5, 201--231, 1994.
\newblock Complex analytic methods in dynamical systems (Rio de Janeiro, 1992).

\bibitem[FS95a]{fs-cdhd2}
J.-E. Forn{\ae}ss and N.~Sibony.
\newblock Complex dynamics in higher dimension. {II}.
\newblock In {\em Modern methods in complex analysis ({P}rinceton, {NJ},
  1992)}, volume 137 of {\em Ann. of Math. Stud.}, pages 135--182. Princeton
  Univ. Press, Princeton, NJ, 1995.

\bibitem[FS95b]{fs-oka}
J.-E. Forn{\ae}ss and N.~Sibony.
\newblock Oka's inequality for currents and applications.
\newblock {\em Math. Ann.}, 301(3):399--419, 1995.

\bibitem[FS01]{fs-example}
J.-E. Forn{\ae}ss and N.~Sibony.
\newblock Dynamics of {${\mathbb P}\sp 2$} (examples).
\newblock In {\em Laminations and foliations in dynamics, geometry and topology
  ({S}tony {B}rook, {NY}, 1998)}, volume 269 of {\em Contemp. Math.}, pages
  47--85. Amer. Math. Soc., Providence, RI, 2001.

\bibitem[FW99]{fw-attractor}
J.-E. Forn{\ae}ss and B.~Weickert.
\newblock Attractors in {${\mathbb P}\sp 2$}.
\newblock In {\em Several complex variables ({B}erkeley, {CA}, 1995--1996)},
  volume~37 of {\em Math. Sci. Res. Inst. Publ.}, pages 297--307. Cambridge
  Univ. Press, Cambridge, 1999.

\bibitem[Gav98]{gavosto}
E.~A. Gavosto.
\newblock Attracting basins in {${\mathbb P}\sp 2$}.
\newblock {\em J. Geom. Anal.}, 8(3):433--440, 1998.

\bibitem[Gro03]{gromov-entro}
M.~Gromov.
\newblock On the entropy of holomorphic maps.
\newblock {\em Enseign. Math. (2)}, 49(3-4):217--235, 2003.

\bibitem[Gue99]{guedj-appro}
V.~Guedj.
\newblock Approximation of currents on complex manifolds.
\newblock {\em Math. Ann.}, 313(3):437--474, 1999.

\bibitem[HL88]{henkin}
G.~Henkin and J.~Leiterer.
\newblock {\em Andreotti-{G}rauert theory by integral formulas}, volume~74 of
  {\em Progress in Mathematics}.
\newblock Birkh{\"a}user Boston, Inc., Boston, MA, 1988.

\bibitem[Hur82]{hurley}
M.~Hurley.
\newblock Attractors: persistence, and density of their basins.
\newblock {\em Trans. Amer. Math. Soc.}, 269(1):247--271, 1982.

\bibitem[JW00]{jonsson-weickert-attractor}
M.~Jonsson and B.~Weickert.
\newblock A nonalgebraic attractor in {$\mathbb P\sp 2$}.
\newblock {\em Proc. Amer. Math. Soc.}, 128(10):2999--3002, 2000.

\bibitem[Mat93]{matsumoto}
K.~Matsumoto.
\newblock Pseudoconvex domains of general order and {$q$}-convex domains in the
  complex projective space.
\newblock {\em J. Math. Kyoto Univ.}, 33(3):685--695, 1993.

\bibitem[Pha05]{pham}
N.-M. Pham.
\newblock Lyapunov exponents and bifurcation current for polynomial-like maps.
\newblock arXiv:math/0512557, 2005.

\bibitem[Ron12]{rong}
F.~Rong.
\newblock Non-algebraic attractors on {${\mathbb P}\sp k$}.
\newblock {\em Discrete Contin. Dyn. Syst.}, 32(3):977--989, 2012.

\bibitem[Sta06]{stawiska}
M.~Stawiska.
\newblock Holomorphic maps on {$\mathbb C\mathbb P^k$} with attracting
  hypersurfaces and pluricomplex {G}reen functions for their repellers.
\newblock {\em Complex Var. Elliptic Equ.}, 51(7):675--681, 2006.

\bibitem[Taf13]{t-attrac-speed}
J.~Taflin.
\newblock Speed of convergence towards attracting sets for endomorphisms of
  {$\mathbb P^k$}.
\newblock {\em Indiana Univ. Math. Journal}, 62:33--44, 2013.

\bibitem[Ued94]{ueda-fatou}
T.~Ueda.
\newblock Fatou sets in complex dynamics on projective spaces.
\newblock {\em J. Math. Soc. Japan}, 46(3):545--555, 1994.

\bibitem[Yom87]{yomdin}
Y.~Yomdin.
\newblock Volume growth and entropy.
\newblock {\em Israel J. Math.}, 57(3):285--300, 1987.

\end{thebibliography}

\noindent
Universit\'e de Bourgogne Franche-Comt\'e, IMB, UMR CNRS 5584, 21078 Dijon Cedex, France.\\
  {\tt e-mail:johan.taflin$@$u-bourgogne.fr}
\end{document}